%%%%%%%%%%%%%%%%%%%%%%%%%%%%%%%%%%%%%%%%%%%%%%%%%%%%%%%%%%%%%%%%%%%%%%%%%%%%%%%%%%%%%%%%%%%%%%%%%%%%%%%%%%%%%%%%%%%%%%%%%%
\documentclass[10pt]{article}
%%%%%%%%%%%%%%%%%%%%%%%%%%%%%%%%%%%%%%%%%%%%%%%%%%%%%%%%%%%%%%%%%%%%%%%%%%%%%%%%%%%%%%%%%%%%%%%%%%%%%%%%%%%%%%%%%%%%%%%%%%
\textheight 7.8in \textwidth 5.1in
%\setlength{\oddsidemargin}{-0.00in}
%\setlength{\topmargin}{-.5cm}
%%%%%%%%%%%%%%%%%%%%%%%%%%%%%%%%%%%%%%%%%%%%%%%%%%%%%%%%%%%%%%%%%%%%%%%%%%%%%%%%%%%%%%%%%%%%%%%%%%%%%%%%%%%%%%%%%%%%%%%%%%
\usepackage{authblk}
\usepackage{amssymb}
\usepackage[all]{xy}
\usepackage{mathrsfs}
\usepackage{amsmath,amssymb,amsthm}
\usepackage{extarrows}
\usepackage{mathrsfs}
%%%%%%%%%%%%%%%%%%%%%%%%%%%%%%%%%%%%%%%%%%%%%%%%%%%%%%%%%%%%%%%%%%%%%%%%%%%%%%%%%%%%%%%%%%%%%%%%%%%%%%%%%%%%%%%%%%%%%%%%%%
\newtheorem{theorem}{Theorem}[section]
\newtheorem{lemma}[theorem]{Lemma}
\newtheorem{proposition}[theorem]{Proposition}
\newtheorem{definition}[theorem]{Definition}

\newtheorem{remark}[theorem]{Remark}
\newtheorem{corollary}[theorem]{Corollary}
\newtheorem{example}[theorem]{Example}

%%%%%%%%%%%%%%%%%%%%%%%%%%%%%%%%%%%%%%%%%%%%%%%%%%%%%%%%%%%%%%%%%%%%%%%%%%%%%%%%%%%%%%%%%%%%%%%%%%%%%%%%%%%%%%%%%%%%%%%%%%
\def\to{\rightarrow}
\def\f{\mathfrak}
\def\c{\mathcal}
\def\b{\mathbf}
\def\r{\mathrm}
\def\bb{\mathbb}

\def\ot{\otimes}
\def\ov{\overline}
%%%%%%%%%%%%%%%%%%%%%%%%%%%%%%%%%%%%%%%%%%%%%%%%%%%%%%%%%%%%%%%%%%%%%%%%%%%%%%%%%%%%%%%%%%%%%%%%%%%%%%%%%%%%%%%%%%%%%%%%%%
\date{}
%%%%%%%%%%%%%%%%%%%%%%%%%%%%%%%%%%%%%%%%%%%%%%%%%%%%%%%%%%%%%%%%%%%%%%%%%%%%%%%%%%%%%%%%%%%%%%%%%%%%%%%%%%%%%%%%%%%%%%%%%%
\begin{document}
%%%%%%%%%%%%%%%%%%%%%%%%%%%%%%%%%%%%%%%%%%%%%%%%%%%%%%%%%%%%%%%%%%%%%%%%%%%%%%%%%%%%%%%%%%%%%%%%%%%%%%%%%%%%%%%%%%%%%%%%%%
\title{Path-Connected Components of Affine Schemes and\\ Algebraic K-Theory}
%%%%%%%%%%%%%%%%%%%%%%%%%%%%%%%%%%%%%%%%%%%%%%%%%%%%%%%%%%%%%%%%%%%%%%%%%%%%%%%%%%%%%%%%%%%%%%%%%%%%%%%%%%%%%%%%%%%%%%%%%%
\author{Maysam Maysami Sadr\thanks{sadr@iasbs.ac.ir}}
\affil{Department of Mathematics, Institute for Advanced Studies in Basic Sciences, Zanjan, Iran}
%%%%%%%%%%%%%%%%%%%%%%%%%%%%%%%%%%%%%%%%%%%%%%%%%%%%%%%%%%%%%%%%%%%%%%%%%%%%%%%%%%%%%%%%%%%%%%%%%%%%%%%%%%%%%%%%%%%%%%%%%%
\maketitle
%%%%%%%%%%%%%%%%%%%%%%%%%%%%%%%%%%%%%%%%%%%%%%%%%%%%%%%%%%%%%%%%%%%%%%%%%%%%%%%%%%%%%%%%%%%%%%%%%%%%%%%%%%%%%%%%%%%%%%%%%%
\begin{abstract}
We introduce a functor $\f{M}:\b{Alg}\times\b{Alg}^\r{op}\to\r{pro}\text{-}\b{Alg}$ constructed from representations of $\r{Hom}_\b{Alg}(A,B\ot?)$.
As applications, the following items are introduced and studied: (i) Analogue of the functor $\pi_0$ for algebras and affine schemes.
(ii) Cotype of Weibel's concept of strict homotopization. (iii) A homotopy invariant intrinsic singular cohomology theory
for affine schemes with cup product.
(iv) Some extensions of $\b{Alg}$ that are enriched over idempotent semigroups.
(v) Classifying homotopy pro-algebras for Corti\~{n}as-Thom's KK-groups and Weibel's homotopy K-groups.\\
\textbf{MSC 2020.} 55P99; 16E20; 14R99; 14F99; 19K35.\\
\textbf{Keywords.} Homotopy theory of algebras; algebraic K-theory; path-connected component; affine scheme; singular cohomology.
\end{abstract}
%%%%%%%%%%%%%%%%%%%%%%%%%%%%%%%%%%%%%%%%%%%%%%%%%%%%%%%%%%%%%%%%%%%%%%%%%%%%%%%%%%%%%%%%%%%%%%%%%%%%%%%%%%%%%%%%%%%%%%%%%%
%%%%%%%%%%%%%%%%%%%%%%%%%%%%%%%%%%%%%%%%%%%%%%%%%%%%%%%%%%%%%%%%%%%%%%%%%%%%%%%%%%%%%%%%%%%%%%%%%%%%%%%%%%%%%%%%%%%%%%%%%%
%%%%%%%%%%%%%%%%%%%%%%%%%%%%%%%%%%%%%%%%%%%%%%%%%%%%%%%%%%%%%%%%%%%%%%%%%%%%%%%%%%%%%%%%%%%%%%%%%%%%%%%%%%%%%%%%%%%%%%%%%%
%%%%%%%%%%%%%%%%%%%%%%%%%%%%%%%%%%%%%%%%%%%%%%%%%%%%%%%%%%%%%%%%%%%%%%%%%%%%%%%%%%%%%%%%%%%%%%%%%%%%%%%%%%%%%%%%%%%%%%%%%%
%%%%%%%%%%%%%%%%%%%%%%%%%%%%%%%%%%%%%%%%%%%%%%%%%%%%%%%%%%%%%%%%%%%%%%%%%%%%%%%%%%%%%%%%%%%%%%%%%%%%%%%%%%%%%%%%%%%%%%%%%%
%%%%%%%%%%%%%%%%%%%%%%%%%%%%%%%%%%%%%%%%%%%%%%%%%%%%%%%%%%%%%%%%%%%%%%%%%%%%%%%%%%%%%%%%%%%%%%%%%%%%%%%%%%%%%%%%%%%%%%%%%%
\section{Introduction}\label{1910101626}
The main objective of this note is to introduce and study a specific bifunctor $$\f{M}:\b{Alg}\times\b{Alg}^\r{op}\to\r{pro}\text{-}\b{Alg}$$
on the category of (noncommutative and nonunital) algebras over any field $\bb{F}$. For any two algebras $A$ and $B$, the pro-algebra
$\f{M}_{A,B}$ is defined to be the pro-object of $\b{Alg}$ representing the set-valued functor $\r{Hom}_\b{Alg}(A,B\ot?)$.
We give an explicit construction of $\f{M}_{A,B}$ associated with each generator-set of $A$ and each vector-space basis of $B$.
It turns out that $\f{M}_{A,B}$ may be regarded as the algebra of polynomial functions on the noncommutative affine ind-scheme
of all morphisms $A\to B$. We consider some variants of $\f{M}$ related to unital, commutative, and reduced cases.
The behavior with respect to product and coproduct, and a specific \emph{exponential law} for the functor $\f{M}$ are considered.
It is also shown that $\f{M}$ preserves algebraic homotopy.
Using $\f{M}$ and regarding it as a (dual) pure-algebraic version of the ordinary mapping space functor on topological spaces
(see below), we introduce and study some new objects in homotopy theories of algebras and (noncommutative) affine schemes:

(i) We introduce a functor $\f{P}=\f{P}^\r{c}$ and some of its variants on $\b{Alg}$ that send any algebra $A$ to some specific subalgebra
$\f{P}(A)\subseteq A$. The associated functor on affine schemes turns out to be an algebraic version of the usual path-connected component functor
$\pi_0$ on topological spaces. Homotopy invariance, behavior with respect to direct sum and tensor product,
and some other properties of $\f{P}$ are considered.  Also, it is shown that in case
$\r{char}(\bb{F})=0$, $\f{P}$ is isomorphic to the de Rham cohomology-group
functor at degree $0$ on the category of unital commutative algebras.

(ii) For any functor $F$ on an (admissible) subcategory of $\b{Alg}$ with values in an arbitrary category we define the \emph{costrict} homotopization
of $F$ to be a homotopy invariant functor $\lceil F\rceil$, with the same domain and target as $F$, satisfying a cotype of the universal property of
Weibel's strict homotopization \cite{Weibel1}. It is shown that under some mild conditions on the domain and target categories,
the functor $\lceil F\rceil$ exists. It turns out that $\f{P}=\lceil\r{id}\rceil$. Also, it became clear that the
\emph{space} of \emph{path-connected components} of any affine scheme can be defined in three different fashions.

(iii) Using machinery of ordinary singular cohomology theory, with each functor $F$ on a specific subcategory of
$\b{Alg}$ with values in abelian groups we associate two homotopy invariant \emph{homology theories} $\ov{\c{H}}^*_F,\underline{\c{H}}^*_F$
for algebras. It is proved that $\ov{\c{H}}^0_F=\lceil F\rceil$ and thus the functors $\ov{\c{H}}^n_F$ may be regarded as higher ordered
costrict homotopizations of $F$. In case $F=\r{id}$ we have $\ov{\c{H}}^*_F=\underline{\c{H}}^*_F$, and we may consider the
cup product for the homology theory. The associated theory on affine schemes is called \emph{intrinsic singular cohomology theory}.
A de Rham Theorem at degree zero is proved for that.

(iv) We construct an extension $\bar{\b{Alg}}$ (and some of its variants) of $\b{Alg}$.
The objects of $\bar{\b{Alg}}$ are those of $\b{Alg}$ and the morphisms from an algebra $A$ to an algebra $B$ are
(various classes of) \emph{pro-ideals} of $\f{M}_{A,B}$. It is shown that $\bar{\b{Alg}}$ is an enriched category over partially ordered idempotent
semigroups, by inclusion and intersection of pro-ideals. Also, the K-group functor ${K_0}$ can be extended to a version of $\bar{\b{Alg}}$.

(v) We consider a bivariant \emph{homology theory} constructed as a pure-algebraic version of Cuntz's interpretation \cite{Cuntz1}
of the Kasparov bivariant K-theory of C*-algebras \cite{Kasparov1}. It is shown that for every unital algebra $C$
the canonical group-morphism from $K_0(C)$ into the Weibel homotopy K-group $KH_0(C)$ \cite{Weibel3} factors through $\f{QQ}(\bb{F},C)$.
Following a method introduced by Phillips \cite{Phillips2} we prove the existence of \emph{classifying homotopy pro-algebras}
for $\f{QQ}$-groups, Corti\~{n}as-Thom's KK-groups \cite{CortinasThom1}, and $KH_0$-groups. Thus, for instance, it is proved that
there exists a pro-object $\ov{KH}_0$ of the homotopy category $\r{Hot}(\b{Alg})$ with a cocommutative cogroup structure such that
$$KH_0(B)\cong[\ov{KH}_0,B]\hspace{10mm}(B\in\b{Alg}).$$

In the following we explain some simple ideas from classical Topology that are behind the definitions of $\f{M}$ and
the objects introduced in (i) and (iii).

Let $\b{Top}$ denote the category of topological spaces and $\b{Top}_\r{k}$ the subcategory of compact topological manifolds.
Consider the bifunctor $$\r{M}:\b{Top}_\r{k}^\r{op}\times\b{Top}_\r{k}\to\b{Top}$$
that associates with $(Y,X)$, the space of all (continuous) mappings from $Y$ into $X$, with compact-open topology.
It is an elementary fact that any map $\phi:Z\times Y\to X$ defines a $Z$-parameterized family $\{\psi(z)\}_z$ in $\r{M}_{Y,X}$ given by
\begin{equation*}\psi:Z\to\r{M}_{Y,X}\hspace{10mm}\psi(z)(y):=\phi(z,y).\end{equation*}
Identifying maps of the type $\phi$ with families of maps of the type $\psi$, we may define the space $\r{M}_{Y,X}$ as the parameter-space $M$
of a family $u:M\times Y\to X$ satisfying the following universal property: For any family $\phi:Z\times Y\to X$
with compact parameter space $Z$ there exists a unique map $\ov{\phi}:Z\to M$ making the following diagram commutative:
\begin{equation*}\xymatrix{M\times Y\ar[r]^-{u}&X\\Z\times Y\ar[u]^-{\ov{\phi}\times\r{id}_Y}\ar[-1,1]_{\phi}&}\end{equation*}
Dualizing the above categorical definition of $\r{M}_{Y,X}$, we get the definition of $\f{M}_{A,B}$: For any two algebras $A$ and $B$,
$\f{M}_{A,B}$ is defined to be an object of $\b{Alg}$ coming with a morphism $\Upsilon_{A,B}:A\to B\ot\f{M}_{A,B}$ and satisfying the following
universal property: For any algebra $C$ and every morphism $\phi:A\to B\ot C$ there exists a unique morphism $\ov{\phi}:\f{M}_{A,B}\to C$
making the following diagram commutative:
\begin{equation*}\xymatrix{B\ot\f{M}_{A,B}\ar[d]_-{\r{id}_B\ot\ov{\phi}}&A\ar[l]_-{\Upsilon_{A,B}}\ar[1,-1]^{\phi}\\B\ot C&}\end{equation*}
It will be shown that $\f{M}_{A,B}$ exists but not always as an algebra. Indeed, as in the general case $\r{M}_{Y,X}$
is a noncompact topological space, $\f{M}_{A,B}$ in general is a pro-object of $\b{Alg}$.
Using the mentioned universal property it is easily verified that the assignment
$(A,B)\to\f{M}_{A,B}$ defines a bifunctor (Theorem \ref{2011302233}).
Also, it is easily checked that there is a canonical bijection between $\bb{F}$-points
of the noncommutative affine ind-scheme $\c{S}(\f{M}_{A,B})$ associated to $\f{M}_{A,B}$ (i.e. the pro-morphisms $\f{M}_{A,B}\to\bb{F}$)
and the morphisms $A\to B$ (Corollary \ref{2012011947}). Thus, $\c{S}(\f{M}_{A,B})$ may be called noncommutative (or quantum) family of
\emph{all} the maps $\c{S}(B)\to\c{S}(A)$. For more details on the concept of quantum family see \cite{Sadr1}.

It is necessary to remark that the pro-algebras of the type $\f{M}_{A,B}$ and their properties have been known for a long time
(see for instance \cite{Gersten2}). But it seems that these objects have not been systematically studied.

Let $\pi_0:\b{Top}_\r{k}\to\b{Top}_\r{k}$ denote the functor that associates with any space the set of its
path-connected components endowed with quotient topology. For any $X\in\b{Top}_\r{k}$
we may identify $\c{C}(\pi_0X)$, the algebra of real-valued continuous functions on $\pi_0X$, with the
subalgebra of those functions $f\in\c{C}(X)$ with the property that for any continuous curve $\gamma:[0,1]\to X$,
$f\circ\gamma$ is constant. Thus if $X^{[0,1]}:=\r{M}([0,1],X)$ denotes the path-space of $X$ then $f\in\c{C}(X)$ belongs to $\c{C}(\pi_0X)$
iff the morphism $$\c{C}(X)\to\c{C}([0,1]\times X^{[0,1]})\hspace{5mm}g\mapsto[(t,\gamma)\mapsto g(\gamma(t))]$$ sends $f$ to a function of the form
$$1\ot\tilde{f}\in\c{C}([0,1])\ot\c{C}(X^{[0,1]})\subseteq\c{C}([0,1]\times X^{[0,1]})$$
for some $\tilde{f}\in\c{C}(X^{[0,1]})$.

Using the above formalism we may define the functor $\pi_0$ on the category of affine schemes:
Let $A$ be a unital commutative algebra regarded as the algebra of polynomial functions on the affine scheme $S=\c{S}(A)$ over $\bb{F}$.
Denote by $\f{M}^\r{cu},\Upsilon^\r{cu}$ the commutative unital versions of $\f{M},\Upsilon$ (Theorem \ref{2011302233}).
We let $\f{P}(A)$ be the subalgebra of those elements $a\in A$ such that the image of $a$ under the morphism
$$\Upsilon^\r{cu}_{A,\bb{F}[x]}:A\to\bb{F}[x]\ot\f{M}^\r{cu}_{A,\bb{F}[x]}$$
is of the form $1\ot\tilde{a}$ in each component of the pro-algebra $\bb{F}[x]\ot\f{M}^\r{cu}_{A,\bb{F}[x]}$ (Lemma \ref{2009301424}).
So, we have replaced the interval $[0,1]$ with the affine line $\bb{A}^1=\c{S}(\bb{F}[x])$. We define $$\pi_0S:=\c{S}(\f{P}(A)).$$

A continuous version of the ordinary singular cohomology theory with values in $\bb{R}$ may be simply defined as follows:
Let $\{\Delta_n\}_n$ denote the standard cosimplicial space. For any space $X$, the action of the cofunctor $\r{M}(?,X)$ on $\{\Delta_n\}_n$
gives rise to the simplicial space $\{\r{M}(\Delta_n,X)\}_n$ and then to the cosimplicial vector space $\{\c{C}(\r{M}(\Delta_n,X))\}_n$
that \emph{accidentally} is also a cosimplicial algebra. The cohomology groups of the Moore cochain complex associated with that cosimplicial
vector space may be called continuous singular cohomology groups of $X$ \cite{Mdzinarishvili1}.

A pure-algebraic version of the above formalism may be applied to affine schemes: Let $\bb{F}[\Delta]:=\{\bb{F}[\Delta_n]\}_n$ denote the standard
simplicial algebra \cite{CortinasThom1,Garkusha1} (see $\S$\ref{2009232252} for the definition). For any affine scheme $S=\c{S}(A)$
the action of $\f{M}_{A,?}$ on $\bb{F}[\Delta]$ gives rise to the cosimplicial pro-algebra $\f{M}_{A,\bb{F}[\Delta]}$. The cohomology
groups of the Moore complex of $\underleftarrow{\lim}\f{M}_{A,\bb{F}[\Delta]}$ are called intrinsic singular cohomology groups of $S$.

In the rest of this section we fix our notations. In $\S$ \ref{1909142202} we construct the functor $\f{M}$ and its variants, and consider
some basic properties of $\f{M}$. In $\S$ \ref{1910191623} we review the classical notion of algebraic homotopy for pro- and ind-morphisms,
and show that $\f{M}$ is homotopy preserving. In $\S$ \ref{1909241128}-\ref{2011161438} we consider respectively the items (i)-(v).
%%%%%%%%%%%%%%%%%%%%%%%%%%%%%%%%%%%%%%%%%%%%%%%%%%%%%%%%%%%%%%%%%%%%%%%%%%%%%%%%%%%%%%%%%%%%%%%%%%%%%%%%%%%%%%%%%%%%%
\subsection*{Notations \& Conventions}
We denote by $\b{Set},\b{Ab},\b{Chain},\b{CChain}$ respectively the categories of sets, abelian groups, and chain and cochain complexes of abelian
groups. Let $\b{C}$ be a category. The category of simplicial (resp. cosimplicial) objects of $\b{C}$ is denoted by $\r{sim}\text{-}\b{C}$
(resp. $\r{cosim}\text{-}\b{C}$). The category of pro-objects (resp. ind-object) of $\b{C}$ (\cite[Appendix]{ArtinMazur1}) is denoted by
$\r{pro}\text{-}\b{C}$ (resp. $\r{ind}\text{-}\b{C}$). An object of $\r{pro}\text{-}\b{C}$ (alternatively called a pro-object of $\b{C}$)
is an indexed family $\{C_i\}_{i\in I}$ of objects of $C$ over a directed set $I$ together with the structural morphisms $\alpha_{ii'}:C_{i'}\to C_i$
for $i'\geq i$ which are compatible: $\alpha_{ii}=\r{id}_{C_i}$ and $\alpha_{ii''}=\alpha_{ii'}\alpha_{i'i''}$. For pro-objects
$C=(I,C_i,\alpha_{ii'})$ and $D=(J,D_j,\beta_{jj'})$ of $\b{C}$ it is defined that
$$\r{Hom}_{\r{pro}\text{-}\b{C}}(C,D):=\lim_{\infty\leftarrow j}\lim_{i\to\infty}\r{Hom}_\b{C}(C_i,D_j).$$
(The structure of $\r{Hom}_{\r{pro}\text{-}\b{C}}(C,D)$ may be explained as follows:
A represented pro-morphism from $C$ to $D$ is distinguished by a function $f:J\to I$ and a family $\{\phi_{j}:C_{f(j)}\to D_j\}_j$
of morphisms with the property that if $j'\geq j$ then there exists $i\geq f(j),f(j')$ such that
$\phi_j\alpha_{f(j)i}=\beta_{jj'}\phi_{j'}\alpha_{f(j')i}$. Two represented pro-morphisms $(f,\phi_j)$ and $(g,\psi_j)$ are equivalent
if for every $j$ there exists $i\geq f(j),g(j)$ such that $\phi_j\alpha_{f(j)i}=\psi_j\alpha_{g(j)i}$. Then, $\r{Hom}_{\r{pro}\text{-}\b{C}}(C,D)$
may be identified with the set of equivalence classes of represented pro-morphisms.)
We always collapse pro-pro- and ind-ind-objects to pro- and ind-objects, without further comment. We use freely the canonical embeddings
$$\b{C}\subset\r{pro}\text{-}\b{C}\hspace{3mm}\text{and}\hspace{3mm}\b{C}\subset\r{ind}\text{-}\b{C}.$$
We also freely use the canonical extensions of any functor $\b{C}\to\b{D}$ to the functors
$$\r{pro}\text{-}\b{C}\to\r{pro}\text{-}\b{D},\hspace{2mm}\r{ind}\text{-}\b{C}\to\r{ind}\text{-}\b{D},
\hspace{2mm}\r{sim}\text{-}\b{C}\to\r{sim}\text{-}\b{D}.$$
Inverse limit $\underleftarrow{\lim}$ (if exists) is considered as a functor from $\r{pro}\text{-}\b{C}$ to $\b{C}$.
Recall that a functor $F:\b{C}\to\b{Set}$ is said to be pro-representable if there exist a pro-object $C=\{C_i\}$
of $\b{C}$  and a natural isomorphism $\Phi:\r{Hom}_{\r{pro}\text{-}\b{C}}(C,?)\to F$.
Note that $\Phi$ is exactly distinguished by the family $\{\phi_i\in F(C_i)\}$ where $\phi_i:=\Phi(\r{id}_{C_i})$.
We call $$(\{C_i\}_i,\{\phi_i\}_i)$$ a pro-representation for $F$, and say that $F$ is pro-represented by $C$.\\
Throughout, we work over a fixed field $\bb{F}$; all vector spaces and algebras are understood over $\bb{F}$.
$\b{Vec}$ denotes category of vector spaces. The symbol $\ot$ without any subscript denotes $\ot_\bb{F}$. The sub- and super-scripts
$\r{nc,c,u,r,fg,fp}$ stand for noncommutative, commutative, unital, reduced, finitely generated, finitely presented. From now on,
the category of all algebras is denoted by $\b{A}_\r{nc}$. The category of unital algebras and unit preserving morphisms is denoted by
$\b{A}_\r{u}$. The category $\b{A}_*$ is similarly considered for $*=\r{c},\r{cu},\r{fg},\r{fp},\ldots$.
Thus, for instance, $\b{A}_\r{cufg}$ denotes the category of finitely generated commutative unital algebras and unit preserving morphisms.
Tensor product, product, and coproduct of pro- and ind-algebras are defined componentwise. For any pro-algebra $A$ by a point of $A$ we mean a
pro-morphism from $A$ into $\bb{F}$. The set of all points and all nonzero points of $A$ are denoted by $\r{Pnt}(A)$ and $\r{Pnt}_{\neq0}(A)$. The
category of affine schemes (over $\bb{F}$) as usual is defined by $\b{Aff}:=\b{A}_\r{cu}^\r{op}$. The affine scheme associated with
$A\in\b{A}_\r{cu}$ is denoted by $\c{S}(A)$. $\r{M}_n$ denotes the algebra of $n\times n$ matrixes with entries in $\bb{F}$.
We denote with the same symbol the functor $\b{A}_\r{nc}\to\b{A}_\r{nc}$ given by $A\mapsto\r{M}_n(A):=\r{M}_n\ot A$.
%%%%%%%%%%%%%%%%%%%%%%%%%%%%%%%%%%%%%%%%%%%%%%%%%%%%%%%%%%%%%%%%%%%%%%%%%%%%%%%%%%%%%%%%%%%%%%%%%%%%%%%%%%%%%%%%%%%%%
%%%%%%%%%%%%%%%%%%%%%%%%%%%%%%%%%%%%%%%%%%%%%%%%%%%%%%%%%%%%%%%%%%%%%%%%%%%%%%%%%%%%%%%%%%%%%%%%%%%%%%%%%%%%%%%%%%%%%
\subsection*{Acknowledgement}
The author would like to express his sincere gratitude to Professor Corti\~{n}as for many valuable comments, hints, and remarks
on the early version of this manuscript.
%%%%%%%%%%%%%%%%%%%%%%%%%%%%%%%%%%%%%%%%%%%%%%%%%%%%%%%%%%%%%%%%%%%%%%%%%%%%%%%%%%%%%%%%%%%%%%%%%%%%%%%%%%%%%%%%%%%%%%%%%%
%%%%%%%%%%%%%%%%%%%%%%%%%%%%%%%%%%%%%%%%%%%%%%%%%%%%%%%%%%%%%%%%%%%%%%%%%%%%%%%%%%%%%%%%%%%%%%%%%%%%%%%%%%%%%%%%%%%%%%%%%%
%%%%%%%%%%%%%%%%%%%%%%%%%%%%%%%%%%%%%%%%%%%%%%%%%%%%%%%%%%%%%%%%%%%%%%%%%%%%%%%%%%%%%%%%%%%%%%%%%%%%%%%%%%%%%%%%%%%%%%%%%%
%%%%%%%%%%%%%%%%%%%%%%%%%%%%%%%%%%%%%%%%%%%%%%%%%%%%%%%%%%%%%%%%%%%%%%%%%%%%%%%%%%%%%%%%%%%%%%%%%%%%%%%%%%%%%%%%%%%%%%%%%%
\section{The Main Definition}\label{1909142202}
Let $\b{B},\b{C},\b{D}$ be arbitrary categories and let $F:\b{C}\times\b{D}\to\b{B}$ be a functor. For objects $B\in\b{B}$ and $C\in\b{C}$
consider the functor\begin{equation*}%\label{2008202146}
F_{B,C}:=\r{Hom}_\b{B}(B,F(C,?)):\b{D}\to\b{Set}.\end{equation*}Let $D$ be a pro-object of $\b{D}$ and $\phi:B\to F(C,D)$ be a pro-morphism.
\begin{lemma}\label{2008210701}
\emph{$(D,\phi)$ is a pro-representation for $F_{B,C}$ iff the following universal property holds:
For every pro-object $E$ of $\b{D}$ and every pro-morphism $\psi:B\to F(C,E)$ there exists a unique pro-morphism $\bar{\psi}:D\to E$
such that $\psi=F(\r{id}_C,\bar{\psi})\phi$.}\end{lemma}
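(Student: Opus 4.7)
The plan is to recognize this lemma as the Yoneda lemma transposed to the pro-category setting. First I would unpack the hypothesis: by the discussion in $\S$\ref{1910101626}, a pro-representation $(D,\phi)$ of $F_{B,C}$ amounts to a pro-object $D=\{D_i\}_{i\in I}$ of $\b{D}$ together with a compatible family $\phi_i\in F_{B,C}(D_i)=\r{Hom}_\b{B}(B,F(C,D_i))$, equivalently a single pro-morphism $\phi:B\to F(C,D)$ in $\r{pro}\text{-}\b{B}$, where $F(C,D):=\{F(C,D_i)\}$ denotes the canonical extension of $F$. The associated natural isomorphism $\Phi:\r{Hom}_{\r{pro}\text{-}\b{D}}(D,?)\to F_{B,C}$ is then forced, by naturality and the normalization $\phi_i=\Phi(\r{id}_{D_i})$, to be given at each $E\in\b{D}$ by
$$\Phi_E(\alpha)=F(\r{id}_C,\alpha)\circ\phi,$$
where the composition is taken in $\r{pro}\text{-}\b{B}$.

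The key observation is that this same formula makes sense for every pro-object $E$ of $\b{D}$, with target $\r{Hom}_{\r{pro}\text{-}\b{B}}(B,F(C,E))$, and defines a natural transformation between the canonical extensions of $\r{Hom}_{\r{pro}\text{-}\b{D}}(D,-)$ and $F_{B,C}$ to $\r{pro}\text{-}\b{D}$. The universal property in the statement of the lemma is now, almost verbatim, the assertion that $\Phi_E$ is bijective for every such $E$: existence of $\bar{\psi}$ with $\psi=F(\r{id}_C,\bar{\psi})\phi$ is surjectivity of $\Phi_E$ at $\psi$, and uniqueness is injectivity. Thus direction $(\Leftarrow)$ follows at once by restricting $\Phi$ to $\b{D}$ and using $\Phi_{D_i}(\r{id}_{D_i})=\phi_i$, while direction $(\Rightarrow)$ reduces to checking that pro-representability on $\b{D}$ automatically yields bijectivity of $\Phi_E$ for all $E\in\r{pro}\text{-}\b{D}$.

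This extension step is the only point that demands any computation. The plan is to combine the formula
$$\r{Hom}_{\r{pro}\text{-}\b{D}}(D,E)=\lim_{\infty\leftarrow j}\lim_{i\to\infty}\r{Hom}_\b{D}(D_i,E_j)$$
from the Notations with the analogous presentation of $\r{Hom}_{\r{pro}\text{-}\b{B}}(B,F(C,E))$ as $\underleftarrow{\lim}_j F_{B,C}(E_j)$, so that componentwise bijectivity of $\Phi$ on each $E_j$ passes to the inverse limit and yields bijectivity of $\Phi_E$. I expect no serious obstacle beyond careful bookkeeping; the main subtlety is in tracking the direction of the structural morphisms of $D$ and $E$ and verifying that $F(\r{id}_C,\alpha)\circ\phi$ really does define a legitimate pro-morphism in $\r{pro}\text{-}\b{B}$, which ultimately rests on functoriality of $F$ in its second argument and on the identification of $F(C,E)$ as the componentwise extension $\{F(C,E_j)\}_j$.
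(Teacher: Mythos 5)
Your proof is correct. The paper gives no proof of this lemma at all---it is stated as a routine fact---so there is nothing to compare against; your Yoneda-style unpacking, in which the natural isomorphism is forced to be $\Phi_E(\alpha)=F(\r{id}_C,\alpha)\circ\phi$ and is then extended from objects of $\b{D}$ to arbitrary pro-objects via the inverse-limit description of $\r{Hom}_{\r{pro}\text{-}\b{D}}(D,E)$ and of $\r{Hom}_{\r{pro}\text{-}\b{B}}(B,F(C,E))$, is exactly the standard argument the author is implicitly relying on.
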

The functor $F$ is said to be $\r{pro}\text{-}\b{D}$-representable if for every $B,C$, $F_{B,C}$ is pro-representable. For $D$ and $\phi$ as above,
we use respectively the notations $$\f{M}(B,C)=\f{M}_{B,C}\hspace{3mm}\text{and}\hspace{3mm}\Upsilon(B,C)=\Upsilon_{B,C}.$$
Let $f:B\to B'$ and $g:C'\to C$ be morphisms respectively in $\b{B}$ and $\b{C}$. The universal property of $(\f{M}_{B,C},\Upsilon_{B,C})$
shows that there is a unique pro-morphism $$\f{M}(f,g)=\f{M}_{f,g}:\f{M}_{B,C}\to\f{M}_{B',C'}$$ satisfying the identity
$$F(g,\r{id}_{\f{M}_{B',C'}})\Upsilon_{B',C'}f=F(\r{id}_C,\f{M}_{f,g})\Upsilon_{B,C}.$$
It is clear that $\f{M}(\r{id}_B,\r{id}_C)=\r{id}_{\f{M}(B,C)}$. For morphisms $f':B'\to B'',g':C''\to C'$, again universal
property of the involved objects shows that $$\f{M}(f'f,gg')=\f{M}(f',g')\f{M}(f,g).$$
Ignoring some set-theoretical difficulties about \emph{the choice} of a pro-representation $(\f{M}_{B,C},\Upsilon_{B,C})$ for $F_{B,C}$,
we have the following easily checked result.
\begin{proposition}\label{2008210703}
\emph{Suppose that the functor $$F:\b{C}\times\b{D}\to\b{B}$$ is $\r{pro}\text{-}\b{D}$-representable. Then, $\f{M}$ may be considered as a functor
$$\f{M}:\b{B}\times\b{C}^\r{op}\to\r{pro}\text{-}\b{D},$$ and $\Upsilon$ as a natural transformation, in the obvious way. For any $C\in\b{C}$,
the functor$$\f{M}(?,C):\r{pro}\text{-}\b{B}\to\r{pro}\text{-}\b{D}$$
is left-adjoint to the functor $$F(C,?):\r{pro}\text{-}\b{D}\to\r{pro}\text{-}\b{B}.$$
Thus, for $B\in\r{pro}\text{-}\b{B}$, $C\in\b{C}$, and $D\in\r{pro}\text{-}\b{D}$, we have the following natural bijection:
$$\r{Hom}_{\r{pro}\text{-}\b{D}}(\f{M}(B,C),D)\cong\r{Hom}_{\r{pro}\text{-}\b{B}}(B,F(C,D))).$$
For $B\in\b{B}$, $C\in\r{ind}\text{-}\b{C}$, and $D\in\b{D}$, we have the following bijection:
$$\r{Hom}_{\r{pro}\text{-}\b{D}}(\f{M}(B,C),D)\cong\r{Hom}_{\r{ind}\text{-}\b{B}}(B,F(C,D)).$$}
\end{proposition}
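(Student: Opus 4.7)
The plan is to deduce the entire proposition from Lemma \ref{2008210701} together with the pro- and ind-category formalism recalled in the Notations. Functoriality of $\f{M}$ and naturality of $\Upsilon$ have essentially been established in the paragraph preceding the statement: for $f:B\to B'$ and $g:C'\to C$, the pro-morphism $\f{M}_{f,g}:\f{M}_{B,C}\to\f{M}_{B',C'}$ is pinned down uniquely by the universal property of $(\f{M}_{B,C},\Upsilon_{B,C})$ applied to the pro-morphism $F(g,\r{id})\Upsilon_{B',C'}f:B\to F(C,\f{M}_{B',C'})$, and identity and composition laws follow at once from uniqueness. The defining equation of $\f{M}_{f,g}$ is itself the naturality square for $\Upsilon$. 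The only real set-theoretic nuisance, explicitly acknowledged in the statement, is that a pro-representation has to be fixed at each $(B,C)$; different choices are canonically pro-isomorphic, so they affect $\f{M}$ only up to a unique natural isomorphism.

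For the adjunction, I would start with the base case $B\in\b{B}$, $C\in\b{C}$, $D\in\r{pro}\text{-}\b{D}$: here $\bar\psi\mapsto F(\r{id}_C,\bar\psi)\Upsilon_{B,C}$ gives exactly the bijection
$$\r{Hom}_{\r{pro}\text{-}\b{D}}(\f{M}(B,C),D)\cong\r{Hom}_{\r{pro}\text{-}\b{B}}(B,F(C,D))$$
of Lemma \ref{2008210701}, and naturality in $B$, $C$, $D$ is immediate from the same universal property. To promote this to $B\in\r{pro}\text{-}\b{B}$, I would write $B=\{B_i\}_I$, $D=\{D_\ell\}_L$ and observe that the pro-extension of $\f{M}(?,C)$ sends $B$ to the pro-pro-object $\{\f{M}(B_i,C)\}_i$, which collapses to a pro-object of $\b{D}$. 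Unwinding both sides of the desired bijection via the definition of pro-hom recalled in the Notations and applying the base case componentwise at each pair $(i,\ell)$ produces the same iterated $\underleftarrow{\lim}_\ell\,\underrightarrow{\lim}_i\,\r{Hom}_\b{B}(B_i,F(C,D_\ell))$, which by definition equals $\r{Hom}_{\r{pro}\text{-}\b{B}}(B,F(C,D))$. The ind-variant for $B\in\b{B}$, $C\in\r{ind}\text{-}\b{C}$, $D\in\b{D}$ proceeds by the same pattern with $C=\{C_m\}$: using that $\f{M}$ is contravariant in $C$, the pro-object $\f{M}(B,C)$ reads as $\{\f{M}(B,C_m)\}_m$ and $F(C,D)$ as the ind-object $\{F(C_m,D)\}_m$, and a single application of the base case at each $m$ identifies the two sides.

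The step that demands genuine care is the bookkeeping around the pro-pro- and ind-ind-collapses in the extension argument; specifically, one must verify that every iterated limit and colimit that appears is over a directed indexing set, so that no exchange of non-commuting limits is ever required. Once that is checked, both bijections and their naturality drop out without further input, and the adjunction between $\f{M}(?,C)$ and $F(C,?)$ at the pro-level is immediate.
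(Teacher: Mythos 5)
Your argument is correct and is exactly the verification the paper intends: the paper gives no proof beyond declaring the result ``easily checked'' (having already constructed $\f{M}_{f,g}$ and its functoriality in the preceding paragraph), and your route --- base case from Lemma \ref{2008210701}, then unwinding the iterated $\underleftarrow{\lim}\,\underrightarrow{\lim}$ definition of pro- and ind-hom-sets through the pro-pro and ind-ind collapses --- is the standard way to fill in those details.
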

We now turn to our main construction in the category of algebras.\\
Let $A,B$ be algebras. Suppose $\theta=\{\delta_a\}_{a\in G}$ is a family of finite linearly independent subsets $\delta_a$ of $B$ indexed by a set
$G\subseteq A$ that generates $A$ as an algebra. Denote by $\f{M}\theta$ the universal algebra in $\b{A}_\r{nc}$
generated by symbols $z_{a,v}$, where $a\in G$ and $v\in\delta_a$, subject to the condition that the assignment
\begin{equation}\label{2009191935}
a\mapsto\sum_{v\in\delta_a}v\ot z_{a,v}\end{equation}defines a morphism ${\Upsilon}\theta:A\to B\ot\f{M}\theta$ in $\b{A}_\r{nc}$.
The pair $(\f{M}\theta,\Upsilon\theta)$ has the following universal property: For every $C\in\b{A}_\r{nc}$ and any morphism
$\varphi:A\to B\ot C$ in $\b{A}_\r{nc}$ with the property that for every $a\in G$, $\varphi(a)$ is a linear combination of vectors in $\delta_a$,
there is a unique morphism $\ov{\varphi}:\f{M}\theta\to C$ in $\b{A}_\r{nc}$ such that $\varphi=(\r{id}_B\ot\ov{\varphi}){\Upsilon}\theta$. We call
$\ov{\varphi}$ the morphism $\theta$-associated with $\varphi$.

Now, fix a vector basis $V$ for the underlying vector space of $B$, and let $\Theta=\Theta_{G,V}$ denote the set of all families
$\theta=\{\delta_a\}_{a\in G}$ as above with $\delta_a\subseteq V$. For $\theta,\theta'\in\Theta$, write $\theta\subseteq\theta'$ if
$\delta_a\subseteq\delta_a'$ for every $a\in G$. Then, $\subseteq$ makes $\Theta$ into a directed set. For $\theta\subseteq\theta'$ let
$\phi_{\theta'\theta}:\f{M}\theta'\to\f{M}\theta$ denote the morphism $\theta'$-associated with $\Upsilon\theta$. We have
$\phi_{\theta\theta}=\r{id}$ and if $\theta\subseteq\theta'\subseteq\theta''$ then
$\phi_{\theta''\theta}=\phi_{\theta'\theta}\phi_{\theta''\theta'}$. So, the data $\{\f{M}\theta,\phi_{\theta'\theta}\}$
distinguish a pro-algebra $\f{M}^\r{nc}_{A,B}$ indexed by $\Theta$ and the family $\{{\Upsilon}\theta\}$ defines a pro-morphism
$$\Upsilon^\r{nc}_{A,B}:A\to B\ot\f{M}^\r{nc}_{A,B}.$$ We have the following universal property. Let $C=\{C_i\}_i$ be a pro-algebra and let
$$\varphi:A\to B\ot C\hspace{10mm}\varphi=\{\varphi_i:A\to B\ot C_i\}_i$$ denote a pro-morphism.
For every $i$, there exists $\theta=\{\delta_a\}$ in $\Theta$ such that $\varphi_i(a)$ a linear combinations of vectors in $\delta_a$.
Let $\ov{\varphi_i}:\f{M}\theta\to C_i$ be the morphism $\theta$-associated with $\varphi_i$.
Then, the family $\{\ov{\varphi_i}\}_i$ defines a unique pro-morphism $$\ov{\varphi}:\f{M}^\r{nc}_{A,B}\to C\hspace{5mm}\varphi=\{\ov{\varphi_i}\}_i$$
satisfying $$\varphi=(\r{id}_B\ot\ov{\varphi})\Upsilon^\r{nc}_{A,B}.$$ We call $\ov{\varphi}$ the pro-morphism associated with $\varphi$.
By Lemma \ref{2008210701}, the pair\begin{equation}\label{2011301249}(\f{M}^\r{nc}_{A,B},\Upsilon^\r{nc}_{A,B})\end{equation} is a pro-representation
for the functor $$\r{Hom}_{\b{A}_\r{nc}}(A,B\ot?):\b{A}_\r{nc}\to\b{Set}.$$ Thus, tensor product of algebras as the functor
\begin{equation*}%\label{2008210702}
\ot:\b{A}_\r{nc}\times\b{A}_\r{nc}\to\b{A}_\r{nc}, \end{equation*} is $\r{pro}\text{-}\b{A}_\r{nc}$-representable.
Similarly, it can be shown that the functor $$\ot:\b{A}_\r{u}\times\b{A}_\r{u}\to\b{A}_\r{u}$$ is $\r{pro}\text{-}\b{A}_\r{u}$-representable.
Also, for $*\in\{\r{c,cr}\}$ ($*\in\{\r{cu,cur}\}$) the functors
$$\ot:\b{A}_\r{nc}\times\b{A}_*\to\b{A}_\r{nc}\hspace{10mm}(\ot:\b{A}_\r{u}\times\b{A}_*\to\b{A}_\r{u})$$
are $\r{pro}\text{-}\b{A}_*$-representable. We denote by $(\f{M}^*_{A,B},\Upsilon^*_{A,B})$ the pro-representations for these functors,
constructed similar to the pair (\ref{2011301249}). Thus, for instance, for unital algebras $A,B$, $\f{M}^\r{cur}_{A,B}$ is an object in
$\r{pro}\text{-}\b{A}_\r{cur}$ given by the inverse system $\{\f{M}\theta,\phi_{\theta'\theta}\}$ indexed over $\Theta$ as above,
where $\f{M}\theta$ is the universal algebra in $\b{A}_\r{cur}$ generated by symbols $z_{a,v}$ subject to the condition that the assignment
(\ref{2009191935}) defines a unit-preserving morphism  from $A$ into $B\ot\f{M}\theta$.
The following is a special case of Proposition \ref{2008210703}.
\begin{theorem}\label{2011302233}
\emph{For $*\in\{\r{nc,c,cr}\}$, $\f{M}^*$ may be considered as a functor $$\f{M}^*:\b{A}_\r{nc}\times\b{A}_\r{nc}^\r{op}\to\r{pro}\text{-}\b{A}_*$$
and $\Upsilon^*$ as a natural transformation, in the obvious way. For any $B\in\b{A}_\r{nc}$,
the functor$$\f{M}^*(?,B):\r{pro}\text{-}\b{A}_\r{nc}\to\r{pro}\text{-}\b{A}_*$$
is left-adjoint to the functor $$B\ot?:\r{pro}\text{-}\b{A}_*\to\r{pro}\text{-}\b{A}_\r{nc}.$$
For $A\in\r{pro}\text{-}\b{A}_\r{nc}$, $B\in\b{A}_\r{nc}$, and $C\in\r{pro}\text{-}\b{A}_*$, we have the natural bijection
$$\r{Hom}_{\r{pro}\text{-}\b{A}_*}(\f{M}^*(A,B),C)\cong\r{Hom}_{\r{pro}\text{-}\b{A}_\r{nc}}(A,B\ot C),$$
and for $A\in\b{A}_\r{nc}$, $B\in\r{ind}\text{-}\b{A}_\r{nc}$, and $C\in\b{A}_*$, the bijection $$\r{Hom}_{\r{pro}\text{-}\b{A}_*}(\f{M}^*(A,B),C)\cong\r{Hom}_{\r{ind}\text{-}\b{A}_\r{nc}}(A,B\ot C).$$
The above statements hold also for $*\in\{\r{u,cu,cur}\}$ if $\b{A}_\r{nc}$ is replaced by $\b{A}_\r{u}$.}
\end{theorem}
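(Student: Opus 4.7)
The plan is to derive the theorem as a concrete instance of Proposition \ref{2008210703} applied to the tensor product bifunctor on each relevant pair of categories. First I would observe that Lemma \ref{2008210701} together with the explicit construction preceding the theorem already establishes pro-representability of the functor $\r{Hom}_{\b{A}_\r{nc}}(A,B\ot?):\b{A}_\r{nc}\to\b{Set}$ for every pair $A,B\in\b{A}_\r{nc}$: the inverse system $\{\f{M}\theta,\phi_{\theta'\theta}\}_{\theta\in\Theta}$ together with the compatible family $\{\Upsilon\theta\}$ assembles to the pair $(\f{M}^\r{nc}_{A,B},\Upsilon^\r{nc}_{A,B})$, and the universal property of each $\f{M}\theta$ combined with directedness of $\Theta$ yields the required universal property verbatim. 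This is the pro-$\b{A}_\r{nc}$-representability hypothesis of Proposition \ref{2008210703} for the functor $\ot:\b{A}_\r{nc}\times\b{A}_\r{nc}\to\b{A}_\r{nc}$.

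Next I would handle the variants. For each $*\in\{\r{c},\r{cr},\r{u},\r{cu},\r{cur}\}$ the same recipe applies: one defines $\f{M}\theta$ as the universal object in $\b{A}_*$ generated by the symbols $z_{a,v}$ subject to the same defining relations coming from (\ref{2009191935}), supplemented by unit preservation when $\r{u}\in *$, commutativity when $\r{c}\in *$, and freeness from nilpotents when $\r{r}\in *$. The key point is that the conditions needed to make assignment (\ref{2009191935}) into a morphism land on the coefficients $z_{a,v}$ and can be imposed inside $\b{A}_*$ without obstruction. Functoriality of $\theta\mapsto\f{M}\theta$, compatibility of the bonding maps $\phi_{\theta'\theta}$, and directedness of $\Theta$ then transfer unchanged, so $(\f{M}^*_{A,B},\Upsilon^*_{A,B})$ pro-represents the corresponding Hom-functor in $\b{A}_*$ (with $\b{A}_\r{nc}$ replaced by $\b{A}_\r{u}$ when $\r{u}\in *$).

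Once pro-representability is established in each case, Proposition \ref{2008210703} immediately supplies: (a) $\f{M}^*$ is a bifunctor on $\b{A}_\r{nc}\times\b{A}_\r{nc}^\r{op}$ (respectively $\b{A}_\r{u}\times\b{A}_\r{u}^\r{op}$), (b) $\Upsilon^*$ is a natural transformation, (c) the adjunction between $\f{M}^*(?,B)$ and $B\ot?$ at the level of pro-objects, yielding the first bijection. For the second bijection, with $B$ an ind-object, I would expand both Hom-sets using the standard definitions of morphisms in pro- and ind-categories and the componentwise definitions of $\f{M}^*$ and $\ot$, then invoke the basic bijection for each pair of indices and commute the limits and colimits; this is routine because ind-colimits in the second slot of $\ot$ correspond to pro-limits in the first slot of $\f{M}^*$ via its contravariance in that argument.

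The main (and essentially only) obstacle is set-theoretic: one must choose representatives of the pro-objects $\f{M}^*_{A,B}$ coherently in $A$ and $B$ so that $\f{M}^*$ is a strict bifunctor rather than a pseudofunctor, but the paper explicitly signals that this is to be glossed over in the sentence preceding Proposition \ref{2008210703}. Beyond this, the proof is bookkeeping with the universal properties already verified componentwise.
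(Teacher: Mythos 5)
Your proposal is correct and follows essentially the same route as the paper: the text preceding the theorem establishes that $(\f{M}^*_{A,B},\Upsilon^*_{A,B})$ pro-represents $\r{Hom}(A,B\ot?)$ in each variant, and the theorem is then obtained as a special case of Proposition \ref{2008210703}, which is exactly the derivation you give (including the acknowledged set-theoretic caveat about choosing representatives).
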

\begin{corollary}\label{2012011947}
\emph{For unital algebras $A,B$, we have the following identification:
$$\r{Hom}_{\b{A}_\r{u}}(A,B)\cong\r{Pnt}_{\neq0}(\f{M}^*_{A,B})\hspace{10mm}(*\in\{\r{u,cu,cur}\}).$$
The similar statement holds for nonunital case.}
\end{corollary}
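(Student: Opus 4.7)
The plan is to obtain the claimed bijection as a direct consequence of Theorem~\ref{2011302233}, specialized to $C=\bb{F}$. For $*\in\{\r{u,cu,cur}\}$, applying the unital half of that theorem with $C=\bb{F}$ yields
$$\r{Hom}_{\r{pro}\text{-}\b{A}_*}(\f{M}^*_{A,B},\bb{F})\;\cong\;\r{Hom}_{\b{A}_\r{u}}(A,B\ot\bb{F})\;\cong\;\r{Hom}_{\b{A}_\r{u}}(A,B),$$
where the last bijection is the canonical unital identification $B\ot\bb{F}\cong B$. It therefore remains only to identify the left-hand side with $\r{Pnt}_{\neq 0}(\f{M}^*_{A,B})$, i.e., with the nonzero pro-morphisms from $\f{M}^*_{A,B}$ into $\bb{F}$ regarded merely as a (possibly nonunital) algebra.

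The bridge between these two descriptions is a standard idempotent observation. If $M$ is any unital algebra and $\phi:M\to\bb{F}$ is an algebra homomorphism (not a priori unit-preserving), then $\phi(1)$ is an idempotent of the field $\bb{F}$ and so equals $0$ or $1$; in the first case $\phi\equiv0$, in the second $\phi$ is unit-preserving. Lifting this to pro-objects via the equivalence-class description of pro-morphisms recalled in the preliminaries, every represented pro-morphism $\f{M}\theta\to\bb{F}$ is either zero or unit-preserving; moreover, two unit-preserving representatives are equivalent in $\r{pro}\text{-}\b{A}_\r{nc}$ precisely when they are equivalent in $\r{pro}\text{-}\b{A}_*$, since the defining compatibility condition only asks for equality of composites valued in $\bb{F}$. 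Consequently the nonzero points of $\f{M}^*_{A,B}$ coincide with the unit-preserving pro-morphisms $\f{M}^*_{A,B}\to\bb{F}$, settling the unital case.

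The nonunital case is treated identically by instead specializing the first bijection of Theorem~\ref{2011302233} to $C=\bb{F}$, obtaining
$$\r{Hom}_{\r{pro}\text{-}\b{A}_*}(\f{M}^*_{A,B},\bb{F})\;\cong\;\r{Hom}_{\b{A}_\r{nc}}(A,B);$$
under this bijection the zero pro-morphism on the left matches the zero algebra homomorphism on the right (both arise from the collapse $a\mapsto0$ of $\Upsilon^*_{A,B}$), and excluding each produces the analogous correspondence on nonzero classes. The only mild obstacle in either case is the pro-categorical bookkeeping behind the zero/unital dichotomy; once the idempotent argument is in hand, the corollary drops out of Theorem~\ref{2011302233} by inspection.
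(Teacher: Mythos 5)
Your proposal is correct and follows essentially the same route as the paper, whose entire proof is the one-line observation that the claim follows from Theorem~\ref{2011302233} by putting $C=\bb{F}$. The extra detail you supply --- that a homomorphism from a unital (pro-)algebra into $\bb{F}$ is either zero or unit-preserving, which bridges $\r{Hom}_{\r{pro}\text{-}\b{A}_*}(\f{M}^*_{A,B},\bb{F})$ and $\r{Pnt}_{\neq0}(\f{M}^*_{A,B})$ --- is exactly the bookkeeping the paper leaves implicit.
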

\begin{proof}
It follows from Theorem \ref{2011302233} by putting $C=\bb{F}$.
\end{proof}
Using universality of $\f{M}^*$ it is easily verified that
$$\f{M}^\r{c}=(\f{M}^\r{nc})_\r{com},\hspace{2mm}\f{M}^\r{cu}=(\f{M}^\r{u})_\r{com},
\hspace{2mm}\f{M}^\r{cr}=(\f{M}^\r{c})_\r{red},\hspace{2mm}\f{M}^\r{cur}=(\f{M}^\r{cu})_\r{red},$$
where $A\mapsto A_\r{com}$ ($\b{A}_\r{nc}\to\b{A}_\r{c}$) and  $A\mapsto A_\r{red}$ ($\b{A}_\r{c}\to\b{A}_\r{cr}$) denote
commutativization and reduction functors. We have also canonical natural transformations
\begin{equation}\label{2011111033}
\f{M}^\r{nc}\to\f{M}^\r{c}\to\f{M}^\r{cr}\hspace{3mm}\text{and}\hspace{3mm}\f{M}^\r{u}\to\f{M}^\r{cu}\to\f{M}^\r{cur}.
\end{equation}
For the relation between unital and nonunital cases we only mention the following result.
Let $+:\b{A}_\r{nc}\to\b{A}_\r{u}$ denote the unitization functor. It is well-known that for any algebra $A$ and any unital algebra $B$,
every morphism $f:A\to B$ extends uniquely to a unit-preserving morphism $A^+\to B$.
(Indeed, this fact says that $+$ is a left-adjoint to the embedding $\b{A}_\r{u}\to\b{A}_\r{nc}$.)
By abuse of notation, we denote this extension of $f$ by $f^+$. The same convention is applied to pro-morphisms.
\begin{theorem}\label{2012091919}
\emph{Let $A\in\b{A}_\r{nc},B\in\b{A}_\r{u}$. Then there is a canonical isomorphism
$$(\f{M}^\r{nc}_{A,B})^+\cong\f{M}^\r{u}_{A^+,B},\hspace{3mm}(\f{M}^\r{c}_{A,B})^+\cong\f{M}^\r{cu}_{A^+,B},
\hspace{3mm}(\f{M}^\r{cr}_{A,B})^+\cong\f{M}^\r{cur}_{A^+,B}.$$}
\end{theorem}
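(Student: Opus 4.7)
The plan is to prove all three isomorphisms simultaneously by a Yoneda argument that composes Theorem~\ref{2011302233} with the (pro-extended) adjunction between unitization $+:\b{A}_\r{nc}\to\b{A}_\r{u}$ and the forgetful inclusion $\iota:\b{A}_\r{u}\hookrightarrow\b{A}_\r{nc}$. I first record the latter: since $+\dashv\iota$ at the level of ordinary algebras, the componentwise extensions satisfy $+\dashv\iota$ on pro-categories, so for any pro-nonunital algebra $D$ and pro-unital algebra $C$ there is a natural bijection
$$\r{Hom}_{\r{pro}\text{-}\b{A}_\r{u}}(D^+,C)\cong\r{Hom}_{\r{pro}\text{-}\b{A}_\r{nc}}(D,C).$$
This is formal from the definition of $\r{Hom}$ in a pro-category (apply the stated adjunction inside each $\r{colim}_i\r{Hom}(D_i,C_j)$ and then take $\lim_j$), but it is the only input outside Theorem~\ref{2011302233} that needs unpacking.

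Fix $A\in\b{A}_\r{nc}$, $B\in\b{A}_\r{u}$, and an arbitrary test object $C\in\r{pro}\text{-}\b{A}_\r{u}$. Then there is a chain of natural bijections
\begin{align*}
\r{Hom}_{\r{pro}\text{-}\b{A}_\r{u}}((\f{M}^\r{nc}_{A,B})^+,C)
&\cong\r{Hom}_{\r{pro}\text{-}\b{A}_\r{nc}}(\f{M}^\r{nc}_{A,B},C)\\
&\cong\r{Hom}_{\r{pro}\text{-}\b{A}_\r{nc}}(A,B\ot C)\\
&\cong\r{Hom}_{\r{pro}\text{-}\b{A}_\r{u}}(A^+,B\ot C)\\
&\cong\r{Hom}_{\r{pro}\text{-}\b{A}_\r{u}}(\f{M}^\r{u}_{A^+,B},C),
\end{align*}
in which the first and third bijections are the unitization adjunction of the previous paragraph (the third being applicable because $B\ot C$ is pro-unital when $B$ and each component of $C$ are unital), while the second and fourth come from Theorem~\ref{2011302233} for $*=\r{nc}$ and $*=\r{u}$ respectively. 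Yoneda's lemma then yields the desired canonical isomorphism $(\f{M}^\r{nc}_{A,B})^+\cong\f{M}^\r{u}_{A^+,B}$ in $\r{pro}\text{-}\b{A}_\r{u}$.

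The commutative and reduced-commutative cases run through the same four-step chain, replacing $\r{nc}/\r{u}$ by $\r{c}/\r{cu}$ and $\r{cr}/\r{cur}$ throughout. This is legitimate because unitization restricts to functors $\b{A}_\r{c}\to\b{A}_\r{cu}$ and $\b{A}_\r{cr}\to\b{A}_\r{cur}$ that remain left-adjoint to the corresponding inclusions: commutativity of $A^+$ when $A$ is commutative is immediate, and reducedness of $A^+$ when $A$ is reduced follows from the observation that $(a,\lambda)^n=0$ in $A^+$ forces $\lambda^n=0$ in $\bb{F}$, hence $\lambda=0$, and then $a^n=0$ in $A$. The main (indeed only) nonformal step I expect to write out is the pro-extension of the unitization adjunction; once that is in hand the three isomorphisms of the theorem drop out uniformly from Theorem~\ref{2011302233}.
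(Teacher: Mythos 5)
Your proof is correct and is essentially the paper's own argument in Yoneda-style packaging: the paper builds the two mutually inverse pro-morphisms $\ov{f^+}$ and $\ov{g}^+$ explicitly from the universal property of $\f{M}$ together with the unitization adjunction, which is exactly what your chain of natural bijections produces upon unwinding. The ingredients you make explicit --- the componentwise extension of $+\dashv\iota$ to pro-categories, and the fact that unitization preserves commutativity and reducedness so the adjunction restricts to the $\r{c}/\r{cu}$ and $\r{cr}/\r{cur}$ pairs --- are used implicitly in the paper's proof as well.
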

\begin{proof}
Let $f:=(\r{id}_B\ot\r{e})\Upsilon^\r{nc}_{A,B}$ where $\r{e}:\f{M}^\r{nc}_{A,B}\to\f{M}_{A,B}^{\r{nc}+}$ denotes the canonical embedding.
We have $f^+:A^+\to B\ot\f{M}^{\r{nc}+}_{A,B}$ in $\r{pro}\text{-}\b{A}_\r{u}$. Let $\ov{f^+}:\f{M}^\r{u}_{A^+,B}\to\f{M}^{\r{nc}+}_{A,B}$
be the pro-morphism associated with $f^+$. Thus $f^+=(\r{id}\ot\ov{f^+})\Upsilon^\r{u}_{A^+,B}$. Let $g:=\Upsilon^\r{u}_{A^+,B}\r{e}$
where $\r{e}$ this time shows the canonical embedding $A\to A^+$. Let $\ov{g}:\f{M}^\r{nc}_{A,B}\to\f{M}^\r{u}_{A^+,B}$ denote
the pro-morphism associated with $g$. Thus $g=(\r{id}_B\ot\ov{g})\Upsilon^\r{nc}_{A,B}$. We have
$\ov{g}^+:\f{M}^{\r{nc}+}_{A,B}\to\f{M}^\r{u}_{A^+,B}$. It follows from universality of $\f{M}^\r{nc}$ and $\f{M}^\r{u}$ that $\ov{f^+}$
and $\ov{g}^+$ are inverses of each other. The proof is complete.
\end{proof}
From now on, when there is no confusion about $*\in\{\r{nc,c,cr,u,cu,cur}\}$, we will use the short
notations $\f{M},\Upsilon$ instead of $\f{M}^*,\Upsilon^*$. In the following we consider some basic properties of $\f{M}$.
\begin{theorem}\label{2009080009}
\emph{\begin{enumerate}
\item[(i)] $\f{M}^\r{nc}_{A,\bb{F}}\cong A$ for $A\in\b{A}_\r{nc}$, and $\f{M}^\r{u}_{\bb{F},B}\cong\bb{F}$ for $B\in\b{A}_\r{u}$.
\item[(ii)] For any $A\in\b{A}_\r{nc}$ there is a canonical bijection $\r{Pnt}(\f{M}^\r{nc}_{\bb{F},A})\cong\{a\in A:a^2=a\}$.
Similarly, for every $n>1$ there is a canonical bijection between $\r{Pnt}(\f{M}^\r{nc}_{\bb{F}^n,A})$
and the set of $n$-tuples of pairwise orthogonal idempotents of $A$.
\item[(iii)] For $A\in\b{A}_\r{nc}$ and $n>1$, $\f{M}^\r{nc}_{A,\bb{F}^n}$ is a coproduct of $n$ copies of $A$ in $\b{A}_\r{nc}$.
\item[(iv)] If $A\in\b{A}_\r{ncfg},A'\in\b{A}_\r{ncfp},B\in\b{A}_\r{nc}$ then
$\f{M}^\r{nc}_{A,B}\in\r{pro}\text{-}\b{A}_\r{ncfg},\f{M}^\r{nc}_{A',B}\in\r{pro}\text{-}\b{A}_\r{ncfp}$.
\item[(v)] If $B$ is a finite-dimensional algebra then
$\f{M}^\r{nc}_{A,B}\cong\underleftarrow{\lim}\f{M}^\r{nc}_{A,B}$ for any algebra $A$.
\end{enumerate}}
\end{theorem}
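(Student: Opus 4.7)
The plan is to deduce (i)--(iii) formally from the adjunction of Theorem \ref{2011302233} by Yoneda-style arguments, and to extract (iv)--(v) by direct inspection of the explicit inverse system $\{\f{M}\theta,\phi_{\theta'\theta}\}$ used to build $\f{M}^\r{nc}_{A,B}$.

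For (i), setting $B=\bb{F}$ in Theorem \ref{2011302233} yields the natural bijection $\r{Hom}(\f{M}^\r{nc}_{A,\bb{F}},C)\cong\r{Hom}(A,\bb{F}\ot C)\cong\r{Hom}(A,C)$, so by uniqueness of pro-representations (two pro-objects representing the same functor are canonically isomorphic) one obtains $\f{M}^\r{nc}_{A,\bb{F}}\cong A$. For the second isomorphism in (i), since $\bb{F}$ is initial in $\b{A}_\r{u}$ the set $\r{Hom}_{\b{A}_\r{u}}(\bb{F},B\ot C)$ is a singleton for every unital $C$; thus both $\f{M}^\r{u}_{\bb{F},B}$ and $\bb{F}$ pro-represent the constant one-point functor. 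For (ii), the same adjunction with $C=\bb{F}$ identifies $\r{Pnt}(\f{M}^\r{nc}_{\bb{F},A})$ with $\r{Hom}_{\b{A}_\r{nc}}(\bb{F},A)$, and a nonunital morphism $\bb{F}\to A$ is just a choice of idempotent in $A$ (the image of $1$); the $\bb{F}^n$-case proceeds identically, the images of the standard basis idempotents giving the $n$-tuple of pairwise orthogonal idempotents. For (iii), the same chain yields $\r{Hom}(\f{M}^\r{nc}_{A,\bb{F}^n},C)\cong\r{Hom}(A,C^n)\cong\r{Hom}(A,C)^n\cong\r{Hom}(\coprod_n A,C)$, where the middle step uses that $C^n$ is the categorical product in $\b{A}_\r{nc}$; Yoneda then identifies $\f{M}^\r{nc}_{A,\bb{F}^n}$ with the coproduct $\coprod_n A$.

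For (iv) and (v), I would work directly from the construction of $\f{M}\theta$. When $A$ is finitely generated, we may choose $G$ finite, in which case each $\f{M}\theta$ is generated by the finitely many $z_{a,v}$ with $a\in G$ and $v\in\delta_a$. If additionally $A$ is finitely presented, say by relations $r_1,\ldots,r_N$ in the free algebra on $G$, then substituting (\ref{2009191935}) into each $r_j$ and expanding in the basis $V$ of $B$ produces only finitely many nonzero coefficients (each $\delta_a$ is finite and each product of basis vectors in $B$ has finite support in $V$); these coefficients yield a finite list of defining relations for $\f{M}\theta$. For (v), finite-dimensionality of $B$ forces $V$ to be finite, so $\theta^{\ast}:=\{V\}_{a\in G}$ is a valid element of $\Theta$ and is its maximum; a pro-system over a directed set with a maximum is canonically isomorphic in $\r{pro}\text{-}\b{A}_\r{nc}$ to its value at that maximum, and therefore coincides with $\underleftarrow{\lim}\f{M}^\r{nc}_{A,B}$.

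The main obstacle I anticipate is the finitely presented case of (iv): one must carefully expand each defining relation of $A$ in the basis $V$ of $B$ and verify that only finitely many nonzero coefficient relations appear in $\f{M}\theta$. This reduces to the tautological fact that each product $v_1\cdots v_k$ of basis vectors has finite support in $V$, but it is the one place in the argument where one must juggle generators and relations simultaneously rather than appeal purely to the universal property.
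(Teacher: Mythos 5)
Your proposal is correct, and it takes the expected route: the paper itself offers no argument beyond the word ``Straightforward,'' and your Yoneda-style deductions of (i)--(iii) from the representing property together with the direct inspection of the inverse system $\{\f{M}\theta\}$ for (iv)--(v) are exactly the details being left to the reader. In particular your treatment of the finitely presented case (finitely many coefficient relations $c_{j,v}=0$ arising from expanding each $r_j$ in the basis $V$) and of (v) (the index set $\Theta_{G,V}$ acquires a maximum when $V$ is finite) correctly fills the only two spots where anything needs to be checked.
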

\begin{proof}
Straightforward.
\end{proof}
\begin{example}
\emph{$\f{M}^\r{nc}_{\bb{F},\c{M}_n}$ is the universal algebra generated by the symbols $\{z_{ij}\}_{i,j=1}^n$ satisfying
$z_{ij}=\sum_{k=1}^nz_{ik}z_{kj}$. We have $\Upsilon^\r{nc}_{\bb{F},\c{M}_n}(1)=\sum_{ij=1}^ne_{ij}\ot z_{ij}$ where $\{e_{ij}\}_{i,j=1}^n$ denotes
the canonical vector basis of $\c{M}_n$.}
\end{example}
\begin{example}
\emph{$\f{M}^\r{u}(\bb{F}[x],\bb{F}^n)$ is isomorphic to $\bb{F}_\r{nc}[x_1,\ldots,x_n]$, the algebra of polynomials in noncommutating
indeterminates $x_1,\ldots,x_n$.}
\end{example}
\begin{example}\label{2010021333}
\emph{Let $A\in\b{A}_\r{ncfg},B\in\b{A}_\r{nc}$. Suppose $G$ is a finite generator-set for $A$, and $V$ is a vector basis for $B$.
For every finite subset $S$ of $V$, let $\theta_S\in\Theta_{G,V}$ denote the family $\{\delta_a\}_a$ where $\delta_a=S$ for every $a\in G$.
Then $\{\theta_S\}_S$ is a cofinal subset of $\Theta_{G,V}$, and hence the pro-algebra $\f{M}^\r{nc}_{A,B}$ is described also by the inverse system
$\{\f{M}\theta_S\}_S$ of algebras. Moreover, suppose that $V$ is countable and $V=\{v_1,v_2,\ldots\}$, and let $S_n:=\{v_1,\ldots,v_n\}$.
Then, $\f{M}_{A,B}$ is also described by the following \emph{nice} inverse subsystem of $\{\f{M}\theta_S\}_S$:
$$\f{M}\theta_{S_1}\leftarrow\cdots\leftarrow\f{M}\theta_{S_{n}}\leftarrow\f{M}\theta_{S_{n+1}}\leftarrow\cdots.$$
Here, $\f{M}\theta_{S_n}$ is generated by $\{z_{a,i}:a\in G,i=1,\ldots,n\}$, and
$\f{M}^\r{nc}\theta_{S_{n+1}}\to\f{M}^\r{nc}\theta_{S_{n}}$ is given by $z_{a,i}\mapsto z_{a,i}$ for $i=1,\ldots,n$, and $z_{a,n+1}\mapsto0$.}
\end{example}
Let $C\in\b{A}_\r{cufg}$, and let $G=\{g_1,\ldots,g_n\}$ be a finite ordered generator-set for $C$.
Let $\phi$ be the morphism $\bb{F}[x_1,\ldots,x_n]\to C$
given by $x_i\mapsto g_i$. Denote by $\c{Z}(C,G)$ the zero-locus of $\ker(\phi)$, i.e. the algebraic set in $\bb{A}^n$ of $n$-tuples
$(\lambda_1,\ldots,\lambda_n)$ under which the evaluation of every polynomial in $\ker(\phi)$, is zero. It is clear that the assignment
$$(\lambda_1,\ldots,\lambda_n)\mapsto[g_1\mapsto\lambda_1,\ldots,g_n\mapsto\lambda_n]$$ defines a bijection from $\c{Z}(C,G)$ onto
$\r{Pnt}_{\neq0}(C)$. Thus, the choice of $G$ gives rise to the structure $\c{Z}(C,G)$ of an algebraic set on $\r{Pnt}_{\neq0}(C)$.
Similarly, if $C=(C_i)_{i}$ be in $\r{pro}\text{-}\b{A}_\r{cufg}$ then any family $G=\{G_i\}_i$ where $G_i$ is a finite ordered generator-set for
$A_i$, gives rise to an algebraic ind-set structure $\c{Z}(C,G):=\{\c{Z}(C_i,G_i)\}_i$ on the set
$$\r{Pnt}_{\neq0}(C)=\lim_{i\to\infty}\r{Pnt}_{\neq0}(C_i).$$
(We say that a set $X$ has the structure of a ind-set $(X_i)_i$ if we have a distinguished bijection between $X$ and $\lim_{i\to\infty}X_i$,
and for every $i$, $X_i\to\lim_{i\to\infty}X_i$ is injective.)
\begin{theorem}
\emph{Let $A\in\b{A}_\r{ufg}$ and let $B\in\b{A}_\r{u}$. Every finite ordered generator-set for $A$ and any ordered vector basis for $B$, give rise
to a canonical algebraic ind-set structure on $\r{Hom}_{\b{A}_\r{u}}(A,B)$.}
\end{theorem}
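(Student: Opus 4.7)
The plan is to identify $\r{Hom}_{\b{A}_\r{u}}(A,B)$ with the nonzero-point set of $\f{M}^\r{cu}_{A,B}$ via Corollary \ref{2012011947}, realize the latter as a pro-object of $\b{A}_\r{cufg}$ equipped with a canonical family of finite ordered generator-sets extracted from $G$ and $V$, and then invoke the construction $\c{Z}(\cdot,\cdot)$ recalled in the paragraph preceding the theorem.

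More precisely, I would first apply Corollary \ref{2012011947} (with $*=\r{cu}$) to obtain the canonical bijection
$$\r{Hom}_{\b{A}_\r{u}}(A,B)\cong\r{Pnt}_{\neq 0}(\f{M}^\r{cu}_{A,B}),$$
so that it suffices to equip the right-hand side with a canonical algebraic ind-set structure. Following Example \ref{2010021333}, for every finite subset $S\subseteq V$ let $\theta_S\in\Theta_{G,V}$ be the family with $\delta_a=S$ for each $a\in G$; the family $\{\theta_S\}_S$ is cofinal in $\Theta_{G,V}$, so $\f{M}^\r{cu}_{A,B}$ is represented by the inverse system $\{\f{M}^\r{cu}\theta_S\}_S$. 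Unwinding the universal property in the commutative unital setting shows that $\f{M}^\r{cu}\theta_S$ is the quotient of the polynomial algebra $\bb{F}[z_{a,v}]_{(a,v)\in G\times S}$ by the ideal that forces (\ref{2009191935}) to extend to a unit-preserving morphism into $B\ot\f{M}^\r{cu}\theta_S$; hence $\f{M}^\r{cu}\theta_S\in\b{A}_\r{cufg}$, and the finite set $G_S:=\{z_{a,v}\}_{(a,v)\in G\times S}$, ordered lexicographically by the given orders on $G$ and $V$, is a canonical finite ordered generator-set for it. Applying the construction $\c{Z}(\cdot,\{G_S\}_S)$ then produces the algebraic ind-set $\{\c{Z}(\f{M}^\r{cu}\theta_S,G_S)\}_S$, which I transport along the above bijection to obtain the required structure on $\r{Hom}_{\b{A}_\r{u}}(A,B)$.

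The only point that is not completely formal is the injectivity axiom in the definition of an ind-set. By the description in Example \ref{2010021333}, for $S\subseteq S'$ the transition morphism $\f{M}^\r{cu}\theta_{S'}\to\f{M}^\r{cu}\theta_S$ sends $z_{a,v}\mapsto z_{a,v}$ when $v\in S$ and $z_{a,v}\mapsto 0$ when $v\in S'\setminus S$, so on zero-loci it induces the map $\c{Z}(\f{M}^\r{cu}\theta_S,G_S)\to\c{Z}(\f{M}^\r{cu}\theta_{S'},G_{S'})$ obtained by padding tuples of coordinates with zeros in the new positions; this is manifestly injective, and hence the colimit map is injective on each stage. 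I do not foresee any further obstacle beyond fixing the pro-representation carefully and tracking the lexicographic ordering of $G_S$ throughout.
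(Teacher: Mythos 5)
Your proposal follows essentially the same route as the paper's proof: identify $\r{Hom}_{\b{A}_\r{u}}(A,B)$ with $\r{Pnt}_{\neq0}(\f{M}^\r{cu}_{A,B})$ via Corollary \ref{2012011947}, model the pro-algebra by the cofinal system $\{\f{M}\theta_S\}_S$ of Example \ref{2010021333} with the dictionary-ordered generator-sets $\{z_{a,v}\}_{(a,v)\in G\times S}$, and apply the $\c{Z}(\cdot,\cdot)$ construction. Your extra verification of the injectivity axiom for the ind-set (the transition maps on zero-loci pad coordinates with zeros) is a detail the paper leaves implicit, but it does not change the approach.
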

\begin{proof}
Consider the pro-algebra $\f{M}^\r{cu}_{A,B}$ with the model $\{\f{M}\theta_S\}_S$ described in Example \ref{2010021333}.
$\f{M}\theta_S$ is generated by elements $z_{a,v}$ indexed by the set $G\times S$. This set may be endowed the dictionary ordering
induced by orderings on $G$ and $V$. Then, the method described above gives rise to an algebraic ind-set structure on $\r{Pnt}_{\neq0}(\f{M}^\r{cu}_{A,B})$. Then, the desired result follows from Corollary \ref{2012011947}.
\end{proof}
For an application of the structure given by the above theorem, see \cite{KambayashiMiyanishi1}.
\begin{theorem}\label{1910101520}
\emph{Suppose $\star$ denotes coproduct in $\b{A}_\r{nc}$. There is an isomorphism:
$$\f{M}^\r{nc}(A,B\oplus B')\cong\f{M}^\r{nc}(A,B)\star\f{M}^\r{nc}(A,B')\hspace{10mm}(A,B,B'\in\b{A}_\r{nc})$$}
\end{theorem}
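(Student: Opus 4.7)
The plan is to prove the isomorphism via the Yoneda Lemma by showing that both sides represent the same functor on $\r{pro}\text{-}\b{A}_\r{nc}$, using the adjunction of Theorem \ref{2011302233} as the main tool.

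First, for any $C\in\r{pro}\text{-}\b{A}_\r{nc}$, the adjunction gives a natural bijection
$$\r{Hom}(\f{M}^\r{nc}(A,B\oplus B'),C)\cong\r{Hom}(A,(B\oplus B')\ot C).$$
Next I would check the purely algebraic fact that tensor product distributes over direct sum of algebras, i.e. $(B\oplus B')\ot C\cong(B\ot C)\oplus(B'\ot C)$ naturally in $C$, as algebras (not merely as vector spaces); this extends to pro-objects componentwise. Since $\oplus$ is the categorical product in $\b{A}_\r{nc}$, mapping out of $A$ splits:
$$\r{Hom}(A,(B\ot C)\oplus(B'\ot C))\cong\r{Hom}(A,B\ot C)\times\r{Hom}(A,B'\ot C).$$

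Now I would apply the adjunction of Theorem \ref{2011302233} in the reverse direction to each factor, obtaining
$$\r{Hom}(A,B\ot C)\times\r{Hom}(A,B'\ot C)\cong\r{Hom}(\f{M}^\r{nc}(A,B),C)\times\r{Hom}(\f{M}^\r{nc}(A,B'),C),$$
and since $\star$ is by definition the coproduct in $\b{A}_\r{nc}$ (extended componentwise to pro-objects), the right-hand side equals $\r{Hom}(\f{M}^\r{nc}(A,B)\star\f{M}^\r{nc}(A,B'),C)$. Chaining these natural isomorphisms and invoking Yoneda in $\r{pro}\text{-}\b{A}_\r{nc}$ delivers the desired isomorphism.

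The main obstacles are minor bookkeeping items rather than conceptual difficulties: one must confirm that the distributive isomorphism $(B\oplus B')\ot C\cong(B\ot C)\oplus(B'\ot C)$ is an isomorphism of algebras (routine, since multiplication is componentwise on each side), and that the coproduct $\star$ in $\r{pro}\text{-}\b{A}_\r{nc}$ computed componentwise still satisfies the universal property converting Hom-out into a product of Hom-outs. Once these are in place, the proof is entirely formal and does not require unwinding the explicit construction of $\f{M}^\r{nc}$ via the index set $\Theta$.
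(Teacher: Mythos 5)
Your proof is correct, but it takes a genuinely different route from the paper. The paper argues at the level of the explicit construction: it fixes a generator-set $G$ of $A$ and vector bases $V\subset B$, $V'\subset B'$, observes that the families $\theta\cup\theta'$ are cofinal in $\Theta_{G,V\cup V'}$, and then verifies by hand that each component $\f{M}(\theta\cup\theta')$ satisfies the universal property of the coproduct $\f{M}\theta\star\f{M}\theta'$, exhibiting the structural morphisms concretely via $(\r{p}\ot\r{id})\Upsilon(\theta\cup\theta')$. Your argument instead stays entirely at the level of represented functors: the adjunction of Theorem \ref{2011302233}, the algebra isomorphism $(B\oplus B')\ot C\cong(B\ot C)\oplus(B'\ot C)$, the fact that $\oplus$ is the product in $\b{A}_\r{nc}$ (so that $\r{Hom}(A,-)$ converts it to a product of sets), the dual fact for $\star$, and Yoneda in $\r{pro}\text{-}\b{A}_\r{nc}$. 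Both are sound. The two bookkeeping points you flag are genuinely fine: distributivity of $\ot$ over $\oplus$ is an algebra isomorphism because multiplication on $(B\oplus B')\ot C$ is componentwise in the $B,B'$ slots, and the componentwise coproduct of pro-objects (indexed over the product of the index sets) is the coproduct in the pro-category because filtered colimits of sets commute with finite products — this is consistent with the paper's stated convention that coproducts of pro-algebras are defined componentwise. What your approach buys is conceptual economy and immediate portability to the other decorated versions $\f{M}^*$ (the analogous statement for $\f{M}^\r{cu}$ is invoked later, in the proof that products exist in $\bar{\b{A}}_\r{u}^\r{prm}$); what the paper's approach buys is an explicit description of the isomorphism on the generators $z_{a,v}$, which is useful if one wants to compute with it.
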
\begin{proof}Let $A$ be generated by $G$, and $V\subset B,V'\subset B'$ be vector bases. For $\theta=\{\delta_a\}\in\Theta_{G,V}$ and
$\theta'=\{\delta'_a\}\in\Theta_{G,V'}$, let $\theta\cup\theta'\in\Theta_{G,V\cup V'}$ denote the family $\{\delta_a\cup\delta'_a\}$ where $V\cup V'$
is considered as a vector basis for $B\oplus B'$. It is enough to prove that $$\f{M}(\theta\cup\theta')=\f{M}(\theta)\star\f{M}(\theta').$$
We do that by checking that $\f{M}(\theta\cup\theta')$ has the required universal property. Let
$\phi_\theta:=(\r{p}\ot\r{id})\Upsilon(\theta\cup\theta')$ where $\r{p}:B\oplus B'\to B$ is the canonical projection. Then,
$\ov{\phi}_\theta:\f{M}\theta\to\f{M}(\theta\cup\theta')$, the morphism $\theta$-associated with $\phi_\theta$, plays the role of coproduct
structural morphism. Similarly, the structural morphism $\ov{\phi'}_{\theta'}:\f{M}\theta'\to\f{M}(\theta\cup\theta')$ is defined. Suppose that
$\psi:\f{M}\theta\to C,\psi':\f{M}\theta'\to C$ are arbitrary morphisms, and let $$\varphi=[(\r{id}_B\ot\psi)\Upsilon\theta]\oplus[(\r{id}_{B'}\ot\psi)\Upsilon\theta'].$$
Then, $\ov{\varphi}_{\theta\cup\theta'}:\f{M}(\theta\cup\theta')\to C$, the morphism $(\theta\cup\theta')$-associated with $\varphi$, is the unique
morphism satisfying $\psi=\ov{\varphi}_{\theta\cup\theta'}\ov{\phi}_{\theta}$ and $\psi'=\ov{\varphi}_{\theta\cup\theta'}\ov{\phi'}_{\theta'}$.
The proof is complete.\end{proof}
\begin{theorem}\label{1910111415}
\emph{(Exponential Law) There is a canonical isomorphism:
$$\f{M}^\r{nc}(A,B\ot B')\cong\f{M}^\r{nc}(\f{M}^\r{nc}(A,B),B')\hspace{10mm}(A,B,B'\in\b{A}_\r{nc}).$$}
\end{theorem}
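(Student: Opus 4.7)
The plan is to apply a Yoneda-style argument in the pro-category $\r{pro}\text{-}\b{A}_\r{nc}$, using the adjunction already established in Theorem \ref{2011302233}. Specifically, I would show that both $\f{M}^\r{nc}(A,B\ot B')$ and $\f{M}^\r{nc}(\f{M}^\r{nc}(A,B),B')$ pro-represent the same set-valued functor on $\r{pro}\text{-}\b{A}_\r{nc}$, so the claimed isomorphism follows formally.

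First I would compute the functor pro-represented by the left-hand side. Fix an arbitrary $C\in\r{pro}\text{-}\b{A}_\r{nc}$. Since $B\ot B'$ lies in $\b{A}_\r{nc}$, the adjunction of Theorem \ref{2011302233} (applied with the algebra $B\ot B'$ in place of $B$) gives a bijection
$$\r{Hom}_{\r{pro}\text{-}\b{A}_\r{nc}}\bigl(\f{M}^\r{nc}(A,B\ot B'),C\bigr)\cong\r{Hom}_{\r{pro}\text{-}\b{A}_\r{nc}}\bigl(A,(B\ot B')\ot C\bigr),$$
which is natural in $C$.

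Next I would compute the functor pro-represented by the right-hand side by applying the same adjunction twice. First, with the algebra $B'$ and the pro-algebra $\f{M}^\r{nc}(A,B)$ in the role of the first argument, we get
$$\r{Hom}_{\r{pro}\text{-}\b{A}_\r{nc}}\bigl(\f{M}^\r{nc}(\f{M}^\r{nc}(A,B),B'),C\bigr)\cong\r{Hom}_{\r{pro}\text{-}\b{A}_\r{nc}}\bigl(\f{M}^\r{nc}(A,B),B'\ot C\bigr).$$
Since $B'\ot C$ is again a pro-algebra, a second application of the adjunction (with $B$) yields
$$\cong\r{Hom}_{\r{pro}\text{-}\b{A}_\r{nc}}\bigl(A,B\ot(B'\ot C)\bigr).$$
The canonical associativity isomorphism $B\ot(B'\ot C)\cong(B\ot B')\ot C$, which is defined componentwise on the pro-algebra $C$ and is natural in $C$, now identifies the two functors.

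By the Yoneda lemma applied in $\r{pro}\text{-}\b{A}_\r{nc}$, two pro-objects pro-representing the same functor are canonically isomorphic, and the desired isomorphism follows. The only mild obstacle is verifying that the associativity of tensor product passes cleanly through the pro-structure and is natural in $C$; this is routine because tensor product of pro-algebras is defined componentwise (as noted in the Notations). The same set-theoretic caveat about the choice of pro-representations that was acknowledged before Proposition \ref{2008210703} applies here as well, but causes no real difficulty.
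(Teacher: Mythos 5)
Your proof is correct, but it takes a genuinely different route from the paper's. The paper argues by explicit construction: it fixes a generator set $G\subseteq A$ and vector bases $V\subseteq B$, $V'\subseteq B'$, shows that the families $\{\theta\ot\theta'\}$ and $\{\theta'|\theta\}$ give cofinal inverse subsystems describing $\f{M}^\r{nc}_{A,B\ot B'}$ and $\f{M}^\r{nc}_{\f{M}\theta,B'}$ respectively, and then identifies the corresponding components $\f{M}(\theta\ot\theta')\cong\f{M}(\theta'|\theta)$ by matching generators (citing an argument from \cite{Sadr1}). You instead observe that both sides pro-represent the functor $C\mapsto\r{Hom}_{\r{pro}\text{-}\b{A}_\r{nc}}(A,(B\ot B')\ot C)$, by two applications of the adjunction of Theorem \ref{2011302233} together with componentwise associativity of $\ot$, and conclude by Yoneda. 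This is legitimate as written: the adjunction is stated in the paper for a pro-algebra in the first slot and an arbitrary pro-algebra $C$, which is exactly what your second application (with $\f{M}^\r{nc}(A,B)$ and $B'\ot C$) requires, and naturality in $C$ is part of the adjunction. Your route buys brevity, manifest naturality in all three variables, and independence from any choice of generators and bases; its cost is that it leans on the full pro-category adjunction of Proposition \ref{2008210703} (whose extension to pro-algebra first arguments the paper only asserts as easily checked), and it is non-constructive, whereas the paper's proof exhibits the isomorphism concretely on the generators $z_{a,v\ot v'}\leftrightarrow z_{z_{a,v},v'}$ using only the universal property of the single components $\f{M}\theta$.
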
\begin{proof}
Suppose $G\subseteq A$ is a generator, and $V\subset B,V'\subset B'$ are vector bases. For $S\subseteq V,S'\subseteq V'$, denote by $S\ot S'$
the set $\{v\ot v':v\in S,v'\in S'\}$. For $\theta=\{\delta_a\}$ and $\theta'=\{\delta'_a\}$ respectively in $\Theta_{G,V}$ and $\Theta_{G,V'}$,
let $\theta\ot\theta'\in\Theta_{G,V\ot V'}$ denote $\{\delta_a\ot\delta'_a\}$. We know that $\f{M}\theta$ is generated by $G\times\theta:=\{z_{a,v}:a\in G,v\in\delta_a\}$. Let $\theta'|\theta$ in $\Theta_{G\times\theta,V'}$ denote $\{\eta_{z_{a,v}}\}$ where
$\eta_{z_{a,v}}:=\delta'_a$. The class $\{\theta\ot\theta'\}$ is cofinal with $\Theta_{G,V\ot V'}$, and the class $\{\theta'|\theta\}$ is
cofinal with $\Theta_{G\times\theta,V'}$. Thus, the pro-algebras $\f{M}_{A,B\ot B'}$ and $\f{M}_{\f{M}\theta,B'}$ are described respectively by
inverse systems $\{\f{M}(\theta\ot\theta')\}$ and $\{\f{M}(\theta'|\theta)\}$. Now, the desired result follows from the canonical isomorphism
of $\f{M}(\theta\ot\theta')$ and $\f{M}(\theta'|\theta)$ which can be proved by using similar arguments as in \cite[Theorem 2.10]{Sadr1}.
\end{proof}The proof of the following result is similar to that of \cite[Theorem 2.8]{Sadr1} and omitted.
\begin{theorem}
\emph{There exists a canonical isomorphism:
$$\f{M}^\r{cu}(A\ot A',B)\cong\f{M}^\r{cu}(A,B)\ot\f{M}^\r{cu}(A',B)\hspace{10mm}(A,A'\in\b{A}_\r{u},B\in\b{A}_\r{cu})$$}
\end{theorem}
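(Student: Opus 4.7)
The plan is to verify the universal property of $\f{M}^\r{cu}(A,B)\ot\f{M}^\r{cu}(A',B)$ (with respect to $A\ot A'$) by constructing a chain of natural bijections of hom-sets indexed by an arbitrary test object $C\in\r{pro}\text{-}\b{A}_\r{cu}$, and then invoke the Yoneda lemma in $\r{pro}\text{-}\b{A}_\r{cu}$ to conclude the isomorphism of pro-objects. The basic tools are the adjunction of Theorem \ref{2011302233} in the unital setting, the fact that $\ot$ is the categorical coproduct in $\b{A}_\r{cu}$, and the elementary observation that a unit-preserving morphism from the (possibly noncommutative) tensor product $A\ot A'$ into a commutative unital target is determined uniquely by a pair of restrictions along the canonical inclusions $a\mapsto a\ot 1$ and $a'\mapsto 1\ot a'$.

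Concretely, fix $C\in\r{pro}\text{-}\b{A}_\r{cu}$. By Theorem \ref{2011302233} applied with $*=\r{cu}$,
$$\r{Hom}_{\r{pro}\text{-}\b{A}_\r{cu}}(\f{M}^\r{cu}(A\ot A',B),C)\cong\r{Hom}_{\r{pro}\text{-}\b{A}_\r{u}}(A\ot A',B\ot C).$$
Since $B\in\b{A}_\r{cu}$ and $C$ has commutative unital components, the pro-algebra $B\ot C$ is componentwise commutative; hence each component hom-set splits as a product, and taking the limit over the index of $C$ yields
$$\r{Hom}_{\r{pro}\text{-}\b{A}_\r{u}}(A,B\ot C)\times\r{Hom}_{\r{pro}\text{-}\b{A}_\r{u}}(A',B\ot C).$$
A second application of the adjunction rewrites each factor in terms of $\f{M}^\r{cu}$, and since $\ot$ is the coproduct in $\b{A}_\r{cu}$ (hence componentwise in $\r{pro}\text{-}\b{A}_\r{cu}$), and filtered colimits commute with finite products in $\b{Set}$, the resulting product of hom-sets reassembles as
$$\r{Hom}_{\r{pro}\text{-}\b{A}_\r{cu}}(\f{M}^\r{cu}(A,B)\ot\f{M}^\r{cu}(A',B),C).$$
All bijections are natural in $C$, so Yoneda produces the claimed canonical isomorphism.

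The main subtleties to address are (a) exhibiting the isomorphism as being induced by an \emph{explicit} canonical pro-morphism, which is obtained by tracing the universal morphisms $\Upsilon^\r{cu}_{A,B}$ and $\Upsilon^\r{cu}_{A',B}$ through the adjunction to build a single map $A\ot A'\to B\ot(\f{M}^\r{cu}(A,B)\ot\f{M}^\r{cu}(A',B))$ and invoking the universal property of $\f{M}^\r{cu}(A\ot A',B)$; and (b) verifying the commutation of the relevant limits with finite products in the pro-category needed to justify the componentwise rearrangements of hom-sets. Both steps are routine once the adjoint framework of Proposition \ref{2008210703} is in place, and the whole argument mirrors that of \cite[Theorem 2.8]{Sadr1}, so no genuinely new difficulty appears.
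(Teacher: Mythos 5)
Your proof is correct. Every step is supported by the framework already in place: the unital case of the adjunction in Theorem \ref{2011302233}; the splitting $\r{Hom}_{\b{A}_\r{u}}(A\ot A',D)\cong\r{Hom}_{\b{A}_\r{u}}(A,D)\times\r{Hom}_{\b{A}_\r{u}}(A',D)$ for a \emph{commutative} unital target $D$ (this is exactly where the hypothesis $B\in\b{A}_\r{cu}$ is used, via commutativity of the components $B\ot C_i$); and the fact that the componentwise tensor product, indexed over the product of the two directed sets, is the coproduct in $\r{pro}\text{-}\b{A}_\r{cu}$ because filtered colimits commute with finite products in $\b{Set}$. Note, though, that the paper gives no proof at all for this statement — it only declares the argument ``similar to that of \cite[Theorem 2.8]{Sadr1}'' — and judging from the proofs it does write out for the adjacent Theorems \ref{1910101520} and \ref{1910111415}, the intended argument is the explicit, component-level one: fix generator-sets $G\subseteq A$, $G'\subseteq A'$ and a basis $V$ of $B$, note that $(G\ot1)\cup(1\ot G')$ generates $A\ot A'$, pass to a cofinal family in the resulting $\Theta$, and identify each component $\f{M}\theta''$ with $\f{M}\theta\ot\f{M}\theta'$ by checking finite universal properties. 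Your route replaces that bookkeeping with a chain of natural bijections followed by Yoneda in the pro-category; the underlying content (maps out of $A\ot A'$ into a commutative target are pairs of maps, and $\ot$ is the coproduct in $\b{A}_\r{cu}$) is identical, but your version makes canonicity and naturality transparent, at the cost of having to unwind the adjunctions if one wants the isomorphism written on the generators $z_{a,v}$. If you write this up, the only points to spell out are the two you already flag: the indexing of the componentwise tensor product of pro-objects over the product directed set, and the explicit comparison pro-morphism assembled from $\Upsilon^\r{cu}_{A,B}$ and $\Upsilon^\r{cu}_{A',B}$ via the universal property of $\f{M}^\r{cu}(A\ot A',B)$.
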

We denote by $\f{op}:\b{A}_\r{nc}\to\b{A}_\r{nc}$ the functor that associates to any algebra the algebra with opposite multiplication.
\begin{theorem}\label{2009072327}
\emph{For any two algebras $A,B$ we have the natural isomorphism
$$\f{op}(\f{M}^\r{nc}({A,B}))\cong\f{M}^\r{nc}({\f{op}(A),\f{op}(B)}).$$}
\end{theorem}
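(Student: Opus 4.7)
The plan is to deduce the isomorphism from the universal property of $\f{M}^\r{nc}$ recorded in Theorem \ref{2011302233} and Lemma \ref{2008210701}, rather than fiddling with the explicit presentation $\{\f{M}\theta\}$. The key preliminary observation is that $\f{op}$ is an involutive autofunctor on $\b{A}_\r{nc}$, extends componentwise to an involutive autofunctor on $\r{pro}\text{-}\b{A}_\r{nc}$, and carries a canonical natural isomorphism $\f{op}(X\ot Y)\cong\f{op}(X)\ot\f{op}(Y)$ (the identity on the underlying vector space) compatible with tensoring morphisms. In particular, a morphism $h\colon X\to Y\ot Z$ in $\r{pro}\text{-}\b{A}_\r{nc}$ gives rise to $\f{op}(h)\colon\f{op}(X)\to\f{op}(Y)\ot\f{op}(Z)$, and this operation is its own inverse.

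Next I would apply $\f{op}$ to the universal pro-morphism $\Upsilon^\r{nc}_{A,B}\colon A\to B\ot\f{M}^\r{nc}_{A,B}$, obtaining
\[
\f{op}(\Upsilon^\r{nc}_{A,B})\colon\f{op}(A)\longrightarrow\f{op}(B)\ot\f{op}(\f{M}^\r{nc}_{A,B}).
\]
The claim is that the pair $\bigl(\f{op}(\f{M}^\r{nc}_{A,B}),\f{op}(\Upsilon^\r{nc}_{A,B})\bigr)$ satisfies the universal property of Lemma \ref{2008210701} for the functor $\r{Hom}_{\b{A}_\r{nc}}(\f{op}(A),\f{op}(B)\ot?)$, from which the stated isomorphism with $\f{M}^\r{nc}(\f{op}(A),\f{op}(B))$ follows by Yoneda.

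To verify this, take an arbitrary pro-algebra $C$ and a pro-morphism $\varphi\colon\f{op}(A)\to\f{op}(B)\ot C$. Applying $\f{op}$ yields $\f{op}(\varphi)\colon A\to B\ot\f{op}(C)$, and the universal property of $\f{M}^\r{nc}_{A,B}$ provides a unique pro-morphism $\psi\colon\f{M}^\r{nc}_{A,B}\to\f{op}(C)$ with $\f{op}(\varphi)=(\r{id}_B\ot\psi)\Upsilon^\r{nc}_{A,B}$. Applying $\f{op}$ once more and using that $\f{op}$ is involutive produces a pro-morphism $\f{op}(\psi)\colon\f{op}(\f{M}^\r{nc}_{A,B})\to C$ satisfying $\varphi=(\r{id}_{\f{op}(B)}\ot\f{op}(\psi))\f{op}(\Upsilon^\r{nc}_{A,B})$. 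Uniqueness of $\f{op}(\psi)$ follows from uniqueness of $\psi$ together with the fact that $\f{op}$ is a bijection on hom-sets of $\r{pro}\text{-}\b{A}_\r{nc}$. Naturality of the resulting isomorphism in $A$ and $B$ is automatic since both sides are pro-representing objects for functors that are naturally identified through $\f{op}$.

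The only real bookkeeping issue—and the step that requires a little care—is checking that the canonical swap $\f{op}(X\ot Y)\cong\f{op}(X)\ot\f{op}(Y)$ is genuinely natural and that it transports composition of pro-morphisms correctly, so that statements like $(\r{id}_B\ot\psi)\Upsilon^\r{nc}_{A,B}$ really do turn into $(\r{id}_{\f{op}(B)}\ot\f{op}(\psi))\f{op}(\Upsilon^\r{nc}_{A,B})$ under $\f{op}$. Once this coherence is spelled out, the proof is a formal diagram chase with no essential difficulty.
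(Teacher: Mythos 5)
Your argument is correct, but it takes a different route from the paper. The paper's entire proof is the remark that the isomorphism ``is easily seen from the construction of $\f{M}^\r{nc}_{A,B}$'': that is, for each $\theta\in\Theta_{G,V}$ the algebra $\f{M}\theta$ is presented by the generators $z_{a,v}$ and the relations forced by requiring (\ref{2009191935}) to be multiplicative, and reversing the multiplication on $A$, $B$ and $\f{M}\theta$ simultaneously carries this presentation to the one defining the corresponding component of $\f{M}^\r{nc}_{\f{op}(A),\f{op}(B)}$ (same generator-set $G$, same basis $V$, opposite relations), compatibly with the structural maps $\phi_{\theta'\theta}$. You instead argue presentation-free: $\f{op}$ is an involutive autofunctor, the identity on underlying vector spaces gives $\f{op}(X\ot Y)\cong\f{op}(X)\ot\f{op}(Y)$, and therefore $\bigl(\f{op}(\f{M}^\r{nc}_{A,B}),\f{op}(\Upsilon^\r{nc}_{A,B})\bigr)$ pro-represents $\r{Hom}_{\b{A}_\r{nc}}(\f{op}(A),\f{op}(B)\ot?)$, whence the isomorphism by Lemma \ref{2008210701} and Yoneda. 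Both are sound; yours buys automatic naturality and independence from the choice of $(G,V)$, while the paper's buys brevity and an explicit description of the isomorphism on generators. The one coherence point you flag --- that $\f{op}$ of a composite like $(\r{id}_B\ot\psi)\Upsilon^\r{nc}_{A,B}$ is $(\r{id}_{\f{op}(B)}\ot\f{op}(\psi))\f{op}(\Upsilon^\r{nc}_{A,B})$ --- is indeed the only thing to check, and it is immediate because $\f{op}$ and the tensor comparison are both the identity on underlying linear maps, so the verification passes componentwise to pro-objects without issue.
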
\begin{proof}
It is easily seen from the construction of $\f{M}^\r{nc}_{A,B}$.\end{proof}
\begin{proposition}
\emph{For $A,B,C\in\b{A}_\r{nc}/\b{A}_\r{u}$ and $*\in\{\r{nc,c,cr}\}/\{\r{u,cu,cur}\}$ there is a natural pro-morphism
\begin{equation*}%\label{2009051300}
\Phi_{A,B,C}:\f{M}^*_{A,C}\to\f{M}^*_{B,C}\ot\f{M}^*_{A,B}
\end{equation*}
such that\begin{equation}\label{2009061331}
(\r{id}_{\f{M}^*_{C,D}}\ot\Phi_{A,B,C})\Phi_{A,C,D}=(\Phi_{B,C,D}\ot\r{id}_{\f{M}^*_{A,B}})\Phi_{A,B,D}\end{equation}}
\end{proposition}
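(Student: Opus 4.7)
The plan is to build $\Phi_{A,B,C}$ directly from the universal property of $\f{M}^*_{A,C}$ supplied by Theorem \ref{2011302233}. By the adjunction there, a pro-morphism $\f{M}^*_{A,C}\to D$ corresponds naturally and bijectively to a pro-morphism $A\to C\ot D$, so it suffices to exhibit a canonical pro-morphism
\begin{equation*}
A\xrightarrow{\;\Upsilon^*_{A,B}\;}B\ot\f{M}^*_{A,B}\xrightarrow{\;\Upsilon^*_{B,C}\ot\r{id}\;}C\ot\f{M}^*_{B,C}\ot\f{M}^*_{A,B}.
\end{equation*}
Setting $D:=\f{M}^*_{B,C}\ot\f{M}^*_{A,B}$, I would define $\Phi_{A,B,C}$ to be the unique pro-morphism produced by the universal property from the above composite. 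This also characterizes $\Phi_{A,B,C}$ by the identity
\begin{equation*}
(\r{id}_C\ot\Phi_{A,B,C})\Upsilon^*_{A,C}=(\Upsilon^*_{B,C}\ot\r{id}_{\f{M}^*_{A,B}})\Upsilon^*_{A,B},
\end{equation*}
which will be the only fact needed in the sequel. Naturality in $A,B,C$ follows automatically from the naturality of $\Upsilon^*$ (Theorem \ref{2011302233}) combined with the uniqueness clause of the universal property.

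To establish the coassociativity identity (\ref{2009061331}), I would invoke uniqueness one more time. Both sides of (\ref{2009061331}) are pro-morphisms $\f{M}^*_{A,D}\to\f{M}^*_{C,D}\ot\f{M}^*_{B,C}\ot\f{M}^*_{A,B}$, and by the universal property of $(\f{M}^*_{A,D},\Upsilon^*_{A,D})$ it is enough to check that after tensoring with $\r{id}_D$ and composing with $\Upsilon^*_{A,D}$ they yield the same pro-morphism $A\to D\ot\f{M}^*_{C,D}\ot\f{M}^*_{B,C}\ot\f{M}^*_{A,B}$. Unwinding the defining identity of $\Phi$ applied twice, each side reduces in a short calculation to the same iterated composite
\begin{equation*}
(\Upsilon^*_{C,D}\ot\r{id}\ot\r{id})(\Upsilon^*_{B,C}\ot\r{id}_{\f{M}^*_{A,B}})\Upsilon^*_{A,B}:A\to D\ot\f{M}^*_{C,D}\ot\f{M}^*_{B,C}\ot\f{M}^*_{A,B},
\end{equation*}
so the two pro-morphisms agree.

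The only real pitfall is bookkeeping: one must be careful to apply the defining identity of $\Phi_{A,C,D}$ (resp.\ $\Phi_{A,B,D}$, $\Phi_{B,C,D}$) in the correct tensor slot, and to use naturality of the tensor product to move $\r{id}\ot\Phi$ past $\Upsilon\ot\r{id}$. There are no convergence or representability issues beyond those already absorbed into Theorem \ref{2011302233}, and no obstacle in passing between $*\in\{\r{nc,c,cr}\}$ and $*\in\{\r{u,cu,cur}\}$, since the universal property quoted is uniform across these cases. Hence the entire proposition reduces to one application of the universal property to define $\Phi$ and a second application to verify (\ref{2009061331}).
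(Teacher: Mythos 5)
Your proposal is correct and follows essentially the same route as the paper: the paper defines $\Phi_{A,B,C}$ as the unique pro-morphism with $(\r{id}_C\ot\Phi_{A,B,C})\Upsilon_{A,C}=(\Upsilon_{B,C}\ot\r{id}_{\f{M}_{A,B}})\Upsilon_{A,B}$ and deduces (\ref{2009061331}) from the universality (faithfulness of $g\mapsto(\r{id}_D\ot g)\Upsilon_{A,D}$) of $\f{M}_{A,D}$, exactly as you do. Your write-up simply spells out the reduction of both sides of the coassociativity identity to the common iterated composite, which the paper leaves implicit.
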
\begin{proof}
By universality of $\f{M}_{A,C}$, there is a unique pro-morphism $\Phi_{A,B,C}$ satisfying
$$(\r{id}_C\ot\Phi_{A,B,C})\Upsilon_{A,C}=(\Upsilon_{B,C}\ot\r{id}_{\f{M}_{A,B}})\Upsilon_{A,B}.$$
Identity (\ref{2009061331}) follows from the universality of $\f{M}_{A,D}$.
\end{proof}
%%%%%%%%%%%%%%%%%%%%%%%%%%%%%%%%%%%%%%%%%%%%%%%%%%%%%%%%%%%%%%%%%%%%%%%%%%%%%%%%%%%%%%%%%%%%%%%%%%%%%%%%%%%%%%%%%%%%%%%%%%
%%%%%%%%%%%%%%%%%%%%%%%%%%%%%%%%%%%%%%%%%%%%%%%%%%%%%%%%%%%%%%%%%%%%%%%%%%%%%%%%%%%%%%%%%%%%%%%%%%%%%%%%%%%%%%%%%%%%%%%%%%
%%%%%%%%%%%%%%%%%%%%%%%%%%%%%%%%%%%%%%%%%%%%%%%%%%%%%%%%%%%%%%%%%%%%%%%%%%%%%%%%%%%%%%%%%%%%%%%%%%%%%%%%%%%%%%%%%%%%%%%%%%
%%%%%%%%%%%%%%%%%%%%%%%%%%%%%%%%%%%%%%%%%%%%%%%%%%%%%%%%%%%%%%%%%%%%%%%%%%%%%%%%%%%%%%%%%%%%%%%%%%%%%%%%%%%%%%%%%%%%%%%%%%
\section{Classical Algebraic Homotopy}\label{1910191623}
A compatible relation on a category $\b{C}$ is an equivalence relation $\f{R}$ on the class of morphisms of $\b{C}$ such that for morphisms
$f,f':C\to C'$ and $g,g':C'\to C''$ if $f\f{R}f'$ and $g\f{R}g'$ then $gf\f{R}g'f'$. Denote by $\r{Hot}_\f{R}(\b{C})$ the category whose objects are
those of $\b{C}$ and whose hom-sets are $$[C,C']_\f{R}=\r{Hom}_{\r{Hot}_\f{R}(\b{C})}(C,C'):=\r{Hom}_\b{C}(C,C')/\f{R}.$$

Let $\b{A}$ be a subcategory of $\b{A}_\r{nc}$. We say that $\b{A}$ is admissible if $\b{A}$ is closed under polynomial extensions, and if for every
algebra $A$ in $\b{A}$ the canonical embedding $\r{e}:A\to A[x]$ given by $a\mapsto a$, the evaluation morphisms $\r{p}_0,\r{p}_1:A[x]\to A$
given respectively by $x\mapsto0,x\mapsto1$, and the morphism defined by $x\mapsto 1-x$ from $A[x]$ onto $A[x]$, belong to $\b{A}$.
It is clear that $\b{A}_*$ for $*\in\{\r{nc,c,cr,u,cu,cur}\}$ is admissible.

Let $A$ be an admissible category of algebras.
Two morphisms $f,g:A\to B$ in $\r{pro}\text{-}\b{A}$ are said to be elementary homotopic, denoted $f\approx g$, if there is a morphism
$H:A\to B\ot\bb{F}[x]\cong B[x]$ in $\r{pro}\text{-}\b{A}$, called elementary homotopy from $f$ to $g$, such that $\r{p}_0H=f$ and
$\r{p}_1H=g$. Similarly, elementary homotopic morphisms in $\r{ind}\text{-}\b{A}$ are defined.

Two morphisms $f,g:A\to B$ in $\b{A}$ are called algebraic homotopic, denoted $f\r{h}g$, if there is a finite chain $h_0,\ldots,h_n$ of
morphisms in $\b{A}$ such that \begin{equation}\label{2011282013}f=h_0\approx h_1\approx\ldots\approx h_n=g.\end{equation}
It is easily verified that $\r{h}$ is a compatible relation on $\b{A}$, and accordingly we have the homotopy category
$\r{Hot}(\b{A})=\r{Hot}_\r{h}(\b{A})$.

We say that two pro-morphisms $f,g:A\to B$ are strongly homotopic, denoted $f\r{sh}g$, if there is a finite chain $h_0,\ldots,h_n$ of
pro-morphisms such that (\ref{2011282013}) is satisfied. It is easily verified that $\r{sh}$ is a compatible relation on
$\r{pro}\text{-}\b{A}$, and accordingly we have the homotopy category $\r{Hot}_\r{sh}(\r{pro}\text{-}\b{A})$.
The compatible relation $\r{sh}$ between ind-morphisms is defined similarly, and hence we have the category
$\r{Hot}_\r{sh}(\r{ind}\text{-}\b{A})$.

We say that two pro-morphisms $f,g:A\to B$ are weakly homotopic, denoted $f\r{wh}g$, if their images in
$\r{pro}\text{-}\r{Hot}(\b{A})$ are equal. It is clear that $\r{wh}$ is a compatible relation on $\r{pro}\text{-}\b{A}$,
and accordingly we have the homotopy category $\r{Hot}_\r{wh}(\r{pro}\text{-}\b{A})$. The homotopy category
$\r{Hot}_\r{wh}(\r{ind}\text{-}\b{A})$ is defined similarly.

It is clear that $\r{wh}$ is coarser than $\r{sh}$. Thus we have the following two sequences of functors all induced by $\r{id}$
in the obvious way: \begin{equation*}%\label{2011282048}
\r{pro}\text{-}\b{A}\to\r{Hot}_\r{sh}(\r{pro}\text{-}\b{A})\to
\r{Hot}_\r{wh}(\r{pro}\text{-}\b{A})\to\r{pro}\text{-}\r{Hot}(\b{A})\end{equation*}
\begin{equation*}%\label{2011291211}
\r{ind}\text{-}\b{A}\to\r{Hot}_\r{sh}(\r{ind}\text{-}\b{A})\to
\r{Hot}_\r{wh}(\r{ind}\text{-}\b{A})\to\r{ind}\text{-}\r{Hot}(\b{A})\end{equation*}
In each of the above two rows the first and second functors are full and the third one is faithful. It is customary
(\cite{CortinasThom1,Garkusha1,Garkusha3,Gersten2}) to denote the hom-sets of the categories $\r{pro}\text{-}\r{Hot}(\b{A})$ and
$\r{ind}\text{-}\r{Hot}(\b{A})$ just by $[?,?]$.
It is easily verified that if $A\in\r{pro}\text{-}\b{A}$ and $B\in\b{A}$ (or if $A\in\b{A}$ and
$B\in\r{ind}\text{-}\b{A}$) then there are natural identifications
\begin{equation}\label{2011282112}[A,B]_\r{sh}\equiv[A,B]_\r{wh}\equiv[A,B]\end{equation}
Note that for pro-algebras $A=(A_i)$ and $B=(B_j)$, we have $$[A,B]=\lim_{\infty\leftarrow j}\lim_{i\to\infty}[A_i,B_j].$$
Similarly, for ind-algebras $A=(A_i)$ and $B=(B_j)$, we have $$[A,B]=\lim_{\infty\leftarrow i}\lim_{j\to\infty}[A_i,B_j].$$
We remark that in general for pro- or ind-algebras $A,B$ the set $[A,B]$ is very \emph{bigger} than $[A,B]_\r{wh}$.

The following simple lemma shows that for morphism $f,g$ in $\b{A}_*$ where $*\in\{\r{c,cr,u,cu,cur}\}$,
$f$ and $g$ are homotopic in $\b{A}_*$ iff $f$ and $g$ are homotopic in $\b{A}_\r{nc}$.
\begin{lemma}
\emph{Let $A,B$ be unital algebras, $f,g:A\to B$ be morphisms in $\b{A}_\r{nc}$, and $H$ be an elementary homotopy in $\b{A}_\r{nc}$ from $f$ to $g$.
If $f\in\b{A}_\r{u}$ then $H,g\in\b{A}_\r{u}$.}
\end{lemma}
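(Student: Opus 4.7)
The plan is to show directly that $H$ preserves the unit; the claim about $g=\r{p}_1H$ then follows immediately since $\r{p}_1\in\b{A}_\r{u}$. Set $p:=H(1_A)\in B[x]$, where we use the identification $B\ot\bb{F}[x]\cong B[x]$, whose unit is the constant polynomial $1_B$. Even though $H$ is only assumed to be a morphism in $\b{A}_\r{nc}$, it is multiplicative, so from $1_A^2=1_A$ we obtain $p^2=p$. Applying the unit-preserving morphism $\r{p}_0$ we compute $p(0)=f(1_A)=1_B$ by the assumption $f\in\b{A}_\r{u}$.

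Now form the complementary element $q:=1_B-p\in B[x]$. A direct calculation gives $q^2=1_B-2p+p^2=1_B-p=q$, so $q$ is again idempotent, and $q(0)=0$, i.e.\ $q$ lies in the two-sided ideal $I:=xB[x]=\ker(\r{p}_0:B[x]\to B)$. The core of the argument is to prove that $I$ contains no nonzero idempotent. Since $x$ is central in $B[x]$, iterating $q\in xB[x]$ yields $q^n\in x^nB[x]$ for every $n\geq 1$; but idempotency forces $q=q^n$, hence $q\in\bigcap_{n\geq 1}x^nB[x]=\{0\}$. Therefore $q=0$, that is, $H(1_A)=1_B=1_{B[x]}$.

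This shows $H\in\b{A}_\r{u}$, and applying $\r{p}_1$ gives $g(1_A)=\r{p}_1(H(1_A))=\r{p}_1(1_{B[x]})=1_B$, so $g\in\b{A}_\r{u}$ as well. The only step with any substance is the vanishing of idempotents in the augmentation ideal $xB[x]$; everything else is a formal use of the fact that, despite $H$ being an $\b{A}_\r{nc}$-morphism, multiplicativity alone forces $H(1_A)$ to be an idempotent that reduces to $1_B$ modulo $x$, and any such lift of $1_B$ is necessarily $1_B$ itself.
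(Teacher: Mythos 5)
Your proof is correct. The paper states this lemma without proof (it is introduced as a ``simple lemma''), so there is no official argument to compare against; your write-up supplies exactly the standard verification one would expect. The key observations all check out: $p:=H(1_A)$ is idempotent because $\b{A}_\r{nc}$-morphisms are multiplicative, $q:=1_B-p$ is an idempotent with zero constant term, and the divisibility argument $q=q^n\in x^nB[x]$ for all $n$ cleanly forces $q=0$ (this is more robust than trying to compare lowest-degree coefficients, which is delicate when $B$ has zero divisors). The conclusion for $g=\r{p}_1H$ then follows as you say. The only stylistic remark is that the identity $q^2=q$ silently uses that $1_B$ is central in $B[x]$, which is automatic since it is the unit of $B[x]$; you might make that one line explicit, but nothing is missing.
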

\begin{lemma}\label{1909231259}
\emph{Let $A,B\in\b{A}_\r{nc}$ and $C\in\r{pro}\text{-}\b{A}_\r{nc}$, and let $f,g:A\to B\ot C$ be pro-morphisms.
Suppose that $\ov{f},\ov{g}:\f{M}^\r{nc}_{A,B}\to C$ denote the pro-morphisms associated with $f,g$.
Then, $f\r{sh}g$ iff $\ov{f}\r{sh}\ov{g}$, and $f\r{wh}g$ iff $\ov{f}\r{wh}\ov{g}$.}
\end{lemma}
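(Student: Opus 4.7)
The plan is to exploit the adjunction from Theorem \ref{2011302233} together with its naturality with respect to the endpoint evaluations $\r{p}_0,\r{p}_1:\bb{F}[x]\to\bb{F}$, which are precisely what govern elementary homotopies.

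For the $\r{sh}$ equivalence, it suffices (by chaining along elementary homotopies) to prove that $f\approx g$ iff $\ov{f}\approx\ov{g}$. Given an elementary homotopy $H:A\to B\ot(C\ot\bb{F}[x])$ from $f$ to $g$, I would apply the adjunction with the pro-algebra $C\ot\bb{F}[x]$ in place of $C$ to obtain an associated pro-morphism $\ov{H}:\f{M}^\r{nc}_{A,B}\to C\ot\bb{F}[x]$. Naturality of the adjunction in the target variable, applied to $\r{p}_k:C\ot\bb{F}[x]\to C$, yields $\r{p}_k\ov{H}=\ov{(\r{id}_B\ot\r{p}_k)H}$ for $k=0,1$, so the endpoints of $\ov{H}$ are $\ov{f}$ and $\ov{g}$, and $\ov{H}$ is an elementary homotopy between them. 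Conversely, given an elementary homotopy $K:\f{M}^\r{nc}_{A,B}\to C\ot\bb{F}[x]$ from $\ov{f}$ to $\ov{g}$, set $H:=(\r{id}_B\ot K)\Upsilon^\r{nc}_{A,B}$; uniqueness in the adjunction forces the associated pro-morphism of $H$ to be $K$, and the same naturality gives $(\r{id}_B\ot\r{p}_k)H=f,g$ respectively.

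For the $\r{wh}$ equivalence, write $C=(C_i)_i$. Since $A$ is an ordinary algebra, unpacking the definition of $\r{pro}\text{-}\r{Hot}(\b{A}_\r{nc})$ shows that $f\r{wh}g$ is equivalent to the condition that the components $f_i,g_i:A\to B\ot C_i$ are homotopic in $\b{A}_\r{nc}$ for every $i$. Using identification (\ref{2011282112}) with the ordinary algebra $C_i$ as target, $\ov{f}\r{wh}\ov{g}$ is equivalent to the condition that for every $i$, the induced pro-morphisms $\ov{f}_i,\ov{g}_i:\f{M}^\r{nc}_{A,B}\to C_i$ coincide in $[\f{M}^\r{nc}_{A,B},C_i]$. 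The $\r{sh}$ equivalence just proved, specialised to the single algebra $C_i$ in place of $C$, provides a bijection $[A,B\ot C_i]\cong[\f{M}^\r{nc}_{A,B},C_i]$ that matches homotopy classes on the two sides. Comparing the level-$i$ conditions then delivers the result.

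The principal obstacle is the bookkeeping in the $\r{wh}$ case: one must navigate the nested limit/colimit structure defining $\r{pro}\text{-}\r{Hot}(\b{A}_\r{nc})$ and identify the correct componentwise reduction. The key simplification is identification (\ref{2011282112}), which permits replacing pro-homotopy classes of morphisms $\f{M}^\r{nc}_{A,B}\to C_i$ with ordinary homotopy classes in the colimit, so that the adjunction from the $\r{sh}$ part applies verbatim at each level $i$.
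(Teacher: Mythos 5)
Your proof is correct and follows essentially the same route as the paper's: the $\r{sh}$ part transports an elementary homotopy $H:A\to B\ot C[x]$ to its associated pro-morphism $\ov{H}:\f{M}^\r{nc}_{A,B}\to C[x]$ (and back via composition with $\Upsilon^\r{nc}_{A,B}$), checking endpoints by naturality/uniqueness, and the $\r{wh}$ part reduces componentwise to the $\r{sh}$ case with each ordinary algebra $C_i$ as target via the identification (\ref{2011282112}). The paper's own argument is just a terser version of exactly this.
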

\begin{proof}If $H:A\to B\ot C[x]$ is an elementary homotopy between $f,g$ then the pro-morphism $\ov{H}:\f{M}_{A,B}\to C[x]$ associated with $H$
is an elementary homotopy between $\ov{f},\ov{g}$. Conversely, if $G:\f{M}_{A,B}\to C[x]$ is an elementary homotopy between $\ov{f},\ov{g}$ then
$(\r{id}_B\ot G)\Upsilon_{A,B}$ is an elementary homotopy between $f,g$.\\
Let $C=(C_i)_i$ where $C_i\in\b{A}_\r{nc}$, and let $f=(f_i)_i$ and $g=(g_i)_i$. Suppose $f\r{wh}g$. Then, $f_i\r{h}g_i$. It follows from the first
part that $\ov{f}_i\r{sh}\ov{g}_i$, and hence $\ov{f}_i\r{wh}\ov{g}_i$. Thus, $\ov{f}\r{wh}\ov{g}$. The converse is similar.\end{proof}
We have the following lemma that can be easily proved.
\begin{lemma}\label{2012022213}
\emph{The analogue of the statement of Lemma \ref{1909231259} holds for $A,C\in\b{A}_\r{nc}$ and $B\in\r{ind}\text{-}\b{A}_\r{nc}$.}
\end{lemma}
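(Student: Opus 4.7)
The plan is to mirror the proof of Lemma \ref{1909231259} with the roles of the pro- and ind-positions exchanged, using the ind-version of the adjunction from Theorem \ref{2011302233}: for each $D\in\b{A}_\r{nc}$ there is a natural bijection
\begin{equation*}
\Psi_D:\r{Hom}_{\r{pro}\text{-}\b{A}_\r{nc}}(\f{M}^\r{nc}_{A,B},D)\cong\r{Hom}_{\r{ind}\text{-}\b{A}_\r{nc}}(A,B\ot D)
\end{equation*}
given by $\varphi\mapsto(\r{id}_B\ot\varphi)\Upsilon^\r{nc}_{A,B}$, with inverse sending an ind-morphism to its associated pro-morphism. In particular, $\Psi_C(\bar{f})=f$ and $\Psi_C(\bar{g})=g$.

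I would first establish the strong-homotopy equivalence by handling elementary homotopies. Naturality of $\Psi$ in $D$, applied to the embedding $\r{e}:C\to C[x]$ and the evaluations $\r{p}_0,\r{p}_1:C[x]\to C$, shows that for any pro-morphism $\bar{H}:\f{M}^\r{nc}_{A,B}\to C[x]$, the ind-morphism $H:=\Psi_{C[x]}(\bar{H}):A\to B\ot C[x]$ satisfies $(\r{id}_B\ot\r{p}_i)H=\Psi_C(\r{p}_i\bar{H})$ for $i=0,1$. Hence $\bar{H}$ is an elementary homotopy from $\bar{f}$ to $\bar{g}$ iff $H$ is an elementary homotopy from $f$ to $g$. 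Chaining finitely many elementary homotopies then gives $f\,\r{sh}\,g$ iff $\bar{f}\,\r{sh}\,\bar{g}$.

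The weak-homotopy part then follows almost for free by invoking (\ref{2011282112}) on both sides. Since $A\in\b{A}_\r{nc}$ and $B\ot C\in\r{ind}\text{-}\b{A}_\r{nc}$, the identification $[A,B\ot C]_\r{sh}\equiv[A,B\ot C]_\r{wh}$ holds; analogously, $\f{M}^\r{nc}_{A,B}\in\r{pro}\text{-}\b{A}_\r{nc}$ and $C\in\b{A}_\r{nc}$ yield $[\f{M}^\r{nc}_{A,B},C]_\r{sh}\equiv[\f{M}^\r{nc}_{A,B},C]_\r{wh}$. Combined with the strong-homotopy statement of the previous step, this immediately gives $f\,\r{wh}\,g$ iff $\bar{f}\,\r{wh}\,\bar{g}$. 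The only real issue, as in Lemma \ref{1909231259}, is verifying that the associated-pro-morphism construction commutes with the polynomial extension $?\ot\bb{F}[x]$; but this is exactly what naturality of $\Psi$ in the second variable encodes, so once that is in hand the rest is bookkeeping.
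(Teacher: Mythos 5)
Your proposal is correct and is essentially the argument the paper intends: the paper gives no proof of Lemma \ref{2012022213} beyond the remark that it ``can be easily proved'', the expected route being exactly your mirroring of the proof of Lemma \ref{1909231259} through the ind-version of the adjunction in Theorem \ref{2011302233}, and your transfer of elementary homotopies via the naturality of $\Psi$ in $D$ applied to $\r{p}_0,\r{p}_1:C[x]\to C$ is the right computation. Your handling of the weak-homotopy part via (\ref{2011282112}) is a small but genuine simplification over the componentwise argument the paper uses for Lemma \ref{1909231259}: there the target $B\ot C$ is a pro-algebra with source an algebra, so neither case of (\ref{2011282112}) applies and one must argue component by component, whereas here both sides ($A\to B\ot C$ with $B\ot C$ an ind-algebra, and $\f{M}^\r{nc}_{A,B}\to C$ with $C$ an algebra) fall squarely under (\ref{2011282112}), so $\r{sh}$ and $\r{wh}$ coincide and the second equivalence follows immediately from the first.
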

\begin{theorem}\label{2012041948}
\emph{For $A,B\in\b{A}_\r{nc}$ and $C\in\r{pro}\text{-}\b{A}_\r{nc}$ ($A,C\in\b{A}_\r{nc}$ and $B\in\r{ind}\text{-}\b{A}_\r{nc}$), we have the
following canonical identification:\begin{equation*}%\label{2009201859}
[A,B\ot C]\cong[\f{M}^\r{nc}(A,B),C]\end{equation*}}
\end{theorem}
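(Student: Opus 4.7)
The plan is to reduce to the level-wise case in the inverse system defining $C$ and apply the adjunction of Theorem \ref{2011302233} together with Lemma \ref{1909231259} at each level.

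For the pro-case, write $C=(C_i)_{i\in I}$. Treating $A$ as a constant pro-algebra and recalling that $B\ot C=(B\ot C_i)_i$, the formula for hom-sets in $\r{pro}\text{-}\r{Hot}(\b{A}_\r{nc})$ gives
\begin{equation*}
[A,B\ot C]=\varprojlim_i[A,B\ot C_i]\qquad\text{and}\qquad[\f{M}^\r{nc}(A,B),C]=\varprojlim_i[\f{M}^\r{nc}(A,B),C_i],
\end{equation*}
so it suffices to construct a bijection $[A,B\ot C_i]\cong[\f{M}^\r{nc}(A,B),C_i]$ natural in $i$. Fix $i$. Theorem \ref{2011302233} yields a natural bijection
\begin{equation*}
\Psi_i:\r{Hom}_{\b{A}_\r{nc}}(A,B\ot C_i)\xrightarrow{\cong}\r{Hom}_{\r{pro}\text{-}\b{A}_\r{nc}}(\f{M}^\r{nc}(A,B),C_i),\qquad f\mapsto\ov{f},
\end{equation*}
and Lemma \ref{1909231259} (specialized to the single algebra $C_i$) implies that $\Psi_i$ respects $\r{wh}$ and therefore descends to wh-quotients. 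Since $A,B\ot C_i\in\b{A}_\r{nc}$, and since $\f{M}^\r{nc}(A,B)\in\r{pro}\text{-}\b{A}_\r{nc}$ with $C_i\in\b{A}_\r{nc}$, the identifications in (\ref{2011282112}) turn these wh-quotients into $[A,B\ot C_i]$ and $[\f{M}^\r{nc}(A,B),C_i]$ respectively. Naturality of $\Psi_i$ in $C_i$ is immediate from the universal property of $\f{M}^\r{nc}(A,B)$, so the level-wise bijections assemble into the desired isomorphism of inverse limits.

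The ind-case is more direct. With $A\in\b{A}_\r{nc}$ and $B\ot C\in\r{ind}\text{-}\b{A}_\r{nc}$ on one side, and $\f{M}^\r{nc}(A,B)\in\r{pro}\text{-}\b{A}_\r{nc}$ with $C\in\b{A}_\r{nc}$ on the other, (\ref{2011282112}) applies to both sides at once and identifies $[A,B\ot C]=[A,B\ot C]_\r{wh}$ and $[\f{M}^\r{nc}(A,B),C]=[\f{M}^\r{nc}(A,B),C]_\r{wh}$. Then the adjunction from Theorem \ref{2011302233} combined with the ind-analogue Lemma \ref{2012022213} gives the bijection at once, with no limit step required.

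The proof is largely bookkeeping; the subtle point, and my main concern to get right, is that the pro-Hot hom-set $[X,Y]$ and the wh-quotient $[X,Y]_\r{wh}$ can differ in the pro--pro case, so (\ref{2011282112}) must be invoked only at levels where one of the arguments lies in $\b{A}_\r{nc}$.
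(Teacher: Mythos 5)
Your proof is correct and uses exactly the ingredients the paper cites for this theorem (Theorem \ref{2011302233}, the identifications (\ref{2011282112}), and Lemmas \ref{1909231259} and \ref{2012022213}); the paper's own proof is nothing more than that citation list. Your level-wise reduction in the pro-case --- forced by the fact, which you correctly flag, that (\ref{2011282112}) does not apply when the source is an algebra and the target a genuine pro-algebra --- is the right way to supply the missing bookkeeping.
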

\begin{proof}It follows from Theorem \ref{2011302233}, Identities (\ref{2011282112}), and Lemmas \ref{1909231259} and \ref{2012022213}.\end{proof}
\begin{theorem}\label{1910191608}
\emph{The functor $\f{M}^\r{nc}$ preserves homotopy in the sense that for morphisms $f_0,f_1:A\to A'$ and $g_0,g_1:B\to B'$ in $\b{A}_\r{nc}$,
if $f_0\r{h}f_1$ and $g_0\r{h}g_1$ then $\f{M}_{f_0,g_0}\r{sh}\f{M}_{f_1,g_1}$. }
\end{theorem}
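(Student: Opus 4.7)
The plan is to reduce the statement to a single elementary homotopy in one variable at a time and then to push each through Lemma \ref{1909231259}. Since $\r{h}$ is by definition generated by $\approx$ and $\r{sh}$ is a compatible equivalence relation, it suffices to prove: if $f_0\approx f_1$ via an elementary homotopy $H_f:A\to A'[x]$ and $g_0\approx g_1$ via $H_g:B\to B'[x]$, then $\f{M}_{f_0,g_0}\r{sh}\f{M}_{f_1,g_1}$. I would insert the intermediate $\f{M}_{f_1,g_0}$ and handle the two variables separately, finishing by transitivity of $\r{sh}$.

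For the first variable, $\f{M}_{f_i,g_0}:\f{M}_{A,B'}\to\f{M}_{A',B}$, and by the defining identity for $\f{M}_{f,g}$ together with Theorem \ref{2011302233}, the pro-morphism $A\to B'\otimes\f{M}_{A',B}$ associated with $\f{M}_{f_i,g_0}$ equals $(g_0\otimes\r{id}_{\f{M}_{A',B}})\,\Upsilon_{A',B}\,f_i$. Composing $H_f$ first with $\Upsilon_{A',B}\otimes\r{id}_{\bb{F}[x]}$ and then with $g_0\otimes\r{id}\otimes\r{id}$ produces a pro-morphism
\[
A\;\longrightarrow\;B'\otimes\f{M}_{A',B}\otimes\bb{F}[x]\;\cong\;(B'\otimes\f{M}_{A',B})[x]
\]
whose evaluations under $\r{p}_0,\r{p}_1$ are precisely the two pro-morphisms above. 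Thus it is an elementary pro-homotopy between them, and Lemma \ref{1909231259} (applied with its $B$ played by $B'$ and its $C$ by $\f{M}_{A',B}$) yields $\f{M}_{f_0,g_0}\r{sh}\f{M}_{f_1,g_0}$.

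The second variable is symmetric: the pro-morphism $(H_g\otimes\r{id}_{\f{M}_{A',B}})\,\Upsilon_{A',B}\,f_1:A\to B'\otimes\bb{F}[x]\otimes\f{M}_{A',B}$, regarded after the obvious tensor swap as a pro-morphism into $(B'\otimes\f{M}_{A',B})[x]$, specializes at $x=0,1$ to the pro-morphisms associated respectively with $\f{M}_{f_1,g_0}$ and $\f{M}_{f_1,g_1}$. A second application of Lemma \ref{1909231259} then gives $\f{M}_{f_1,g_0}\r{sh}\f{M}_{f_1,g_1}$, and composing with the first step concludes.

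I do not anticipate a substantive obstacle. The whole argument is a direct transport of the elementary homotopies $H_f,H_g$ through the universal pro-morphism $\Upsilon$ and back via the associated-pro-morphism bijection of Theorem \ref{2011302233}. The only care required is the bookkeeping of the contravariance in the second slot and of the ordering of tensor factors.
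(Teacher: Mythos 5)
Your argument is correct and takes essentially the same route as the paper: the paper's proof is the one-line observation that the claim follows from the defining identity for $\f{M}_{f,g}$ together with Lemma \ref{1909231259}, and your write-up simply makes explicit the elementary homotopies $(g_0\ot\r{id}\ot\r{id})(\Upsilon_{A',B}\ot\r{id}_{\bb{F}[x]})H_f$ and $(H_g\ot\r{id}_{\f{M}_{A',B}})\Upsilon_{A',B}f_1$ (plus the reduction to single elementary steps and the insertion of $\f{M}_{f_1,g_0}$) that make that one-liner work. No gaps.
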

\begin{proof}It follows from definitions of $\f{M}_{f_0,g_0},\f{M}_{f_1,g_1}$ and Lemma \ref{1909231259}.\end{proof}
Note that the above four results hold for all $\f{M}^*$.

A functor $F$ from an admissible category $\b{A}$ of algebras to an arbitrary category $\b{C}$ is said to be homotopy invariant
if for any two morphism $f,g\in\b{A}$, $f\r{h}g$ implies $F(f)=F(g)$. We say that $F:\b{Aff}\to\b{C}$ is $\bb{A}^1$-homotopy invariant
if $F$ as the functor $\b{A}_\r{cu}\to\b{C}^\r{op}$, is homotopy invariant. The following lemma is very well-known.
\begin{lemma}\label{2010111932}
\emph{For any functor $F:\b{A}\to\b{C}$ the following statements are equivalent.\begin{enumerate}
\item[(i)] $F$ is homotopy invariant.
\item[(ii)] For every algebra $A$ in $\b{A}$, $F(\r{e}):F(A)\to F(A[x])$ is an isomorphism in $\b{C}$.
\item[(iii)] For every algebra $A$ in $\b{A}$, $F(\r{p}_0),F(\r{p}_1):F(A[x])\to F(A)$ are equal. \end{enumerate}}
\end{lemma}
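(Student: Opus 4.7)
The plan is to run the standard cyclic implications (i) $\Rightarrow$ (iii) $\Rightarrow$ (ii) $\Rightarrow$ (i), exploiting in each step the admissibility of $\b{A}$ to produce the auxiliary morphisms I need from iterated polynomial extensions.

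For (i) $\Rightarrow$ (iii), I would observe that $\r{p}_0$ and $\r{p}_1$ are themselves algebraically homotopic: the morphism $H:A[x]\to A[y]$ that fixes $A$ and sends $x\mapsto y$ becomes, under the identification $A[y]\cong A\ot\bb{F}[y]$, an elementary homotopy from $\r{p}_0$ to $\r{p}_1$. Homotopy invariance then yields $F(\r{p}_0)=F(\r{p}_1)$ directly.

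For (iii) $\Rightarrow$ (ii), the equation $\r{p}_0\circ\r{e}=\r{id}_A$ gives $F(\r{p}_0)F(\r{e})=\r{id}_{F(A)}$ for free, so the content is showing that $F(\r{e}\circ\r{p}_0)=\r{id}_{F(A[x])}$. I would exhibit the elementary homotopy $\phi:A[x]\to A[x]\ot\bb{F}[y]$ defined by $a\mapsto a$ on $A$ and $x\mapsto xy$; evaluating $y$ at $0$ recovers $\r{e}\circ\r{p}_0$ while evaluating at $1$ recovers $\r{id}_{A[x]}$. Applying (iii) to the algebra $A[x]$ (whose polynomial extension is $A[x]\ot\bb{F}[y]$) makes the two evaluation morphisms agree under $F$, so composing with $F(\phi)$ gives $F(\r{e}\circ\r{p}_0)=F(\r{id}_{A[x]})$ and hence the missing inverse identity.

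For (ii) $\Rightarrow$ (i), since $\r{p}_0\circ\r{e}=\r{p}_1\circ\r{e}=\r{id}_A$, the assumption that $F(\r{e})$ is invertible forces $F(\r{p}_0)=F(\r{p}_1)=F(\r{e})^{-1}$, and then for any elementary homotopy $H:A\to B\ot\bb{F}[x]$ between $f$ and $g$ the computation $F(f)=F(\r{p}_0H)=F(\r{p}_1H)=F(g)$ is immediate; the general case follows by chaining along (\ref{2011282013}). The one bookkeeping point worth checking, which is as close as the argument comes to an obstacle, is that $H$ and $\phi$ above actually belong to $\b{A}$; this is forced by admissibility, since both are built from iterated polynomial extensions together with the distinguished morphisms $\r{e},\r{p}_0,\r{p}_1$, but it is the only step not immediate from the stated hypotheses.
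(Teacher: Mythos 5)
Your proof is correct and is essentially the paper's argument: both hinge on the section identities $\r{p}_0\r{e}=\r{p}_1\r{e}=\r{id}_A$ together with the contraction homotopy $x\mapsto xy$ exhibiting $\r{e}\r{p}_0$ as homotopic to $\r{id}_{A[x]}$, and both handle general homotopies by splitting an elementary homotopy $H$ as $f=\r{p}_0H$, $g=\r{p}_1H$ and chaining. The only difference is cosmetic: you run the cycle (i)$\Rightarrow$(iii)$\Rightarrow$(ii)$\Rightarrow$(i) and invoke (iii) on $A[x]$ to extract the inverse identity, whereas the paper runs (i)$\Rightarrow$(ii)$\Rightarrow$(iii)$\Rightarrow$(i) and applies homotopy invariance to the same homotopy directly.
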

\begin{proof}(i)$\Rightarrow$(ii): We show that $F(\r{p}_0)$ is the inverse of $F(\r{e})$ in $\b{C}$: We have $\r{p}_0\r{e}=\r{id}_A$. Thus
$F(\r{p}_0)F(\r{e})=\r{id}_{F(A)}$. The morphism $A[x]\to A[x]([y])$ given by $p(x)\mapsto p(xy)$ is an elementary homotopy between $\r{e}\r{p}_0$
and $\r{id}_{A[x]}$. Thus $F(\r{e})F(\r{p}_0)=\r{id}_{F(A[x])}$. (ii)$\Rightarrow$(iii): We have $\r{p}_0\r{e}=\r{p}_1\r{e}$. Thus
$F(\r{p}_0)F(\r{e})=F(\r{p}_1)F(\r{e})$. Since $F(\r{e})$ is an isomorphism, we must have $F(\r{p}_0)=F(\r{p}_1)$. (iii)$\Rightarrow$(i) is trivial.
\end{proof}
Recall that a differential graded commutative algebra (DGCA) is given by a pair $(\bar{A},d)$ where $\bar{A}=\oplus_{n=0}^\infty A^n$ is a GA
with a GC multiplication (i.e. $a_ia_j=(-1)^{ij}a_ja_i$ for $a_i\in A^i,a_j\in A^j$) and where $d:\bar{A}\to\bar{A}$ is a
GD  (i.e. a $\bb{F}$-linear map of degree $1$ such that $d(a_ia_j)=d(a_i)a_j+(-1)^ia_id(a_j)$ and $dd=0$). It is well-known that for every
$A\in\b{A}_\r{cu}$ there exists a DGCA $\Omega(A)=\oplus\Omega^n(A)$ with $\Omega^0(A)=A$ satisfying the following universal property:
For every unital DGCA $(\bar{B},\delta)$, any arbitrary morphism $f:A\to B^0$ in $\b{A}_\r{cu}$ extends uniquely to a morphism $f:\Omega(A)\to\bar{B}$
satisfying $fd=f\delta$. Indeed, $\Omega(A)$ is the exterior algebra associated to $A$-module $\Omega^1(A)$ and
$d:A\to\Omega^1(A)$ is a derivation such that for every unital $A$-module $M$ and any derivation $\delta:A\to M$ there is a unique
module homomorphism $\phi:\Omega^1(A)\to M$ satisfying $\delta=\phi d$. If $\bb{F}$ is real field and $A$ is the algebra of smooth real functions
on a smooth compact manifold then $\Omega(A)$ is isomorphic to de Rham complex of differential forms on the manifold
\cite[Proposition 8.1]{GraciaVarillyFigueroa1}. So, for any $A\in\b{A}_\r{cu}$ and its associated affine scheme $S=\c{S}(A)$
the cohomology groups of $\Omega(A)$ is denoted by $\c{H}_\r{deR}^n(A)$ or $\c{H}_\r{deR}^n(S)$ ($n\geq0$)
and called de Rham cohomology groups of $A$ or $S$. Note that $\c{H}_\r{deR}(S):=\oplus_{n=0}^\infty\c{H}_\r{deR}^n(S)$
is a unital GCA with the multiplication induced by that of $\Omega(A)$. By universality of $\Omega$ we may consider the functors
$$\c{H}_\r{deR}^0:\b{Aff}^\r{op}\to\b{A}_\r{cu},\hspace{2mm}\c{H}_\r{deR}^n:\b{Aff}^\r{op}\to\b{Vec},
\hspace{2mm}\c{H}_\r{deR}:\b{Aff}^\r{op}\to\b{A}_\r{u}.$$
It is shown that these functors are $\bb{A}^1$-homotopy invariant provided that $\r{Char}(\bb{F})=0$:
For $A\in\b{A}_\r{cu}$ let $\r{p}_0,\r{p}_1:\Omega(A[x])\to\Omega(A)$ denote the cochain maps induced by $\r{p}_0,\r{p}_1:A[x]\to A$.
By freeness of the variable $x$ we have a $A[x]$-module decomposition $\Omega^n(A[x])=\oplus_{i=0}^nM_i$ such that
$M_i$ is the $A[x]$-submodule generated by all elements $(d^ix)\omega_{n-i}\in\Omega^n(A[x])$ where
$\omega_{n-i}\in\Omega^{n-i}(A)$ and where $d^ix$ denotes the (exterior) product of $i$ copies of $dx$ in $\Omega(A[x])$.
Consider a linear map $\phi^{n}$ from $\Omega^n(A[x])$ into $\Omega^{n-1}(A)$ defined for $\alpha_i\in M_i$ by $\phi^n(\alpha_i):=0$ if $i\neq1$
and $\phi^n(\alpha_1):=\int_0^1\alpha_1$. Here the formal integral is given by
$$\int_0^1dx(x^k\omega):=\frac{1}{k+1}x^{k+1}\omega\mid_0^1=\frac{1}{k+1}\omega\hspace{3mm}(\omega\in\Omega^{n-1}(A))$$
Then it can be checked that $(\phi^n)_n$ is a cochain homotopy between $\r{p}_0$ and $\r{p}_1$,
and hence $\c{H}_\r{deR}^n(\r{p}_0)=\c{H}_\r{deR}^n(\r{p}_1)$.
%%%%%%%%%%%%%%%%%%%%%%%%%%%%%%%%%%%%%%%%%%%%%%%%%%%%%%%%%%%%%%%%%%%%%%%%%%%%%%%%%%%%%%%%%%%%%%%%%%%%%%%%%%%%%%%%%%%%%%%%%
%%%%%%%%%%%%%%%%%%%%%%%%%%%%%%%%%%%%%%%%%%%%%%%%%%%%%%%%%%%%%%%%%%%%%%%%%%%%%%%%%%%%%%%%%%%%%%%%%%%%%%%%%%%%%%%%%%%%%%%%%%
%%%%%%%%%%%%%%%%%%%%%%%%%%%%%%%%%%%%%%%%%%%%%%%%%%%%%%%%%%%%%%%%%%%%%%%%%%%%%%%%%%%%%%%%%%%%%%%%%%%%%%%%%%%%%%%%%%%%%%%%%%
%%%%%%%%%%%%%%%%%%%%%%%%%%%%%%%%%%%%%%%%%%%%%%%%%%%%%%%%%%%%%%%%%%%%%%%%%%%%%%%%%%%%%%%%%%%%%%%%%%%%%%%%%%%%%%%%%%%%%%%%%%
\section{Homotopy Invariant Subalgebras}\label{1909241128}
The main aim of this section is to consider an analogue of the functor $\pi_0$ for algebras:
\begin{definition}\label{2010231857}
\emph{Let $A\in\b{A}_\r{nc}/\b{A}_\r{u}$. For $*\in\{\r{nc,c,cr}\}/\{\r{u,cu,cur}\}$, let
$$\f{p}_0,\f{p}_1:A\to\underleftarrow{\lim}\f{M}^*_{A,\bb{F}[x]}$$ be given by
$$\f{p}_0:=\underleftarrow{\lim}\big[((x\mapsto0)\ot\r{id})\Upsilon^*_{A,\bb{F}[x]}\big]\hspace{2mm}\text{and}\hspace{2mm}
\f{p}_1:=\underleftarrow{\lim}\big[((x\mapsto1)\ot\r{id})\Upsilon^*_{A,\bb{F}[x]}\big].$$
We let the subalgebra $\f{P}^*(A)\subseteq A$ be defined by
$$\f{P}^*(A):=\{a\in A:\f{p}_0(a)=\f{p}_1(a)\}.$$}\end{definition}
We have the following useful lemma.
\begin{lemma}\label{2009301424}
\emph{With assumptions of Definition \ref{2010231857}, suppose $A$ is generated by $G$
and $V$ is a vector basis for $\bb{F}[x]$. For any $a\in A$ the following statements are equivalent.
\begin{enumerate}
\item[(i)] For every $\theta\in\Theta_{G,V}$, there exists $\hat{a}_\theta\in\f{M}^*\theta$ such that $\Upsilon^*\theta(a)=1\ot\hat{a}_\theta$.
\item[(ii)] For every algebra $C\in\b{A}_*$ and every morphism $\phi:A\to C[x]$ in $\b{A}_\r{nc}/\b{A}_\r{u}$, $\phi(a)$ is constant.
\item[(iii)] For every $\theta\in\Theta_{G,V}$, $((x\mapsto0)\ot\r{id})\Upsilon^*\theta(a)=((x\mapsto1)\ot\r{id})\Upsilon^*\theta(a)$.
\item[(iv)] For every $C,\phi$ as in (ii), $(x\mapsto0)\phi(a)=(x\mapsto1)\phi(a)$.
\item[(v)] $a$ belongs to $\f{P}^*(A)$.
\end{enumerate}}\end{lemma}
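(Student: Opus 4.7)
The plan is to establish the five equivalences by dispatching the easy implications via the universal property of $\f{M}^*\theta$, and isolating the one substantive step.

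First, I would handle (iii)$\Leftrightarrow$(v) directly from the definition of $\f{P}^*(A)$: since an inverse limit of algebras is computed componentwise, $\f{p}_0(a)=\f{p}_1(a)$ in $\underleftarrow{\lim}\f{M}^*_{A,\bb{F}[x]}$ is the same as equality of $((x\mapsto i)\ot\r{id})\Upsilon^*\theta(a)$ for $i=0,1$ and every $\theta\in\Theta_{G,V}$. Next, for (i)$\Leftrightarrow$(ii): the universal property of $(\f{M}^*\theta,\Upsilon^*\theta)$ applied to $C=\f{M}^*\theta$ and $\phi=\Upsilon^*\theta$ gives (ii)$\Rightarrow$(i); conversely, factoring any $\phi:A\to C[x]$ as $\phi=(\r{id}_{\bb{F}[x]}\ot\ov{\phi})\Upsilon^*\theta$ for a suitable $\theta$ and applying $\r{id}\ot\ov{\phi}$ to $\Upsilon^*\theta(a)=1\ot\hat{a}_\theta$ gives $\phi(a)=1\ot\ov{\phi}(\hat{a}_\theta)$. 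The same factorization shows (iii)$\Leftrightarrow$(iv): $\Leftarrow$ is the special case $C=\f{M}^*\theta$, $\phi=\Upsilon^*\theta$, while $\Rightarrow$ follows by evaluating at $x=0,1$ after factoring. The implications (i)$\Rightarrow$(iii) and (ii)$\Rightarrow$(iv) are immediate, since $(x\mapsto 0)$ and $(x\mapsto 1)$ both fix $1\in\bb{F}[x]$.

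The only substantive step is (iv)$\Rightarrow$(ii). Given $\phi:A\to C[x]$ with $C\in\b{A}_*$, I would write $\phi(a)=\sum_{k\geq 0}x^k\ot c_k\in\bb{F}[x]\ot C$ (a finite sum) and aim to force $c_k=0$ for every $k\geq 1$. Since $\b{A}_*$ is admissible, $C[y]\in\b{A}_*$, and there is a well-defined algebra morphism
\begin{equation*}
\tau:C[x]\to C[x,y]=(C[y])[x],\qquad\sum_k c_kx^k\mapsto\sum_k c_k(xy)^k=\sum_k c_kx^ky^k,
\end{equation*}
because $x$ and $y$ are central in $\bb{F}[x,y]$. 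Composing, $\psi:=\tau\phi:A\to(C[y])[x]$ is a morphism in the ambient category, and (iv) applied to $\psi$ (with $C[y]$ in the role of $C$) yields
\begin{equation*}
(x\mapsto 0)\psi(a)=(x\mapsto 1)\psi(a)\quad\text{in }C[y],\qquad\text{i.e.,}\qquad 1\ot c_0=\sum_{k\geq 0}y^k\ot c_k,
\end{equation*}
which forces $c_k=0$ for every $k\geq 1$. Hence $\phi(a)=1\ot c_0$ is constant, establishing (ii).

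The main obstacle is spotting the substitution $x\mapsto xy$: once one sees that adjoining a fresh central variable lets (iv) detect all positive powers of $x$ in $\phi(a)$ simultaneously, the rest is routine bookkeeping with the universal property. A minor point in the nonunital setting is to keep $\bb{F}[x]$ as the unital polynomial algebra, so that $1\in\bb{F}[x]$ and the evaluation morphisms $x\mapsto 0,1$ make literal sense when tensored with a possibly nonunital $C$.
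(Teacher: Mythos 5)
Your proposal is correct and follows essentially the same route as the paper: the easy implications are dispatched by instantiating at $C=\f{M}^*\theta$, $\phi=\Upsilon^*\theta$ or by factoring $\phi$ through $\Upsilon^*\theta$ via the universal property, and the one substantive step (iv)$\Rightarrow$(ii) is handled by exactly the paper's substitution $b\mapsto\phi(b)(xy)$ into a polynomial extension by a fresh variable. Your version merely spells out the coefficient bookkeeping that the paper leaves implicit.
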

\begin{proof}
(i)$\Rightarrow$(ii) and (iii)$\Rightarrow$(iv) follow from the universal property of $\f{M}_{A,\bb{F}[x]}$. (i)$\Rightarrow$(iii) and (iv)$\Rightarrow$(iii)
are trivial. Also, (ii)$\Rightarrow$(i),(iv) and (iii)$\Leftrightarrow$(v) are trivial.
If $\phi:A\to C[x]$ is a morphism then we can also consider the morphism $A\to (C[x])[y]$ given by
$b\mapsto\phi(b)(xy)$. This shows that (iv)$\Rightarrow$(ii) is satisfied.\end{proof}
It is easily verified that any morphism from $A$ to $B$ transforms $\f{P}^*(A)$ into $\f{P}^*(B)$. Thus we may consider the following subfunctor
of $\r{id}_{\b{A}_\r{nc}/\b{A}_\r{u}}$: $$\f{P}^*:\b{A}_\r{nc}/\b{A}_\r{u}\to\b{A}_\r{nc}/\b{A}_\r{u}.$$
(We say that a functor $F:\b{A}_*\to\b{A}_*$ is a subfunctor of $\r{id}_{\b{A}_*}$ if for any $A\in\b{A}_*$, $F(A)$ is a subalgebra
of $A$ (in unital cases, $F(A)$ is required to have the unit of $A$) and for any morphism $f:A\to B$, $F(f)=f|_{F(A)}$.)
For any $A\in\b{A}_\r{nc}/\b{A}_\r{u}$, we have
$$\f{P}^\r{nc}(A)\subseteq\f{P}^\r{c}(A)\subseteq\f{P}^\r{cr}(A),\hspace{10mm}\f{P}^\r{u}(A)\subseteq\f{P}^\r{cu}(A)\subseteq\f{P}^\r{cur}(A),$$
and for any unital algebra $A$, $\f{P}^\r{nc}(A)\subseteq\f{P}^\r{u}(A)$, $\f{P}^\r{c}(A)\subseteq\f{P}^\r{cu}(A)$,
$\f{P}^\r{cr}(A)\subseteq\f{P}^\r{cur}(A)$.
\begin{theorem}\label{2010172101}
\emph{$\f{P}^*:\b{A}_*\to\b{A}_*$ for $*\in\{\r{nc,c,cr,u,cu,cur}\}$ is homotopy invariant.}\end{theorem}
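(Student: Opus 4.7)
The plan is to show directly that any elementary homotopy collapses on elements of $\f{P}^*(A)$, from which homotopy invariance follows by transitivity. Concretely, suppose $f,g:A\to B$ are morphisms in $\b{A}_*$ and $H:A\to B[x]$ is an elementary homotopy between them, so that $f=(x\mapsto0)H$ and $g=(x\mapsto1)H$. Given any $a\in\f{P}^*(A)$, I would apply Lemma \ref{2009301424}, specifically the equivalence (v)$\Leftrightarrow$(iv), taking $C:=B$ and $\phi:=H$. This instantly gives $(x\mapsto0)H(a)=(x\mapsto1)H(a)$, i.e.\ $f(a)=g(a)$.

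Since algebraic homotopy $f\r{h}g$ is by definition a finite chain of elementary homotopies $f=h_0\approx h_1\approx\cdots\approx h_n=g$, iterating the previous paragraph along the chain yields $f(a)=g(a)$ for every $a\in\f{P}^*(A)$. This means $\f{P}^*(f)=f|_{\f{P}^*(A)}$ and $\f{P}^*(g)=g|_{\f{P}^*(A)}$ coincide as morphisms $\f{P}^*(A)\to\f{P}^*(B)$ (the fact that they land in $\f{P}^*(B)$ is exactly the functoriality of $\f{P}^*$ noted just before the theorem). Hence $\f{P}^*$ is homotopy invariant, as required by the definition.

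There is no real obstacle here: Lemma \ref{2009301424} was engineered precisely to provide characterization (iv), which is the \emph{constancy under evaluation} property that kills any elementary homotopy. The same one-line argument works uniformly for all six superscripts $*\in\{\r{nc,c,cr,u,cu,cur}\}$ because Lemma \ref{2009301424} itself is stated at that level of generality; in the unital cases one merely remarks that the evaluation endpoints $(x\mapsto0),(x\mapsto1)$ are unital, so $f$ and $g$ remain unital and the restriction to $\f{P}^*(A)$ (which carries the unit when $A$ does) is legitimate.
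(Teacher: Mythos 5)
Your proposal is correct and is essentially the paper's own argument: the paper likewise takes an elementary homotopy $H:A\to B[x]$, invokes Lemma \ref{2009301424} to conclude that $H(a)$ is constant for $a\in\f{P}^*(A)$, deduces $f(a)=\r{p}_0H(a)=\r{p}_1H(a)=g(a)$, and chains over elementary homotopies. Your use of equivalence (iv) in place of (ii) and your remarks about the unital cases are harmless refinements of the same one-line proof.
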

\begin{proof}
For morphisms $f,g:A\to B$ in $\b{A}_*$ suppose $H:A\to B[x]$ is an elementary homotopy in $\b{A}_*$ from $f$ to $g$.
For any $a\in\f{P}^*(A)$, $H(a)$ is constant and hence $f(a)=\r{p}_0H(a)=\r{p}_1H(a)=g(a)$.
Thus $\f{P}^*(f)=\f{P}^*(g)$. The proof is complete.\end{proof}
\begin{theorem}\label{2012091918}
\emph{For any algebra $A$, we have $\f{P}^\r{nc}(A)=0$.}\end{theorem}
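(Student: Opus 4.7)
The plan is to invoke Lemma \ref{2009301424}, specifically the equivalence (v)$\Leftrightarrow$(iv): it suffices to exhibit, for every nonzero $a\in A$, an algebra $C\in\b{A}_\r{nc}$ and a morphism $\phi:A\to C[x]$ in $\b{A}_\r{nc}$ such that the polynomial $\phi(a)\in C[x]$ takes different values at $x=0$ and $x=1$.

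The key observation is that if $e$ is an idempotent in any algebra $B$, then the map $b\mapsto b\ot e$ is a morphism $A\to A\ot B$ in $\b{A}_\r{nc}$, because $(b\ot e)(b'\ot e)=bb'\ot e^2=bb'\ot e$. In the noncommutative setting there is ample room to pick a \emph{nonconstant} idempotent in $B[x]$. Concretely, I would take $B:=\r{M}_2$ and
\begin{equation*}
e(x):=\begin{pmatrix}1 & x\\0 & 0\end{pmatrix}\in\r{M}_2[x]=\r{M}_2\ot\bb{F}[x],
\end{equation*}
a direct matrix computation giving $e(x)^2=e(x)$. Then, setting $C:=A\ot\r{M}_2$ and
\begin{equation*}
\phi:A\to C[x]=A\ot\r{M}_2\ot\bb{F}[x],\hspace{3mm}\phi(b):=b\ot e(x)=(b\ot e_{11})+(b\ot e_{12})x,
\end{equation*}
where $\{e_{ij}\}$ is the standard basis of $\r{M}_2$, one gets a morphism in $\b{A}_\r{nc}$. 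Its evaluations at $x=0$ and $x=1$ are $a\ot e_{11}$ and $a\ot e_{11}+a\ot e_{12}$, whose difference $a\ot e_{12}$ is nonzero for every nonzero $a\in A$ (since $b\mapsto b\ot e_{12}$ is an injection $A\hookrightarrow A\ot\r{M}_2$). Hence condition (iv) of Lemma \ref{2009301424} fails for $a$, so $a\notin\f{P}^\r{nc}(A)$, which gives $\f{P}^\r{nc}(A)=0$.

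There is no real obstacle to the plan; the only conceptual point worth emphasising is that the argument genuinely relies on the noncommutativity of the coefficient algebra $\r{M}_2$. Indeed, in the commutative setting the only idempotents of $\bb{F}[x]$ are $0$ and $1$, so this construction degenerates, which is consistent with the fact that $\f{P}^\r{c}$ and $\f{P}^\r{cu}$ are not identically zero (e.g.\ $\f{P}^\r{cu}(\bb{F})=\bb{F}$); the proof thus cleanly isolates the specific feature of the noncommutative case that forces the path-component functor to collapse.
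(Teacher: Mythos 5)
Your proof is correct and is essentially the paper's own argument: your morphism $\phi(b)=b\ot e(x)$ into $A\ot\r{M}_2\ot\bb{F}[x]$ is exactly the paper's map $a\mapsto\left(\begin{smallmatrix}a & ax\\ 0 & 0\end{smallmatrix}\right)$ into $\c{M}_2(A)[x]$, merely repackaged via the nonconstant idempotent $e(x)$ and routed through condition (iv) of Lemma \ref{2009301424} instead of condition (ii). The closing remark contrasting with the commutative case is a nice touch but not part of the proof.
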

\begin{proof}
The morphism $A\to\c{M}_2(A)[x]$ defined by
$$a\mapsto\left(\begin{array}{cc}a & 0 \\0 & 0 \\\end{array}\right)+\left(\begin{array}{cc}0 & a \\0 & 0 \\\end{array}\right)x$$
transforms any nonzero $a$ to a nonconstant polynomial. The proof is complete.\end{proof}
It follows easily from Theorems \ref{2012091919} and \ref{2012091918} that for any algebra $A$, $$\f{P}^\r{u}(A^+)=\bb{F}1.$$
\begin{theorem}\label{2010312353}
\emph{For any $A\in\b{A}_\r{u}$ we have $\f{P}^\r{c}(A)=\f{P}^\r{cu}(A)$ and $\f{P}^\r{cr}(A)=\f{P}^\r{cur}(A)$.}\end{theorem}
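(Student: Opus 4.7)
The inclusions $\f{P}^\r{c}(A)\subseteq\f{P}^\r{cu}(A)$ and $\f{P}^\r{cr}(A)\subseteq\f{P}^\r{cur}(A)$ are already recorded just before the theorem (since any unit-preserving morphism to $C[x]$ with $C\in\b{A}_\r{cu}$ is a morphism to $C[x]$ with $C\in\b{A}_\r{c}$), so my plan is to establish only the reverse inclusions. Throughout, I would use the characterization in Lemma~\ref{2009301424}(ii): $a\in\f{P}^*(A)$ iff $\phi(a)$ is a constant polynomial for every morphism $\phi:A\to C[x]$ of the appropriate type into $C\in\b{A}_*$.

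The essential ingredient I would isolate first is the classical fact that, for any commutative $\bb{F}$-algebra $C$ (unital or not), every idempotent of $C[x]$ already lies in $C$. For unital $C$ this follows from $\r{Spec}(C[x])=\r{Spec}(C)\times\bb{A}^1$ together with connectedness of $\bb{A}^1$, so that clopen subsets and hence idempotents descend to the first factor. The non-unital case reduces to the unital one via $C^+$: the idempotents of $C^+[x]=(C^+)[x]$ all lie in $C^+$, and those with vanishing scalar-polynomial part are precisely the idempotents of $C$. This is the only nonformal step and the one I expect to be the main obstacle, though it is essentially classical.

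Granted this lemma, for $\f{P}^\r{cu}(A)\subseteq\f{P}^\r{c}(A)$ I would fix $a\in\f{P}^\r{cu}(A)$ and an arbitrary morphism $\phi:A\to C[x]$ in $\b{A}_\r{nc}$ with $C\in\b{A}_\r{c}$. Since $A$ is unital, $e:=\phi(1)$ is idempotent in $C[x]$ and therefore lies in $C$; commutativity of $C[x]$ then routes $\phi$ through the subalgebra $eC[x]$, which is unital commutative with unit $e$. Writing each element of $eC[x]$ uniquely as $\sum_id_iy^i$ with $d_i\in D:=eC$ and $y:=ex$ (uniqueness using the polynomial structure of $C[x]$ over $C$ together with the identity $d_ie=d_i$) I would obtain a canonical identification
\[eC[x]\cong D[y]\qquad\text{in }\b{A}_\r{cu}.\]
This presents $\phi$ as a unit-preserving morphism $\tilde\phi:A\to D[y]$ in $\b{A}_\r{u}$ with $D\in\b{A}_\r{cu}$, so the hypothesis $a\in\f{P}^\r{cu}(A)$, applied via Lemma~\ref{2009301424}(ii), forces $\tilde\phi(a)\in D\subseteq C$, whence $\phi(a)$ is $x$-constant as required. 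The reduced case $\f{P}^\r{cur}(A)\subseteq\f{P}^\r{cr}(A)$ follows by the same argument after the small additional check that $D=eC$ inherits reducedness from $C$: if $(ec)^n=ec^n=0$ for some $c\in C$, then $ec$ is nilpotent in the reduced algebra $C$ and hence zero.
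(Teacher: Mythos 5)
Your proposal is correct and follows essentially the same route as the paper: the paper also passes from $a\in\f{P}^\r{cu}(A)$ and an arbitrary $\phi:A\to C[x]$ with $C\in\b{A}_\r{c}$ to the observation that $e:=\phi(1)$ is a constant idempotent, and then factors $\phi$ through $\hat{C}[x]$ where $\hat{C}:=\{c\in C:ec=c\}=eC$ is your $D$, concluding by the unital hypothesis. The only difference is one of detail: the paper dismisses the fact $\phi(1)\in C$ with ``easily verified'' and the reduced case with ``similar,'' whereas you correctly isolate the idempotent lemma as the one nonformal ingredient and supply the checks.
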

\begin{proof}
We have $\f{P}^\r{c}(A)\subseteq\f{P}^\r{cu}(A)$.
Let $a\in\f{P}^\r{cu}(A)$. For $C\in\b{A}_\r{c}$ and $\phi:A\to C[x]$ it is easily verified that $\phi(1)\in C$
and $\phi(A)\subseteq\hat{C}[x]$ where $\hat{C}:=\{c\in C:\phi(1)c=c\}$.
Thus $\phi:A\to\hat{C}[x]$ is a unital morphism and hence
$\phi(a)\in\hat{C}\subseteq C$. So $\f{P}^\r{cu}(A)\subseteq\f{P}^\r{c}(A)$. The other case is similar.\end{proof}
\begin{theorem}
\emph{For any algebra $A$ we have $\f{P}^\r{c}(A)=\r{p}^{-1}\f{P}^\r{c}(A_\r{com})$ where $\r{p}:A\to A_\r{com}$ denotes the canonical projection.}
\end{theorem}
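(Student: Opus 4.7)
The plan is to prove the equality by reducing both sides to the same characterization given by Lemma \ref{2009301424}(ii), and then leveraging the universal property of the commutativization functor $A \mapsto A_\r{com}$.

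First, I would invoke Lemma \ref{2009301424} to rephrase the membership conditions. For $a \in A$, we have $a \in \f{P}^\r{c}(A)$ if and only if for every $C \in \b{A}_\r{c}$ and every morphism $\phi: A \to C[x]$ in $\b{A}_\r{nc}$, $\phi(a)$ is constant (i.e. lies in $C \subseteq C[x]$). Similarly, $\r{p}(a) \in \f{P}^\r{c}(A_\r{com})$ if and only if for every $C \in \b{A}_\r{c}$ and every morphism $\psi: A_\r{com} \to C[x]$, $\psi(\r{p}(a))$ is constant.

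Next, I would use the universal property of $A_\r{com}$: since $C[x]$ is commutative (as $C$ is), every morphism $\phi: A \to C[x]$ factors uniquely as $\phi = \tilde{\phi} \circ \r{p}$ for some morphism $\tilde{\phi}: A_\r{com} \to C[x]$, and conversely every $\psi: A_\r{com} \to C[x]$ yields $\psi \circ \r{p}: A \to C[x]$. This sets up a natural bijection between the two sets of morphisms such that $\phi(a) = \tilde{\phi}(\r{p}(a))$ for all $a \in A$. In particular, $\phi(a)$ is constant if and only if $\tilde{\phi}(\r{p}(a))$ is constant.

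Combining these two observations, $a \in \f{P}^\r{c}(A)$ if and only if $\tilde\phi(\r{p}(a))$ is constant for every commutative $C$ and every morphism $A_\r{com} \to C[x]$, which by the reformulation of $\f{P}^\r{c}(A_\r{com})$ is equivalent to $\r{p}(a) \in \f{P}^\r{c}(A_\r{com})$, i.e. $a \in \r{p}^{-1}\f{P}^\r{c}(A_\r{com})$. There is no genuine obstacle here; the argument is a direct combination of the characterization lemma and the defining universal property of commutativization, and the only thing to be slightly careful about is that $C[x]$ is automatically commutative whenever $C$ is, which is exactly what allows the factorization through $\r{p}$.
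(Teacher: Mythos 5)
Your proposal is correct and is essentially identical to the paper's own (very brief) proof: the paper likewise cites Lemma \ref{2009301424}(ii) together with the fact that any morphism $\phi:A\to C[x]$ with $C$ commutative factors through $\r{p}$. You have merely written out the details that the paper leaves implicit.
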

\begin{proof}
It follows from Lemma \ref{2009301424}(ii) and the fact that any morphism $\phi:A\to C[x]$ where $C$ is a commutative algebra factors through $\r{p}$.
\end{proof}
In general, it is not easy to determine the subalgebras $\f{P}^*(A)$.
We shall consider some other homotopy invariant subfunctors of $\r{id}_{\b{A}_*}$ which are computable and may be applied for our proposes
instead of $\f{P}^*$. In $\S$ \ref{2010141800} we will show that $\f{P}^*$ is the largest subfunctor among all
homotopy invariant subfunctors of $\r{id}_{\b{A}_*}$.
\begin{lemma}\label{2010180840}
\emph{Let $C$ be an algebra and $p=\sum_{i=0}^nc_ix^i$ be a polynomial in $C[x]$.
\begin{enumerate}
\item[(i)] If $C$ is commutative and if $p^k=p$ for some $k\geq2$ with the property $\r{Char}(\bb{F})\nmid(k-1)$, then $p$ is constant.
\item[(ii)] If $C$ is unital and has no nonzero nilpotent element and if $p$ is a root of a nonzero polynomial with coefficients
in $\bb{F}$, then $p$ is constant.
\end{enumerate}}\end{lemma}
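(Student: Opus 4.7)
My plan is to treat the two parts separately.

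For (i), the key structural observation will be that $e := p^{k-1}$ is an idempotent in $C[x]$: iterating $p^k = p$ gives $p^m = p^{m-(k-1)}$ for every $m \geq k$, so in particular $e^2 = p^{2(k-1)} = p^{k-1} = e$. Differentiating $p^k = p$ by the derivation $\partial_x$ of $C[x]$, and using the power rule that commutativity makes available, yields $kep' = p'$. Multiplying this identity by $e$ and applying $e^2 = e$ collapses it to $(k-1)ep' = 0$. Since the hypothesis $\r{Char}(\bb{F}) \nmid (k-1)$ says that $k-1$ is a unit in $\bb{F}$, we deduce $ep' = 0$, and substituting back into $kep' = p'$ then gives $p' = 0$.

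To finish (i), I have to translate $p' = 0$ into $p$ being constant. In characteristic zero this is immediate. In characteristic $l > 0$, the vanishing of $\sum ic_i x^{i-1}$ only forces $c_i = 0$ when $l \nmid i$, so $p = Q(x^l)$ for some $Q \in C[y]$ with $\deg Q = (\deg p)/l < \deg p$ whenever $\deg p \geq 1$. Since $y \mapsto x^l$ is injective on polynomial rings, the relation $p^k = p$ transports to $Q^k = Q$ in $C[y]$, and an induction on $\deg p$ (with trivial base case $\deg p = 0$) closes the argument.

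For (ii), my plan is to argue directly by leading coefficients. Supposing for contradiction that $\deg p = m \geq 1$ with leading coefficient $c_m \neq 0$, the reducedness of $C$ guarantees that $c_m^j \neq 0$ for every $j \geq 1$ (otherwise $c_m$ would be nilpotent), so $p^j$ has degree exactly $mj$ with leading coefficient $c_m^j$. If $q(t) = \sum_{j=0}^d a_j t^j \in \bb{F}[t]$ is a nonzero polynomial of degree $d$ with $q(p) = 0$, then the coefficient of $x^{md}$ in $q(p) = \sum_{j=0}^d a_j p^j$ comes only from $a_d p^d$ and equals $a_d c_m^d$, forcing $a_d c_m^d = 0$. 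Since $a_d \in \bb{F}^\times$ this gives $c_m^d = 0$, contradicting reducedness. The only slight obstacle I anticipate is the positive-characteristic wrinkle in (i): one must notice that $p' = 0$ does not by itself give constancy and invoke the substitution $y = x^l$ together with induction on degree to finish.
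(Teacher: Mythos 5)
Your proof is correct; part (ii) matches the paper's argument (both reduce $q(p)=0$ to the statement that the leading coefficient of $p$ is nilpotent, hence zero by reducedness — the paper normalizes $q$ to be monic and peels coefficients off by downward induction, you phrase it as a contradiction at the true degree, which is the same computation), but your part (i) takes a genuinely different route. The paper works coefficient by coefficient from the bottom: comparing degrees $0$ and $1$ in $p^k=p$ gives $c_0^k=c_0$ and $kc_0^{k-1}c_1=c_1$, so multiplying by $c_0$ yields $(k-1)c_0c_1=0$, hence $c_0c_1=0$ and then $c_1=0$; the vanishing of $c_2,\ldots,c_n$ follows \emph{analogously} by the same multinomial comparison, each stage using the previously killed coefficients to remove the cross terms. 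Your derivation-plus-idempotent argument condenses all of that bookkeeping into the single identity $(k-1)ep'=0$ with $e=p^{k-1}$, which is cleaner and more conceptual; the price is that $p'=0$ is weaker than constancy in characteristic $l>0$, so you need the extra Frobenius-descent step $p=Q(x^l)$ and an induction on degree. That step is handled correctly (injectivity of $y\mapsto x^l$ transports $p^k=p$ to $Q^k=Q$, and $\deg Q=(\deg p)/l<\deg p$ since $l\geq2$), and it is exactly the point a hastier write-up would miss; the paper's bottom-up coefficient argument avoids it entirely at the cost of heavier computation.
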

\begin{proof}
(i) We have $c_0^k=c_0$ and $kc_0^{k-1}c_1=c_1$. Thus $kc_0c_1=c_0c_1$. This implies that $c_0c_1=0$ and therefore $c_1=0$.
Analogously and respectively, it is proved that $c_2=0,\ldots,c_n=0$. (ii) There are $\lambda_0,\ldots,\lambda_{k-1}\in\bb{F}$ with $k\geq1$
such that $p^k+\lambda_{k-1}p^{k-1}+\cdots+\lambda_0=0$. Thus $c_n^k=0$ and then, by the assumption, we have $c_n=0$. The proof is complete.
\end{proof}
\begin{proposition}\label{1909151849}
\emph{Let $A$ be an algebra and let $a\in A$.
\begin{enumerate}
\item[(i)] If $a^k=a$ for some $k\geq2$ with $\r{Char}(\bb{F})\nmid(k-1)$ then $a\in\f{P}^\r{c}(A)$.
\item[(ii)] If $a$ is an idempotent then $a\in\f{P}^\r{c}(A)$.
\item[(iii)] If $A$ is unital and $a$ is integral over $\bb{F}$ then $a\in\f{P}^\r{cur}(A)$.
\end{enumerate}}
\end{proposition}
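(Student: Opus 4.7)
The plan is to apply characterization (ii) of Lemma \ref{2009301424}, which reduces membership in $\f{P}^*(A)$ to showing that for every $C\in\b{A}_*$ and every (unital, when relevant) morphism $\phi:A\to C[x]$, the image $\phi(a)$ is a constant polynomial. Combined with Lemma \ref{2010180840}, which gives precisely the two criteria needed to force a polynomial over such $C$ to be constant (being a fixed point of a $k$-th power operation, or being algebraic over $\bb{F}$), each of (i)--(iii) reduces to a single line once $\phi$ is applied and compatibility of $\phi$ with the relevant algebraic relation is used.

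For (i), fix any $C\in\b{A}_\r{c}$ and a morphism $\phi:A\to C[x]$ in $\b{A}_\r{nc}$. Setting $p:=\phi(a)\in C[x]$ and applying $\phi$ to the identity $a^k=a$ gives $p^k=p$. Since $\r{Char}(\bb{F})\nmid(k-1)$, Lemma \ref{2010180840}(i) forces $p$ to be constant. Thus $a$ satisfies condition (ii) of Lemma \ref{2009301424} for $*=\r{c}$, proving $a\in\f{P}^\r{c}(A)$. Part (ii) is immediate: an idempotent is the special case $k=2$, and $\r{Char}(\bb{F})\nmid1$ holds for every field, so part (i) applies directly.

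For (iii), let $q(t)=t^n+\lambda_{n-1}t^{n-1}+\cdots+\lambda_0\in\bb{F}[t]$ be a nonzero polynomial with $q(a)=0$ (witnessing integrality of $a$ over $\bb{F}$ in the unital algebra $A$). Fix any $C\in\b{A}_\r{cur}$ and any unital morphism $\phi:A\to C[x]$; since $\phi$ preserves unit and $\bb{F}$-algebra structure, $q(\phi(a))=\phi(q(a))=0$, so $p:=\phi(a)\in C[x]$ is a root of the nonzero polynomial $q\in\bb{F}[t]$. Because $C$ is unital and reduced (no nonzero nilpotent elements), Lemma \ref{2010180840}(ii) forces $p$ to be constant. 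Invoking the equivalence (ii)$\Leftrightarrow$(v) of Lemma \ref{2009301424} for $*=\r{cur}$ completes the argument.

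The proof is essentially a bookkeeping exercise once the correct reformulation via Lemma \ref{2009301424} is invoked; there is no genuine obstacle. The only point requiring a little care is making sure that in each part the target category of $\phi$ matches the decoration $*$: commutative (but possibly nonunital) for (i) and (ii), and commutative unital reduced for (iii), so that the hypotheses of Lemma \ref{2010180840} are literally satisfied.
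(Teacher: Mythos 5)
Your proof is correct and follows exactly the route the paper itself takes: reduce each part to the criterion of Lemma \ref{2009301424}(ii) and then invoke the corresponding case of Lemma \ref{2010180840} to force $\phi(a)$ to be a constant polynomial, with (ii) as the special case $k=2$ of (i). The paper's own proof is just a one-line citation of these same two lemmas, so your write-up is simply a more detailed version of the intended argument.
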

\begin{proof}
(i) and (iii) follow from Lemma \ref{2010180840} and the definition of $\f{P}^\r{c}$ by the analogue of Lemma \ref{2009301424}(ii).
(ii) follows from (i).
\end{proof}
For any algebra $A$ and every $k\geq2$ let $\f{I}_k\f{P}(A)\subseteq A$ denote the subalgebra generated by all elements $a$ with the property
$a^k=a$. Let also $\f{IP}(A)\subseteq A$ denote the subalgebra generated by all elements $a$ such that $a^k=a$ for some $k\geq2$.
For any commutative unital algebra $A$ let $\r{Int}(A)\subseteq A$ denote integral closure of $\bb{F}$ in $A$.
It is clear that $\f{I}_k\f{P}(A)\subseteq\f{IP}(A)$ and $\f{IP}(A)\subseteq\r{Int}(A)$.
We have also the subfunctors
$$\f{IP},\f{I}_k\f{P}:\b{A}_\r{nc}\to\b{A}_\r{nc}\hspace{3mm}\text{and}\hspace{3mm}\r{Int}:\b{A}_\r{cu}\to\b{A}_\r{cu}.$$
\begin{theorem}\label{2010172015}
\emph{\begin{enumerate}
\item[(i)] For any algebra $A$, if $\r{Char}(\bb{F})\nmid(k-1)$ then $\f{I}_k\f{P}(A)\subseteq\f{P}^\r{c}(A)$, and
if $\r{Char}(\bb{F})=0$ then $\f{IP}(A)\subseteq\f{P}^\r{c}(A)$.
\item[(ii)] If $A$ is commutative and unital then $\r{Int}(A)\subseteq\f{P}^\r{cur}(A)=\f{P}^\r{cr}(A)$.
\item[(iii)] If $\r{Char}(\bb{F})\nmid(k-1)$ then $\f{I}_k\f{P}:\b{A}_\r{c}\to\b{A}_\r{c}$ is homotopy invariant.
If $\r{Char}(\bb{F})=0$ then $\f{IP}:\b{A}_\r{c}\to\b{A}_\r{c}$ is homotopy invariant.
\item[(iv)] The functor $\r{Int}:\b{A}_\r{cur}\to\b{A}_\r{cur}$ is homotopy invariant.
\end{enumerate}}
\end{theorem}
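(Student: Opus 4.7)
The plan is to derive all four parts from the pointwise inclusions of Proposition \ref{1909151849}, combined with the homotopy invariance of $\f{P}^\r{c}$ and $\f{P}^\r{cur}$ (Theorem \ref{2010172101}) and the identity $\f{P}^\r{cur}=\f{P}^\r{cr}$ on unital commutative algebras (Theorem \ref{2010312353}). Nothing in the proof requires a return to the universal property of $\f{M}$; the whole content is already packaged in the earlier results.

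For (i), if $a\in A$ satisfies $a^k=a$ with $\r{Char}(\bb{F})\nmid(k-1)$, then Proposition \ref{1909151849}(i) yields $a\in\f{P}^\r{c}(A)$. Since $\f{P}^\r{c}(A)$ is a subalgebra of $A$, it contains the subalgebra generated by all such elements, which is exactly $\f{I}_k\f{P}(A)$. In characteristic zero the same reasoning, applied for every $k\geq 2$ simultaneously, gives $\f{IP}(A)\subseteq\f{P}^\r{c}(A)$. For (ii), in the commutative unital setting Proposition \ref{1909151849}(iii) places every integral element in $\f{P}^\r{cur}(A)$, and since $\r{Int}(A)$ is the subalgebra they generate we get $\r{Int}(A)\subseteq\f{P}^\r{cur}(A)$; the equality $\f{P}^\r{cur}(A)=\f{P}^\r{cr}(A)$ is simply Theorem \ref{2010312353}.

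For (iii) and (iv), the guiding principle is that a subfunctor of a homotopy invariant subfunctor of $\r{id}$ is itself homotopy invariant. Concretely, if $G\subseteq F\subseteq\r{id}_{\b{A}_*}$ is a chain of subfunctors with $F$ homotopy invariant, then for homotopic morphisms $f,g\colon A\to B$ the identity $F(f)=F(g)$ reads $f|_{F(A)}=g|_{F(A)}$; restricting further to $G(A)\subseteq F(A)$ yields $G(f)=G(g)$. Applying this with $G=\f{I}_k\f{P}$ (or $\f{IP}$) and $F=\f{P}^\r{c}$ on $\b{A}_\r{c}$, using part (i), gives (iii); applying it with $G=\r{Int}$ and $F=\f{P}^\r{cur}$ on $\b{A}_\r{cur}$, using part (ii), gives (iv). Before doing this one has to verify that $\f{I}_k\f{P},\f{IP}$ and $\r{Int}$ are genuine subfunctors of $\r{id}$, i.e.\ that an algebra morphism $f\colon A\to B$ sends the distinguished subalgebra of $A$ into that of $B$: for $\f{I}_k\f{P}$ and $\f{IP}$ this is immediate from $f(a)^k=f(a)$ whenever $a^k=a$; for $\r{Int}$ it follows from $f$ preserving roots of polynomials with coefficients in $\bb{F}$.

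The main \emph{obstacle} is really only bookkeeping: keeping straight which of the six variants $\r{nc,c,cr,u,cu,cur}$ one is in, and confirming that the inclusions of subfunctors land in the intended target subcategory (for instance that $\r{Int}(A)$ is commutative, unital, and reduced when $A$ is, so that it lives in $\b{A}_\r{cur}$). All substantive analysis has already been carried out in Proposition \ref{1909151849} and Theorems \ref{2010172101} and \ref{2010312353}.
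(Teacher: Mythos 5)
Your proof is correct. Parts (i) and (ii) coincide with the paper's: both are read off from Proposition \ref{1909151849} together with the fact that $\f{P}^\r{c}(A)$ and $\f{P}^\r{cur}(A)$ are subalgebras, and the equality $\f{P}^\r{cur}(A)=\f{P}^\r{cr}(A)$ is exactly Theorem \ref{2010312353}. For parts (iii) and (iv) you take a slightly different route. The paper reruns the argument of Theorem \ref{2010172101} directly: given an elementary homotopy $H\colon A\to B[x]$ in $\b{A}_\r{c}$ from $f$ to $g$ and a generator $a$ with $a^k=a$, one has $H(a)^k=H(a)$, so Lemma \ref{2010180840} forces $H(a)$ to be constant, hence $f(a)=g(a)$, and agreement on generators gives agreement on the whole subalgebra (likewise for $\r{Int}$ via Lemma \ref{2010180840}(ii)). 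You instead deduce (iii) and (iv) from the inclusions in (i) and (ii) together with the already-established homotopy invariance of $\f{P}^\r{c}$ and $\f{P}^\r{cur}$, using the observation that a subfunctor of a homotopy invariant subfunctor of the identity is itself homotopy invariant: $F(f)=F(g)$ says $f|_{F(A)}=g|_{F(A)}$, which restricts to any smaller subalgebra $G(A)\subseteq F(A)$. Both routes ultimately rest on Lemma \ref{2010180840}; yours avoids repeating the elementary-homotopy computation and makes the logical dependence of (iii)--(iv) on (i)--(ii) explicit, at the modest cost of having to verify that $\f{I}_k\f{P}$, $\f{IP}$, and $\r{Int}$ are genuine subfunctors landing in the intended subcategories --- a check you supply and which the paper also records just before the theorem statement.
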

\begin{proof}
(i) and (ii) follow from Proposition \ref{1909151849}. Proofs of (iii) and (iv) are similar to the proof of Theorem \ref{2010172101} and
follow from Lemma \ref{2010180840}.
\end{proof}
It is easily verified that any subfunctor of $\r{id}_{\b{A}_*}$ for $*\in\{\r{nc,c,cr}\}$ preserves arbitrary direct sums. Thus we have,
\begin{theorem}\label{2010040928}
\emph{$\f{I}_k\f{P},\f{IP},\f{P}^\r{cr},\f{P}^\r{c}:\b{A}_\r{nc}\to\b{A}_\r{nc}$ preserve arbitrary direct sums.}
\end{theorem}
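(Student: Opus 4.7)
The plan is to reduce the theorem to the general observation recorded just before it: any subfunctor of $\r{id}_{\b{A}_*}$ for $*\in\{\r{nc},\r{c},\r{cr}\}$ preserves arbitrary direct sums. It then suffices to check (a) that each of the four functors $\f{I}_k\f{P}$, $\f{IP}$, $\f{P}^\r{c}$, and $\f{P}^\r{cr}$ is a subfunctor of $\r{id}_{\b{A}_\r{nc}}$, and (b) to justify the general observation itself.

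For (a), I would argue as follows. If $a\in A$ satisfies $a^k=a$ and $f:A\to B$ is any algebra morphism, then $f(a)^k=f(a^k)=f(a)$; hence $f$ sends the generating set of $\f{I}_k\f{P}(A)$ into that of $\f{I}_k\f{P}(B)$, so $f(\f{I}_k\f{P}(A))\subseteq\f{I}_k\f{P}(B)$. The same reasoning covers $\f{IP}$. The subfunctor property for $\f{P}^\r{c}$ and $\f{P}^\r{cr}$ was already recorded in the paragraph immediately following Lemma \ref{2009301424}. In each case, $F(f)$ is by definition the restriction $f|_{F(A)}$, so functoriality is automatic.

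For (b), let $F$ be any subfunctor of $\r{id}_{\b{A}_*}$ and let $A=\oplus_i A_i$, realized concretely as tuples of finite support with componentwise operations, equipped with canonical inclusions $e_j:A_j\to A$ and projections $p_j:A\to A_j$. Because $F$ is a subfunctor of the identity, $F(p_j)=p_j|_{F(A)}$ and $F(e_j)=e_j|_{F(A_j)}$. For $a\in F(A)$, write $a=(a_i)_i$ with finite support; then $a_j=p_j(a)\in F(A_j)$ for each $j$, so $F(A)\subseteq\oplus_i F(A_i)$. Conversely, the subalgebra $F(A)$ of $A$ contains every $e_j(F(A_j))$, and being closed under addition contains $\oplus_i F(A_i)$ in its entirety. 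Hence $F(\oplus_i A_i)=\oplus_i F(A_i)$.

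Essentially no obstacle remains; the whole verification is formal. The single point worth flagging is that in the \emph{unital} categories the analogous equation would fail (the unit of an infinite product does not lie in the algebraic direct sum, and there is no canonical inclusion $e_j$ preserving units), which is precisely why the preceding general observation, and hence the present theorem, is restricted to the nonunital variants $*\in\{\r{nc},\r{c},\r{cr}\}$.
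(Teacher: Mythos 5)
Your proposal is correct and follows exactly the route the paper takes: the paper's entire proof is the remark, stated just before the theorem, that any subfunctor of $\r{id}_{\b{A}_*}$ for $*\in\{\r{nc,c,cr}\}$ preserves arbitrary direct sums, and you have simply supplied the (routine) verifications of that remark and of the subfunctor property for the four functors. Your closing observation about why the unital variants are excluded is a reasonable aside and consistent with the paper's restriction.
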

For proof of the next lemma we shall need the following fact from elementary Linear Algebra:
Let $W,W'$ be vector spaces and $W_0\subseteq W$ be a vector subspace. Let $x\in W'\ot W$ be such that for every linear functional
$\alpha:W\to\bb{F}$ with $\alpha|_{W_0}=0$ we have $(\r{id}_{W'}\ot\alpha)(x)=0$. Then $x$ belongs to $W'\ot W_0$.
\begin{lemma}\label{2010251146}
\emph{Suppose $\bb{F}$ is algebraically closed. Let $A\in\b{A}_\r{curfg}$, $W$ be a vector space, and $W_0\subseteq W$ be a vector subspace.
For any $c\in A\ot W$ if $h\ot\r{id}_W(c)\in W_0$ for every $h\in\r{Pnt}(A)$, then $c\in A\ot W_0$.}
\end{lemma}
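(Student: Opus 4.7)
My plan is to reduce the claim to the purely linear-algebraic fact stated just before the lemma (with $W'=A$) and then use the Nullstellensatz to finish.

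First I would invoke the linear-algebra fact: in order to conclude $c \in A \ot W_0$, it suffices to show that for every linear functional $\alpha : W \to \bb{F}$ with $\alpha|_{W_0} = 0$, the element $(\r{id}_A \ot \alpha)(c) \in A$ vanishes. So I fix such an $\alpha$ and set $y := (\r{id}_A \ot \alpha)(c) \in A$; the goal becomes $y = 0$.

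Next I would relate $y$ to the hypothesis by a naturality computation. For any $h \in \r{Pnt}(A)$, identifying $\bb{F} \ot W$ with $W$, one has
\begin{equation*}
h(y) \;=\; h\bigl((\r{id}_A \ot \alpha)(c)\bigr) \;=\; (h \ot \alpha)(c) \;=\; \alpha\bigl((h \ot \r{id}_W)(c)\bigr).
\end{equation*}
By hypothesis $(h \ot \r{id}_W)(c) \in W_0$, and $\alpha$ vanishes on $W_0$, so $h(y) = 0$ for every $h \in \r{Pnt}(A)$.

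Finally, I would invoke Hilbert's Nullstellensatz. Since $A \in \b{A}_\r{curfg}$ is finitely generated, commutative, unital, and reduced over the algebraically closed field $\bb{F}$, every maximal ideal of $A$ has residue field $\bb{F}$ and is therefore the kernel of a (unit-preserving) homomorphism $A \to \bb{F}$, i.e.\ of a point in $\r{Pnt}(A)$. The intersection of all maximal ideals is the nilradical, which is $0$ by reducedness. Therefore $y \in \bigcap_{h \in \r{Pnt}(A)} \ker(h) = 0$, as required. There is no real obstacle here; the only care needed is in identifying $\r{Pnt}(A)$ with $\bb{F}$-points in the Nullstellensatz sense, which is precisely where the hypotheses ``algebraically closed'', ``finitely generated'', and ``reduced'' are used.
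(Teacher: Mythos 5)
Your proof is correct and follows essentially the same route as the paper's: both reduce the claim to the linear-algebra fact via functionals $\alpha$ killing $W_0$ and then use the Nullstellensatz together with reducedness to conclude. Your write-up is if anything slightly cleaner, since the naturality identity $h\bigl((\r{id}_A\ot\alpha)(c)\bigr)=\alpha\bigl((h\ot\r{id}_W)(c)\bigr)$ and the observation that $\bigcap_h\ker(h)=0$ make explicit the step the paper leaves implicit in its identification of $A\ot W$ with $W$-valued polynomial functions on the variety.
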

\begin{proof}
Let $Z\subseteq\bb{A}^m$ be the zero-locus of a radical ideal $I\subseteq\bb{F}[X_m]$ with $A\cong\bb{F}[X_m]/I$.
We know $Z$ may be identified with $\r{Pnt}_{\neq0}(A)$ through $\zeta\mapsto(p(X_m)\mapsto p(\zeta))$, and
by Hilbert's Nullstellensatz, $A\ot W$ may be identified with vector space of all functions $f:Z\to W$ such that
$f(\zeta)=\sum_i\zeta^iw_i$ for some polynomial $\sum_iw_iX_m^i$ with $w_i\in W$. So, suppose that $f$ be such a representation for $c$.
By assumptions we have $f(\zeta)\in W_0$ for every $\zeta\in Z$. Thus for any linear functional $\alpha:W\to\bb{F}$ with $\alpha|_{W_0}=0$
we have $\r{id}_A\ot\alpha(c)=\alpha\circ f=0$. Hence $c\in A\ot W_0$.
\end{proof}
We have the following corollary of Lemma \ref{2010251146}.
\begin{proposition}\label{2010251242}
\emph{Suppose $\bb{F}$ is algebraically closed and let $F:\b{A}_\r{cur}\to\b{A}_\r{cur}$ be any subfunctor of $\r{id}_{\b{A}_\r{cur}}$. Then
for every $A,B\in\b{A}_\r{curfg}$ we have $F(A\ot B)=F(A)\ot F(B)$.}
\end{proposition}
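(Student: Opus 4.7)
The plan is to prove the two inclusions separately, and the harder one will be an application of Lemma \ref{2010251146} applied twice (once in each tensor factor) combined with the standard identity $(A\ot F(B))\cap(F(A)\ot B)=F(A)\ot F(B)$ for subspaces of a tensor product.

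The easy inclusion $F(A)\ot F(B)\subseteq F(A\ot B)$ should follow immediately from functoriality of $F$ together with its subfunctor-of-identity property. The unital morphisms $\iota_A:A\to A\ot B$, $a\mapsto a\ot1$, and $\iota_B:B\to A\ot B$, $b\mapsto1\ot b$, are sent by $F$ to the restrictions of $\iota_A,\iota_B$ to $F(A),F(B)$; hence $F(A)\ot1$ and $1\ot F(B)$ both lie in $F(A\ot B)$, and since $F(A\ot B)$ is a subalgebra it contains their product $F(A)\ot F(B)$.

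For the reverse inclusion, fix $c\in F(A\ot B)$. For every point $h\in\r{Pnt}(A)$, the evaluation $h\ot\r{id}_B:A\ot B\to B$ is a morphism in $\b{A}_\r{cur}$, so by functoriality $(h\ot\r{id}_B)(c)\in F(B)$. Since $A\in\b{A}_\r{curfg}$ and $\bb{F}$ is algebraically closed, Lemma \ref{2010251146} (applied with $W=B$, $W_0=F(B)$) yields $c\in A\ot F(B)$. Swapping the roles and applying the same argument to the points $h'\in\r{Pnt}(B)$, via the morphisms $\r{id}_A\ot h':A\ot B\to A$, gives $c\in F(A)\ot B$ (after identifying $A\ot B$ with $B\ot A$ so that Lemma \ref{2010251146} applies with the finitely generated factor on the left).

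To conclude, I would invoke the elementary linear-algebra fact that for subspaces $A_0\subseteq A$ and $B_0\subseteq B$ one has $(A\ot B_0)\cap(A_0\ot B)=A_0\ot B_0$; it is proved by choosing complementary decompositions $A=A_0\oplus A_1$, $B=B_0\oplus B_1$ and reading off the four summands of $A\ot B$. Combined with the previous paragraph this forces $c\in F(A)\ot F(B)$, completing the proof. The only subtle step is the double application of Lemma \ref{2010251146}: one must check that the lemma's hypothesis (the left factor being finitely generated commutative unital reduced) holds in both the original and the swapped tensor order, which is why both $A$ and $B$ are assumed to be in $\b{A}_\r{curfg}$.
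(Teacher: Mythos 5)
Your proof is correct and follows essentially the same route as the paper's: the easy inclusion via the two unital morphisms $a\mapsto a\ot 1_B$, $b\mapsto 1_A\ot b$, and the hard inclusion by applying Lemma \ref{2010251146} once in each tensor factor and then intersecting. The only difference is that you make explicit the elementary identity $(A\ot F(B))\cap(F(A)\ot B)=F(A)\ot F(B)$, which the paper leaves implicit in the step ``Thus $c\in F(A)\ot F(B)$.''
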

\begin{proof}
Let $c\in F(A\ot B)\subseteq A\ot B$. For any $h\in\r{Pnt}(A)$, $h\ot\r{id}_B:A\ot B\to B$ is a morphism in $\b{A}_\r{cur}$ and hence
$h\ot\r{id}_B(c)\in F(B)\subseteq B$. Thus it follows from Lemma \ref{2010251146} that $c\in A\ot F(B)$. Similarly, $c\in F(A)\ot B$.
Thus $c\in F(A)\ot F(B)$.\\
Since $x\mapsto x\ot1_B$ and $y\mapsto1_A\ot y$ define two morphisms $A\to A\ot B$ and $B\to A\ot B$ in $\b{A}_\r{cur}$, we have
$a\ot1_B\in F(A\ot B)$ and $1_A\ot b\in F(A\ot B)$ for every $a\in F(A)$ and $b\in F(B)$. Thus $F(A)\ot F(B)\subseteq F(A\ot B)$.
\end{proof}
The following theorem follows directly from Proposition \ref{2010251242}.
\begin{theorem}\label{1909210027}
\emph{Suppose $\bb{F}$ is algebraically closed. Then the functors
$$\f{P}^\r{c},\f{P}^\r{cr},\f{IP},\f{I}_k\f{P},\r{Int}:\b{A}_\r{curfg}\to\b{A}_\r{cur}$$ preserve tensor product.}
\end{theorem}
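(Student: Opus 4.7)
The plan is to deduce this theorem as an immediate instance of Proposition \ref{2010251242}. That proposition already gives $F(A\ot B)=F(A)\ot F(B)$ whenever $F:\b{A}_\r{cur}\to\b{A}_\r{cur}$ is a subfunctor of $\r{id}_{\b{A}_\r{cur}}$ and $A,B\in\b{A}_\r{curfg}$. Hence it is enough to check, for each of the five functors, that its restriction to $\b{A}_\r{cur}$ is a subfunctor of $\r{id}_{\b{A}_\r{cur}}$, and to confirm that $A\ot B$ still lies in $\b{A}_\r{curfg}$ so that the proposition is applicable.

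First I would verify the target condition: for $A\in\b{A}_\r{cur}$, each $F(A)\subseteq A$ is automatically commutative and reduced as a subalgebra of such an algebra, so what must be shown is that $1\in F(A)$. For $F=\f{P}^\r{c}$ and $F=\f{P}^\r{cr}$ this is immediate from Proposition \ref{1909151849}(ii), since $1$ is an idempotent and idempotents lie in $\f{P}^\r{c}(A)\subseteq\f{P}^\r{cr}(A)$. For $F=\f{IP}$ and $F=\f{I}_k\f{P}$ the element $1$ satisfies $1^k=1$ for every $k\geq2$, hence lies in the generating set. For $F=\r{Int}$ the unit is trivially integral over $\bb{F}$. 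Functoriality of the restrictions, together with the requirement $F(f)=f|_{F(A)}$, is already part of how each functor is introduced in the text.

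Second I would verify the domain condition: Proposition \ref{2010251242} is applied at $A\ot B$, so I need $A\ot B\in\b{A}_\r{cur}$ whenever $A,B\in\b{A}_\r{curfg}$. Unitality, commutativity and finite generation of $A\ot B$ are automatic. Reducedness is the only nontrivial point, and it is exactly where the hypothesis that $\bb{F}$ is algebraically closed (or merely perfect) enters: the tensor product of two reduced commutative algebras over a perfect field is reduced. Thus $A\ot B\in\b{A}_\r{curfg}$, and applying Proposition \ref{2010251242} to each of the five functors yields $F(A\ot B)=F(A)\ot F(B)$, which is the desired statement.

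The argument is genuinely a direct corollary, so there is no real obstacle; the only place where care is required is the reducedness of $A\ot B$, and that is a standard fact about perfect base fields. Everything else is a routine check that each subfunctor preserves the unit and takes values in the commutative reduced unital subcategory.
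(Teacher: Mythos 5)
Your proposal is correct and follows essentially the same route as the paper, which disposes of the theorem by declaring it a direct corollary of Proposition \ref{2010251242}. The extra checks you supply --- that each of the five functors contains the unit and so restricts to a subfunctor of $\r{id}_{\b{A}_\r{cur}}$, and that $A\ot B$ remains reduced over an algebraically closed (indeed perfect) field so that the proposition applies --- simply make explicit what the paper leaves implicit.
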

For any affine scheme $S=\c{S}(A)$ ($A\in\b{A}_\r{cu}$) the affine scheme of path-connected components of $S$
is defined to be the affine scheme \begin{equation}\label{2012082236}\pi_0(S):=\c{S}(\f{P}^\r{cu}(A)).\end{equation}
Thus $\pi_0$ may be considered as a $\bb{A}^1$-homotopy invariant functor $$\pi_0:\b{Aff}\to\b{Aff}.$$
Recall that an affine group-scheme is a group object $G$ in $\b{Aff}$. Suppose that $G=\c{S}(A)$. Then the group structure of $G$ is induced
by a Hopf algebra structure on $A$. In case $\bb{F}$ is algebraically closed and $G$ is an affine variety, $G$ is called affine group-variety.
Note that in this case $A$ is a finitely generated integral domain.
\begin{theorem}
\emph{For any affine group-variety $G$, $\pi_0(G)$ is an affine group-scheme.}
\end{theorem}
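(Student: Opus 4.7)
The plan is to show that the commutative Hopf algebra structure on the coordinate algebra $A$, where $G=\c{S}(A)$, restricts to the subalgebra $\f{P}^\r{cu}(A)\subseteq A$, whence $\pi_0(G)=\c{S}(\f{P}^\r{cu}(A))$ automatically inherits the structure of an affine group-scheme. Since $G$ is an affine group-variety over the algebraically closed field $\bb{F}$, the paragraph preceding the theorem tells us that $A$ is a finitely generated integral domain, hence lies in $\b{A}_\r{curfg}$, and is equipped with comultiplication $\Delta:A\to A\otimes A$, counit $\epsilon:A\to\bb{F}$, and antipode $S:A\to A$ --- all morphisms in $\b{A}_\r{cu}$ satisfying the usual Hopf algebra identities.

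The core step is to restrict the comultiplication. First I would observe that $A\otimes A$ again lies in $\b{A}_\r{curfg}$: the only nontrivial point is reducedness, and for finitely generated integral domains over an algebraically closed field the tensor product is in fact an integral domain (geometric integrality). By Theorem \ref{2010312353} we have $\f{P}^\r{cu}=\f{P}^\r{c}$ on unital algebras, so Theorem \ref{1909210027} applies and gives
\[
\f{P}^\r{cu}(A\otimes A)=\f{P}^\r{cu}(A)\otimes\f{P}^\r{cu}(A).
\]
Since $\f{P}^\r{cu}$ is a subfunctor of the identity, functoriality applied to $\Delta$ produces the desired restriction $\Delta|_{\f{P}^\r{cu}(A)}:\f{P}^\r{cu}(A)\to\f{P}^\r{cu}(A)\otimes\f{P}^\r{cu}(A)$.

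The remaining structure maps restrict for more elementary reasons: functoriality gives $S(\f{P}^\r{cu}(A))\subseteq\f{P}^\r{cu}(A)$, while $\epsilon$ trivially sends $\f{P}^\r{cu}(A)$ into $\bb{F}=\f{P}^\r{cu}(\bb{F})$. Each Hopf algebra axiom is an equality between morphisms out of $A$ (or a tensor power of $A$), so restricting every morphism involved to $\f{P}^\r{cu}(A)$ preserves these identities. Hence $\f{P}^\r{cu}(A)$ is itself a commutative Hopf algebra, and $\pi_0(G)$ is an affine group-scheme. The main obstacle is the initial verification that $A\otimes A\in\b{A}_\r{curfg}$, which requires standard commutative-algebra input about reducedness of tensor products over an algebraically closed base; once that is in hand, the entire argument is a formal consequence of the preservation-of-tensor-product identity furnished by Theorem \ref{1909210027} together with elementary functoriality.
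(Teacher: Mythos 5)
Your proposal is correct and follows essentially the same route as the paper, whose proof is simply the citation of Theorem \ref{1909210027}: the comultiplication restricts to $\f{P}^\r{cu}(A)$ because $\f{P}^\r{cu}$ is a subfunctor of the identity and preserves tensor products of finitely generated reduced algebras over the algebraically closed base. Your explicit verification that $A\ot A$ remains in $\b{A}_\r{curfg}$ (via geometric integrality/reducedness) and your treatment of the counit and antipode are exactly the details the paper leaves implicit.
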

\begin{proof}
It follows from Theorem \ref{1909210027}.
\end{proof}
For a discussion on path-connected components of algebraic groups see \cite[Chapter 2]{Jardine2}.
The following important theorem and its proof have been kindly offered to me by Professor Corti\~{n}as.
\begin{theorem}\label{2010311847}
\emph{Suppose that $\r{Char}(\bb{F})=0$. Then $$\pi_0(S)=\c{H}_\r{deR}^0(S)\hspace{10mm}(S\in\b{Aff}).$$}
\end{theorem}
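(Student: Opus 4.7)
The plan is to translate the theorem into the identity of subalgebras $\f{P}^\r{cu}(A) = \ker(d_A: A \to \Omega^1(A))$, where $A$ is the coordinate algebra of $S$ and $d_A$ is the universal derivation; since $\c{H}_\r{deR}^0(A)$ is by construction this kernel, the assertion then follows by applying $\c{S}$. I will prove the two inclusions separately, using Lemma \ref{2009301424}(ii) as the working characterization of $\f{P}^\r{cu}(A)$.

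For the direction $\c{H}_\r{deR}^0(A) \subseteq \f{P}^\r{cu}(A)$, fix $a$ with $d_A(a) = 0$ and let $\phi: A \to C[x]$ be any unit-preserving morphism with $C \in \b{A}_\r{cu}$. By naturality of the universal derivation, $d_{C[x]}(\phi(a)) = \Omega^1(\phi)(d_A(a)) = 0$. Writing $\phi(a) = \sum_{i=0}^n c_i x^i$ and invoking the standard splitting $\Omega^1(C[x]) \cong (C[x] \ot_C \Omega^1(C)) \oplus C[x]\, dx$, vanishing of the $dx$-component yields $\sum_{i=1}^n i c_i x^{i-1} = 0$, whence $i c_i = 0$ for every $i \geq 1$. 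The hypothesis $\r{Char}(\bb{F}) = 0$ forces $c_i = 0$ for $i \geq 1$, so $\phi(a) = c_0$ is constant and Lemma \ref{2009301424} gives $a \in \f{P}^\r{cu}(A)$.

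For the reverse inclusion the main step is to produce a single test morphism that sees $d_A(a)$. I will use the Nagata trivial extension $R := A \oplus \Omega^1(A)$ with commutative unital multiplication $(a_1, \omega_1)(a_2, \omega_2) := (a_1 a_2,\, a_1 \omega_2 + a_2 \omega_1)$, in which $\Omega^1(A)$ appears as a square-zero ideal. The formula $\phi(a) := (a, 0) + (0, d_A(a))\, x$ defines a unit-preserving algebra morphism $\phi: A \to R[x]$, as one verifies using the Leibniz rule for $d_A$ together with $(0, \omega)(0, \omega') = 0$. If $a \in \f{P}^\r{cu}(A)$, then Lemma \ref{2009301424} forces $\phi(a)$ to be constant in $R[x]$, so its $x$-coefficient $(0, d_A(a))$ vanishes in $R$, giving $d_A(a) = 0$.

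The only substantive obstacle is locating the right test algebra for the reverse inclusion; once one recognizes the trivial extension $A \oplus \Omega^1(A)$ as the universal square-zero recipient of $d_A$, the computation is immediate. The forward inclusion uses $\r{Char}(\bb{F}) = 0$ essentially: in characteristic $p$ the element $x^p$ satisfies $d(x^p) = 0$ without being constant, so a non-constant $\phi(a)$ could coexist with $d_A(a) = 0$, which would break the argument.
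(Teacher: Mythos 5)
Your proof is correct and follows essentially the same route as the paper: the inclusion $\f{P}^\r{cu}(A)\subseteq\ker(d)$ via the trivial square-zero extension $A\oplus\Omega^1(A)$ and the test morphism $a\mapsto a+d(a)x$ is exactly the paper's argument, and your reverse inclusion is the paper's formal-$x$-derivative argument, merely phrased through the splitting of $\Omega^1(C[x])$ instead of applying the universal property of $\Omega^1(A)$ to the derivation $D\circ\phi$. Nothing further is needed.
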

\begin{proof}
Let $S=\c{S}(A)$. Consider the universal derivation $d:A\to\Omega^1(A)$ as described in $\S$\ref{1910191623}. We must show that
$\ker(d)=\f{P}^\r{cu}(A)$. Let $\bar{A}$ denote the commutative unital algebra with underlying vector space $A\oplus\Omega^1(A)$ and multiplication
given by $(a,\omega)(a',\omega')=(aa',a\omega'+a'\omega)$ for $a,a'\in A$ and $\omega,\omega'\in\Omega^1(A)$. Then the assignment
$a\mapsto a+d(a)x$ defines a morphism from $A$ into $\bar{A}[x]$. Thus if $a\in\f{P}^\r{cu}(A)$ then $d(a)=0$.
Hence $\f{P}^\r{cu}(A)\subseteq\ker(d)$.
Suppose that $a\in\ker(d)$. Let $\phi:A\to B[x]$ be an arbitrary unit preserving morphism for a commutative unital algebra $B$. $B[x]$ can be
considered as a unital $A$-module through $\phi$. Let $D:B[x]\to B[x]$ denote the ordinary derivation given by
$D(\sum_{i=0}^nb_ix^i)=\sum_{i=1}^nib_ix^{i-1}$. Then $D\phi:A\to B[x]$ is a derivation into the $A$-module $B[x]$. Thus $D\phi(a)=0$.
This means that $\phi(a)$ must be a constant polynomial and hence $a\in\f{P}^\r{cu}(A)$. The proof is complete.
\end{proof}
%%%%%%%%%%%%%%%%%%%%%%%%%%%%%%%%%%%%%%%%%%%%%%%%%%%%%%%%%%%%%%%%%%%%%%%%%%%%%%%%%%%%%%%%%%%%%%%%%%%%%%%%%%%%%%%%%%%%%%%%%%
%%%%%%%%%%%%%%%%%%%%%%%%%%%%%%%%%%%%%%%%%%%%%%%%%%%%%%%%%%%%%%%%%%%%%%%%%%%%%%%%%%%%%%%%%%%%%%%%%%%%%%%%%%%%%%%%%%%%%%%%%%
%%%%%%%%%%%%%%%%%%%%%%%%%%%%%%%%%%%%%%%%%%%%%%%%%%%%%%%%%%%%%%%%%%%%%%%%%%%%%%%%%%%%%%%%%%%%%%%%%%%%%%%%%%%%%%%%%%%%%%%%%%
%%%%%%%%%%%%%%%%%%%%%%%%%%%%%%%%%%%%%%%%%%%%%%%%%%%%%%%%%%%%%%%%%%%%%%%%%%%%%%%%%%%%%%%%%%%%%%%%%%%%%%%%%%%%%%%%%%%%%%%%%%
\section{Costrict homotopization}\label{2010141800}
\begin{definition}\label{2010111257}
\emph{Let $\b{A}$ be an admissible category of algebras and $\b{C}$ be an arbitrary category.
The costrict homotopization of a functor $F:\b{A}\to\b{C}$ is defined to be a homotopy invariant functor
$\lfloor F\rfloor:\b{A}\to\b{C}$ coming with a natural transformation $\alpha_F:F\to\lfloor F\rfloor$ such that
the following universal property is satisfied: For any homotopy invariant functor $G:\b{A}\to\b{C}$ and any natural transformation
$\beta:F\to G$ there is a unique natural transformation $\underline{\beta}:\lfloor F\rfloor\to G$ such that $\beta=\underline{\beta}\alpha_F$.}
\end{definition}
It is clear that if $\lfloor F\rfloor$ exists then it is unique up to a natural isomorphism. Also, for any homotopy invariant functor $F$
we have $\lfloor F\rfloor=F$.

Note that Definition \ref{2010111257} is a simple generalization of Weibel's concept of strict
homotopization (\cite{Weibel1,Garkusha1}). It seems that this concept at the first time has been considered by Gersten \cite{Gersten2}.
\begin{theorem}\label{2012081413}
\emph{Suppose $\b{C}$ has finite coequalizers. Then any functor $F:\b{A}\to\b{C}$ has a costrict homotopization.}
\end{theorem}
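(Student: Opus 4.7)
The plan is to construct $\lfloor F\rfloor$ object-wise as a single coequalizer in $\b{C}$, and then verify functoriality, homotopy invariance, and the universal property in turn. Observing that homotopy invariance (by Lemma \ref{2010111932}(iii)) is equivalent to the identity $F(\r{p}_0)=F(\r{p}_1)$, one is led to coequalize precisely this pair.

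For each $A\in\b{A}$, I would take $\alpha_{F,A}:F(A)\to\lfloor F\rfloor(A)$ to be a coequalizer in $\b{C}$ of the pair $F(\r{p}_0),F(\r{p}_1):F(A[x])\to F(A)$; this exists by hypothesis. To promote $A\mapsto\lfloor F\rfloor(A)$ to a functor, I consider a morphism $f:A\to B$ in $\b{A}$ together with its polynomial extension $f[x]:A[x]\to B[x]$, which lies in $\b{A}$ by admissibility. From the commutations $\r{p}_i^B f[x]=f\r{p}_i^A$ and $\alpha_{F,B}F(\r{p}_0^B)=\alpha_{F,B}F(\r{p}_1^B)$, one deduces $\alpha_{F,B}F(f)F(\r{p}_0^A)=\alpha_{F,B}F(f)F(\r{p}_1^A)$; the universal property of $\alpha_{F,A}$ then supplies a unique $\lfloor F\rfloor(f):\lfloor F\rfloor(A)\to\lfloor F\rfloor(B)$ with $\lfloor F\rfloor(f)\alpha_{F,A}=\alpha_{F,B}F(f)$. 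Functoriality of $\lfloor F\rfloor$ and naturality of $\alpha_F$ follow from the uniqueness of these induced arrows.

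The key point, where the construction earns its keep, is homotopy invariance. By Lemma \ref{2010111932}(iii) it suffices to check $\lfloor F\rfloor(\r{p}_0)=\lfloor F\rfloor(\r{p}_1)$ for every $A$. The defining identities read $\lfloor F\rfloor(\r{p}_i)\alpha_{F,A[x]}=\alpha_{F,A}F(\r{p}_i)$ for $i=0,1$, and the right-hand sides coincide by choice of $\alpha_{F,A}$. Since any coequalizer morphism is epi in $\b{C}$, cancelling $\alpha_{F,A[x]}$ on the right gives $\lfloor F\rfloor(\r{p}_0)=\lfloor F\rfloor(\r{p}_1)$. Thus, somewhat magically, a single round of coequalization already delivers a homotopy invariant functor, with no need to iterate or pass to a transfinite colimit; this is also why finite (in fact just binary) coequalizers suffice.

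For the universal property, suppose $G:\b{A}\to\b{C}$ is homotopy invariant and $\beta:F\to G$ is a natural transformation. Lemma \ref{2010111932}(iii) applied to $G$ gives $G(\r{p}_0)=G(\r{p}_1)$, so naturality of $\beta$ yields $\beta_A F(\r{p}_0)=G(\r{p}_0)\beta_{A[x]}=G(\r{p}_1)\beta_{A[x]}=\beta_A F(\r{p}_1)$; hence each $\beta_A$ factors uniquely through $\alpha_{F,A}$ as $\underline{\beta}_A\alpha_{F,A}$, and naturality of $\underline{\beta}$ again follows by uniqueness of factorization through the coequalizers $\alpha_{F,A}$, using that each $\alpha_{F,A}$ is epi. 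There is no serious obstacle in the plan; the only step requiring a sentence of care is the epi property of coequalizer morphisms, which is exactly what upgrades one level of coequalization into a fully homotopy invariant universal cover of $F$.
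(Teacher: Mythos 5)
Your proposal is correct and follows essentially the same route as the paper: $\lfloor F\rfloor(A)$ is defined as the coequalizer of $F(\r{p}_0),F(\r{p}_1):F(A[x])\to F(A)$, with functoriality, homotopy invariance, and the universal property all extracted from the coequalizer's universal property exactly as in the text. The only (harmless) divergence is the homotopy-invariance step: you reduce to $\lfloor F\rfloor(\r{p}_0)=\lfloor F\rfloor(\r{p}_1)$ via Lemma \ref{2010111932}(iii) and cancel the epimorphism $\alpha_F(A[x])$, whereas the paper checks directly that an elementary homotopy $H$ from $f$ to $g$ gives $\alpha_F(B)F(f)=\alpha_F(B)F(g)$; both rest on the same coequalizing relation and the epi-ness of the coequalizer map.
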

\begin{proof}
For any $A\in\b{A}$ let $\alpha_F(A):F(A)\to\lfloor F\rfloor(A)$ be a coequalizer of
$$F(\r{p}_0),F(\r{p}_1):F(A[x])\to F(A)$$
in $\b{C}$. For any morphism $f:A\to B$ in $\b{A}$, the coequalizer property of $\lfloor F\rfloor(A)$ implies the existence of a unique morphism
$\lfloor F\rfloor(f)$ in $\b{C}$ making the diagram
$$\xymatrix{F(A[x])\ar[d]_-{F(f[x])}\ar@<2.5pt>[r]^{F(\r{p}_0)}\ar@<-2.5pt>[r]_{F(\r{p}_1)}&F(A)\ar[d]_{F(f)}
\ar[r]^-{\alpha_F(A)}&\lfloor F\rfloor(A)\ar[d]_{\lfloor F\rfloor(f)}\\
F(B[x])\ar@<2.5pt>[r]^{F(\r{p}_0)}\ar@<-2.5pt>[r]_{F(\r{p}_1)}&F(B)\ar[r]^-{\alpha_F(B)}&\lfloor F\rfloor(B)}$$
commutative. Thus $\lfloor F\rfloor$ may be considered as a functor. Suppose that $H:A\to B[x]$ is an elementary homotopy from $f$ to $g$. We have
\begin{equation*}\begin{split}
\alpha_F(B)F(f)&=\alpha_F(B)(F(\r{p}_0)F(H))=(\alpha_F(B)F(\r{p}_0))F(H)\\
&=(\alpha_F(B)F(\r{p}_1))F(H)=\alpha_F(B)(F(\r{p}_1)F(H))=\alpha_F(B)F(g)\end{split}\end{equation*}
Thus $\lfloor F\rfloor(f)=\lfloor F\rfloor(g)$, and therefore $\lfloor F\rfloor$ is homotopy invariant.
Suppose $G:\b{A}\to\b{C}$ is homotopy invariant and $\beta:F\to G$ is a natural transformation.
By Lemma \ref{2010111932}(iii), $G(\r{p}_0)=G(\r{p}_1)$, and hence universality of the coequalizer $\lfloor F\rfloor(A)$
implies the existence of a morphism $\underline{\beta}(A)$ making the following diagram commutative:
$$\xymatrix{F(A[x])\ar[d]_-{\beta(A[x])}\ar@<2.5pt>[r]^{F(\r{p}_0)}\ar@<-2.5pt>[r]_{F(\r{p}_1)}&F(A)\ar[d]_{\beta(A)}
\ar[r]^-{\alpha_F(A)}&\lfloor F\rfloor(A)\ar[dl]^{\underline{\beta}(A)}\\
G(A[x])\ar@<2.5pt>[r]^{G(\r{p}_0)}\ar@<-2.5pt>[r]_{G(\r{p}_1)}&G(A)}$$\end{proof}
It can be seen easily from the proof of Theorem \ref{2012081413} that for the identity functor $\r{id}:\b{A}\to\b{A}$ we have
$\lfloor\r{id}\rfloor=0$. (Indeed, it follows from the fact that for any $A\in\b{A}$ and every $a,b\in A$, $\r{p}_0(a+(b-a)x)=a$ and
$\r{p}_1(a+(b-a)x)=b$.) Also, for any affine scheme $S=\c{S}(A)$ regarded as the functor
\begin{equation}\label{2012082241}S=\r{Hom}_{\b{A}_\r{cu}}(A,?):\b{A}_\r{cu}\to\b{Set}\end{equation}
we have \begin{equation}\label{2012082231}\lfloor S\rfloor=[A,?].\end{equation}

A cotype for the concept of strict homotopization may be defined as follows.
\begin{definition}
\emph{Let $\b{A}$ be an admissible category of algebras and $\b{C}$ be an arbitrary category.
The costrict homotopization of a functor $F:\b{A}\to\b{C}$ is defined to be a homotopy invariant functor
$\lceil F\rceil:\b{A}\to\b{C}$ coming with a natural transformation $\alpha^F:\lceil F\rceil\to F$ such that
the following universal property is satisfied: For any homotopy invariant functor $G:\b{A}\to\b{C}$ and any natural transformation
$\beta:G\to F$ there is a unique natural transformation $\ov{\beta}:G\to\lceil F\rceil$ such that $\beta=\alpha^F\ov{\beta}$.}
\end{definition}
It is clear that if $\lceil F\rceil$ exists then it is unique up to a natural isomorphism. Also, for any homotopy invariant functor $F$
we have $\lceil F\rceil=F$.

The following theorem is one of the main results of this note.
\begin{theorem}\label{2010141245}
\emph{Suppose $\b{C}$ has finite equalizers and arbitrary inverse limits. Then any functor $F:\b{A}_*\to\b{C}$, where
$*\in\{\r{nc,c,cr,u,cu,cur}\}$, has a costrict homotopization.}
\end{theorem}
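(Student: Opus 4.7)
The plan is to dualize the construction of Theorem \ref{2012081413}, but with a twist: in the covariant strict case, it suffices to coequalize the single parallel pair $F(\r{p}_0),F(\r{p}_1):F(A[x])\to F(A)$ because any elementary homotopy $H:A\to B[x]$ factors the difference through $F(H)$, and coequalizing on the \emph{target} $B$ kills it. For the costrict (subobject) version this trick fails: the only natural parallel pair ending at $F(A)$ gives an equalizer inside $F(A[x])$, not inside $F(A)$. Instead I want $\lceil F\rceil(A)$ to be the largest subobject of $F(A)$ on which every elementary homotopy out of $A$ becomes trivial. Concretely, for each object $B\in\b{A}_*$ and each morphism $H:A\to B[x]$ in $\b{A}_*$, the pair $F(\r{p}_0H),F(\r{p}_1H):F(A)\to F(B)$ is a parallel pair out of $F(A)$, and I define
\begin{equation*}
\alpha^F(A):\lceil F\rceil(A)\to F(A)
\end{equation*}
to be the joint equalizer of all such parallel pairs, i.e.\ the equalizer of the two evident morphisms $F(A)\rightrightarrows\prod_{(B,H)}F(B)$. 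This exists by the assumption that $\b{C}$ admits finite equalizers and arbitrary inverse limits (modulo the usual set-theoretic caveat about $(B,H)$ ranging over a possibly large collection, which I ignore in line with the rest of the paper).

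Functoriality is immediate from the definition: given $f:A\to A'$ and any test homotopy $K:A'\to B[x]$, the composition $Kf:A\to B[x]$ is also a legitimate test homotopy, so for $x\in\lceil F\rceil(A)$,
\begin{equation*}
F(\r{p}_0K)F(f)(x)=F(\r{p}_0Kf)(x)=F(\r{p}_1Kf)(x)=F(\r{p}_1K)F(f)(x),
\end{equation*}
so $F(f)\alpha^F(A)$ equalizes every pair indexing $\lceil F\rceil(A')$ and hence factors through $\alpha^F(A')$. This produces the morphism $\lceil F\rceil(f)$ and, since $\alpha^F(A')$ is monic, uniquely. Homotopy invariance is then built in: if $H:A\to A'[x]$ is an elementary homotopy from $\phi$ to $\psi$, then $\phi=\r{p}_0H$ and $\psi=\r{p}_1H$ are precisely one of the test pairs, so $F(\phi)$ and $F(\psi)$ agree after pulling back along $\alpha^F(A)$, i.e.\ $\lceil F\rceil(\phi)=\lceil F\rceil(\psi)$.

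For the universal property, let $G:\b{A}_*\to\b{C}$ be homotopy invariant and $\beta:G\to F$ a natural transformation. By Lemma \ref{2010111932}, homotopy invariance of $G$ gives $G(\r{p}_0H)=G(\r{p}_1H)$ for every $H:A\to B[x]$ (since $\r{p}_0H$ and $\r{p}_1H$ are homotopic via $H$). Naturality of $\beta$ then yields
\begin{equation*}
F(\r{p}_0H)\beta(A)=\beta(B)G(\r{p}_0H)=\beta(B)G(\r{p}_1H)=F(\r{p}_1H)\beta(A),
\end{equation*}
so $\beta(A)$ equalizes every test pair for $\lceil F\rceil(A)$, producing a unique factorization $\ov{\beta}(A):G(A)\to\lceil F\rceil(A)$ with $\alpha^F(A)\ov{\beta}(A)=\beta(A)$; uniqueness across $A$ (i.e.\ naturality of $\ov{\beta}$) again follows from the monic character of $\alpha^F$.

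The only genuine obstacle is set-theoretic: the collection of all pairs $(B,H)$ indexing the joint equalizer is a proper class. As in several earlier constructions in the paper I treat this by choosing a representative set (for instance, restricting $B$ to a cardinal bound depending on $F(A)$ and cutting down the $H$'s accordingly), which suffices because a morphism out of $F(A)$ is determined by its effect on a generating subobject; alternatively one may rephrase the equalizer using the pro-algebra $\f{M}^*_{A,\bb{F}[x]}$ of Theorem \ref{2011302233}, whose universal property converts the class-indexed family $\{H\}$ into a single pro-morphism and reduces the construction to a single equalizer of pro-morphisms. Either way, the argument above goes through and delivers $\lceil F\rceil$ with the required universal property.
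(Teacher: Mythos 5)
Your construction is, at bottom, the same as the paper's: the paper defines $\lceil F\rceil(A)$ as the equalizer of the two morphisms $\hat{\f{p}}_0,\hat{\f{p}}_1:F(A)\to\underleftarrow{\lim}F(\f{M}_{A,\bb{F}[x]})$ obtained by applying $F$ to $((x\mapsto0)\ot\r{id})\Upsilon_{A,\bb{F}[x]}$ and $((x\mapsto1)\ot\r{id})\Upsilon_{A,\bb{F}[x]}$, and by the universal property of $\f{M}_{A,\bb{F}[x]}$ every test pair $F(\r{p}_0H),F(\r{p}_1H)$ factors through this single pair via $F(\ov{H})$, so your joint equalizer coincides with the paper's equalizer. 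Your verifications of functoriality, homotopy invariance, and the universal property are correct and mirror the paper's. The one real difference is that your primary formulation indexes the product over \emph{all} pairs $(B,H)$, a proper class, and the stated hypotheses (finite equalizers plus arbitrary small inverse limits) do not license such a limit; your proposed cardinality-bound repair is not justified for an abstract target category $\b{C}$. The ``alternative'' you mention in your last sentence --- replacing the class of test homotopies by the single set-indexed pro-algebra $\f{M}^*_{A,\bb{F}[x]}$ --- is not an optional variant but the step that makes the construction legitimate, and it is precisely the paper's proof; this is also where the hypothesis on $\f{M}$ (hence the restriction to $\b{A}_*$) actually enters.
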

\begin{proof}
Let $\b{A},\f{M},\Upsilon$ denote $\b{A}_*,\f{M}^*,\Upsilon^*$. For any $A\in\b{A}$, let $\alpha^F(A):\lceil F\rceil(A)\to F(A)$ denote the
equalizer of the morphisms $$\hat{\f{p}}_0,\hat{\f{p}}_1:F(A)\to\underleftarrow{\lim}F(\f{M}_{A,\bb{F}[x]}),$$
given (similarly to $\f{p}_0,\f{p}_1$ in Definition \ref{2010231857}) by
$$\hat{\f{p}}_0:=\underleftarrow{\lim}F[((x\mapsto0)\ot\r{id})\Upsilon_{A,\bb{F}[x]}]\hspace{2mm}\text{and}\hspace{2mm}
\hat{\f{p}}_1:=\underleftarrow{\lim}F[((x\mapsto1)\ot\r{id})\Upsilon_{A,\bb{F}[x]}].$$
For any $f:A\to B$ in $\b{A}$, by the universal property of the equalizer $\lceil F\rceil(B)$, there is a unique morphism
$\lceil F\rceil(f)$ making the following diagram commutative:
$$\begin{array}{c}
    \xymatrix{\lceil F\rceil(A)\ar[d]_{\lceil F\rceil(f)}\ar[r]^-{\alpha^F(A)}&\\
\lceil F\rceil(B)\ar[r]_-{\alpha^F(B)}&}
  \end{array}
\underleftarrow{\lim}F\left(
    \begin{array}{c}
     {\xymatrix{A\ar[r]^-{\Upsilon_{A,\bb{F}[x]}}\ar[d]_-{f}&\bb{F}[x]\ot\f{M}_{A,\bb{F}[x]}
\ar@<2.5pt>[r]^-{x\mapsto0}\ar@<-2.5pt>[r]_-{x\mapsto1}&
\f{M}_{A,\bb{F}[x]}\ar[d]^{\f{M}(f,\bb{F}[x])}\\
B\ar[r]_-{\Upsilon_{B,\bb{F}[x]}}&\bb{F}[x]\ot\f{M}_{B,\bb{F}[x]}\ar@<2.5pt>[r]^-{x\mapsto0}\ar@<-2.5pt>[r]_-{x\mapsto1}&
\f{M}_{B,\bb{F}[x]}}} \\
    \end{array}
  \right)
$$
Thus $\lceil F\rceil$ may be considered as a functor. Suppose that $H:A\to B[x]$ is an elementary homotopy from $f$ to $g$.
Let $\ov{H}:\f{M}_{A,\bb{F}[x]}\to B$ denote the associated pro-morphism of $H$. We have
$$f=\ov{H}\big((x\mapsto0)\ot\r{id}\big)\Upsilon_{A,\bb{F}[x]}\hspace{3mm}\text{and}\hspace{3mm}
g=\ov{H}\big((x\mapsto1)\ot\r{id}\big)\Upsilon_{A,\bb{F}[x]}.$$ It follows that,
\begin{equation*}\begin{split}\big(\underleftarrow{\lim}F(f)\big)\alpha^F(A)&=\big(\underleftarrow{\lim}F(\ov{H})\big)\hat{\f{p}}_0\alpha^F(A)\\
&=\big(\underleftarrow{\lim}F(\ov{H})\big)\hat{\f{p}}_1\alpha^F(A)\\&=\big(\underleftarrow{\lim}F(g)\big)\alpha^F(A).\end{split}\end{equation*}
This implies that $\lceil F\rceil(f)=\lceil F\rceil(g)$, and hence $\lceil F\rceil$ is homotopy invariant.
Suppose $G:\b{A}\to\b{C}$ is homotopy invariant and $\beta:G\to F$ is a natural transformation. We have the following commutative diagram
in $\r{pro}\text{-}\b{C}$:$$\xymatrix{F(A)\ar[rr]^-{F(\Upsilon_{A,\bb{F}[x]})}&&F(\bb{F}[x]\ot\f{M}_{A,\bb{F}[x]})
\ar@<2.5pt>[r]^-{F(x\mapsto0)}\ar@<-2.5pt>[r]_-{F(x\mapsto1)}&F(\f{M}_{A,\bb{F}[x]})\\
G(A)\ar[rr]_-{G(\Upsilon_{A,\bb{F}[x]})}\ar[u]^-{\beta(A)}&&G(\bb{F}[x]\ot\f{M}_{A,\bb{F}[x]})
\ar@<2.5pt>[r]^-{G(x\mapsto0)}\ar@<-2.5pt>[r]_-{G(x\mapsto1)}&G(\f{M}_{A,\bb{F}[x]})\ar[u]_{\beta(\f{M}_{A,\bb{F}[x]})}}$$
On the other hand, by Lemma \ref{2010111932}(iii), the pro-morphisms
$$\xymatrix{G(\bb{F}[x]\ot\f{M}_{A,\bb{F}[x]})\ar@<2.5pt>[r]^-{G(x\mapsto0)}\ar@<-2.5pt>[r]_-{G(x\mapsto1)}&G(\f{M}_{A,\bb{F}[x]})}$$
are (componentwise) equal. Hence we have $$\hat{\f{p}}_0\beta(A)=\hat{\f{p}}_1\beta(A).$$
Now, it follows from the universal property of the equalizer $\lceil F\rceil(A)$ that there is a
unique morphism $\ov{\beta}(A):G(A)\to\lceil F\rceil(A)$ satisfying $\beta(A)=\alpha^F(A)\ov{\beta}(A)$.
\end{proof}
The following is seen directly from Definition \ref{2010231857} and the proof of Theorem \ref{2010141245}.
\begin{theorem}
\emph{The functor $\f{P}^*:\b{A}_*\to\b{A}_*$ is a costrict homotopization of the identity functor $\r{id}:\b{A}_*\to\b{A}_*$.
Therefore, in a sense, $\f{P}^*$ is the greatest homotopy invariant subfunctor of $\r{id}$.}
\end{theorem}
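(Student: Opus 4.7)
The plan is to verify that $\f{P}^*$, as introduced in Definition \ref{2010231857}, literally coincides with the object produced by the construction in the proof of Theorem \ref{2010141245} when applied to $F=\r{id}_{\b{A}_*}$. Since the costrict homotopization is unique up to natural isomorphism, such an identification immediately yields $\f{P}^*\cong\lceil\r{id}\rceil$.

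Concretely, I would first specialize the construction in Theorem \ref{2010141245} to $F=\r{id}$. The two parallel pro-morphisms
$\hat{\f{p}}_0,\hat{\f{p}}_1:F(A)\to\underleftarrow{\lim}F(\f{M}_{A,\bb{F}[x]})$
become exactly the morphisms $\f{p}_0,\f{p}_1:A\to\underleftarrow{\lim}\f{M}^*_{A,\bb{F}[x]}$ of Definition \ref{2010231857}. Next, I would observe that in $\b{A}_*$ the equalizer of two algebra morphisms is just the set-theoretic equalizer endowed with its obvious subalgebra structure (in the unital cases, the unit lies in the equalizer because both $\f{p}_0,\f{p}_1$ are unit preserving). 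Therefore the equalizer constructed in the proof of Theorem \ref{2010141245} is precisely the subalgebra $\{a\in A:\f{p}_0(a)=\f{p}_1(a)\}=\f{P}^*(A)$, with $\alpha^{\r{id}}(A)$ the inclusion $\f{P}^*(A)\hookrightarrow A$. Functoriality of this identification is automatic: the induced map on equalizers agrees with the restriction $f|_{\f{P}^*(A)}$, which is exactly $\f{P}^*(f)$ as noted after Lemma \ref{2009301424}. Homotopy invariance is Theorem \ref{2010172101}, so all hypotheses of being a costrict homotopization are in place, and the universal property follows from that of the equalizer, verbatim as in the proof of Theorem \ref{2010141245}.

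For the second sentence, I would deduce the maximality statement directly from the universal property. Given any homotopy invariant subfunctor $G$ of $\r{id}_{\b{A}_*}$, the defining inclusions $G(A)\hookrightarrow A$ assemble into a natural transformation $\iota:G\to\r{id}$. Applying the universal property of $\lceil\r{id}\rceil=\f{P}^*$ produces a unique natural transformation $\bar{\iota}:G\to\f{P}^*$ with $\iota=\alpha^{\r{id}}\bar{\iota}$; since $\alpha^{\r{id}}$ and $\iota$ are both componentwise inclusions, $\bar{\iota}(A)$ is forced to be the inclusion $G(A)\hookrightarrow\f{P}^*(A)$, which exhibits $G$ as a subfunctor of $\f{P}^*$.

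The only real content to be careful about is the first step: confirming that the set-theoretic equalizer of $\f{p}_0,\f{p}_1$ is an honest equalizer in $\b{A}_*$ rather than merely in $\b{Set}$. This is routine because $\f{p}_0,\f{p}_1$ are themselves morphisms of (pro-)algebras in $\b{A}_*$, so their equalizer inherits the algebraic structure; the admissibility of $\b{A}_*$ ensures the construction stays inside $\b{A}_*$. I do not anticipate any further obstacle.
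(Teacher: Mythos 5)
Your proposal is correct and follows essentially the same route as the paper, which simply asserts that the theorem ``is seen directly from Definition \ref{2010231857} and the proof of Theorem \ref{2010141245}''; you have filled in exactly those details (identifying the equalizer of $\f{p}_0,\f{p}_1$ in $\b{A}_*$ with the subalgebra $\f{P}^*(A)$ and reading off maximality from the universal property). No gaps.
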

More generally, the following result is also seen from the above theorem and the proof of Theorem \ref{2010141245}.
\begin{theorem}
\emph{Let $\b{C}$ be a category with arbitrary limits and $F:\b{A}_*\to\b{C}$ be a functor that preserves arbitrary limits.
Then $\lceil F\rceil=F\circ\f{P}^*$.}
\end{theorem}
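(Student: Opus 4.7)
The plan is to compare the two explicit equalizer descriptions in play. By Definition \ref{2010231857}, the subalgebra $\f{P}^*(A)\subseteq A$ is precisely the equalizer in $\b{A}_*$ of the pair
\[
\f{p}_0,\f{p}_1:A\rightrightarrows\underleftarrow{\lim}\f{M}^*_{A,\bb{F}[x]},
\]
since it is defined as the largest subalgebra on which these two morphisms agree, and this equalizer exists in $\b{A}_*$ because $\b{A}_*$ has equalizers as kernels of pairs and admits the inverse limit of a pro-algebra as an object. On the other hand, from the proof of Theorem \ref{2010141245}, $\lceil F\rceil(A)$ is constructed as the equalizer in $\b{C}$ of
\[
\hat{\f{p}}_0,\hat{\f{p}}_1:F(A)\rightrightarrows\underleftarrow{\lim}F(\f{M}^*_{A,\bb{F}[x]}),
\]
where $\hat{\f{p}}_i$ is obtained by applying $F$ componentwise to the same structural data used to define $\f{p}_i$.

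Now I would exploit the hypothesis that $F$ preserves arbitrary limits. First, the inverse limit is carried to an inverse limit, so $F\big(\underleftarrow{\lim}\f{M}^*_{A,\bb{F}[x]}\big)\cong\underleftarrow{\lim}F(\f{M}^*_{A,\bb{F}[x]})$, and under this identification $F(\f{p}_i)=\hat{\f{p}}_i$. Second, applying $F$ to the equalizer diagram
\[
\f{P}^*(A)\hookrightarrow A\rightrightarrows\underleftarrow{\lim}\f{M}^*_{A,\bb{F}[x]}
\]
in $\b{A}_*$ yields an equalizer diagram in $\b{C}$, which is precisely the defining equalizer diagram for $\lceil F\rceil(A)$. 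By uniqueness of equalizers, there is a canonical isomorphism $F(\f{P}^*(A))\cong\lceil F\rceil(A)$, and the natural transformation $\alpha^F$ corresponds under this isomorphism to $F$ applied to the canonical inclusion $\f{P}^*\hookrightarrow\r{id}_{\b{A}_*}$.

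Finally, naturality in $A$ and compatibility with the universal property are essentially automatic once the equalizer identification is established: for any morphism $f:A\to B$ in $\b{A}_*$, the square that defines $\lceil F\rceil(f)$ in the proof of Theorem \ref{2010141245} coincides, under the isomorphism above, with $F(\f{P}^*(f))$, by universality of the equalizer; and the universal factorization of $\beta:G\to F$ with $G$ homotopy invariant through $F\circ\f{P}^*$ follows because both $\hat{\f{p}}_0\beta(A)$ and $\hat{\f{p}}_1\beta(A)$ coincide by Lemma \ref{2010111932}(iii). The one step that requires the most care is verifying that the equalizer of $\f{p}_0,\f{p}_1$ taken in $\b{A}_*$ genuinely agrees, after $F$ is applied, with the equalizer in $\b{C}$; this is exactly where limit preservation of $F$ is used, and it is the only nontrivial ingredient of the argument.
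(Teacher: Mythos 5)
Your proposal is correct and follows exactly the route the paper intends: the paper offers no explicit proof, merely remarking that the result ``is seen from'' the preceding theorem and the proof of Theorem \ref{2010141245}, and your argument spells out precisely that intended reasoning --- both sides are equalizers of the same pair of morphisms once $F$ is commuted past the inverse limit and the equalizer using limit preservation. The only cosmetic quibble is the phrase ``kernels of pairs'' (you mean difference kernels, i.e.\ the subalgebra on which the two morphisms agree), which does not affect the argument.
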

As a direct corollary of the above theorem we have,
\begin{corollary}
\emph{Let $S$ denote an ordinary scheme over $\bb{F}$ regarded as a Zariski-sheaf $\b{A}_\r{cu}\to\b{Set}$ \cite{DemazureGabriel1}.
Then $\lceil S\rceil=S\circ\f{P}^\r{cu}$.}
\end{corollary}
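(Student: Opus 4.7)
The plan is to invoke the preceding theorem with target $\b{C} = \b{Set}$ (which admits all small limits) and functor $F = S$; the conclusion $\lceil S \rceil = S \circ \f{P}^\r{cu}$ will then follow directly, provided one can check the hypothesis that $S : \b{A}_\r{cu} \to \b{Set}$ preserves arbitrary inverse limits.

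First, I would dispatch the affine case. If $S = \c{S}(A_0)$ for some $A_0 \in \b{A}_\r{cu}$, then $S = \r{Hom}_{\b{A}_\r{cu}}(A_0, ?)$ is a covariant representable functor and hence preserves all limits; the preceding theorem then immediately yields $\lceil S \rceil(B) = S(\f{P}^\r{cu}(B))$ for every $B \in \b{A}_\r{cu}$, establishing the corollary in this case.

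For a general scheme $S$ in the Demazure--Gabriel sense, I would argue locally along an affine open cover $\{U_i = \c{S}(A_i)\}_i$. The costrict homotopization $\lceil S\rceil(B)$ is defined in the proof of the preceding theorem as an equalizer in $\b{Set}$ built from $S(B)$ and $\underleftarrow{\lim} S(\f{M}^\r{cu}_{B,\bb{F}[x]})$; since $S$ is a Zariski sheaf, each set appearing in that equalizer admits a local description through the $U_i$ and their pairwise overlaps. Applying the affine case to each $U_i$ and then regluing via the sheaf condition should recover $\lceil S\rceil(B) = S(\f{P}^\r{cu}(B))$.

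The main obstacle will be to justify that the Zariski gluing commutes with both the equalizer defining $\lceil \cdot \rceil$ and the pro-inverse limit $\underleftarrow{\lim}\f{M}^\r{cu}_{B,\bb{F}[x]}$, since the sheaf axioms give compatibility only with the specific finite limits coming from Zariski localizations. To circumvent this I would avoid direct manipulation of the pro-limit and instead use the intrinsic characterization of $\f{P}^\r{cu}(B)$ furnished by Lemma \ref{2009301424}(ii), namely that $b \in \f{P}^\r{cu}(B)$ iff every morphism $B \to C[x]$ into a polynomial extension of any commutative unital $C$ sends $b$ to a constant. This criterion is testable against any family of affine subschemes of $\c{S}(B)$, and is therefore manifestly compatible with Zariski localization and gluing; this should be enough to transfer the affine identity $\lceil U_i\rceil = U_i \circ \f{P}^\r{cu}$ to the global statement $\lceil S\rceil = S \circ \f{P}^\r{cu}$.
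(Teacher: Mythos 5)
Your opening move and your affine case coincide with the paper's own argument: the paper obtains the corollary by applying the preceding theorem ($\lceil F\rceil=F\circ\f{P}^*$ for limit-preserving $F$) to $F=S$ with $\b{C}=\b{Set}$, and for $S=\c{S}(A_0)$ the hypothesis is exactly the fact that the representable functor $\r{Hom}_{\b{A}_\r{cu}}(A_0,?)$ preserves all limits. Up to that point you are reproducing the intended proof.

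The gap is in the non-affine case, which you sketch but do not close. Two concrete problems. First, Lemma \ref{2009301424}(ii) characterizes the subalgebra $\f{P}^\r{cu}(B)\subseteq B$, but that is not the object at stake: by the construction in the proof of Theorem \ref{2010141245}, what must be shown is that the equalizer of $\hat{\f{p}}_0,\hat{\f{p}}_1$ inside $S(B)$ coincides with the image of $S(\f{P}^\r{cu}(B))\to S(B)$, i.e.\ that every $s\in S(B)$ equalized by the two maps factors, uniquely, through $\c{S}(B)\to\c{S}(\f{P}^\r{cu}(B))$. Knowing which elements of $B$ lie in $\f{P}^\r{cu}(B)$ does not produce such a factorization when $s$ is not a single ring map. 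Second, the regluing step does not work as described: an element of $S(B)$ for non-affine $S$ is a compatible family of maps $V_j\to U_{i(j)}$ for some Zariski cover $\{V_j\}$ of $\c{S}(B)$, and there is no reason for $\{V_j\}$ to be pulled back from a cover of $\c{S}(\f{P}^\r{cu}(B))$, so ``applying the affine case to each $U_i$ and regluing'' has no cover of the target over which to glue; nor does the sketch address injectivity of $S(\f{P}^\r{cu}(B))\to S(B)$, which is not formal since $\c{S}(B)\to\c{S}(\f{P}^\r{cu}(B))$ is induced by a subring inclusion and is not obviously an epimorphism of schemes. By contrast, the paper's proof is a one-line appeal to the preceding theorem, implicitly granting that the Zariski-sheaf functor of a scheme preserves the relevant limits (the inverse limit over $\Theta$ and the equalizer); if, as your own caveat indicates, you do not wish to grant that hypothesis for non-affine $S$, your replacement argument must actually supply the factorization and injectivity above, and at present it does not.
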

In particular in case $S$ is an affine scheme regarded as the functor (\ref{2012082241}) we have
\begin{equation}\label{2012082244}
\lceil S\rceil=\r{Hom}_{\b{A}_\r{cu}}(A,\f{P}^\r{cu}(?)).
\end{equation}
Thus for any affine scheme $S$ we have defined three types $\pi_0(S),\lfloor S\rfloor,\lceil S\rceil$ of homotopizations given by
(\ref{2012082236}),(\ref{2012082231}),(\ref{2012082244}).
\begin{remark}
\emph{It is clear that the concept of (co)strict homotopization can be defined for any functor $F:\b{B}\to\b{C}$ where $\b{B}$ is a category
with a compatible relation as defined in $\S$\ref{1910191623}, or more generally, for any category $\b{B}$ with a class of weak equivalences.
Also it seems that the analogues of Theorems \ref{2012081413} and \ref{2010141245} can be stated for model categories $B$
with natural cylinder and path objects \cite[$\S$I.3 and $\S$I.3a]{Baues1}.}
\end{remark}
%%%%%%%%%%%%%%%%%%%%%%%%%%%%%%%%%%%%%%%%%%%%%%%%%%%%%%%%%%%%%%%%%%%%%%%%%%%%%%%%%%%%%%%%%%%%%%%%%%%%%%%%%%%%%%%%%%%%%%%%%%
%%%%%%%%%%%%%%%%%%%%%%%%%%%%%%%%%%%%%%%%%%%%%%%%%%%%%%%%%%%%%%%%%%%%%%%%%%%%%%%%%%%%%%%%%%%%%%%%%%%%%%%%%%%%%%%%%%%%%%%%%%
%%%%%%%%%%%%%%%%%%%%%%%%%%%%%%%%%%%%%%%%%%%%%%%%%%%%%%%%%%%%%%%%%%%%%%%%%%%%%%%%%%%%%%%%%%%%%%%%%%%%%%%%%%%%%%%%%%%%%%%%%%
%%%%%%%%%%%%%%%%%%%%%%%%%%%%%%%%%%%%%%%%%%%%%%%%%%%%%%%%%%%%%%%%%%%%%%%%%%%%%%%%%%%%%%%%%%%%%%%%%%%%%%%%%%%%%%%%%%%%%%%%%%
\section{Intrinsic Singular Cohomology}\label{2009232252}
Let $\Delta:=\{\b{n}\}$ denote the category of finite ordinals $\b{n}:=\{0<\ldots<n\}$. The standard simplicial algebra
\cite{CortinasThom1,Garkusha1} $\bb{F}[\Delta]\in\r{sim}\text{-}\b{A}_\r{cu}$ is given by the following assignments:
$$\b{n}\mapsto\bb{F}[\Delta_n]:=\bb{F}[x_0,\ldots,x_n]/\langle1-\sum_{i=0}^nx_i\rangle,\hspace{7mm}
(\alpha:\b{n}\to\b{m})\mapsto([x_i]\mapsto\sum_{j\in\alpha^{-1}(i)}[x_j]).$$
For $*\in\{\r{nc,c,cr,u,cu,cur}\}$, consider the functors
$$\f{S}^*:=\f{M}^*(?,\bb{F}[\Delta])\hspace{3mm}\text{and}\hspace{3mm}\underleftarrow{\f{S}}^*:=\underleftarrow{\lim}\f{M}^*(?,\bb{F}[\Delta])$$
given by:
\begin{equation*}%\label{2009231041}
\f{S}^*:\b{A}_\r{nc}/\b{A}_\r{u}\to\r{cosim}\text{-}\r{pro}\text{-}\b{A}_*\hspace{5mm}
A\mapsto\big(\b{n}\mapsto\f{M}^*(A,\bb{F}[\Delta_n])\big)
\end{equation*}
\begin{equation*}%\label{2009231042}
\underleftarrow{\f{S}}^*:\b{A}_\r{nc}/\b{A}_\r{u}\to\r{cosim}\text{-}\b{A}_*\hspace{5mm}
A\mapsto\big(\b{n}\mapsto\underleftarrow{\lim}\f{M}^*(A,\bb{F}[\Delta_n])\big)
\end{equation*}
Let $F:\b{A}_*\to\b{Ab}$ be an arbitrary functor. For any $A\in\b{A}_\r{nc}/\b{A}_\r{u}$, we have the cosimplicial abelian groups
$F\underleftarrow{\f{S}}^*(A)$ and $\underleftarrow{\lim}F\f{S}^*(A)$.
Let $\f{N}:\r{cosim}\text{-}\b{Ab}\to\b{coChain}$ denote the normalization functor. We let (see \cite[$\S$ 8.4]{Weibel2}):
$$\ov{\c{H}}^n_{F}(A):=\r{H}^n\f{N}F\underleftarrow{\f{S}}^*(A)\cong\pi^n(F\underleftarrow{\f{S}}^*(A),0)$$
$$\underline{\c{H}}^n_{F}(A):=\r{H}^n\f{N}\underleftarrow{\lim}F\f{S}^*(A)\cong\pi^n(\underleftarrow{\lim}F\f{S}^*(A),0)$$
It is clear that $\ov{\c{H}}^n_{F}$ and $\underline{\c{H}}^n_{F}$ may be considered as the functors
$$\ov{\c{H}}^n_{F},\underline{\c{H}}^n_{F}:\b{A}_\r{nc}/\b{A}_\r{u}\to\b{Ab}.$$
We have a canonical natural transformation $\underleftarrow{\f{S}}^*\to\f{S}^*$ induced by the inverse limits.
This, in turn,  induces the canonical natural transformations
$$F\underleftarrow{\f{S}}^*\to\underleftarrow{\lim}F\f{S}^*\hspace{3mm}\text{and (hence)}\hspace{3mm}\ov{\c{H}}^n_{F}\to\underline{\c{H}}^n_{F}.$$
\begin{theorem}\label{2011101935}
\emph{The functors $\ov{\c{H}}^n_{F}$ and $\underline{\c{H}}^n_{F}$ are homotopy invariant.}
\end{theorem}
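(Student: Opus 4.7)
The plan is to prove the stronger statement that any elementary homotopy in $\b{A}_*$ induces a cosimplicial homotopy between the corresponding morphisms of cosimplicial abelian groups $F\underleftarrow{\f{S}}^*(-)$ (and likewise for $\underleftarrow{\lim}F\f{S}^*(-)$); since the normalization functor $\f{N}$ transforms cosimplicial homotopies into chain homotopies of cochain complexes of abelian groups (the Dold--Kan-type fact recalled in Weibel, \S 8.4), this will yield $\ov{\c{H}}^n_F(f) = \ov{\c{H}}^n_F(g)$ and $\underline{\c{H}}^n_F(f) = \underline{\c{H}}^n_F(g)$ for elementary-homotopic $f,g$, hence for all algebraically homotopic $f,g$. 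Equivalently, by Lemma \ref{2010111932}(iii), it suffices to handle the evaluations $\r{p}_0,\r{p}_1 : A[x]\to A$, but I carry out the construction for a general elementary homotopy for naturality.

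The combinatorial input is the algebraic prism decomposition. For each $n$ and each $0\le j\le n$, dualizing the inclusion of the $j$-th top-dimensional simplex of the prism $\Delta^n\times\Delta^1$ yields a unital commutative algebra morphism
$$s^n_j : \bb{F}[\Delta_n]\ot\bb{F}[\Delta_1]\longrightarrow\bb{F}[\Delta_{n+1}],$$
given on generators by $y_k\ot 1\mapsto x_k$ for $k<j$, $y_j\ot 1\mapsto x_j+x_{j+1}$, $y_k\ot 1\mapsto x_{k+1}$ for $k>j$, $1\ot z_0\mapsto x_0+\cdots+x_j$ and $1\ot z_1\mapsto x_{j+1}+\cdots+x_{n+1}$; a direct check shows that the defining relations $\sum_i x_i=1$ are respected.

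Now, given an elementary homotopy $H:A\to B\ot\bb{F}[\Delta_1]$ with $\r{p}_0 H=f$, $\r{p}_1 H=g$, combine $H$ with $s^n_j$ and the universal morphism $\Upsilon^*_{B,\bb{F}[\Delta_n]}$ to form a pro-morphism
$$\mu^n_j : A\xrightarrow{H} B\ot\bb{F}[\Delta_1]\xrightarrow{\Upsilon\ot\r{id}}\bb{F}[\Delta_n]\ot\f{M}(B,\bb{F}[\Delta_n])\ot\bb{F}[\Delta_1]\xrightarrow{(s^n_j\ot\r{id})\circ\sigma}\bb{F}[\Delta_{n+1}]\ot\f{M}(B,\bb{F}[\Delta_n]),$$
where $\sigma$ swaps the last two tensor factors. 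Via the adjunction of Theorem \ref{2011302233}, $\mu^n_j$ transposes to a pro-algebra morphism $\bar\mu^n_j:\f{M}(A,\bb{F}[\Delta_{n+1}])\to\f{M}(B,\bb{F}[\Delta_n])$. Taking $\underleftarrow{\lim}$ and then applying $F$ (or, dually, applying $F$ first and then $\underleftarrow{\lim}$, for the $\underline{\c{H}}^n_F$-variant) yields the candidate cosimplicial-homotopy maps
$$h^j_n := F(\underleftarrow{\lim}\bar\mu^n_j):F\underleftarrow{\f{S}}^*_{n+1}(A)\longrightarrow F\underleftarrow{\f{S}}^*_n(B).$$

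The only nontrivial step is to verify that $\{h^j_n\}$ is a genuine cosimplicial homotopy between $F\underleftarrow{\f{S}}^*(f)$ and $F\underleftarrow{\f{S}}^*(g)$. By the naturality of the adjunction together with the functoriality of $\underleftarrow{\lim}$ and $F$, the required identities reduce to relations among the $s^n_j$, the simplicial face and degeneracy maps of $\bb{F}[\Delta]$, and the two vertex inclusions $\bb{F}[\Delta_0]\to\bb{F}[\Delta_1]$ corresponding to $\r{p}_0$ and $\r{p}_1$. These are precisely the algebraic translations of the classical prism identities on $\Delta^n\times\Delta^1$, and follow by a direct computation on the polynomial generators. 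I expect this combinatorial bookkeeping to be the principal technical obstacle; once it is carried out, the Dold--Kan-type fact invoked at the start closes the proof for both $\ov{\c{H}}^n_F$ and $\underline{\c{H}}^n_F$.
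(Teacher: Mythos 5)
Your proposal is correct and follows essentially the same route as the paper: the same algebraic prism decomposition (your $s^n_j$ are the paper's $\phi^n_i$ up to the identification $\bb{F}[\Delta_1]\cong\bb{F}[x]$), transported through $\f{M}$ by the adjunction (the paper packages this transpose as an auxiliary pro-morphism $\Gamma_n$), with the alternating sum of your cosimplicial-homotopy components being exactly the paper's prism operator $P^n$. Both arguments defer the same combinatorial verification of the prism identities.
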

The proof is an adaptation of the usual proof of homotopy invariance of singular cohomology of topological spaces by using prism operators.
See \cite[Theorem 2.10]{Hatcher1}.
\begin{proof}
For every $0\leq i\leq n$, let $\phi_i^n:\bb{F}[\Delta_n]\ot\bb{F}[x]\to\bb{F}[\Delta_{n+1}]$ be defined by
$[x_j]\mapsto[x_j]$ if $j<i$; $[x_i]\mapsto[x_i+x_{i+1}]$; $[x_j]\mapsto[x_{j+1}]$ if $j>i$; and $x\mapsto[x_{i+1}+\cdots+x_{n+1}]$.
For every algebra $B$ let
$$\Gamma_n:\f{M}(B\ot\bb{F}[x],\bb{F}[\Delta_n]\ot\bb{F}[x])\to\f{M}(B,\bb{F}[\Delta_n])$$
be the unique pro-morphism satisfying
$$(\r{id}_{\bb{F}[\Delta_n]}\ot\r{flip})(\Upsilon_{B,\bb{F}[\Delta_n]}\ot\r{id}_{\bb{F}[x]})=
(\r{id}_{\bb{F}[\Delta_n]}\ot\r{id}_{\bb{F}[x]}\ot\Gamma_n)\Upsilon_{B\ot\bb{F}[x],\bb{F}[\Delta_n]\ot\bb{F}[x]}.$$
Suppose that $H:A\to B\ot\bb{F}[x]$ is an elementary homotopy from $f$ to $g$. Let
$$\hat{\phi}^n_i:\underleftarrow{\lim}\f{M}(A,\bb{F}[\Delta_{n+1}])\to\underleftarrow{\lim}\f{M}(B,\bb{F}[\Delta_n])$$
be given by
$$\hat{\phi}^n_i:=\underleftarrow{\lim}\big(\Gamma_n\f{M}(H,\bb{F}[\Delta_n]\ot\bb{F}[x])\f{M}(A,\phi^n_i)\big),$$
and let the prism operator
$$P^{n}:F\big(\underleftarrow{\lim}\f{M}(A,\bb{F}[\Delta_{n+1}])\big)\to F\big(\underleftarrow{\lim}\f{M}(B,\bb{F}[\Delta_n])\big)$$
be given by $P^{n}:=\sum_{i=0}^n(-1)^iF(\hat{\phi}^n_i)$. Then, it can be checked that
$(P^n)_n$ is a cochain homotopy between
$\f{N}F\underleftarrow{\f{S}}(f)$ and $\f{N}F\underleftarrow{\f{S}}(g)$, and hence $\ov{\c{H}}^n_{F}(f)=\ov{\c{H}}^n_{F}(g)$.
Thus $\ov{\c{H}}^n_{F}$ is homotopy invariant. Homotopy invariance of $\underline{\c{H}}^n_{F}$ is obtained similarly.
\end{proof}
\begin{theorem}\label{2011101934}
\emph{We have the following natural isomorphism of abelian groups: $$\ov{\c{H}}^0_{F}(A)\cong\lceil F\rceil(A)\hspace{10mm}(A\in\b{A}_*)$$}
\end{theorem}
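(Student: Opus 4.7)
The plan is to verify that $\ov{\c{H}}^0_F$ satisfies the defining universal property of the costrict homotopization $\lceil F\rceil$; uniqueness of universal objects will then deliver the asserted natural isomorphism.

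First, I unpack $\ov{\c{H}}^0_F(A)$. Using $\bb{F}[\Delta_0]=\bb{F}$ and $\bb{F}[\Delta_1]\cong\bb{F}[x]$, the two lowest terms of the cosimplicial algebra $\underleftarrow{\f{S}}^*(A)$ are $\underleftarrow{\lim}\f{M}^*(A,\bb{F})\cong A$ and $\underleftarrow{\lim}\f{M}^*(A,\bb{F}[x])$, and unwinding the cosimplicial structure shows that the two coface maps between them are precisely the morphisms $\f{p}_0,\f{p}_1$ of Definition~\ref{2010231857}. Since $\r{H}^0$ of the Moore complex of a cosimplicial abelian group is the equalizer of its two degree-one coface maps, $\ov{\c{H}}^0_F(A)$ is the equalizer in $\b{Ab}$ of $F(\f{p}_0),F(\f{p}_1):F(A)\to F(\underleftarrow{\lim}\f{M}^*(A,\bb{F}[x]))$. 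In particular the equalizer inclusion $\alpha:\ov{\c{H}}^0_F\hookrightarrow F$ is a natural transformation, and Theorem~\ref{2011101935} ensures that $\ov{\c{H}}^0_F$ is homotopy invariant.

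It remains to verify the universal property of $\lceil F\rceil$ for $\ov{\c{H}}^0_F$. Given any homotopy invariant $G:\b{A}_*\to\b{Ab}$ and natural transformation $\beta:G\to F$, I must produce a unique $\ov{\beta}:G\to\ov{\c{H}}^0_F$ with $\alpha\ov{\beta}=\beta$. Uniqueness is automatic since $\alpha$ is monic, so the content is that $\beta(A)$ factors through $\ov{\c{H}}^0_F(A)$, equivalently that $F(\f{p}_0)\beta(A)=F(\f{p}_1)\beta(A)$. The key input is the observation already exploited inside the proof of Theorem~\ref{2010141245}: for every index $\theta$ of the defining pro-system of $\f{M}^*(A,\bb{F}[x])$, the structural morphism $\Upsilon\theta:A\to\f{M}\theta\ot\bb{F}[x]=\f{M}\theta[x]$ is an \emph{explicit} elementary homotopy in $\b{A}_*$ between $\f{p}_0^\theta:=\r{p}_0\Upsilon\theta$ and $\f{p}_1^\theta:=\r{p}_1\Upsilon\theta$. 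Homotopy invariance of $G$ then yields $G(\f{p}_0^\theta)=G(\f{p}_1^\theta)$ for every $\theta$, and applying naturality of $\beta$ to these level-wise equalities forces $F(\f{p}_0)\beta(A)=F(\f{p}_1)\beta(A)$ by the same assembly argument that appears inside the proof of Theorem~\ref{2010141245}.

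The main technical obstacle is precisely this last assembly step: transporting the componentwise equalities at every $\theta$ into a single equality whose codomain is $F(\underleftarrow{\lim}\f{M}^*(A,\bb{F}[x]))$ rather than the (a priori coarser) $\underleftarrow{\lim}F\f{M}^*(A,\bb{F}[x])$ that appeared in the construction of $\lceil F\rceil$. It is handled by postcomposing with the canonical comparison $F(\underleftarrow{\lim}\f{M})\to\underleftarrow{\lim}F(\f{M})$ and tracking naturality of $\beta$, just as in the cosimplicial/equalizer analysis from the proof of Theorem~\ref{2010141245}. Once the universal property is confirmed for $\ov{\c{H}}^0_F$, uniqueness of the costrict homotopization produces the required natural isomorphism $\ov{\c{H}}^0_F(A)\cong\lceil F\rceil(A)$.
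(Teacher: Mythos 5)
Your strategy---verify that $\ov{\c{H}}^0_F$ satisfies the defining universal property of $\lceil F\rceil$ and then invoke uniqueness---is genuinely different from the paper's, which simply observes that $\ov{\c{H}}^0_F(A)$, being $\r{H}^0$ of a Moore complex, is the equalizer of the two bottom coface maps, and identifies this with the equalizer used to \emph{construct} $\lceil F\rceil(A)$ in the proof of Theorem \ref{2010141245}. Your first paragraph, unpacking $\ov{\c{H}}^0_F(A)$ as the equalizer of $F(\f{p}_0),F(\f{p}_1):F(A)\to F(\underleftarrow{\lim}\f{M}^*(A,\bb{F}[x]))$ and noting homotopy invariance via Theorem \ref{2011101935}, is correct and is essentially all the paper says.

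The gap is exactly at the step you flag as the main obstacle, and your proposed resolution does not close it. The argument inside the proof of Theorem \ref{2010141245} yields $\hat{\f{p}}_0\beta(A)=\hat{\f{p}}_1\beta(A)$, an equality with codomain $\underleftarrow{\lim}F(\f{M}^*(A,\bb{F}[x]))$; what you need is $F(\f{p}_0)\beta(A)=F(\f{p}_1)\beta(A)$, with codomain $F(\underleftarrow{\lim}\f{M}^*(A,\bb{F}[x]))$. Writing $c$ for the canonical comparison $F(\underleftarrow{\lim}\f{M}^*(A,\bb{F}[x]))\to\underleftarrow{\lim}F(\f{M}^*(A,\bb{F}[x]))$, one checks that $\hat{\f{p}}_i=c\circ F(\f{p}_i)$, so the componentwise equalities give only $c\,F(\f{p}_0)\beta(A)=c\,F(\f{p}_1)\beta(A)$: postcomposing with $c$ runs in the wrong direction and cannot recover the equality before postcomposition unless $c$ is monic, which is not available for an arbitrary $F:\b{A}_*\to\b{Ab}$. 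The other natural route---use naturality of $\beta$ along $\f{p}_i$ to reduce to $G(\f{p}_0)=G(\f{p}_1)$ and invoke homotopy invariance of $G$---also stalls: the candidate elementary homotopy $\underleftarrow{\lim}\Upsilon^*_{A,\bb{F}[x]}$ has components $\Upsilon\theta(a)\in\f{M}\theta[x]$ whose $x$-degrees grow with $\theta$, so it lands in $\underleftarrow{\lim}(\f{M}\theta[x])$ rather than in $(\underleftarrow{\lim}\f{M}\theta)[x]$, and hence does not exhibit $\f{p}_0\,\r{h}\,\f{p}_1$ as morphisms into $\underleftarrow{\lim}\f{M}^*(A,\bb{F}[x])$. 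What your argument actually establishes is $\underline{\c{H}}^0_F(A)\cong\lceil F\rceil(A)$ (these are literally the same equalizer); to reach the stated theorem you must in addition show that the canonical inclusion $\ov{\c{H}}^0_F(A)\subseteq\underline{\c{H}}^0_F(A)$ is onto, and that is precisely the point your proof leaves open.
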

\begin{proof}It follows from the proof of Theorem \ref{2010141245} and definition of $\ov{\c{H}}^0_{F}(A)$.
(Note that we are required to have $A\cong\f{M}^*_{A,\bb{F}[\Delta_0]}$. Thus $A$ must be in $\b{A}_*$.)\end{proof}
So, the restricted functors $$\ov{\c{H}}^n_{F}:\b{A}_*\to\b{Ab}$$ may be
interpreted as a sort of \emph{higher} costrict homotopizations of $F:\b{A}_*\to\b{Ab}$.

In case $F$ is the identity functor $\r{id}:\b{A}_*\to\b{A}_*$, we put $$\c{H}^n_*(A):=\ov{\c{H}}^n_{F}(A)=\underline{\c{H}}^n_{F}(A).$$
Thus we have defined the homotopy invariant functors$$\c{H}^n_\r{nc},\c{H}^n_\r{c},\c{H}^n_\r{cr}:\b{A}_\r{nc}\to\b{Vec}\hspace{3mm}\text{and}
\hspace{3mm}\c{H}^n_\r{u},\c{H}^n_\r{cu},\c{H}^n_\r{cur}:\b{A}_\r{u}\to\b{Vec}.$$
From (\ref{2011111033}) we have the natural transformations
$$\c{H}^n_\r{nc}\to\c{H}^n_\r{c}\to\c{H}^n_\r{cr}\hspace{3mm}\text{and}\hspace{3mm}\c{H}^n_\r{u}\to\c{H}^n_\r{cu}\to\c{H}^n_\r{cur}.$$
For any algebra $A$ it can be easily checked that
\begin{equation}\label{2011112226}
\c{H}^0_\r{nc}(A)=\f{P}^\r{nc}(A)=0,\hspace{1mm}\c{H}^0_\r{c}(A)=\f{P}^\r{c}(A_\r{com}),\hspace{1mm}
\c{H}^0_\r{cr}(A)=\f{P}^\r{cr}((A_\r{com})_\r{red})
\end{equation}
Analogous identities are satisfied for unital cases. By Theorem \ref{2010312353} we have
\begin{equation}\label{2011112227}
\c{H}^0_\r{c}(A)=\c{H}^0_\r{cu}(A),\hspace{2mm}\c{H}^0_\r{cr}(A)=\c{H}^0_\r{cur}(A)\hspace{5mm}(A\in\b{A}_\r{u}).
\end{equation}
We now define a cup product between the cocycles:\\
Let the morphisms $\phi_{n,m},\psi_{n,m}$ be given by
$$\phi_{n,m}:\bb{F}[\Delta_{n+m}]\to\bb{F}[\Delta_{n}],\hspace{2mm}[x_i]\mapsto[x_i]\hspace{1mm}\text{if}
\hspace{1mm}0\leq i\leq n\hspace{1mm}\text{and}\hspace{1mm}[x_i]\mapsto0\hspace{1mm}\text{otherwise}$$
$$\psi_{n,m}:\bb{F}[\Delta_{n+m}]\to\bb{F}[\Delta_{m}],\hspace{2mm}[x_{n+i}]\mapsto[x_{i}]\hspace{1mm}\text{if}
\hspace{1mm}0\leq i\leq m\hspace{1mm}\text{and}\hspace{1mm}[x_i]\mapsto0\hspace{1mm}\text{otherwise}$$
We let the cup product of the cochains $$c_n\in\underleftarrow{\lim}\f{M}^*_{A,\bb{F}[\Delta_n]}\hspace{3mm}\text{and}\hspace{3mm} c_m\in\underleftarrow{\lim}\f{M}^*_{A,\bb{F}[\Delta_m]}$$ be given by
$$c_n\smallsmile c_m:=\big(\underleftarrow{\lim}\f{M}^*_{A,\phi_{n,m}}(c_n)\big)\big(\underleftarrow{\lim}\f{M}^*_{A,\psi_{n,m}}(c_m)\big)
\in\underleftarrow{\lim}\f{M}^*_{A,\bb{F}[\Delta_{n+m}]}.$$
It can be checked that $\smallsmile$ makes the graded vector space $\oplus_{n=0}^\infty\underleftarrow{\lim}\f{M}^*_{A,\bb{F}[\Delta_n]}$
to a DGCA. Hence, $\smallsmile$ can be considered as a multiplication on
$$\c{H}_*(A):=\oplus_{n=0}^\infty\c{H}_*^n(A),$$
making it to a GCA.

For any affine scheme $S=\c{S}(A)$ ($A\in\b{A}_\r{cu}$) the \emph{intrinsic singular cohomology algebra} of $S$ is defined to be the GCA given by
$$\c{H}_\r{sing}(S)=\oplus_{n=0}^\infty\c{H}_\r{sing}^n(S):=\c{H}_\r{cu}(A).$$
Thus we have defined a $\bb{A}^1$-homotopy invariant cohomology theory
$$\c{H}_\r{sing}:\b{Aff}^\r{op}\to\b{A}_\r{u}.$$
We end this section by the following result.
\begin{theorem}
\emph{\textbf{de Rham Theorem at degree zero.} Suppose $\r{Char}(\bb{F})=0$. For any affine scheme $S$ the graded-algebra morphism
$\c{H}_\r{deR}(S)\to\c{H}_\r{sing}(S)$ induced by the universal property of de Rham complex is an isomorphism at degree zero:
$$\c{H}^0_\r{deR}(S)\cong\c{H}^0_\r{sing}(S).$$}
\end{theorem}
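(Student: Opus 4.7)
The plan is to identify both $\c{H}^0_\r{deR}(S)$ and $\c{H}^0_\r{sing}(S)$ with the subalgebra $\f{P}^\r{cu}(A)\subseteq A$ (where $S=\c{S}(A)$) and then to observe that under these identifications the map induced by the universal property of $\Omega(A)$ is simply the identity. Every ingredient has already been assembled in the previous sections, so the proof is mostly a matter of piecing them together.

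First I would compute the target. By definition $\c{H}^0_\r{sing}(S)=\c{H}^0_\r{cu}(A)=\ov{\c{H}}^0_{\r{id}}(A)$. Theorem \ref{2011101934} supplies a natural isomorphism $\ov{\c{H}}^0_{\r{id}}(A)\cong\lceil\r{id}\rceil(A)$, and the theorem stated right after Theorem \ref{2010141245} identifies $\lceil\r{id}_{\b{A}_\r{cu}}\rceil$ with $\f{P}^\r{cu}$. Concretely, the degree-zero part of the cosimplicial algebra $\underleftarrow{\f{S}}^\r{cu}(A)$ is $\underleftarrow{\lim}\f{M}^\r{cu}_{A,\bb{F}[\Delta_0]}\cong A$ via Theorem \ref{2009080009}(i), and unwinding the cofaces $\delta^0,\delta^1\colon\b{0}\to\b{1}$ in $\Delta$ under the identification $\bb{F}[\Delta_1]\cong\bb{F}[x]$ shows that the two induced cosimplicial face maps $d^0,d^1\colon A\to\underleftarrow{\lim}\f{M}^\r{cu}_{A,\bb{F}[x]}$ coincide with $\hat{\f{p}}_1,\hat{\f{p}}_0$ of Definition \ref{2010231857}. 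Hence the kernel of the Moore differential $\partial^0=d^0-d^1$ is exactly $\f{P}^\r{cu}(A)$ viewed as a subalgebra of $A$. For the source side, the proof of Theorem \ref{2010311847} shows directly, under the hypothesis $\r{Char}(\bb{F})=0$, that $\ker(d\colon A\to\Omega^1(A))=\f{P}^\r{cu}(A)$, and $\c{H}^0_\r{deR}(A)$ is this kernel by definition.

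Finally I would check that the canonical morphism aligns. The graded vector space $\bigoplus_n\underleftarrow{\lim}\f{M}^\r{cu}_{A,\bb{F}[\Delta_n]}$ equipped with the cup product $\smallsmile$ and the Moore differential is a unital DGCA whose degree-zero part is $A$. Applying the universal property of $\Omega(A)$ to the identity $\r{id}\colon A\to A$ produces a unique DGCA morphism $\Omega(A)\to\bigoplus_n\underleftarrow{\lim}\f{M}^\r{cu}_{A,\bb{F}[\Delta_n]}$ whose degree-zero component is $\r{id}_A$. Passing to $\r{H}^0$, the induced morphism $\c{H}^0_\r{deR}(S)\to\c{H}^0_\r{sing}(S)$ becomes the identity on $\f{P}^\r{cu}(A)$, which is the desired isomorphism. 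The only non-formal step is the unwinding of the cosimplicial faces in terms of $\hat{\f{p}}_0$ and $\hat{\f{p}}_1$; after that every remaining assertion is a direct invocation of a previously established theorem, so this small computation is the main place where care is needed.
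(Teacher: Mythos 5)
Your proposal is correct and follows essentially the same route as the paper: both sides are identified with $\f{P}^\r{cu}(A)$, the de Rham side via Theorem \ref{2010311847} and the singular side via the computation $\c{H}^0_\r{cu}(A)=\f{P}^\r{cu}(A)$ (identities (\ref{2011112226}) and (\ref{2011112227}) in the paper, which your unwinding of the cofaces as $\hat{\f{p}}_1,\hat{\f{p}}_0$ reproves). Your additional check that the universal-property comparison morphism restricts to the identity on $\f{P}^\r{cu}(A)$ is left implicit in the paper but is a worthwhile inclusion.
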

\begin{proof}
It follows from Theorem \ref{2010311847}, and identities (\ref{2011112226}) and (\ref{2011112227}).
\end{proof}
%%%%%%%%%%%%%%%%%%%%%%%%%%%%%%%%%%%%%%%%%%%%%%%%%%%%%%%%%%%%%%%%%%%%%%%%%%%%%%%%%%%%%%%%%%%%%%%%%%%%%%%%%%%%%%%%%%%%%%%%%%
%%%%%%%%%%%%%%%%%%%%%%%%%%%%%%%%%%%%%%%%%%%%%%%%%%%%%%%%%%%%%%%%%%%%%%%%%%%%%%%%%%%%%%%%%%%%%%%%%%%%%%%%%%%%%%%%%%%%%%%%%%
%%%%%%%%%%%%%%%%%%%%%%%%%%%%%%%%%%%%%%%%%%%%%%%%%%%%%%%%%%%%%%%%%%%%%%%%%%%%%%%%%%%%%%%%%%%%%%%%%%%%%%%%%%%%%%%%%%%%%%%%%%
%%%%%%%%%%%%%%%%%%%%%%%%%%%%%%%%%%%%%%%%%%%%%%%%%%%%%%%%%%%%%%%%%%%%%%%%%%%%%%%%%%%%%%%%%%%%%%%%%%%%%%%%%%%%%%%%%%%%%%%%%%
\section{Generalized Morphisms Between Algebras}\label{2008311351}
For any algebra $A$ denote by $\f{I}(A)$ (resp. $\f{I}_\ell(A),\f{I}_\r{r}(A)$) the set of all ideals (resp. left, right ideals) of $A$.
Also, let $\f{I}_*(A)\subseteq\f{I}(A)$ where $*\in\{\r{prpr,fcodim,prm,max}\}$ denote respectively the subset of proper, proper with finite
codimension, prime, and maximal ideals of $A$. $\f{I}$ may be considered as a functor $\b{A}_\r{nc}^\r{op}\to\b{Set}$ in the obvious way.
Similarly, we have the functors $\f{I}_\ell,\f{I}_\r{r}:\b{A}_\r{nc}^\r{op}\to\b{Set}$,
$\f{I}_\r{prpr},\f{I}_\r{fcodim}:\b{A}_\r{u}^\r{op}\to\b{Set}$, and $\f{I}_\r{prm}:\b{A}_\r{cu}^\r{op}\to\b{Set}$, and in case $\bb{F}$ is
algebraically closed, the functor $\f{I}_\r{max}:\b{A}_\r{cufg}^\r{op}\to\b{Set}$. Thus by the canonical extension we have also the functors
$$\f{I},\f{I}_\ell,\f{I}_\r{r}:\r{pro}\text{-}\b{A}_\r{nc}^\r{op}\to\r{ind}\text{-}\b{Set},\hspace{2mm}
\f{I}_\r{prpr},\f{I}_\r{fcodim}:\r{pro}\text{-}\b{A}_\r{u}^\r{op}\to\r{ind}\text{-}\b{Set}$$
$$\f{I}_\r{prm}:\r{pro}\text{-}\b{A}_\r{cu}^\r{op}\to\r{ind}\text{-}\b{Set},\hspace{3mm}
\f{I}_\r{max}:\r{pro}\text{-}\b{A}_\r{cufg}^\r{op}\to\r{ind}\text{-}\b{Set}.$$
For $A\in\r{pro}\text{-}\b{A}_\r{nc}$ \emph{we call} any member of the set $\underrightarrow{\lim}\f{I}(A)$ a pro-ideal of $A$.
We need some facts and definitions: Let $A=(A_i,\phi_{i'i})$ be a pro-algebra. (i) Any pro-ideal $T$ of $A$ is a class $[T_i]$ of the equivalence
relation $\sim$ on $\dot{\cup}_i\f{I}(A_i)$ defined by $$\big(T_i\sim T_{i'}\big)\Leftrightarrow\big(\exists i''\geq i',i:\phi_{i''i}^{-1}(T_i)=\phi_{i''i'}^{-1}(T_{i'})\big)\hspace{2mm}\text{for}\hspace{2mm}T_i\in\f{I}(A_i),T_{i'}\in\f{I}(A_{i'}).$$ (ii) For any
pro-morphism $f$ from $A$ into an algebra $B$ we let $\ker(f)$ denote the pro-ideal $[\underrightarrow{\lim}\f{I}(f)](0)$ of $A$ where $0$ denotes the
zero-ideal of $B$. (Note that the notion of zero pro-ideal is meaningless.) (iii) For pro-ideals $T=[T_i],T'=[T'_{i'}]$ of $A$ we let the pro-ideal
$T\cap T'$ be defined by $[\phi_{i''i}^{-1}(T_i)\cap\phi_{i''i'}^{-1}(T'_{i'})]$ where $i''$ is an arbitrary index greater than $i$ and $i'$. It is
easily verified that the \emph{intersection} $T\cap T'$ is well-defined. We write $T\subseteq T'$ and say that $T'$ includes $T$ if there exists
$i''\geq i,i'$ such that $\phi_{i''i}^{-1}(T_i)\subseteq\phi_{i''i'}^{-1}(T'_{i'})$. It can be checked that \emph{inclusion} is a well-defined partial ordering on pro-ideals.\\
In (iv)-(vii) below suppose $A=(A_i,\phi_{i'i}),B=(B_j,\psi_{j'j})$ are pro-algebras such that all structural morphisms $\phi_{i'i},\psi_{j'j}$ are
surjective: (iv) If $T_i\in\f{I}(A_i)$ and $T_{i'}\in\f{I}(A_{i'})$ represent the same pro-ideal $T$ of $A$, then $$A_i/T_i\cong
A_{i''}/\phi_{i''i}^{-1}(T_i)=A_{i''}/\phi_{i''i'}^{-1}(T_{i'})\cong A_{i'}/T_{i'}\hspace{5mm}(\text{for some}\hspace{2mm}i''\geq i,i').$$
Thus we can define the associated \emph{quotient algebra} up to isomorphism by $A/T:=A_i/T_i$.
Note that we have a canonical quotient pro-morphism $\r{q}_T:A\to A/T$ represented by the quotient morphism $\r{q}_{T_i}:A_i\to A_i/T_i$.
Also, $T=\ker(\r{q}_T)$. (v) It is clear that if for every indexes $i,i'$ the algebras $A_i,A_{i'}$ are considered as ideals in themselves then
$[A_i]=[A_{i'}]$. We denote the pro-ideal $[A_i]$ of $A$ by $A_\r{ideal}$. (vi) For pro-ideals $T=[T_i],T'=[T'_{i'}]$ of $A$ we let the pro-ideal
$T+T'$ be defined by $[\phi_{i''i}^{-1}(T_i)+\phi_{i''i'}^{-1}(T'_{i'})]$ where $i''$  is an arbitrary index with $i''\geq i,i'$.
The \emph{sum} $T+T'$ is well-defined. (vii) For pro-ideals $T=[T_i],S=[S_j]$ of $A,B$ we associate a pro-ideal of $A\ot B$ given by
$$T\ot B_\r{ideal}+A_\r{ideal}\ot S:=[T_i\ot B_j+A_i\ot S_j]$$
It is easily checked that this is well-defined. ($T\ot S:=[T_i\ot S_j]$ is not well-defined.)

We define a category $\bar{\b{A}}_\r{nc}$ as follows: The objects of $\bar{\b{A}}_\r{nc}$ are those of $\b{A}_\r{nc}$, and
$$\r{Hom}_{\bar{\b{A}}_\r{nc}}(A,B):=\underrightarrow{\lim}\f{I}(\f{M}^\r{nc}_{A,B}).$$
The composition $\circ:\r{Hom}_{\bar{\b{A}}_\r{nc}}(B,C)\times\r{Hom}_{\bar{\b{A}}_\r{nc}}(A,B)\to\r{Hom}_{\bar{\b{A}}_\r{nc}}(A,C)$ is given by
$$\underrightarrow{\lim}\f{I}(\f{M}_{B,C})\times\underrightarrow{\lim}\f{I}(\f{M}_{A,B})\to
\underrightarrow{\lim}\f{I}(\f{M}_{B,C}\ot\f{M}_{A,B})\to\underrightarrow{\lim}\f{I}(\f{M}_{A,C})$$ where the first map is given by
\begin{equation*}\label{2009102230}
(T,S)\mapsto T\ot(\f{M}_{A,B})_\r{ideal}+(\f{M}_{B,C})_\r{ideal}\ot S\end{equation*}
and the second map is $\underrightarrow{\lim}\f{I}(\Phi_{A,B,C})$. Equivalently, we may let
\begin{equation*}\label{2009102231}
T\circ S:=\ker[(\r{q}_T\ot\r{q}_S)\Phi_{A,B,C}].\end{equation*}
Associativity of $\circ$ follows from (\ref{2009061331}). For any morphism $f:A\to B$ in $\b{A}_\r{nc}$ denote by $\ov{f}\in\r{Pnt}(\f{M}_{A,B})$ the
point associated with $f$. By Corollary \ref{2012011947} the assignment $f\mapsto\ker(\ov{f})$ defines an
embedding of $\r{Hom}_{\b{A}_\r{nc}}(A,B)$ into $\r{Hom}_{\bar{\b{A}}_\r{nc}}(A,B)$. We have
$$\r{id}_{\f{M}_{A,B}}=\bigg(\xymatrix{\f{M}_{A,B}\ar[r]^-{\Phi_{A,A,B}}&\f{M}_{A,B}\ot\f{M}_{A,A}\ar[rr]^-{\r{id}_{\f{M}_{A,B}}\ot\ov{\r{id}}_A}&&
\f{M}_{A,B}\ot\bb{F}\cong\f{M}_{A,B}}\bigg)$$ where $\ov{\r{id}}_A\in\r{Pnt}(\f{M}_{A,A})$ denotes the pro-morphism associated to $\r{id}_A$.
Thus for any pro-ideal $T$ of $\f{M}_{A,B}$, $T\circ\ker(\ov{\r{id}}_A)=T$, and similarly, $\ker(\ov{\r{id}}_B)\circ T=T$. For morphisms $f:A\to B$
and $g:B\to C$ in $\b{A}_\r{nc}$ the identity $(\ov{g}\ot\ov{f})\Phi_{A,B,C}=\ov{gf}$ implies that $\ker(\ov{g})\circ\ker(\ov{f})=\ker(\ov{gf})$.
Hence $\bar{\b{A}}_\r{nc}$ is a category containing $\b{A}_\r{nc}$.
Similarly, we define generalized categories of algebras listed below:
\begin{enumerate}
\item[$\bullet$] For $*\in\{\ell,\r{r}\}$: $\bar{\b{A}}_\r{nc}^*\supset\b{A}_\r{nc}$ with
$$\r{Obj}(\bar{\b{A}}_\r{nc}^*):=\r{Obj}(\b{A}_\r{nc})\hspace{2mm}\text{and}\hspace{2mm}
\r{Hom}_{\bar{\b{A}}_\r{nc}^*}(A,B):=\underrightarrow{\lim}\f{I}_*(\f{M}^\r{nc}_{A,B}).$$
\item[$\bullet$] For $*\in\{\r{prpr,fcodim}\}$: $\bar{\b{A}}_\r{u}^*\supset\b{A}_\r{u}$ with
$$\r{Obj}(\bar{\b{A}}_\r{u}^*):=\r{Obj}(\b{A}_\r{u})\hspace{2mm}\text{and}\hspace{2mm}
\r{Hom}_{\bar{\b{A}}_\r{u}^*}(A,B):=\underrightarrow{\lim}\f{I}_*(\f{M}^\r{u}_{A,B}).$$
\item[$\bullet$] $\bar{\b{A}}_\r{u}^\r{prm}\supset\b{A}_\r{u}$ with
$$\r{Obj}(\bar{\b{A}}_\r{u}^\r{prm}):=\r{Obj}(\b{A}_\r{u})\hspace{2mm}\text{and}\hspace{2mm}
\r{Hom}_{\bar{\b{A}}_\r{u}^\r{prm}}(A,B):=\underrightarrow{\lim}\f{I}_\r{prm}(\f{M}^\r{cu}_{A,B}).$$
\item[$\bullet$] In case $\bb{F}$ is algebraically closed: $\bar{\b{A}}_\r{ufg}^\r{max}\supset\b{A}_\r{ufg}$ with
$$\r{Obj}(\bar{\b{A}}_\r{ufg}^\r{max}):=\r{Obj}(\b{A}_\r{ufg})\hspace{2mm}\text{and}\hspace{2mm}
\r{Hom}_{\bar{\b{A}}_\r{ufg}^\r{max}}(A,B):=\underrightarrow{\lim}\f{I}_\r{max}(\f{M}^\r{cu}_{A,B}).$$\end{enumerate}
In the following theorem we record some additional properties of these categories.
\begin{theorem}
\emph{\begin{enumerate}
\item[(i)] $\bar{\b{A}}_\r{nc}^\ell$ and $\bar{\b{A}}_\r{nc}^\r{r}$ are equivalent categories.
\item[(ii)] $\bar{\b{A}}_\r{u}^\r{prpr}\supset\bar{\b{A}}_\r{u}^\r{fcodim}$. Also $\bar{\b{A}}_\r{u}^\r{prpr}$
is equivalent to a subcategory of $\bar{\b{A}}_\r{nc}$.
\item[(iii)] $\bar{\b{A}}_\r{u}^\r{prm}$ is equivalent to a subcategory of $\bar{\b{A}}_\r{u}^\r{prpr}$.
\item[(iv)] $\bar{\b{A}}_\r{ufg}^\r{max}$ and $\b{A}_\r{ufg}$ are equivalent ($\bb{F}$ is supposed to be algebraically closed).
\end{enumerate}}\end{theorem}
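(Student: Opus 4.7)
The strategy is to exploit naturality of the pro-algebras $\f{M}^*_{A,B}$, the canonical natural transformations (\ref{2011111033}), Theorem \ref{2009072327}, Corollary \ref{2012011947}, and the defining equation of $\Phi_{A,B,C}$ to transport pro-ideals between the various $\f{M}^*$. For part (i), I will take $F:\bar{\b{A}}_\r{nc}^\ell\to\bar{\b{A}}_\r{nc}^\r{r}$ to be the functor $F(A):=\f{op}(A)$ on objects. Theorem \ref{2009072327} furnishes a natural isomorphism $\f{op}(\f{M}^\r{nc}_{A,B})\cong\f{M}^\r{nc}_{\f{op}(A),\f{op}(B)}$ that interchanges left pro-ideals of the source with right pro-ideals of the target, inducing a bijection on hom-sets; since $\f{op}$ is involutive, $F$ is its own inverse equivalence. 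The remaining point is to check that this isomorphism is compatible with the composition-defining pro-morphism $\Phi_{A,B,C}$, which follows by direct inspection of the construction of $\Phi$ from the $\Upsilon$'s.

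For part (ii), the inclusion $\bar{\b{A}}_\r{u}^\r{fcodim}\subseteq\bar{\b{A}}_\r{u}^\r{prpr}$ is tautological on hom-sets; to verify it is a subcategory I need only observe that $T\circ S=\ker[(\r{q}_T\ot\r{q}_S)\Phi_{A,B,C}]$ has finite codimension when $T,S$ do, which is immediate because $(\f{M}^\r{u}_{B,C}/T)\ot(\f{M}^\r{u}_{A,B}/S)$ is finite-dimensional. For the embedding of $\bar{\b{A}}_\r{u}^\r{prpr}$ into $\bar{\b{A}}_\r{nc}$, I will apply universality of $\f{M}^\r{nc}$ to the morphism $\Upsilon^\r{u}_{A,B}:A\to B\ot\f{M}^\r{u}_{A,B}$ (regarded in $\b{A}_\r{nc}$), producing a canonical natural pro-morphism $\alpha_{A,B}:\f{M}^\r{nc}_{A,B}\to\f{M}^\r{u}_{A,B}$, and then let the functor send $A\mapsto A$ and $T\mapsto\alpha_{A,B}^{-1}(T)$. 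Properness of the pullback will follow because the composite $\f{M}^\r{nc}_{A,B}\xrightarrow{\alpha_{A,B}}\f{M}^\r{u}_{A,B}\twoheadrightarrow\f{M}^\r{u}_{A,B}/T$ is nonzero (it recovers the nonzero unital map $A\to B\ot(\f{M}^\r{u}_{A,B}/T)$ via $\Upsilon^\r{nc}$), and compatibility with composition reduces to naturality of $\alpha$ against $\Phi_{A,B,C}$. Part (iii) is handled analogously, using the canonical surjective natural transformation $\f{M}^\r{u}\to\f{M}^\r{cu}$ of (\ref{2011111033}) (componentwise commutativization), pulling back prime pro-ideals, and invoking the standard fact that preimages of prime ideals under surjective ring homomorphisms are prime.

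For part (iv), since $\bb{F}$ is algebraically closed and each component $\f{M}^\r{cu}\theta$ of $\f{M}^\r{cu}_{A,B}$ is a finitely generated commutative unital algebra (by Theorem \ref{2009080009}(iv) together with Theorem \ref{2012091919}), Hilbert's Nullstellensatz places its maximal ideals in bijection with its $\bb{F}$-points; taking direct limits and applying Corollary \ref{2012011947} yields a natural bijection $\r{Hom}_{\bar{\b{A}}_\r{ufg}^\r{max}}(A,B)\cong\r{Pnt}_{\neq0}(\f{M}^\r{cu}_{A,B})\cong\r{Hom}_{\b{A}_\r{u}}(A,B)$, sending a maximal pro-ideal $T$ to the unique unital morphism $f:A\to B$ whose associated point $\bar{f}$ satisfies $\ker(\bar{f})=T$. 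To confirm composition is preserved, I apply $\bar{g}\ot\bar{f}$ to the defining identity $(\r{id}_C\ot\Phi_{A,B,C})\Upsilon_{A,C}=(\Upsilon_{B,C}\ot\r{id})\Upsilon_{A,B}$, recognize the right-hand side as the morphism $g\circ f:A\to C$, and conclude $(\bar{g}\ot\bar{f})\Phi_{A,B,C}=\overline{g\circ f}$, whence $T\circ S$ corresponds to $g\circ f$ under the bijection.

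The main technical obstacle, concentrated in (ii) and (iii), will be verifying that the pullback on pro-ideals is faithful. This reduces to componentwise surjectivity of $\alpha_\theta:\f{M}^\r{nc}\theta\to\f{M}^\r{u}\theta$ (the analogous map $\f{M}^\r{u}\theta\to\f{M}^\r{cu}\theta$ is commutativization and hence automatically surjective). Surjectivity of $\alpha_\theta$ should follow by writing $1_A$ as a nonunital polynomial in the generator-set $G$ (possible because $G$ generates $A$ as an algebra) and then using the unitality identity $\Upsilon^\r{u}(1_A)=1_B\ot1$ to produce $1$ as a polynomial in the $z_{a,v}$'s inside $\f{M}^\r{u}\theta$, possibly invoking Theorem \ref{2012091919} to handle the unit cleanly. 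Once this is secured, the remaining verifications are routine diagram chases through the definitions of composition in $\bar{\b{A}}^*$ and the defining equation of $\Phi_{A,B,C}$.
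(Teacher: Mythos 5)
Your proposal matches the paper's proof in all four parts: (i) via $\f{op}$ and Theorem \ref{2009072327}, (ii)--(iii) via the canonical comparison pro-morphisms $\f{M}^\r{nc}_{A,B}\to\f{M}^\r{u}_{A,B}$ and $\f{M}^\r{u}_{A,B}\to\f{M}^\r{cu}_{A,B}$ with pullback of (pro-)ideals, and (iv) via finite generation of the components of $\f{M}^\r{cu}_{A,B}$, the Nullstellensatz, and Corollary \ref{2012011947}. The only difference is that you spell out the faithfulness of the pullback (componentwise surjectivity of the comparison maps), a point the paper leaves implicit; your argument for it is correct.
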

\begin{proof}(i) It follows from Theorem \ref{2009072327} that $\b{op}:\b{A}_\r{nc}\to\b{A}_\r{nc}$ extends to an equivalence between
$\bar{\b{A}}_\r{nc}^\ell$ and $\bar{\b{A}}_\r{nc}^\r{r}$. (ii) For every $A,B\in\b{A}_\r{u}$ let $\psi_{A,B}:\f{M}^\r{nc}_{A,B}\to\f{M}^\r{u}_{A,B}$
denote the pro-morphism satisfying $\Upsilon^\r{u}_{A,B}=(\r{id}_B\ot\psi_{A,B})\Upsilon^\r{nc}_{A,B}$. The functor given by $A\mapsto A$ and
$$\xymatrix{\underrightarrow{\lim}\f{I}(\f{M}^\r{u}_{A,B})\supset\underrightarrow{\lim}\f{I}_\r{prpr}(\f{M}^\r{u}_{A,B})
\ar[rr]^--{\underrightarrow{\lim}\f{I}(\psi_{A,B})}&&\underrightarrow{\lim}\f{I}(\f{M}^\r{nc}_{A,B})}$$
embeds $\bar{\b{A}}_\r{u}^\r{prpr}$ into $\bar{\b{A}}_\r{nc}$. The proof of (iii) is similar to that of (ii).
(iv) We know that $\b{A}_\r{ufg}\subset\bar{\b{A}}_\r{ufg}^\r{max}$. Let $A,B\in\b{A}_\r{ufg}$. By Theorem \ref{2009080009}(iii),
all components of $\f{M}^\r{cu}_{A,B}$ are in $\b{A}_\r{cufg}$. Thus if $T$ is a maximal pro-ideal of $\f{M}^\r{cu}_{A,B}$ then
$\f{M}^\r{cu}_{A,B}/T\cong\bb{F}$ and hence the canonical quotient pro-morphism $q:\f{M}^\r{cu}_{A,B}\to\f{M}^\r{cu}_{A,B}/T$ can be considered
as a member of $\r{Pnt}_{\neq0}(\f{M}^\r{cu}_{A,B})$. The proof is complete.\end{proof}
We now consider some properties of the extended categories of algebras.
\begin{theorem}
\emph{Intersection (resp. sum) and inclusion of pro-ideals induce enriched category structure on $\bar{\b{A}}_\r{nc}$ by
partially ordered idempotent semigroups.}\end{theorem}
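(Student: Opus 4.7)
The plan is to unpack what enrichment over partially ordered idempotent semigroups amounts to, verify each hom-set carries the required algebraic structure, and finally prove that composition $\circ$ is a morphism of such structures in each variable. Throughout, I would work first at the level of ordinary ideals in ordinary algebras and then transport the identities to pro-ideals via the definitions in parts (i)--(vii) of the preamble to the section.

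First I would observe that for any algebra $M$, the poset $(\f{I}(M),\subseteq)$ is an idempotent, commutative, associative monoid under both $\cap$ and $+$, with the partial order compatible with each operation (if $T\subseteq T'$ and $S\subseteq S'$ then $T\cap S\subseteq T'\cap S'$ and $T+S\subseteq T'+S'$). All of these facts are preserved by taking inverse images along arbitrary algebra morphisms, so they pass to the directed colimit $\underrightarrow{\lim}\f{I}(\f{M}^\r{nc}_{A,B})$ using the representation of pro-ideals and of their intersection/sum given in (i), (iii) and (vi) of the preamble. This gives the hom-set structure.

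The core of the proof is the compatibility of $\circ$ with $\cap$ and $+$ in each variable: for $T,T'$ pro-ideals of $\f{M}_{B,C}$ and $S,S'$ pro-ideals of $\f{M}_{A,B}$, I would establish
\begin{equation*}
(T\cap T')\circ S=(T\circ S)\cap(T'\circ S),\quad T\circ(S\cap S')=(T\circ S)\cap(T\circ S'),
\end{equation*}
and the analogous identities with $\cap$ replaced by $+$, together with monotonicity $T\subseteq T'$, $S\subseteq S'$ $\Rightarrow$ $T\circ S\subseteq T'\circ S'$. By the formula $T\circ S=\ker[(\r{q}_T\ot\r{q}_S)\Phi_{A,B,C}]$ and since $\Phi_{A,B,C}^{-1}$ preserves intersection, sum and inclusion, everything reduces to the corresponding identities about the ideal $\ker(\r{q}_T\ot\r{q}_S)=T\ot N+M\ot S$ inside $M\ot N$, where $M=\f{M}_{B,C}$ and $N=\f{M}_{A,B}$. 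The key elementary lemma is: for surjective quotient maps $\r{q}_T:M\to M/T$ and $\r{q}_S:N\to N/S$,
\begin{equation*}
\ker(\r{q}_T\ot\r{q}_S)\cap\ker(\r{q}_{T'}\ot\r{q}_S)=\ker(\r{q}_{T\cap T'}\ot\r{q}_S),
\end{equation*}
which I would prove by using exactness of $\ot$ over a field together with the canonical injection $M/(T\cap T')\hookrightarrow M/T\oplus M/T'$, tensored with $N/S$. The sum version is easier and follows directly from $(T\ot N+M\ot S)+(T'\ot N+M\ot S)=(T+T')\ot N+M\ot S$.

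The main obstacle is the bookkeeping needed to lift these identities from ordinary ideals in a single component $\f{M}\theta$ to pro-ideals: one must choose representatives of $T,T',S$ over sufficiently large indices, check that the inverse-image construction defining $T\circ S$ is independent of these choices, and verify that intersections and sums in $\underrightarrow{\lim}\f{I}$ are preserved by the pro-morphism $\Phi_{A,B,C}$. Once the component-wise identities above are established, this is a routine cofinality argument, so the substantive work is really the kernel/tensor lemma and its sum analogue; the rest of the theorem (idempotency $T\cap T=T$, $T+T=T$; functoriality of $\r{id}_A$ on either side, already recorded earlier in Section \ref{2008311351}) is automatic.
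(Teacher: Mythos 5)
Your route is essentially the paper's: everything is reduced to a componentwise identity among ideals of the tensor product $M\ot N$ with $M=\f{M}_{B,C}$, $N=\f{M}_{A,B}$, and then pulled back along $\Phi_{A,B,C}$ and transported through the directed colimit. For the intersection and inclusion part your write-up is actually more complete than the paper's, which simply asserts the identity $M\ot(S\cap S')+T\ot N=(M\ot S+T\ot N)\cap(M\ot S'+T\ot N)$ without proof; your derivation of it from $\ker(\r{q}_T\ot\r{q}_S)=T\ot N+M\ot S$, the injection $N/(S\cap S')\hookrightarrow N/S\oplus N/S'$, and exactness of $\ot_{\bb{F}}$ is correct, and inverse images along any morphism do preserve intersections and inclusions, so that half of the theorem goes through.

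The gap is in the sum case. You assert that $\Phi_{A,B,C}^{-1}$ preserves sums of ideals, but for a non-surjective algebra morphism $f$ one only has $f^{-1}(I)+f^{-1}(J)\subseteq f^{-1}(I+J)$ in general (equality holds when $f$ is surjective, which is why the structural maps of the pro-systems cause no trouble, but nothing in the paper makes $\Phi_{A,B,C}$ surjective). Consequently your argument yields only $(T\circ S)+(T\circ S')\subseteq T\circ(S+S')$, not the equality required for $\circ$ to be a semigroup bihomomorphism with respect to $+$. To be fair, the paper's own proof dismisses this with ``the sum case is similar'' and so has exactly the same lacuna; but as written your proposal turns that silence into an explicit false claim. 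You would need either a separate argument showing that every element of $\Phi^{-1}\big((T\ot N+M\ot S)+(T\ot N+M\ot S')\big)$ splits accordingly, or to weaken the statement for the sum-enrichment to a lax compatibility.
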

\begin{proof}It is clear that $\cap$ and $\subseteq$ make $\r{Hom}_{\bar{\b{A}}_\r{nc}}(A,B)$ into a partially ordered idempotent-semigroup. Suppose
that $S=[S_i],S'=[S'_{i'}]$ are pro-ideals of $\f{M}_{A,B}$ and $T=[T_j]$ be a pro-ideal of $\f{M}_{B,C}$. Without lost of generality assume $i=i'$.
We have \begin{equation*}\begin{split}&(\f{M}_{B,C})_j\ot(S_i\cap S'_i)+T_j\ot(\f{M}_{A,B})_i\\
=&\big((\f{M}_{B,C})_j\ot S_i+T_j\ot(\f{M}_{A,B})_i\big)\cap\big((\f{M}_{B,C})_j\ot S'_i+T_j\ot(\f{M}_{A,B})_i\big)\end{split}\end{equation*}
in the algebra $(\f{M}_{B,C})_j\ot(\f{M}_{A,B})_i$. It follows that $T\circ(S\cap S')=(T\circ S)\cap(T\circ S')$. Similarly, $(T\cap T')\circ S=
(T\circ S)\cap(T'\circ S)$. Thus $\circ$ is a bihomomorphism with respect to the semigroup operation $\cap$. Also, it is clear that if $T\subseteq T',S\subseteq S'$ then $T\circ S\subseteq T'\circ S'$. The sum case is similar.\end{proof}
\begin{theorem}\label{2009092223}
\emph{Suppose that $A$ and $B$ are isomorphic in $\bar{\b{A}}_\r{u}^\r{prpr}$. Then they are also isomorphic in $\b{A}_\r{u}$.}\end{theorem}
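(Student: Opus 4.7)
The plan is to extract a genuine pair of mutually inverse unital morphisms from the pro-ideal data $(T,S)$ providing the isomorphism. Write $Q_T:=\f{M}^\r{u}_{A,B}/T$ and $Q_S:=\f{M}^\r{u}_{B,A}/S$; since $T,S$ are proper, both are nonzero unital $\bb{F}$-algebras, and I introduce the auxiliary ``generalized morphisms''
\[\bar{\Upsilon}_{A,B}:=(\r{id}_B\ot\r{q}_T)\Upsilon^\r{u}_{A,B}\colon A\to B\ot Q_T,\qquad\bar{\Upsilon}_{B,A}:=(\r{id}_A\ot\r{q}_S)\Upsilon^\r{u}_{B,A}\colon B\to A\ot Q_S,\]
which are honest morphisms in $\b{A}_\r{u}$. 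The first step is to unpack the equality $S\circ T=\ker(\ov{\r{id}}_A)$: the pro-morphisms $\ov{\r{id}}_A\colon\f{M}^\r{u}_{A,A}\to\bb{F}$ and $(\r{q}_S\ot\r{q}_T)\Phi_{A,B,A}\colon\f{M}^\r{u}_{A,A}\to Q_S\ot Q_T$ share the same kernel pro-ideal, and since that pro-ideal has one-dimensional quotient, the image of the second map is forced to be the unit subalgebra $\bb{F}\cdot 1\subseteq Q_S\ot Q_T$, so uniqueness of the factorization yields $(\r{q}_S\ot\r{q}_T)\Phi_{A,B,A}=\iota_{AA}\,\ov{\r{id}}_A$ where $\iota_{AA}\colon\bb{F}\to Q_S\ot Q_T$ is the unit inclusion. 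A short diagram chase using the defining property of $\Phi$ and the identity $(\r{id}_A\ot\ov{\r{id}}_A)\Upsilon^\r{u}_{A,A}=\r{id}_A$ then produces the composition identity
\[(\bar{\Upsilon}_{B,A}\ot\r{id}_{Q_T})\,\bar{\Upsilon}_{A,B}=\iota_A\colon A\to A\ot Q_S\ot Q_T,\qquad a\mapsto a\ot 1\ot 1,\]
and symmetrically $T\circ S=\ker(\ov{\r{id}}_B)$ gives $(\bar{\Upsilon}_{A,B}\ot\r{id}_{Q_S})\,\bar{\Upsilon}_{B,A}=\iota_B$.

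The decisive step is a descent through linear functionals. For any $\bb{F}$-linear $\ell_T\colon Q_T\to\bb{F}$ and $\ell_S\colon Q_S\to\bb{F}$ with $\ell_T(1)=\ell_S(1)=1$ (which exist since $Q_T,Q_S$ are nonzero unital), define
\[f_{\ell_T}:=(\r{id}_B\ot\ell_T)\bar{\Upsilon}_{A,B}\colon A\to B,\qquad g_{\ell_S}:=(\r{id}_A\ot\ell_S)\bar{\Upsilon}_{B,A}\colon B\to A,\]
which are a priori merely $\bb{F}$-linear. Applying $\r{id}\ot\ell_S\ot\ell_T$ to each of the two composition identities yields simultaneously $g_{\ell_S}f_{\ell_T}=\r{id}_A$ and $f_{\ell_T}g_{\ell_S}=\r{id}_B$, so $g_{\ell_S}$ is a linear bijection whose unique two-sided inverse is $f_{\ell_T}$. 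Hence $f_{\ell_T}$ is independent of the choice of $\ell_T$, and likewise $g_{\ell_S}$ is independent of $\ell_S$. Writing any linear $\ell\colon Q_T\to\bb{F}$ with $\ell(1)=0$ as the difference of two unit-preserving functionals, one deduces $(\r{id}_B\ot\ell)\bar{\Upsilon}_{A,B}=0$, and exactness of tensor product over the field $\bb{F}$ gives $\bar{\Upsilon}_{A,B}(A)\subseteq B\ot\bb{F}\cdot 1_{Q_T}$. Therefore there is a unique unital algebra morphism $f\colon A\to B$ with $\bar{\Upsilon}_{A,B}(a)=f(a)\ot 1_{Q_T}$, and symmetrically a unital algebra morphism $g\colon B\to A$ with $\bar{\Upsilon}_{B,A}(b)=g(b)\ot 1_{Q_S}$. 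Substituting these into the composition identities forces $gf=\r{id}_A$ and $fg=\r{id}_B$, exhibiting the desired isomorphism in $\b{A}_\r{u}$.

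The main obstacle is the first move, from the kernel-level equality $S\circ T=\ker(\ov{\r{id}}_A)$ to the concrete composition identity $(\bar{\Upsilon}_{B,A}\ot\r{id})\bar{\Upsilon}_{A,B}=\iota_A$. In general two pro-morphisms with equal kernel pro-ideals need not coincide; here the rescue is the very special feature that $\ker(\ov{\r{id}}_A)$ has one-dimensional quotient, which pins the image of the companion map to the unit subalgebra $\bb{F}\cdot 1$ and makes the factorization through $\bb{F}$ canonical. After that is settled, the rest of the proof is driven by the elementary observation that a linear map admitting a two-sided inverse is unique, which is precisely what converts an ``isomorphism parameterized by $Q_T$ and $Q_S$'' into an honest $\bb{F}$-algebra isomorphism, with no need to choose $\bb{F}$-points of $Q_T$ or $Q_S$ (which in general may not exist).
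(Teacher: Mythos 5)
Your proof is correct and follows essentially the same route as the paper: first upgrade the kernel equalities to the composition identities $(\bar{\Upsilon}_{B,A}\ot\r{id})\bar{\Upsilon}_{A,B}(a)=a\ot1\ot1$ (and its mirror) using that $\f{M}^\r{u}_{A,A}/\ker(\ov{\r{id}}_A)\cong\bb{F}$ forces the image into the unit subalgebra, then descend to genuine mutually inverse morphisms. The only difference is that the paper invokes this last descent as a stated ``elementary fact'' of linear algebra, whereas you prove it explicitly via unit-normalized functionals; your argument is a valid proof of exactly that fact.
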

We shall need the following elementary fact: Let $\phi:V\to W\ot V',\psi:W\to V\ot W'$ be morphisms in $\b{Vec}$. Suppose for some
$x\in V',y\in W'$ we have that $(\psi\ot\r{id})\phi(v)=v\ot y\ot x$ and $(\phi\ot\r{id})\psi(w)=w\ot x\ot y$.
Then there are isomorphisms $\hat{\phi}:V\to W$ and $\hat{\psi}:W\to V$ such that $\phi(v)=\hat{\phi}(v)\ot x$ and $\psi(w)=\hat{\psi}(w)\ot y$.
\begin{proof}Let $S,T$ be proper pro-ideals respectively in $\f{M}^\r{u}_{A,B},\f{M}^\r{u}_{B,A}$ such that
\begin{equation}\label{2009100927}
\ker[(\r{q}_T\ot\r{q}_S)\Phi_{A,B,A}]=\ker(\ov{\r{id}}_A)\hspace{5mm}\ker[(\r{q}_S\ot\r{q}_T)\Phi_{B,A,B}]=\ker(\ov{\r{id}}_B)\end{equation}
Let $\phi:=(\r{id}_B\ot\r{q}_S)\Upsilon^\r{u}_{A,B}$ and $\psi:=(\r{id}_A\ot\r{q}_T)\Upsilon^\r{u}_{B,A}$. Since
$\f{M}^\r{u}_{A,A}/\ker(\ov{\r{id}}_A)\cong\bb{F}$, the left side of (\ref{2009100927}) implies that image of $(\r{q}_T\ot\r{q}_S)\Phi_{A,B,A}$ is
the subalgebra generated by $1_{\f{M}^\r{u}_{B,A}/T}\ot1_{\f{M}^\r{u}_{A,B}/S}$. Thus, it follows from the identity
$$(\psi\ot\r{id})\phi=[\r{id}_A\ot((\r{q}_T\ot\r{q}_S)\Phi_{A,B,A})]\Upsilon^\r{u}_{A,A}$$
that $(\psi\ot\r{id})\phi(a)=a\ot1_{\f{M}^\r{u}_{B,A}/T}\ot1_{\f{M}^\r{u}_{A,B}/S}$. Similarly, it is shown that
$(\phi\ot\r{id})\psi(b)=b\ot1_{\f{M}^\r{u}_{A,B}/S}\ot1_{\f{M}^\r{u}_{B,A}/T}$. Now, the desired result follows from the mentioned fact.\end{proof}
Every morphism $f:A\to B$ in $\b{A}_\r{u}$ induces a fundamental functor $\hat{f}$ between categories
of unital left modules of $A$ and $B$ given by $$\hat{f}:\r{Mod}(A)\to\r{Mod}(B)\hspace{10mm}\hat{f}(M):=B\ot_AM$$
where $B$ is considered as a right $A$-module with module multiplication $b\cdot a:=bf(a)$.
The assignment $f\mapsto\hat{f}$ is simply extended to morphisms in $\bar{\b{A}}_\r{u}^\r{prpr}$ as follows:
Let $S$ be a proper pro-ideal of $\f{M}^\r{u}_{A,B}$ and let $\phi:A\to B\ot\f{M}_{A,B}/S$ be as in the proof of Theorem \ref{2009092223}.
$\hat{S}(M)$ is defined to be the $B$-submodule of $B\ot\f{M}_{A,B}/S$ with the underlying vector space $\hat{\phi}(M)$.
The action of $\hat{S}$ on morphisms of $\r{Mod}(A)$ is defined to be that of $\hat{\phi}$.
\begin{theorem}
\emph{The K-group functor ${K_0}$ on $\b{A}_\r{u}$ extends canonically to a functor $${K_0}:\bar{\b{A}}_\r{u}^\r{fcodim}\to\b{Ab}.$$}
\end{theorem}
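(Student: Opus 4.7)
The plan is to extend the extension-of-scalars construction $f \mapsto \hat f = B\otimes_A(-)$ described in the preceding paragraphs $K$-theoretically, exploiting the finite-dimensionality of the quotients defining pro-ideals in $\bar{\b{A}}_\r{u}^\r{fcodim}$. Given $S\in\r{Hom}_{\bar{\b{A}}_\r{u}^\r{fcodim}}(A,B)$, I would pick an ideal-level representative $S_\theta\subseteq\f{M}\theta$ with finite-dimensional quotient $D:=\f{M}\theta/S_\theta$, and compose the $\theta$-component of $\Upsilon^\r{u}_{A,B}$ with $\r{id}_B\ot q_{S_\theta}$ to obtain a unital algebra morphism $\phi_S:A\to B\ot D$. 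Since $D$ is finite-dimensional, $B\ot D$ is a finitely generated free $B$-module, so restriction of scalars along $B\hookrightarrow B\ot D$ preserves finite generation and projectivity. I would then define
\begin{equation*}
K_0(S):=r_B\circ(\phi_S)_*\,:\,K_0(A)\longrightarrow K_0(B),
\end{equation*}
where $(\phi_S)_*$ is induced by base change along $\phi_S$ (applied to finitely generated projective $A$-modules).

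Two routine verifications complete the framework. For well-definedness under change of representative, any two $S_\theta$, $S_{\theta'}$ representing the same pro-ideal admit a common refinement $\theta''\geq\theta,\theta'$ at which their pullbacks coincide; the resulting finite-dimensional quotients are canonically isomorphic, and the two candidate morphisms $\phi_S$ are identified under this iso, so the $K_0$-class of $(B\ot D)\ot_A P$ does not depend on the representative. For compatibility with the given $K_0$ on $\b{A}_\r{u}$, observe that an ordinary morphism $f:A\to B$ corresponds to the codimension-one pro-ideal $\ker(\bar f)$, for which $D=\bb{F}$, $B\ot D\cong B$, and $\phi_S=f$; the restriction $r_B$ is then the identity and $K_0(S)=f_*=K_0(f)$.

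The principal difficulty is functoriality, $K_0(T\circ S)=K_0(T)\circ K_0(S)$, for composable $S:A\to B$ and $T:B\to C$ with quotients $D$ and $E$. The defining identity $(\r{id}_C\ot\Phi_{A,B,C})\Upsilon_{A,C}=(\Upsilon_{B,C}\ot\r{id})\Upsilon_{A,B}$ together with the definition $T\circ S=\ker[(q_T\ot q_S)\Phi_{A,B,C}]$ forces the morphism associated with $T\circ S$, after composing with the canonical embedding $\f{M}_{A,C}/(T\circ S)\hookrightarrow E\ot D$, to coincide with $(\phi_T\ot\r{id}_D)\phi_S$. On the module side, the standard base-change identity
\begin{equation*}
(C\ot E)\ot_B\bigl((B\ot D)\ot_A P\bigr)\cong(C\ot E\ot D)\ot_A P
\end{equation*}
(as $C$-modules after restriction) yields the composition formula when $K_0(T\circ S)$ is computed using the ambient representative $E\ot D$. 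The hard step is to show that this computation agrees with the intrinsic one at the image subalgebra $\f{M}_{A,C}/(T\circ S)\subseteq E\ot D$: this will require either a dedicated comparison using the fact that $E\ot D$ is a finite-dimensional extension of $\f{M}_{A,C}/(T\circ S)$ (bringing in additivity of $K_0$ on short exact sequences of free module summands), or a reformulation of $K_0(S)$ directly in terms of the intrinsic quotient together with a cofinality argument over representatives. This bookkeeping, which is where the finite-dimensionality assumption in $\mathrm{fcodim}$ is crucially used, is the main obstacle of the proof.
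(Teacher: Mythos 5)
Your construction coincides with the paper's. The paper defines $K_0(S)$ by exactly the recipe you give: form the finite\-/dimensional quotient $Q_S:=\f{M}^\r{u}_{A,B}/S$, base-change a finitely generated projective $A$-module $M$ along $\phi_S=(\r{id}_B\ot\r{q}_S)\Upsilon^\r{u}_{A,B}:A\to B\ot Q_S$, and restrict scalars to $B$; the verification that $N:=(B\ot Q_S)\ot_AM$ stays finitely generated projective over $B$ is carried out, as in your sketch, by noting that $B\ot Q_S$ is a free $B$-module on a basis $\{1_B\ot c_i\}$, so a direct summand of a finite free $(B\ot Q_S)$-module is a direct summand of a finite free $B$-module. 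Your well-definedness check under change of ideal-level representative is absorbed in the paper into the general convention that the quotient by a pro-ideal is defined up to canonical isomorphism.

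The step you single out as the principal difficulty, functoriality, is not addressed in the paper at all: its proof ends with the sentence defining $K_0(S)$ and never verifies $K_0(T\circ S)=K_0(T)K_0(S)$. Your worry is substantive, not bookkeeping. Since $T\circ S=\ker[(\r{q}_T\ot\r{q}_S)\Phi_{A,B,C}]$, the intrinsic quotient $\f{M}^\r{u}_{A,C}/(T\circ S)$ is only the \emph{image} of $(\r{q}_T\ot\r{q}_S)\Phi_{A,B,C}$ inside $Q_T\ot Q_S$, whereas the naive base-change identity computes $K_0(T)K_0(S)$ using all of $Q_T\ot Q_S$. Already on $P=A$ the two recipes produce free $C$-modules of ranks $\dim\big(\f{M}^\r{u}_{A,C}/(T\circ S)\big)$ and $(\dim Q_T)(\dim Q_S)$, which disagree whenever $(\r{q}_T\ot\r{q}_S)\Phi_{A,B,C}$ is not surjective; the proof of Theorem \ref{2009092223} exhibits composites whose image is just $\bb{F}(1\ot 1)$ while $Q_T,Q_S$ may be large. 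So your attempt reproduces everything the paper actually proves, and the obstacle you identify is a genuine gap left open by the paper rather than a routine verification; neither of your two proposed repairs is carried out there.
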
\begin{proof}
Let $S$ be a proper finite-codimensional pro-ideal of $\f{M}_{A,B}^\r{u}$. We show that $\hat{S}(M)$ is a finitely-generated projective $B$-module
for any finitely-generated projective $A$-module $M$: Let $C=\f{M}^\r{u}_{A,B}/S, D=B\ot C, N=D\ot_AM$.
We know that $C$ is a finite-dimensional algebra and $N$ as a $D$-module is finitely-generated and projective.
Suppose that $\{c_i\}_{i=1}^k$ is a vector basis for $C$. If $\{x_j\}^\ell_{j=1}$ generate $N$ as a $D$-module then $\{(1_B\ot c_i)x_j\}$ generates
$N$ as a $B$-module. Thus $N$ is a finitely generated $B$-module. We know that there exists a $D$-module isomorphism
$\Gamma:N\oplus N'\to\oplus_{i=1}^nD$ for some $n\geq1$ and some $D$-module $N'$. It is clear that $\Gamma$ is also a $B$-module isomorphism.
On the other hand, $D$ is a free $B$-module with $B$-basis $\{1_B\ot c_i\}$. Thus $\oplus_{i=1}^nD$ is also a free $B$-module. So, $N$ as a $B$-module
is projective. ${K_0}(S)$ is defined to be the group-morphism ${K_0}(A)\to{K_0}(B)$ induced by the assignment $M\mapsto\hat{S}(M)$
on finitely-generated projective modules.\end{proof}
\begin{theorem}
\emph{Products exist in $\bar{\b{A}}_\r{u}^\r{prm}$ and (hence) coincide with products of $\b{A}_\r{u}$.}
\end{theorem}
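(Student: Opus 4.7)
The plan is to show that the direct product $A\times B$ from $\b{A}_\r{u}$, equipped with the images in $\bar{\b{A}}_\r{u}^\r{prm}$ of the canonical projections $\pi_A,\pi_B$, already serves as a categorical product in the enlarged category; this yields both existence and coincidence at once. The key structural step is the commutative unital analogue of Theorem~\ref{1910101520}: since $(A\times B)\ot D\cong(A\ot D)\times(B\ot D)$ for unital $D$, and since $\ot$ is the coproduct in $\b{A}_\r{cu}$, chaining the natural bijections in Theorem~\ref{2011302233} yields
\[
\f{M}^\r{cu}_{C,A\times B}\;\cong\;\f{M}^\r{cu}_{C,A}\ot\f{M}^\r{cu}_{C,B},
\]
under which $\f{M}(\r{id},\pi_A)$ and $\f{M}(\r{id},\pi_B)$ become the canonical coproduct injections $i_A:x\mapsto x\ot 1$ and $i_B:y\mapsto 1\ot y$.

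Next I would unwind the composition in $\bar{\b{A}}_\r{u}^\r{prm}$. Using the identity $(q_{\ker\bar{\pi}_A}\ot\r{id})\Phi_{C,A\times B,A}=\f{M}(\r{id},\pi_A)$, which follows from the defining property of $\Phi$ together with the fact that $\bar{\pi}_A$ is the point of $\f{M}^\r{cu}_{A\times B,A}$ associated with $\pi_A$, one sees that for any prime pro-ideal $U$ of $\f{M}^\r{cu}_{C,A\times B}$ the composite $\bar{\pi}_A\circ U$ equals the pullback $i_A^{-1}(U)$, and symmetrically for $B$. Thus the universal property of the product in $\bar{\b{A}}_\r{u}^\r{prm}$ reduces to the purely algebraic statement: given prime pro-ideals $S\subseteq R:=\f{M}^\r{cu}_{C,A}$ and $T\subseteq R':=\f{M}^\r{cu}_{C,B}$, there is a unique prime pro-ideal $U\subseteq R\ot R'$ with $i_A^{-1}(U)=S$ and $i_B^{-1}(U)=T$. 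The natural candidate is
\[
U\;:=\;S\ot R'_\r{ideal}+R_\r{ideal}\ot T,
\]
whose quotient is canonically $(R/S)\ot(R'/T)$, so existence reduces to checking that this quotient is an integral pro-algebra and that the contractions recover $S$ and $T$.

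The heart of the argument, and the main obstacle, lies in establishing primality and uniqueness of $U$. Over a general field the tensor product of two integral domains need not be a domain, and even when it is, a prime meeting both coordinate subrings trivially is not automatically unique. To handle this I would exploit the concrete description from Example~\ref{2010021333}: at each finite level the components of $\f{M}^\r{cu}_{C,A}$ and $\f{M}^\r{cu}_{C,B}$ are polynomial algebras in disjoint families of generators $z_{a,v}$, so componentwise $R\ot R'$ is a polynomial algebra in two polynomially independent families of variables. On each such component, primality together with the uniqueness of the trivial-contraction prime can be verified directly, using a Nullstellensatz-style argument in the spirit of Lemma~\ref{2010251146}. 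Reassembling the compatible families into the pro-system then transfers these conclusions to $U$, completing both the existence and the uniqueness halves of the universal property.
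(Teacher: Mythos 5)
Your overall route coincides with the paper's: the same isomorphism $\f{M}^\r{cu}_{C,A\times B}\cong\f{M}^\r{cu}_{C,A}\ot\f{M}^\r{cu}_{C,B}$, the same candidate $U=S\ot R'_\r{ideal}+R_\r{ideal}\ot T$, and a correct (indeed more explicit than the paper's) unwinding of the composition law showing that $\ker(\ov{\pi}_A)\circ U=i_A^{-1}(U)$. Where the paper simply asserts that $U$ is the unique prime pro-ideal with the prescribed contractions, you rightly single out primality and uniqueness as the substance of the proof; but your proposed way of settling them does not work. The components $\f{M}\theta$ of $\f{M}^\r{cu}_{C,A}$ are not polynomial algebras in the symbols $z_{c,v}$: they are quotients of polynomial algebras by the relations forced by requiring (\ref{2009191935}) to be a homomorphism, so the ``two polynomially independent families of variables'' picture is available only when $C$ is free. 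Moreover, the Nullstellensatz-style argument of Lemma \ref{2010251146} presupposes $\bb{F}$ algebraically closed, which is not a hypothesis of this theorem; over a general field $(R/S)\ot(R'/T)$ need not even be a domain (e.g.\ $\bb{C}\ot_{\bb{R}}\bb{C}\cong\bb{C}\times\bb{C}$), so already the primality of $U$ is in doubt.

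The more serious problem is that the uniqueness you need is false even in the most favourable case, so this step cannot be repaired by a sharper argument. Take $C=\bb{F}[t]$ and $A=B=\bb{F}$: then $R=R'\cong\bb{F}[z]$, $R\ot R'\cong\bb{F}[z_1,z_2]$, and $i_A,i_B$ are the two coordinate inclusions. Both the zero ideal and the prime $(z_1-z_2)$ contract to $(0)$ in each of $\bb{F}[z_1]$ and $\bb{F}[z_2]$, and they represent distinct prime pro-ideals because the index set of $\f{M}^\r{cu}_{\bb{F}[t],\bb{F}\oplus\bb{F}}$ has a maximum element with component $\bb{F}[z_1,z_2]$. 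Hence for $S=T=(0)$ there are at least two morphisms $U$ in $\bar{\b{A}}_\r{u}^\r{prm}$ with $\ker(\ov{p}_1)\circ U=S$ and $\ker(\ov{p}_2)\circ U=T$. This objection applies verbatim to the paper's own one-line assertion of uniqueness; any correct version must either restrict the admissible pro-ideals (for instance to kernels of points, where uniqueness reduces to the product in $\b{A}_\r{u}$) or weaken the universal property to one adapted to the order-enriched structure of Section \ref{2008311351}, so you should not expect to complete the argument as stated.
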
\begin{proof}
Let $B_1,B_2$ be unital algebras and let $p_1,p_2$ denote the canonical projections from $B_1\oplus B_2$ respectively onto $B_1,B_2$.
Let $S_1,S_2$ be morphisms in $\bar{\b{A}}_\r{u}^\r{prm}$ from $A$ respectively to $B_1,B_2$. By a result similar to Theorem \ref{1910101520}
we have the canonical isomorphism $\f{M}^\r{cu}_{A,B_1\oplus B_2}\cong\f{M}^\r{cu}_{A,B_1}\ot\f{M}^\r{cu}_{A,B_2}$. So, we may consider
$$I:=S_1\ot(\f{M}^\r{cu}_{A,B_2})_\r{ideal}+(\f{M}^\r{cu}_{A,B_1})_\r{ideal}\ot S_2$$
as a pro-ideal of $\f{M}^\r{cu}_{A,B_1\oplus B_2}$. Then $I$ is the only morphism in $\bar{\b{A}}_\r{u}^\r{prm}$ satisfying
$$S_1:=\ker(\ov{p}_1)\circ I\hspace{3mm}\text{and}\hspace{3mm}S_2:=\ker(\ov{p}_2)\circ I.$$\end{proof}
%%%%%%%%%%%%%%%%%%%%%%%%%%%%%%%%%%%%%%%%%%%%%%%%%%%%%%%%%%%%%%%%%%%%%%%%%%%%%%%%%%%%%%%%%%%%%%%%%%%%%%%%%%%%%%%%%%%%%%%%%%
%%%%%%%%%%%%%%%%%%%%%%%%%%%%%%%%%%%%%%%%%%%%%%%%%%%%%%%%%%%%%%%%%%%%%%%%%%%%%%%%%%%%%%%%%%%%%%%%%%%%%%%%%%%%%%%%%%%%%%%%%%
%%%%%%%%%%%%%%%%%%%%%%%%%%%%%%%%%%%%%%%%%%%%%%%%%%%%%%%%%%%%%%%%%%%%%%%%%%%%%%%%%%%%%%%%%%%%%%%%%%%%%%%%%%%%%%%%%%%%%%%%%%
%%%%%%%%%%%%%%%%%%%%%%%%%%%%%%%%%%%%%%%%%%%%%%%%%%%%%%%%%%%%%%%%%%%%%%%%%%%%%%%%%%%%%%%%%%%%%%%%%%%%%%%%%%%%%%%%%%%%%%%%%%
\section{Classifying Algebras for Algebraic K-Groups}\label{2011161438}
In this section we consider a bivariant K-theory $\f{QQ}$ which is a pure-algebraic version of Cuntz's interpretation \cite{Cuntz1}
of the Kasparov bivariant K-theory of C*-algebras \cite{Kasparov1}. Using the functor $\f{M}^\r{nc}$ and following a method introduced by Phillips
\cite{Phillips2} we prove the existence of \emph{classifying homotopy pro-algebras} for $\f{QQ}$, Corti\~{n}as-Thom's KK-groups
\cite{CortinasThom1}, and Weibel's homotopy K-groups \cite{Weibel3}.

In this section we denote by $\c{R}$ the matrix in $\r{M}_2(\bb{F}[x])$ given by
$$\c{R}:=\left(\begin{array}{cc}1-x^2 & x^3-2x \\x & 1-x^2 \\\end{array}\right).$$
Note that $\c{R}$ is invertible. For a story about $\c{R}$ in Algebraic Homotopy see \cite[$\S$3.4]{CortinasThom1}.
We begin with some well-known lemmas.
\begin{lemma}\label{2012152154}
\emph{For any $B\in\b{A}_\r{nc}$, the morphism  $\r{M}_2(B)\to\r{M}_2(B[x])$ given by $M\mapsto\c{R}^{-1}M\c{R}$
is an elementary homotopy between $\r{id}_{\r{M}_2(B)}$ and the morphism
$$\left(\begin{array}{cc}a&b\\c&d\\\end{array}\right)\mapsto\left(\begin{array}{cc}d&-c\\-b&a\\\end{array}\right).$$}
\end{lemma}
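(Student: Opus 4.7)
The plan is to verify the lemma by direct computation, since everything is entirely explicit. First I would record that $\c{R}$ has determinant
$$\det\c{R}=(1-x^2)^2-x(x^3-2x)=1-2x^2+x^4-x^4+2x^2=1,$$
so $\c{R}\in\r{M}_2(\bb{F}[x])$ is invertible, with explicit inverse
$$\c{R}^{-1}=\left(\begin{array}{cc}1-x^2 & 2x-x^3 \\ -x & 1-x^2\end{array}\right).$$
This makes the assignment $M\mapsto\c{R}^{-1}M\c{R}$ well-defined. Viewing $\bb{F}[x]$ as embedded in the center of $B[x]$ and hence $\r{M}_2(\bb{F}[x])$ as a subalgebra of $\r{M}_2(B[x])$ through which $\c{R}$ is invertible, conjugation $N\mapsto\c{R}^{-1}N\c{R}$ is an inner automorphism of $\r{M}_2(B[x])$. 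Composing it with the natural inclusion $\r{M}_2(B)\hookrightarrow\r{M}_2(B[x])$ induced by $B\hookrightarrow B[x]$ yields an algebra morphism $H\colon\r{M}_2(B)\to\r{M}_2(B[x])$; this is what is required for $H$ to be an elementary homotopy.

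Second, I would check the two endpoint evaluations. Specializing $x=0$ gives $\c{R}(0)=I$ and so $\r{p}_0 H(M)=I^{-1}MI=M$, which is $\r{id}_{\r{M}_2(B)}$. Specializing $x=1$ gives
$$\c{R}(1)=\left(\begin{array}{cc}0 & -1 \\ 1 & 0\end{array}\right),\qquad \c{R}^{-1}(1)=\left(\begin{array}{cc}0 & 1 \\ -1 & 0\end{array}\right),$$
and direct multiplication yields
$$\c{R}^{-1}(1)\left(\begin{array}{cc}a & b \\ c & d\end{array}\right)\c{R}(1)=\left(\begin{array}{cc}c & d \\ -a & -b\end{array}\right)\left(\begin{array}{cc}0 & -1 \\ 1 & 0\end{array}\right)=\left(\begin{array}{cc}d & -c \\ -b & a\end{array}\right),$$
which is precisely the claimed target morphism.

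There is essentially no obstacle here: the entire content of the lemma is the observation that $\det\c{R}=1$ together with the two endpoint specializations, and the only thing one must be slightly careful about is that even though $B$ is noncommutative, the scalar entries of $\c{R}$ commute with every element of $B$, so conjugation by $\c{R}$ really is an algebra morphism of $\r{M}_2(B[x])$. Once this is noted, the computation above completes the proof.
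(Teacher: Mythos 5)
Your computation is correct and is exactly the direct verification the paper leaves implicit (the lemma is stated there as "well-known" with no proof): $\det\c{R}=1$, $\c{R}(0)=I$, and the $x=1$ specialization of the conjugation gives the stated target morphism. The only cosmetic caveat is that for non-unital $B$ the conjugation should be read as the automorphism $\c{R}^{-1}(\,\cdot\,)\c{R}\ot\r{id}_B$ of $\r{M}_2(\bb{F}[x])\ot B\cong\r{M}_2(B[x])$ (or as multiplication by multipliers), rather than via an embedding of $\bb{F}[x]$ into $B[x]$, but this does not affect the argument.
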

\begin{lemma}\label{2012032136}
\emph{Let $\alpha,\beta:A\to B$ be morphisms in $\b{A}_\r{nc}$. Then the morphisms
$$(a\mapsto\left(\begin{array}{cc}\alpha(a) & 0 \\0 & \beta(a) \\\end{array}\right))\hspace{2mm}\text{and}\hspace{2mm}
(a\mapsto\left(\begin{array}{cc}\beta(a) & 0 \\0 & \alpha(a) \\\end{array}\right))$$ from $A$ into $\r{M}_2(B)$ are elementary homotopic.}
\end{lemma}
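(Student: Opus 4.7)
The plan is to exhibit an explicit elementary homotopy by conjugating the diagonal morphism by the matrix $\c{R}$ from Lemma \ref{2012152154}, viewed as an element of $\r{M}_2(B[x])$.

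More precisely, I would define
\[
H:A\to\r{M}_2(B)[x]\cong\r{M}_2(B[x]),\hspace{3mm}
H(a):=\c{R}^{-1}\left(\begin{array}{cc}\alpha(a)&0\\0&\beta(a)\\\end{array}\right)\c{R}.
\]
Since $\alpha,\beta$ are algebra morphisms, the assignment $a\mapsto\r{diag}(\alpha(a),\beta(a))$ defines a morphism $A\to\r{M}_2(B)\hookrightarrow\r{M}_2(B[x])$, and conjugation by the invertible matrix $\c{R}\in\r{M}_2(B[x])$ is an inner automorphism of $\r{M}_2(B[x])$. Hence $H$ is an algebra morphism; thus it is a candidate for an elementary homotopy.

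Next I would check the evaluations at $x=0$ and $x=1$. At $x=0$, $\c{R}$ specializes to the identity matrix, so $\r{p}_0H(a)=\r{diag}(\alpha(a),\beta(a))$. At $x=1$, $\c{R}$ specializes to $\bigl(\begin{smallmatrix}0&-1\\1&0\end{smallmatrix}\bigr)$ with inverse $\bigl(\begin{smallmatrix}0&1\\-1&0\end{smallmatrix}\bigr)$, and a short direct computation shows that the corresponding conjugation swaps the diagonal entries, giving $\r{p}_1H(a)=\r{diag}(\beta(a),\alpha(a))$. This is precisely what one would expect from Lemma \ref{2012152154}, which says that conjugation by $\c{R}$ is an elementary homotopy from $\r{id}_{\r{M}_2(B)}$ to the morphism sending $\bigl(\begin{smallmatrix}a&b\\c&d\end{smallmatrix}\bigr)$ to $\bigl(\begin{smallmatrix}d&-c\\-b&a\end{smallmatrix}\bigr)$; applied to the off-diagonal-zero matrix $\r{diag}(\alpha(a),\beta(a))$ the sign changes are invisible and only the swap remains.

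Since there is no real calculation beyond specializing $\c{R}$ at the two endpoints, I anticipate no genuine obstacle; the proof is essentially a one-line application of Lemma \ref{2012152154} to the algebra morphism $\r{diag}(\alpha,\beta):A\to\r{M}_2(B)$, with the composition of the resulting elementary homotopy in $\r{M}_2(B)$ with $\r{diag}(\alpha,\beta)$ furnishing the required elementary homotopy in $A$.
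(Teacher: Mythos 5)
Your proposal is correct and matches the paper's argument: the paper's proof of this lemma is literally ``It follows directly from Lemma \ref{2012152154},'' i.e.\ compose the morphism $a\mapsto\mathrm{diag}(\alpha(a),\beta(a))$ with the conjugation-by-$\mathcal{R}$ homotopy, exactly as you do. Your explicit check of the endpoint specializations of $\mathcal{R}$ at $x=0$ and $x=1$ just spells out the detail the paper leaves implicit.
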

\begin{proof}It follows directly from Lemma \ref{2012152154}.\end{proof}
\begin{lemma}\label{2012032233}
\emph{With the assumptions of Lemma \ref{2012032136}, suppose $B$ is an ideal of a unital algebra $C$, and there exists $c\in C$ such that
$\beta(a)=c\alpha(a)c^{-1}$. Then the morphisms $$(a\mapsto\left(\begin{array}{cc}\alpha(a) & 0 \\0 & 0
\\\end{array}\right))\hspace{2mm}\text{and}\hspace{2mm}(a\mapsto\left(\begin{array}{cc}\beta(a) & 0 \\0 & 0 \\\end{array}\right))$$
from $A$ into $\r{M}_2(B)$ are  homotopic.}
\end{lemma}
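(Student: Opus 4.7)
The plan is to exhibit a single elementary homotopy obtained by conjugating $a\mapsto\mathrm{diag}(\alpha(a),0)$ by a polynomial path in $\mathrm{GL}_2(C[x])$ that connects $I$ to $\mathrm{diag}(c,c^{-1})$. The key observation is the classical Whitehead-style identity
\[
\begin{pmatrix} c & 0 \\ 0 & c^{-1} \end{pmatrix} = \begin{pmatrix} 1 & c \\ 0 & 1 \end{pmatrix}\begin{pmatrix} 1 & 0 \\ -c^{-1} & 1 \end{pmatrix}\begin{pmatrix} 1 & c \\ 0 & 1 \end{pmatrix}\begin{pmatrix} 0 & -1 \\ 1 & 0 \end{pmatrix},
\]
verified by direct multiplication, which writes $\mathrm{diag}(c,c^{-1})$ as a product of three elementary matrices and one rotation.

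The first step is to interpolate each factor from $I$ to itself polynomially: replace the off-diagonal $c$, $-c^{-1}$, $c$ by $xc,-xc^{-1},xc$, and recognize that $\mathcal{R}(x)\in\mathrm{SL}_2(\bb{F}[x])$ (a short determinant check gives $\det\mathcal{R}=1$) with $\mathcal{R}(0)=I$ and $\mathcal{R}(1)=\bigl(\begin{smallmatrix}0&-1\\1&0\end{smallmatrix}\bigr)$. Define
\[
u(x):=\begin{pmatrix} 1 & xc \\ 0 & 1 \end{pmatrix}\begin{pmatrix} 1 & 0 \\ -xc^{-1} & 1 \end{pmatrix}\begin{pmatrix} 1 & xc \\ 0 & 1 \end{pmatrix}\mathcal{R}(x)\;\in\;\r{M}_2(C[x]).
\]
Each elementary factor is invertible with polynomial inverse, and $\mathcal{R}(x)$ has polynomial inverse since $\det\mathcal{R}=1$, so $u(x)\in\mathrm{GL}_2(C[x])$, with $u(0)=I$ and $u(1)=\mathrm{diag}(c,c^{-1})$.

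The second step is to set
\[
H:A\to\r{M}_2(B)[x]\cong\r{M}_2(B[x]),\qquad H(a):=u(x)\begin{pmatrix}\alpha(a)&0\\0&0\end{pmatrix}u(x)^{-1}.
\]
Since conjugation by a fixed invertible element is an algebra automorphism of $\r{M}_2(C[x])$, $H$ is an algebra morphism. To verify that $H$ actually lands in $\r{M}_2(B[x])$ (rather than only in $\r{M}_2(C[x])$) one uses the ideal hypothesis: each entry of $H(a)$ is a sum of products $(u(x))_{i1}\,\alpha(a)\,(u(x)^{-1})_{1j}$, and since $\alpha(a)\in B$ and $B\triangleleft C$, such products lie in $B[x]$. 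Finally, $\r{p}_0H(a)=I\cdot\mathrm{diag}(\alpha(a),0)\cdot I=\mathrm{diag}(\alpha(a),0)$, while
\[
\r{p}_1H(a)=\mathrm{diag}(c,c^{-1})\,\mathrm{diag}(\alpha(a),0)\,\mathrm{diag}(c^{-1},c)=\mathrm{diag}(c\alpha(a)c^{-1},0)=\mathrm{diag}(\beta(a),0),
\]
so $H$ is an elementary homotopy between the two morphisms, which is stronger than the stated conclusion.

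There is no serious obstacle. The only point requiring care is that $c^{-1}$ need not lie in $B$, so the interpolating path $u(x)$ genuinely lives in $\r{M}_2(C[x])$ and not in $\r{M}_2(B[x])$; that is precisely why the hypothesis $B\triangleleft C$ is needed, and it is invoked when checking that the conjugation pulls $\r{M}_2(B[x])$ into itself. Apart from this small bookkeeping, the argument is a direct application of the Whitehead trick combined with the polynomial rotation $\mathcal{R}(x)$ already introduced for Lemma \ref{2012152154}.
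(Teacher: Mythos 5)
Your argument is correct, and it in fact establishes something slightly stronger than the lemma asserts: a single \emph{elementary} homotopy rather than a finite chain of them. The paper proceeds differently. It first invokes Lemma \ref{2012032136} to get $\bigl(a\mapsto\r{diag}(\alpha(a),0)\bigr)\approx\bigl(a\mapsto\r{diag}(0,\alpha(a))\bigr)$, then conjugates that elementary homotopy by the constant matrix $\r{diag}(1,c)$ (which fixes $\r{diag}(\alpha(a),0)$ and carries $\r{diag}(0,\alpha(a))$ to $\r{diag}(0,\beta(a))$, staying inside $\r{M}_2(B[x])$ by the ideal hypothesis), and finally applies Lemma \ref{2012032136} once more to swap $\r{diag}(0,\beta(a))$ back to $\r{diag}(\beta(a),0)$; the conclusion is therefore only ``homotopic'' via a chain of three elementary homotopies. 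You instead build one invertible polynomial path $u(x)\in\r{GL}_2(C[x])$ from $I$ to $\r{diag}(c,c^{-1})$ out of the Whitehead factorization
\[
\begin{pmatrix} c & 0 \\ 0 & c^{-1}\end{pmatrix}
=\begin{pmatrix} 1 & c \\ 0 & 1\end{pmatrix}
\begin{pmatrix} 1 & 0 \\ -c^{-1} & 1\end{pmatrix}
\begin{pmatrix} 1 & c \\ 0 & 1\end{pmatrix}
\begin{pmatrix} 0 & -1 \\ 1 & 0\end{pmatrix}
\]
with $x$ inserted in the off-diagonal entries and $\c{R}$ replacing the rotation, and conjugate once. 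Both proofs hinge on the same two ingredients --- the rotation $\c{R}$ and the observation that conjugating an image concentrated in the first column/row by a matrix over $C$ stays in $\r{M}_2(B[x])$ because $B\triangleleft C$ --- so neither is more delicate than the other; the paper's version is shorter given that Lemma \ref{2012032136} is already on the table, while yours is self-contained past Lemma \ref{2012152154} and upgrades the conclusion to elementary homotopy, which could be reused wherever a one-step homotopy matters. Your bookkeeping (the entries of $H(a)$ are of the form $(u(x))_{i1}\,\alpha(a)\,(u(x)^{-1})_{1j}$, hence in $B[x]$) is exactly the point that needs to be checked, and you check it.
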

\begin{proof}By Lemma \ref{2012032136}, we have
$$\big(a\mapsto\left(\begin{array}{cc}\alpha(a) & 0 \\0 & 0 \\\end{array}\right)\big)\approx
\big(a\mapsto\left(\begin{array}{cc}0 & 0 \\0 & \alpha(a)\\\end{array}\right)\big).$$
Hence, the morphisms $$a\mapsto\left(\begin{array}{cc}1 & 0 \\0 & c \\\end{array}\right)\left(\begin{array}{cc}\alpha(a) & 0 \\0 & 0
\\\end{array}\right)\left(\begin{array}{cc}1 & 0 \\0 & c^{-1} \\\end{array}\right)=\left(\begin{array}{cc}\alpha(a) & 0 \\0 & 0 \\\end{array}\right)$$ $$a\mapsto\left(\begin{array}{cc}1 & 0 \\0 & c \\\end{array}\right)\left(\begin{array}{cc}0 & 0 \\0 & \alpha(a)
\\\end{array}\right)\left(\begin{array}{cc}1 & 0 \\0 & c^{-1} \\\end{array}\right)=\left(\begin{array}{cc}0& 0 \\0 & \beta(a) \\\end{array}\right)$$
from $A$ into $\r{M}_2(B)$ are elementary homotopic.
Now, applying Lemma \ref{2012032136} another time, we get the desired result.\end{proof}
\begin{lemma}\label{2012032241}
\emph{Let $\alpha_i:A\to\r{M}_{k_i}(B)$ ($i=1,\ldots,n$) be morphisms in $\b{A}_\r{nc}$. Suppose $\sigma$ denotes a permutation of $\{1,\ldots,n\}$.
Then the morphisms$$a\mapsto\left(\begin{array}{ccccc}\alpha_1(a)&0&\cdots & 0 & 0 \\0 & \alpha_2(a)&\cdots&0&0\\\vdots&\vdots&\ddots&\vdots&\vdots\\
0&0&\cdots&\alpha_n(a)&0\\0&0&\cdots&0&0\\\end{array}\right)$$ $$a\mapsto\left(\begin{array}{ccccc}\alpha_{\sigma(1)}(a)&0&\cdots & 0 & 0 \\0 &
\alpha_{\sigma(2)}(a)&\cdots&0&0\\\vdots&\vdots&\ddots&\vdots&\vdots\\0&0&\cdots&\alpha_{\sigma(n)}(a)&0\\0&0&\cdots&0&0\\\end{array}\right)$$
from $A$ into $\r{M}_{2k}(B)$, where $k:=\sum_i k_i$,  are homotopic.}
\end{lemma}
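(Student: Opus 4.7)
The plan is to reduce the general permutation case to that of an adjacent transposition and then apply Lemma~\ref{2012032233}. Since every $\sigma\in S_n$ decomposes as a product of adjacent transpositions $(i,i+1)$, and the relation $\r{h}$ of algebraic homotopy is transitive, it suffices to prove the statement when $\sigma=(i,i+1)$.

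For the adjacent case, the key idea is to regard the entire block-diagonal as a single morphism into $\r{M}_k(B)$ and realize the swap by conjugation in a unitization. Concretely, define $\gamma,\gamma':A\to\r{M}_k(B)$ by
$$\gamma(a):=\r{diag}\bigl(\alpha_1(a),\ldots,\alpha_i(a),\alpha_{i+1}(a),\ldots,\alpha_n(a)\bigr),$$
$$\gamma'(a):=\r{diag}\bigl(\alpha_1(a),\ldots,\alpha_{i+1}(a),\alpha_i(a),\ldots,\alpha_n(a)\bigr),$$
and set $C:=\r{M}_k(B^+)$, a unital algebra in which $\r{M}_k(B)$ sits as a two-sided ideal. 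Let $c\in C$ be the block-diagonal matrix equal to $I_{k_j}$ on the $j$-th diagonal block for $j\ne i,i+1$, and equal to the permutation matrix $\bigl(\begin{smallmatrix}0 & I_{k_{i+1}}\\ I_{k_i} & 0\end{smallmatrix}\bigr)$ on the $(k_i+k_{i+1})$-dimensional subspace corresponding to blocks $i$ and $i+1$. A direct block computation shows that $c$ is invertible in $C$ and that $\gamma'(a)=c\gamma(a)c^{-1}$ for all $a\in A$.

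Now invoke Lemma~\ref{2012032233} with $A$, $B$ replaced by $A$, $\r{M}_k(B)\subset C=\r{M}_k(B^+)$, and with $\alpha=\gamma$, $\beta=\gamma'$, and conjugating element $c$ as above. It produces an algebraic homotopy between the morphisms
$a\mapsto\bigl(\begin{smallmatrix}\gamma(a) & 0\\ 0 & 0\end{smallmatrix}\bigr)$ and $a\mapsto\bigl(\begin{smallmatrix}\gamma'(a) & 0\\ 0 & 0\end{smallmatrix}\bigr)$
viewed as morphisms from $A$ into $\r{M}_2(\r{M}_k(B))\cong\r{M}_{2k}(B)$. Unpacking the block structure, the first is exactly $a\mapsto\r{diag}(\alpha_1(a),\ldots,\alpha_n(a),0_k)$ and the second is $a\mapsto\r{diag}(\alpha_{\sigma(1)}(a),\ldots,\alpha_{\sigma(n)}(a),0_k)$ for $\sigma=(i,i+1)$, which is the desired statement. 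For a general $\sigma=\tau_r\cdots\tau_1$ with each $\tau_s$ adjacent, iterating this construction produces a finite chain of homotopies connecting the two diagonals.

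The main technical point is the construction of $c$ and the verification $\gamma'=c\gamma c^{-1}$ when the two adjacent block sizes $k_i,k_{i+1}$ differ, since then the swap is not a block-diagonal permutation with square blocks of equal size and one must handle carefully how $P$ reshuffles the $(k_i+k_{i+1})$-dimensional subspace between column partition $(k_i,k_{i+1})$ and row partition $(k_{i+1},k_i)$. Once $c$ is in hand, Lemma~\ref{2012032233} does all the homotopy-theoretic work, and no further appeal to the universal bifunctor $\f{M}^*$ or matrix manipulations of the $\c{R}$-type from Lemma~\ref{2012152154} is needed.
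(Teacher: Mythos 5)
Your proof is correct and takes essentially the same route as the paper, whose entire proof is ``It follows directly from Lemma~\ref{2012032233}'': the key step in both is that the two block-diagonal morphisms into $\r{M}_k(B)$ are conjugate by a block permutation matrix $c\in\r{M}_k(B^+)$, after which Lemma~\ref{2012032233} applied to the ideal $\r{M}_k(B)\subset\r{M}_k(B^+)$ yields the homotopy in $\r{M}_2(\r{M}_k(B))\cong\r{M}_{2k}(B)$. Your preliminary reduction to adjacent transpositions is harmless but superfluous, since the conjugation argument works verbatim for an arbitrary $\sigma$ using the full block permutation matrix.
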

\begin{proof}It follows directly from Lemma \ref{2012032233}.\end{proof}
For any algebra $B$, let $\r{M}_\bullet(B)$ denote the ind-algebra indexed over $\bb{N}$ with components $\r{M}_n(B)$ and structural
morphisms $\r{M}_n(B)\to\r{M}_{n+1}(B)$ given by
\begin{equation}\label{2011211406}M\mapsto\left(\begin{array}{cc}M & 0 \\0 & 0 \\\end{array}\right).
\end{equation}For any $m,n$, consider the morphism\begin{equation}\label{2012041425}
\Gamma_{m,n}:\r{M}_m(B)\oplus\r{M}_n(B)\to\r{M}_{m+n}(B)\hspace{5mm}(M,N)\mapsto\left(\begin{array}{cc}M & 0 \\0 & N
\\\end{array}\right)\end{equation}
Then it follows from Lemma \ref{2012032241} that the image of the family $\{\Gamma_{m,n}\}$ in $\r{ind}\text{-}\r{Hot}(\b{A}_\r{nc})$
defines an ind-morphism $$\Gamma:\r{M}_\bullet(B)\oplus\r{M}_\bullet(B)\to\r{M}_\bullet(B).$$
(Note that $\{\Gamma_{m,n}\}$ does not define a morphism in $\r{ind}\text{-}\b{A}_\r{nc}$ except for $B=0$.)
It also follows from Lemma \ref{2012032241}
that $\r{M}_\bullet(B)$ is an abelian monoid in $\r{ind}\text{-}\r{Hot}(\b{A}_\r{nc})$ with comultiplication $\Gamma$ and null element given by the
ind-morphism $0\to\r{M}_\bullet(B)$. It is not hard to see that all the above statements hold if the algebra $B$ is replaced by any ind-algebra.
Also, we may consider $\r{M}_\bullet$ as the $\r{wh}$ preserving functor $$\r{M}_\bullet:\r{ind}\text{-}\b{A}_\r{nc}\to\r{ind}\text{-}\b{A}_\r{nc}.$$
The following result is obvious.
\begin{proposition}\label{2011211155}
\emph{For $A,B\in\r{ind}\text{-}\b{A}_\r{nc}$, the set $[A,\r{M}_\bullet(B)]$
has a canonical abelian monoid structure induced by $\Gamma$. Moreover, this structure is functorial in $A$ and $B$.}
\end{proposition}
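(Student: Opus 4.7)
The plan is to transfer the abelian monoid structure already established on $\r{M}_\bullet(B)$ (as an object of $\r{ind}\text{-}\r{Hot}(\b{A}_\r{nc})$, in the paragraph preceding the proposition) to the hom-set $[A,\r{M}_\bullet(B)]$ via the standard Yoneda mechanism. The statement is labelled ``obvious'' precisely because once the monoid structure on the target is in hand, what remains is formal.

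First I would recall that direct sum $\oplus$ is the finite product in $\b{A}_\r{nc}$ (the coproduct, denoted $\star$ in Theorem \ref{1910101520}, is different). Since the compatible relation $\r{h}$ is a congruence, the product $\oplus$ descends to a finite product in $\r{Hot}(\b{A}_\r{nc})$, and this in turn extends to a finite product in $\r{ind}\text{-}\r{Hot}(\b{A}_\r{nc})$ acting componentwise. Consequently there is a canonical natural bijection
$$[A,\r{M}_\bullet(B)\oplus\r{M}_\bullet(B)]\cong[A,\r{M}_\bullet(B)]\times[A,\r{M}_\bullet(B)].$$
Post-composition with $\Gamma\in[\r{M}_\bullet(B)\oplus\r{M}_\bullet(B),\r{M}_\bullet(B)]$ then defines a binary operation $+$ on $[A,\r{M}_\bullet(B)]$, while the class of the ind-morphism $0\to\r{M}_\bullet(B)$ supplies a distinguished element, which is a two-sided identity.

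Next I would verify the monoid axioms by pulling back. Associativity, commutativity, and the unit law on $[A,\r{M}_\bullet(B)]$ translate, via the natural bijection above, into exactly the associativity, commutativity, and unit diagrams for $\Gamma$ and $0\to\r{M}_\bullet(B)$ in $\r{ind}\text{-}\r{Hot}(\b{A}_\r{nc})$, all of which were already obtained as consequences of Lemma \ref{2012032241} in the discussion preceding the proposition.

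Finally, for functoriality I would check two points. In the variable $A$: any ind-morphism $f:A'\to A$ induces pre-composition $f^*:[A,\r{M}_\bullet(B)]\to[A',\r{M}_\bullet(B)]$; this respects $+$ automatically, since $+$ is defined by post-composition with the fixed ind-morphism $\Gamma$ and these two operations commute. In the variable $B$: given an ind-morphism $g:B\to B'$, the defining block-diagonal morphisms (\ref{2012041425}) manifestly commute with $\r{M}_\bullet(g)$, so the identity $\r{M}_\bullet(g)\circ\Gamma_B=\Gamma_{B'}\circ(\r{M}_\bullet(g)\oplus\r{M}_\bullet(g))$ holds in $\r{ind}\text{-}\r{Hot}(\b{A}_\r{nc})$; consequently post-composition with $\r{M}_\bullet(g)$ is a monoid morphism $[A,\r{M}_\bullet(B)]\to[A,\r{M}_\bullet(B')]$. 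I do not expect any genuine obstacle here; the only mild care needed is the verification that finite products descend through the passage to $\r{Hot}$ and then to ind-categories, which is formal.
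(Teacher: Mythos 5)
Your proposal is correct and matches the paper's intent: the paper gives no written proof (it declares the result obvious after establishing in the preceding paragraph that $\Gamma$ makes $\r{M}_\bullet(B)$ an abelian monoid object of $\r{ind}\text{-}\r{Hot}(\b{A}_\r{nc})$), and your argument is exactly the standard Yoneda-style transfer of that structure to $[A,\r{M}_\bullet(B)]$ that the author is implicitly invoking. The only detail worth spelling out --- that $\oplus$ is the finite product in $\b{A}_\r{nc}$ and descends through $\r{Hot}$ and the ind-construction --- you have correctly identified and it is indeed formal.
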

\begin{lemma}\label{2011211105}
\emph{Suppose there exists a morphism $f:A\to A$ such that $\tilde{f}\r{h}0$ where
$$\tilde{f}:A\to\r{M}_2(A)\hspace{5mm}a\mapsto\left(\begin{array}{cc}f(a) & 0 \\0 & a \\\end{array}\right).$$
Then the monoid $[A,\r{M}_\bullet(B)]$ is a group.}
\end{lemma}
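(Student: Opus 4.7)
The plan is to write down an explicit inverse for every element of $[A,\r{M}_\bullet(B)]$. Given a class represented by some morphism $\alpha:A\to\r{M}_n(B)$, I will show that the class of $\alpha\circ f:A\to\r{M}_n(B)$ is its additive inverse. Since the monoid structure on $[A,\r{M}_\bullet(B)]$ from Proposition \ref{2011211155} is abelian and since every element of $[A,\r{M}_\bullet(B)]$ is represented by a morphism into some $\r{M}_n(B)$ (by the definition of hom-sets in the ind-category), this suffices.

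First I would check that the sum $[\alpha]+[\alpha\circ f]$ is represented, via the comultiplication $\Gamma_{n,n}$ of (\ref{2012041425}), by the block-diagonal morphism
$$\beta:A\to\r{M}_{2n}(B),\hspace{5mm}a\mapsto\left(\begin{array}{cc}\alpha(a)&0\\0&\alpha(f(a))\end{array}\right),$$
where $\r{M}_{2n}(B)$ is identified with $\r{M}_2(\r{M}_n(B))$ in the canonical way. Next I would observe that under this identification $\beta$ is exactly the composition $\r{M}_2(\alpha)\circ\tilde{f}$, where $\r{M}_2(\alpha):\r{M}_2(A)\to\r{M}_2(\r{M}_n(B))$ is the matrix extension of $\alpha$ (equivalently $\r{id}_{\r{M}_2}\ot\alpha$), since
$$\r{M}_2(\alpha)\left(\begin{array}{cc}f(a)&0\\0&a\end{array}\right)=\left(\begin{array}{cc}\alpha(f(a))&0\\0&\alpha(a)\end{array}\right),$$
which agrees with $\beta$ after the swap of the two diagonal blocks guaranteed by Lemma \ref{2012032241} (so the order of $\alpha$ and $\alpha\circ f$ along the diagonal does not matter up to homotopy).

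The hypothesis $\tilde{f}\r{h}0$ is now invoked: since $\r{h}$ is a compatible relation on $\b{A}_\r{nc}$, postcomposition with the morphism $\r{M}_2(\alpha)$ preserves homotopy, so
$$\beta=\r{M}_2(\alpha)\circ\tilde{f}\r{h}\r{M}_2(\alpha)\circ 0=0$$
as morphisms $A\to\r{M}_{2n}(B)$. Therefore $[\beta]=0$ in $[A,\r{M}_\bullet(B)]$, which reads $[\alpha]+[\alpha\circ f]=0$, and $[\alpha\circ f]$ is the required inverse.

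The only subtle point, which I expect to be the main obstacle rather than any deep step, is the bookkeeping of the identification $\r{M}_2(\r{M}_n(B))\cong\r{M}_{2n}(B)$ together with the order of the diagonal entries: the monoid sum $[\alpha]+[\alpha\circ f]$ is defined via $\Gamma_{n,n}$, and the composition $\r{M}_2(\alpha)\circ\tilde{f}$ places the entries in a particular order, so one has to appeal to Lemma \ref{2012032241} to see that the two block-diagonal morphisms represent the same class. Once this is settled, the extension to the case where $A$ and $B$ are ind-algebras is routine: a class in $[A,\r{M}_\bullet(B)]$ is represented levelwise, and the same argument, combined with the fact that $\r{M}_\bullet$ is a $\r{wh}$-preserving functor, produces the inverse.
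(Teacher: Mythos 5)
Your proposal is correct and follows exactly the route the paper intends: the paper's proof is the one-line assertion that $g\circ f$ is an inverse for $g$ in $[A,\r{M}_\bullet(B)]$, and your argument is precisely the verification of that assertion, identifying $[\alpha]+[\alpha\circ f]$ with the class of $\r{M}_2(\alpha)\circ\tilde{f}$ up to the block swap of Lemma \ref{2012032136}/\ref{2012032241} and then using compatibility of $\r{h}$ with composition to conclude it is null. No gaps.
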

\begin{proof}
It is easily verified that for every ind-morphism $g:A\to\r{M}_\bullet(B)$, $g\circ f$ is an inverse for $g$ in $[A,\r{M}_\bullet(B)]$.
\end{proof}
Consider the coproduct $A\star A$ in $\b{A}_\r{nc}$.
For any $a\in A$, let $a_1,a_2$ denote the two copies of $a$ in $A\star A$.
The algebra $\r{q}A$, originally introduced by Cuntz, is defined to be the kernel of the codiagonal morphism
$A\star A\to A$ ($a_1,a_2\mapsto a$) \cite{Cuntz1},\cite[$\S$4.11]{CortinasThom1}.
The key property of $\r{q}A$ is that for any two morphisms $\alpha,\beta:A\to B$ if $I$ is an ideal of $B$ such that
$\alpha(a)-\beta(a)\in I$, then the restriction to $\r{q}A$ of the morphism $A\star A\to B$ given by $a_1\mapsto\alpha(a)$ and $a_2\mapsto\beta(a)$,
takes its values in $I$, and thus may be regarded as a morphism $\r{q}A\to I$.
We may consider $\r{q}$ as a functor $\r{q}:\b{A}_\r{nc}\to\b{A}_\r{nc}$ in the obvious way. Then, it
is easily verified that $\r{q}$ is homotopy preserving.
The proof of the next lemma is an adapted version of that of \cite[Proposition 1.15]{Phillips2}.
\begin{lemma}\label{2011201234}
\emph{$\r{q}A$ satisfies the assumption of Lemma \ref{2011211105}.}
\end{lemma}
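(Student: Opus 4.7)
The plan is to set $f : \r{q}A \to \r{q}A$ to be the automorphism $\sigma$ obtained by restricting to $\r{q}A$ the flip of $A\star A$ that interchanges the two copies ($a_1\leftrightarrow a_2$). Since the codiagonal $\pi : A\star A \to A$ is symmetric in the two factors, $\pi\sigma=\pi$, so $\sigma$ preserves $\r{q}A=\ker(\pi)$. With this choice, the task reduces to exhibiting an elementary homotopy from the zero morphism to $\tilde{\sigma} : \r{q}A \to \r{M}_2(\r{q}A)$, $y \mapsto \r{diag}(\sigma(y),y)$.

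The homotopy will come from the conjugation-by-$\c{R}$ trick underlying Lemmas \ref{2012152154}--\ref{2012032136}. First I would define two morphisms $\alpha,\beta : A \to \r{M}_2\big((A\star A)[t]\big)$ by
$$\alpha(a) := \c{R}^{-1}(t)\,\r{diag}(a_1,a_2)\,\c{R}(t), \qquad \beta(a) := \r{diag}(a_1,a_2),$$
where $\c{R}(t)$ is the scalar matrix of Lemma \ref{2012152154}. Both are morphisms ($\alpha$ is $\beta$ conjugated by the invertible $\c{R}(t)$), so by the universal property of the coproduct they assemble into a single morphism $\Psi : A\star A \to \r{M}_2\big((A\star A)[t]\big)$ with $\Psi(a_1)=\alpha(a)$, $\Psi(a_2)=\beta(a)$.

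Next I would verify the key containment $\alpha(a)-\beta(a)\in\r{M}_2(\r{q}A[t])$. Applying $\pi$ entry-wise, and using that the entries of $\c{R}(t)$ lie in $\bb{F}[t]$ (hence are fixed by $\pi$) while $\pi(a_1)=\pi(a_2)=a$, one obtains
$$\c{R}^{-1}(t)\,(aI_2)\,\c{R}(t) - aI_2 = 0.$$
The defining universal property of $\r{q}A$ then yields a morphism $\hat{\Psi} : \r{q}A \to \r{M}_2(\r{q}A[t]) \cong \r{M}_2(\r{q}A)[t]$, which is by construction an elementary homotopy between its specializations at $t=0$ and $t=1$. At $t=0$, $\c{R}(0)=I_2$ forces $\alpha|_{t=0}=\beta|_{t=0}$, so $\hat{\Psi}|_{t=0}=0$; at $t=1$, Lemma \ref{2012152154} gives $\alpha|_{t=1}(a)=\r{diag}(a_2,a_1)$, and a short multiplicative check shows that the resulting morphism $A\star A \to \r{M}_2(A\star A)$ is $x\mapsto \r{diag}(\sigma(x),x)$ on all of $A\star A$, whose restriction to $\r{q}A$ is exactly $\tilde{\sigma}$.

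The only non-routine step is the entry-wise computation confirming $\alpha(a)-\beta(a)\in\r{M}_2(\r{q}A[t])$; this is precisely what allows the universal property of $\r{q}A$ to upgrade $\Psi$ to a morphism taking values in the ideal $\r{M}_2(\r{q}A)[t]$, and it is the reason for choosing $\beta$ to be the bare diagonal $\r{diag}(a_1,a_2)$ rather than $\r{diag}(a_2,a_1)$. Once this containment is secured, everything else is an unwinding of the universal property of $\r{q}A$ together with Lemma \ref{2012152154}, so the assumption of Lemma \ref{2011211105} is satisfied for $\r{q}A$ with $f=\sigma$.
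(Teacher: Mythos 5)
Your proof is correct and follows essentially the same route as the paper: take $f$ to be the switch restricted to $\r{q}A$, build two morphisms $A\to\r{M}_2((A\star A)[x])$ differing by conjugation with $\c{R}$, check their difference lands in $\r{M}_2(\r{q}A)[x]$, and invoke the key property of $\r{q}A$ to get the elementary homotopy. The only (immaterial) difference is that you attach the $\c{R}$-conjugated diagonal to $a_1$ rather than $a_2$, so your homotopy runs from $0$ at $t=0$ to $\tilde{\sigma}$ at $t=1$ instead of the reverse.
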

\begin{proof}
Consider the morphisms $\alpha,\beta:A\to\r{M}_2(A\star A)[x]$ given by
$$\alpha:a\mapsto\left(\begin{array}{cc}a_2 & 0 \\0 & a_1 \\\end{array}\right)\hspace{5mm}\text{and}\hspace{5mm}
\beta:a\mapsto\c{R}^{-1}\left(\begin{array}{cc}a_1 & 0 \\0 & a_2 \\\end{array}\right)\c{R}.$$
It is easily verified that $\alpha(a)-\beta(a)\in\r{M}_2(\r{q}A)[x]$, and thus we have the canonical morphism
$\varphi:\r{q}A\to\r{M}_2(\r{q}A)[x]$ induced by $\alpha,\beta$.
Let $\r{s}:A\star A\to A\star A$ denote switch i.e. the morphism defined by $a_1\mapsto a_2$ and $a_2\mapsto a_1$.
Put $f:=s|_{\r{q}A}$. Then $\varphi$ is an elementary homotopy from $\tilde{f}$ to $0$. \end{proof}
The following result is a version of \cite[Proposition 1.4]{Cuntz1}.
\begin{theorem}\label{2012041236}
\emph{For any two algebras $A$ and $B$, the set $$\f{QQ}(A,B):=[\r{q}A,\r{M}_\bullet(B)]$$ has a canonical abelian group
structure. Moreover, we may consider the following homotopy invariant functor in the obvious way:
$$\f{QQ}:\b{A}_\r{nc}^\r{op}\times\b{A}_\r{nc}\to\b{Ab}.$$}
\end{theorem}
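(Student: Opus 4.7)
The plan is to assemble the preceding four lemmas/propositions; no new ideas are required and the theorem is essentially a packaging statement. First I would apply Proposition \ref{2011211155} with the first argument $A$ replaced by $\r{q}A$: this immediately equips
$$\f{QQ}(A,B)=[\r{q}A,\r{M}_\bullet(B)]$$
with a canonical abelian monoid structure, where addition of two classes $[g],[g']$ is represented by the composite $\Gamma\circ(g\oplus g')$ built from the block-diagonal sum ind-morphism $\Gamma:\r{M}_\bullet(B)\oplus\r{M}_\bullet(B)\to\r{M}_\bullet(B)$ introduced before Proposition \ref{2011211155}, with null element represented by the zero ind-morphism $\r{q}A\to 0\to\r{M}_\bullet(B)$.

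To upgrade this monoid to a group, I would invoke Lemma \ref{2011201234}, which supplies an endomorphism $f:\r{q}A\to\r{q}A$ (namely the restriction of the switch $\r{s}:A\star A\to A\star A$) with $\tilde{f}\r{h}0$. Lemma \ref{2011211105} then asserts that for every $[g]\in[\r{q}A,\r{M}_\bullet(B)]$ the class $[g\circ f]$ serves as an additive inverse. Combined with commutativity coming from Proposition \ref{2011211155}, this gives the desired abelian group structure.

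For the functoriality statement, note that $\r{q}:\b{A}_\r{nc}\to\b{A}_\r{nc}$ is a functor (since coproducts and the codiagonal are functorial) and that $\r{M}_\bullet:\r{ind}\text{-}\b{A}_\r{nc}\to\r{ind}\text{-}\b{A}_\r{nc}$ is a functor. Hence the assignment $(A,B)\mapsto[\r{q}A,\r{M}_\bullet(B)]$ is a bifunctor from $\b{A}_\r{nc}^\r{op}\times\b{A}_\r{nc}$ to $\b{Set}$, contravariant in $A$ via precomposition with $\r{q}\alpha$ and covariant in $B$ via postcomposition with $\r{M}_\bullet\beta$. The ``moreover'' clause of Proposition \ref{2011211155} says that the monoid operation is natural in both variables, which immediately gives that the induced maps $\alpha^*$ and $\beta_*$ are group homomorphisms with respect to the group structure just constructed.

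Finally, homotopy invariance is essentially automatic: it is easily checked from the universal property defining $\r{q}A$ that $\r{q}$ preserves the relation $\r{h}$, and it was already noted above Proposition \ref{2011211155} that $\r{M}_\bullet$ preserves $\r{wh}$. Since $[\cdot,\cdot]$ is by construction the hom-set in a homotopy category (see the identifications (\ref{2011282112})), homotopic morphisms $\alpha\r{h}\alpha':A\to A'$ induce the same map $\f{QQ}(A',B)\to\f{QQ}(A,B)$, and similarly in the second variable. The only step I would double-check carefully is naturality of $\Gamma$ under arbitrary precomposition, since $\Gamma$ was only defined up to $\r{h}$-equivalence via Lemma \ref{2012032241}; but this is exactly the content of the ``functorial in $A$ and $B$'' clause in Proposition \ref{2011211155}, so the argument is complete.
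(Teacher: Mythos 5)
Your proposal is correct and follows exactly the paper's own route: the paper's proof is the one-line citation of Proposition \ref{2011211155} together with Lemmas \ref{2011211105} and \ref{2011201234}, which is precisely the assembly you carry out (with the functoriality and homotopy-invariance details, which the paper leaves implicit, filled in correctly via the functoriality of $\r{q}$ and $\r{M}_\bullet$ and the fact that $\r{q}$ preserves homotopy).
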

\begin{proof}It follows from Proposition \ref{2011211155}, and Lemmas \ref{2011211105} and \ref{2011201234}.\end{proof}
For any algebra $A$, let the \emph{evaluation} morphisms $\r{ev}_1,\r{ev}_2:\r{q}A\to A$ be defined respectively by $(a_1\mapsto a,a_2\mapsto0)$ and
$(a_1\mapsto0,a_2\mapsto a)$.
\begin{remark}
\emph{It is proved by Cuntz that for any C*-algebra $A$, $\r{q}A$ and $\r{q}^2A$ are analytically homotopic up to stabilization by $\r{M}_2$
\cite[Theorem 1.6]{Cuntz1}. Using this result, the triple $(\r{q}A,\r{ev}_1,\r{ev}_2)$ can be regarded as an analogue of the triple $([0,1],0,1)$,
homology theories of C*-algebras which are stable with respect to a tensor with compact operators.
Unfortunately, it seems that there is no way to restate a pure-algebraic version of this result.}
\end{remark}
We let $$\f{Q}(B):=\f{QQ}(\bb{F},B).$$ Thus we have the following homotopy invariant functor:
$$\f{Q}:\b{A}_\r{nc}\to\b{Ab}.$$
Let $C\in\b{A}_\r{u}$. Denote by $\r{Idmp}_n(C)$ the set of idempotent matrixes in $\r{M}_n(C)$. Then (\ref{2011211406}) makes the family $(\r{Idmp}_n(C))_{n\geq1}$ into an ind-set; write $\r{Idmp}(C)$ for its direct limit. The group
$\lim_{n\to\infty}\r{GL}_n(C)$ acts on $\r{Idmp}(C)$ by conjugation. Write $[\r{Idmp}(C)]$ for the set of conjugate classes.
The morphisms (\ref{2012041425}) induce an abelian monoid structure on $[\r{Idmp}(C)]$.
Its Grothendieck group is ${K_0}(C)$, the usual K-group of $C$.
It is clear that any $M\in\r{Idmp}_n(C)$ is exactly distinguished by the morphism $\alpha_M:\bb{F}\to\r{M}_n(C)$
given by $1\mapsto M$. By Lemma \ref{2012032233}, if the idempotents $M,M'\in\r{Idmp}_n(C)$ are conjugate then $\alpha_M,\alpha_{M'}$,
considered canonically as morphisms into $\r{M}_{2n}(C)$, are homotopic. Thus the assignment $M\mapsto\alpha_M$ induces a natural monoid-morphism
\begin{equation}\label{2012202312}
[\r{Idmp}(C)]\to[\bb{F},\r{M}_\bullet(C)].
\end{equation}
The morphism $\r{ev}_1:\r{q}\bb{F}\to\bb{F}$ induces the monoid-morphism
\begin{equation}\label{2012202318}
[\bb{F},\r{M}_\bullet(C)]\to[\r{q}\bb{F},\r{M}_\bullet(C)]=\f{Q}(C).
\end{equation}
The composition of (\ref{2012202312}) and (\ref{2012202318}) gives rise to the group-morphism
\begin{equation}\label{2012202322}
{K_0}(C)\to\f{Q}(C).
\end{equation}
We need the following Yoneda lemma.
\begin{lemma}\label{2012041916}
\emph{Let $\b{C}$ be a category with finite coproducts and cofinal object. Let $C\in\r{pro}\text{-}\b{C}$, and suppose that for
every $D\in\b{C}$ the set $\r{Hom}_{\r{pro}\text{-}\b{C}}(C,D)$ has an abelian group structure such that these structures make $\r{Hom}_{\r{pro}\text{-}\b{C}}(C,?)$ to a functor from $\b{C}$ to $\b{Ab}$. Then $C$ has a cocommutative cogroup structure in $\r{pro}\text{-}\b{C}$
whose induces the group structures of $\r{Hom}_{\r{pro}\text{-}\b{C}}(C,D)$ in the obvious way. }
\end{lemma}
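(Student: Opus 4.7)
The plan is to apply the contravariant pro-Yoneda embedding $C \mapsto \r{Hom}_{\r{pro}\text{-}\b{C}}(C,?)$ from $\r{pro}\text{-}\b{C}$ into $\r{Fun}(\b{C}, \b{Set})^\r{op}$. First I would recall that this embedding is fully faithful: for $C=\{C_i\}_i$ and any pro-object $C'$, a natural transformation $\r{Hom}_{\r{pro}\text{-}\b{C}}(C,?) \to \r{Hom}_{\r{pro}\text{-}\b{C}}(C',?)$ of $\b{Set}$-valued functors on $\b{C}$ is determined by its values on the identities $\r{id}_{C_i}$, and the pro-representation formalism reviewed in the Notations shows that such coherent data correspond precisely to pro-morphisms $C' \to C$. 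Next, since $\b{C}$ has finite coproducts these lift componentwise to $\r{pro}\text{-}\b{C}$, and the universal property gives the natural identification $\r{Hom}_{\r{pro}\text{-}\b{C}}(C \sqcup C, D) \cong \r{Hom}_{\r{pro}\text{-}\b{C}}(C, D) \times \r{Hom}_{\r{pro}\text{-}\b{C}}(C, D)$; interpreting the cofinal object as the initial object $\b{0}$ of $\b{C}$, the functor $\r{Hom}_{\r{pro}\text{-}\b{C}}(\b{0},?)$ is constant at a singleton $*$.

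Now the hypothesis produces three natural transformations of functors $\b{C} \to \b{Set}$: addition $m : \r{Hom}_{\r{pro}\text{-}\b{C}}(C,?) \times \r{Hom}_{\r{pro}\text{-}\b{C}}(C,?) \to \r{Hom}_{\r{pro}\text{-}\b{C}}(C,?)$, zero $e : * \to \r{Hom}_{\r{pro}\text{-}\b{C}}(C,?)$, and inversion $i : \r{Hom}_{\r{pro}\text{-}\b{C}}(C,?) \to \r{Hom}_{\r{pro}\text{-}\b{C}}(C,?)$. Through the identifications above, $m$ becomes a transformation with source $\r{Hom}_{\r{pro}\text{-}\b{C}}(C \sqcup C,?)$ and $e$ one with source $\r{Hom}_{\r{pro}\text{-}\b{C}}(\b{0},?)$; by full faithfulness they correspond respectively to unique pro-morphisms $\mu : C \to C \sqcup C$, $\epsilon : C \to \b{0}$, and $\iota : C \to C$. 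Unwinding the correspondence, the given operations on $\r{Hom}_{\r{pro}\text{-}\b{C}}(C, D)$ are recovered as $f + g = [f,g] \circ \mu$, the neutral element as the composite of $\epsilon$ with the unique pro-morphism $\b{0} \to D$, and $-f = f \circ \iota$.

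Finally I would verify the cocommutative cogroup axioms: coassociativity, the two counit laws, the two coinverse laws, and cocommutativity. Each is an equation between two parallel pro-morphisms with source $C$; by fullness of the embedding, it is equivalent to equality of the corresponding induced natural transformations, and evaluating at each $D \in \b{C}$ reduces the statement to the matching abelian group axiom for $\r{Hom}_{\r{pro}\text{-}\b{C}}(C, D)$, which holds by hypothesis. The main obstacle is purely bookkeeping: one must track the contravariance of Yoneda and the duality that converts products of hom-sets into coproducts of pro-objects; the substantive input is just the full faithfulness of the pro-Yoneda embedding, which is standard.
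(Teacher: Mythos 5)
Your argument is correct and is exactly the standard co-Yoneda argument the paper has in mind: the paper states this result as ``the following Yoneda lemma'' and gives no proof at all, and the restricted-Yoneda full faithfulness you invoke is precisely the pro-representation formalism recalled in the paper's Notations (a natural transformation out of $\r{Hom}_{\r{pro}\text{-}\b{C}}(C,?)$ is determined by the compatible family of its values on the $\r{id}_{C_i}$). The only step worth flagging as nontrivial bookkeeping is the identification $\r{Hom}_{\r{pro}\text{-}\b{C}}(C\sqcup C,D)\cong\r{Hom}_{\r{pro}\text{-}\b{C}}(C,D)\times\r{Hom}_{\r{pro}\text{-}\b{C}}(C,D)$ for $D\in\b{C}$, which rests on the fact that the componentwise coproduct is a coproduct in $\r{pro}\text{-}\b{C}$ and that filtered colimits commute with finite products in $\b{Set}$; with that in place, your reduction of all cogroup axioms to the group axioms of $\r{Hom}_{\r{pro}\text{-}\b{C}}(C,D)$ via full faithfulness is complete.
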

\begin{theorem}\label{2011272145}
\emph{For any algebra $A$, there exists a pro-algebra $\ov{\f{QQ}}_A$ with a cocommutative cogroup structure
as an object in $\r{pro}\text{-}\r{Hot}(\b{A}_\r{nc})$
such that the abelian groups $[\ov{\f{QQ}}_A,B]$ and $\f{QQ}(A,B)$ are naturally isomorphic for every algebra $B$.
In particular, there exists a pro-algebra $\ov{\f{Q}}$ with a natural abelian group-morphism
$$[\ov{\f{Q}},B]\cong\f{Q}(B)\hspace{10mm}(B\in\b{A}_\r{nc}).$$}
\end{theorem}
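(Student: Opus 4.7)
The plan is to realize the functor $B \mapsto \f{QQ}(A,B)=[\r{q}A,\r{M}_\bullet(B)]$ as a representable functor $[\ov{\f{QQ}}_A,-]$ on the homotopy category, and then apply the Yoneda-type Lemma~\ref{2012041916} to transport the abelian group structure of $\f{QQ}(A,B)$ into a cocommutative cogroup structure on $\ov{\f{QQ}}_A$. The main tool is the exponential law (Theorem~\ref{2012041948}): it converts ``maps into $\r{M}_n\ot B$'' into ``maps out of $\f{M}^\r{nc}(\r{q}A,\r{M}_n)$.''

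First I would build $\ov{\f{QQ}}_A$ explicitly. For each $n\geq 1$, finite-dimensionality of $\r{M}_n$ together with Theorem~\ref{2009080009}(v) ensures that the pro-algebra $\f{M}^\r{nc}(\r{q}A,\r{M}_n)$ is canonically isomorphic to the single algebra
$$X_n:=\underleftarrow{\lim}\,\f{M}^\r{nc}(\r{q}A,\r{M}_n).$$
The standard embedding $\r{M}_n\hookrightarrow\r{M}_{n+1}$, $M\mapsto\mathrm{diag}(M,0)$, which defines the ind-algebra $\r{M}_\bullet$, induces by contravariance of $\f{M}^\r{nc}$ in the second argument a morphism $X_{n+1}\to X_n$ in $\b{A}_\r{nc}$, and these assemble into a pro-algebra
$$\ov{\f{QQ}}_A:=\big(X_n,\;X_{n+1}\to X_n\big)_{n\geq 1}.$$

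Next, for any algebra $B$, the definition of hom-sets in $\r{pro}\text{-}\r{Hot}(\b{A}_\r{nc})$ collapses to
$$[\ov{\f{QQ}}_A,B]=\lim_{n\to\infty}[X_n,B]=\lim_{n\to\infty}[\f{M}^\r{nc}(\r{q}A,\r{M}_n),B].$$
Applying Theorem~\ref{2012041948} componentwise gives $[\f{M}^\r{nc}(\r{q}A,\r{M}_n),B]\cong[\r{q}A,\r{M}_n\ot B]=[\r{q}A,\r{M}_n(B)]$, and taking $\lim_{n\to\infty}$ together with the identification $[\r{q}A,\r{M}_\bullet(B)]=\lim_{n\to\infty}[\r{q}A,\r{M}_n(B)]$ for the ind-algebra $\r{M}_\bullet(B)$ produces the desired natural bijection $[\ov{\f{QQ}}_A,B]\cong\f{QQ}(A,B)$. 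Now Lemma~\ref{2012041916} applies: the abelian group structure of Theorem~\ref{2012041236} is functorial in $B$, and $\r{Hot}(\b{A}_\r{nc})$ has finite coproducts (the free product $\star$ passes to $\r{Hot}$ because homotopies can be applied coordinatewise) and an initial object (the zero algebra), so $\ov{\f{QQ}}_A$ acquires a cocommutative cogroup structure in $\r{pro}\text{-}\r{Hot}(\b{A}_\r{nc})$ inducing the given group laws. Specializing $A=\bb{F}$ then yields $\ov{\f{Q}}:=\ov{\f{QQ}}_{\bb{F}}$ with $[\ov{\f{Q}},B]\cong\f{Q}(B)$.

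The main obstacle will be verifying that the direct system computing $[\ov{\f{QQ}}_A,B]$, whose transition maps come from pre-composition with $X_{n+1}\to X_n$, matches the direct system computing $[\r{q}A,\r{M}_\bullet(B)]$, whose transition maps come from the inclusions $\r{M}_n(B)\hookrightarrow\r{M}_{n+1}(B)$. This reduces to the naturality of $\Upsilon^\r{nc}$ and of the adjunction bijection in Theorem~\ref{2011302233} with respect to the morphism $\r{M}_n\hookrightarrow\r{M}_{n+1}$ in the $B$-slot, so it is a diagram chase but conceptually routine.
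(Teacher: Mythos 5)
Your proposal is correct and follows essentially the same route as the paper: the paper sets $\ov{\f{QQ}}_A:=\f{M}^\r{nc}_{\r{q}A,\r{M}_\bullet(\bb{F})}$, identifies $[\f{M}^\r{nc}_{\r{q}A,\r{M}_\bullet(\bb{F})},B]\cong[\r{q}A,\r{M}_\bullet(\bb{F})\ot B]=\f{QQ}(A,B)$ via Theorem \ref{2012041948} (in its ind-algebra form), and then invokes Lemma \ref{2012041916}. Your version merely unpacks the same pro-algebra componentwise, using Theorem \ref{2009080009}(v) to replace each $\f{M}^\r{nc}(\r{q}A,\r{M}_n)$ by its inverse limit, and verifies the transition-map compatibility that the paper absorbs into the naturality of the adjunction.
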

\begin{proof}
By Theorem \ref{2012041948}, for every algebra $B$ we have
$$\f{QQ}(A,B)=[\r{q}A,\r{M}_\bullet(B)]=[\r{q}A,\r{M}_\bullet(\bb{F})\ot B]\cong[\f{M}^\r{nc}_{\r{q}A,\r{M}_\bullet(\bb{F})},B].$$
Thus it follows from Lemma \ref{2012041916} that $\f{M}^\r{nc}_{\r{q}A,\r{M}_\bullet(\bb{F})}$ is the desired pro-algebra
\end{proof}
Following Phillips \cite{Phillips2}, $\ov{\f{QQ}}_A$ may be called classifying pro-algebra.

For any algebra $B$, we put
$$\r{M}_\infty(B):=\lim_{n\to\infty}\r{M}_n(B)\hspace{4mm}\text{and}\hspace{4mm}\c{M}_\infty(B):=\r{M}_\bullet\r{M}_\infty(B).$$
It is clear that we have the $\r{wh}$-preserving functor
$$\c{M}_\infty:\r{ind}\text{-}\b{A}_\r{nc}\to\r{ind}\text{-}\b{A}_\r{nc}.$$
As explained in \cite[$\S$4.1]{CortinasThom1}, the abelian monoid structure on $\c{M}_\infty(B)$, may be induced also by the direct sum of
infinite matrixes in $\r{M}_\infty(B)$.
For any algebra $B$, we denote by $B[\Delta]$ the ind-algebra $B\ot\bb{F}[\Delta]$. For any simplicial set $S\in\r{sim}\text{-}\b{Set}$,
let $$\f{Z}(S,B):=\r{Hom}_{\r{sim}\text{-}\b{Set}}(S,B[\Delta])$$
where $B[\Delta]$, by the forgetful functor $\r{sim}\text{-}\b{A}_\r{nc}\to\r{sim}\text{-}\b{Set}$, is considered as a simplicial
set. Then $\f{Z}(S,B)$ can be considered as an algebra with pointwise operations. Thus we have the functor
$$\f{Z}:(\r{sim}\text{-}\b{Set})^\r{op}\times\b{A}_\r{nc}\to\b{A}_\r{nc}.$$
We extend $\f{Z}$ canonically to the functor
$$\f{Z}:(\r{pro}\text{-}\r{sim}\text{-}\b{Set})^\r{op}\times\b{A}_\r{nc}\to\r{ind}\text{-}\b{A}_\r{nc},$$
and then again to the functor
$$\f{Z}:(\r{pro}\text{-}\r{sim}\text{-}\b{Set})^\r{op}\times\r{ind}\text{-}\b{A}_\r{nc}\to\r{ind}\text{-}\b{A}_\r{nc}.$$
Let $\r{sd}:\r{sim}\text{-}\b{Set}\to\r{sim}\text{-}\b{Set}$ denote the simplicial subdivision functor \cite[$\S$III.4]{GoerssJardine1}.
For any $S\in\r{sim}\text{-}\b{Set}$ let $\r{sd}^\bullet S\in\r{pro}\text{-}\r{sim}\text{-}\b{Set}$ be given by the inverse system
$$S=\r{sd}^0 S\leftarrow\r{sd}^1 S\leftarrow\cdots\leftarrow\r{sd}^n S\leftarrow\cdots$$
where the structural morphisms are given by a natural transformation $\r{sd}\to\r{id}$ called last vertex map.
Let $\Delta^1$ denote standard $1$-simplex i.e. the simplicial set given by $$\b{n}\mapsto\r{Hom}_\Delta(\b{n},\b{1}).$$
For any ind-algebra $B$ let $B^{\c{S}^1}$ (\cite[$\S$3.3]{CortinasThom1}) denote the kernel of ind-morphism
$$\f{Z}(\r{sd}^\bullet\Delta^1,B)\to\f{Z}(\r{sd}^\bullet\partial\Delta^1,B)$$
induced by the canonical morphism $\partial\Delta^1\to\Delta^1$ in $\r{sim}\text{-}\b{Set}$. So we have the functor
$$?^{\c{S}^1}:\r{ind}\text{-}\b{A}_\r{nc}\to\r{ind}\text{-}\b{A}_\r{nc}$$
The $n$ times iteration of this functor is denoted by $?^{\c{S}^n}$. It is not hard to see that for any two algebras $A,B$
we have the following canonical isomorphisms of ind-algebras:
$$(A\ot B)^{\c{S}^n}\cong A^{\c{S}^n}\ot B\hspace{10mm}(\c{M}_\infty(B))^{\c{S}^n}\cong\c{M}_\infty(B^{\c{S}^n})$$
(When the ground ring is not a field, the proof of these latter identities follow from the nontrivial result \cite[Proposition 3.1.3]{CortinasThom1}.)
In particular, we have $$(B[x])^{\c{S}^n}\cong B^{\c{S}^n}[x],$$ and hence the functor $?^{\c{S}^n}$ is homotopy preserving.
By \cite[Theorem 3.3.2]{CortinasThom1} the set $$[A,B^{\c{S}^n}]\hspace{10mm}(A\in\b{A}_\r{nc},B\in\r{ind}\text{-}\b{A}_\r{nc})$$
has a canonical group structure, functorial in $A,B$, and abelian for $n\geq2$.

By an extension \cite[Definition 4.2.1]{CortinasThom1} we mean a sequence
\begin{equation}\label{2011270637}\xymatrix{C\ar[r]^{g}&B\ar[r]^{f}& A}\end{equation}
of ind-morphisms between ind-algebras such that $g=\ker(f)$ and $f=\r{coker}(g)$. For $B\in\r{ind}\text{-}\b{A}_\r{nc}$ the canonical morphism
$\Delta^0\to\Delta^1$ in $\r{sim}\text{-}\b{Set}$ induces an ind-morphism
$$\f{Z}(\r{sd}^\bullet\Delta^1,B)\to\f{Z}(\r{sd}^\bullet\Delta^0,B)\cong B.$$
Denote its kernel by $\c{P}(B)$. The canonical factorization $$\Delta^0\to\partial\Delta^1\to\Delta^1$$ of $\Delta^0\to\Delta^1$
gives us the so-called loop extension \cite[$\S$4.5]{CortinasThom1}:
\begin{equation}\label{2012211242}
B^{\c{S}^1}\to\c{P}(B)\to B.
\end{equation}
For $V\in\b{Vec}$ let $\r{T}(V):=\oplus_{n=1}^\infty V^{\ot^n}$ denote the tensor algebra associated to $V$. Then
$$\r{T}:\r{ind}\text{-}\b{Vec}\to\r{ind}\text{-}\b{A}_\r{nc}$$ is a left adjoint for the forgetful functor
$\r{ind}\text{-}\b{A}_\r{nc}\to\r{ind}\text{-}\b{Vec}$. For any ind-algebra $A$ let $\eta_A:\r{T}(A)\to A$
denote the adjoint of $\r{id}_A$ and put $\r{J}(A):=\ker\eta_A$. Let $\iota_A:\r{J}(A)\to\r{T}(A)$ denote the embedding.
Then we have the following extension:$$\xymatrix{\r{J}(A)\ar[r]^{\iota_A}&\r{T}(A)\ar[r]^{\eta_A}& A.}$$
It is not hard to see that $\r{J}$ can be considered as a homotopy preserving functor
$$\r{J}:\r{ind}\text{-}\b{A}_\r{nc}\to\r{ind}\text{-}\b{A}_\r{nc}.$$
Suppose the ind-morphism $h:A\to B$ in $\r{ind}\text{-}\b{Vec}$ is a splitting for extension (\ref{2011270637}) i.e. $fh=\r{id}_A$.
Let $\gamma_h$ denote the adjoint of $h$. Then there is a unique morphism $\xi_h$ making the following diagram commutative:
$$\xymatrix{\r{J}(A)\ar[r]^{\iota_A}\ar[d]^{\xi_h}&\r{T}(A)\ar[r]^{\eta_A}\ar[d]^{\gamma_h}& A\ar[d]^{\r{id}}\\C\ar[r]^{g}&B\ar[r]^{f}& A}$$
If $h':A\to B$ is another splitting for (\ref{2011270637}) then by \cite[Proposition 4.4.1]{CortinasThom1} we have $\xi_h\sim_\r{h}\xi_{h'}$.
So $\xi=\xi_h$ is just called a classifying map for (\ref{2011270637}). For more details on the constructions of $\r{J}(A),\c{P}(A),A^{\c{S}^n}$ see
\cite{CortinasThom1} or \cite{Garkusha3}.

For any two algebras $A,B$ we let
\begin{equation}\label{2012211555}
kk^{(n)}(A,B):=[\r{J}^n(A),(\c{M}_\infty(B))^{\c{S}^{n}}]=[\r{J}^n(A),\c{M}_\infty(B^{\c{S}^{n}})].
\end{equation}
It is clear that $kk^{(n)}$ may be considered as a homotopy invariant functor $$kk^{(n)}:\b{A}_\r{nc}^\r{op}\times\b{A}_\r{nc}\to\b{Ab}.$$
Let $f:A\to B$ be a morphism in $\b{A}_\r{nc}$. Then let $\r{j}(f)\in kk^{(1)}(A,B)$ be the composit
$$\xymatrix{\r{J}(A)\ar[r]^-{\r{J}(f)}&\r{J}(B)\ar[r]&\r{J}(\c{M}_\infty(B))
\ar[r]^-{\xi}&\c{M}_\infty(B^{\c{S}^{1}})},$$
where the second arrow is induced by the canonical embedding $B\to\c{M}_\infty(B)$ and where $\xi$ is a classifying map for the extention
induced by (\ref{2012211242}). Similarly, for any $n\geq1$ and any
homotopy class $[f]\in kk^{(n)}(A,B)$ of a ind-morphism $f$ let
$$\Phi^n_{A,B}([f])\in kk^{(n+1)}(A,B)$$ denote the homotopy class of the ind-morphism
defined by the composit $$\xymatrix{\r{J}^{n+1}(A)\ar[r]^-{\r{J}(f)}&\r{J}((\c{M}_\infty(B))^{\c{S}^{n}})
\ar[r]^-{\xi}&((\c{M}_\infty(B))^{\c{S}^{n}})^{\c{S}^{1}}\cong}(\c{M}_\infty(B))^{\c{S}^{n+1}}.$$
Then, as it is explained in \cite[$\S$6.1]{CortinasThom1}, $\Phi^n_{A,B}$ is a morphism in $\b{Ab}$, and moreover, the assignment
$(A,B)\mapsto\Phi^n_{A,B}$ defines a natural transformation $$\Phi^n:kk^{(n)}\to kk^{(n+1)}.$$
Thus, we have the following direct system of abelian groups:
$$\xymatrix{kk^{(1)}(A,B)\ar[r]^-{\Phi^1_{A,B}}&kk^{(2)}(A,B)\ar[r]^-{\Phi^2_{A,B}}&\cdots}.$$
The Corti\~{n}as-Thom bivariant K-group is defined to be the abelian group $$kk(A,B):=\lim_{n\to\infty}kk^{(n)}(A,B).$$
It is clear that $kk$ may be considered as a homotopy invariant functor
$$kk:\b{A}_\r{nc}^\r{op}\times\b{A}_\r{nc}\to\b{Ab}.$$
It is proved that there is a triangulated category $kk$ whose objects are those of $\b{A}_\r{nc}$ and whose morphism-sets are $kk(A,B)$.
Also, the assignment $f\mapsto\r{j}(f)$ defines a functor $\b{A}_\r{nc}\to kk$ which is universal among all
excisive, homotopy invariant and $\r{M}_\infty$-stable homology theories on $\b{A}_\r{nc}$ \cite[Theorem 6.6.2]{CortinasThom1}.

After the above long review of Corti\~{n}as-Thom's theory, we build a comparison morphism between $\f{Q}(C)$ and $kk(\bb{F},C)$:
For $C\in\b{A}_\r{nc}$, consider the composit
\begin{equation}\label{2012211536}
\xymatrix{\f{Q}(C)=[\r{q}\bb{F},\r{M}_\bullet(C)]\ar[r]&[\r{q}\bb{F},\r{M}_\infty(C)]\ar[r]^-{\r{j}}&kk(\r{q}\bb{F},\r{M}_\infty(C))
\cong kk(\r{q}\bb{F},C)}
\end{equation}
where the first arrow is induced by the canonical ind-morphism $\r{M}_\bullet(C)\to\r{M}_\infty(C)$.
As it is noted in \cite[$\S$6.1]{CortinasThom1} the sum operation in the $kk$-groups is the same as the operation induced by the abelian monoid
$\r{M}_\bullet$ in (\ref{2012211555}). Thus the composite (\ref{2012211536}) is a group-morphism.
For every surjective morphism $f$ in $\b{A}_\r{nc}$ it follows from \cite[Corollary 6.3.4]{CortinasThom1} that $\r{j}(f)$ has a right inverse
in $kk$. So, we have the following group-morphism induced by the inverse of $\r{j}(\r{ev}_1:\r{q}\bb{F}\to\bb{F})$:
\begin{equation}\label{2012212257}
kk(\r{q}\bb{F},C)\to kk(\bb{F},C)
\end{equation}
Composition of (\ref{2012211536}) and (\ref{2012212257}) is the canonically defined group-morphism
\begin{equation}\label{2012212303}
\f{Q}(C)\to kk(\bb{F},C).
\end{equation}
For Weibel's homotopy algebraic K-theory $KH$ we refer the reader to the original paper \cite{Weibel3} or \cite{Weibel1,Cortinas1}.
By \cite[Theorem 8.2.1]{CortinasThom1}, for any algebra $C$, we have a natural isomorphism between abelian groups $KH_0(C)$ and $kk(\bb{F},C)$.
Thus, (\ref{2012212303}) gives rise to the natural group-morphism
\begin{equation}\label{2012212315}
\f{Q}(C)\to KH_0(C).
\end{equation}
In case $C$ is a unital algebra, it can be seen from the proof of \cite[Theorem 8.2.1]{CortinasThom1} that the canonical morphism
(see \cite[$\S$5]{Cortinas1}),
\begin{equation}\label{2012212325}
K_0(C)\to KH_0(C)
\end{equation}
is equal to the composition of (\ref{2012202322}) and (\ref{2012212315}).
\begin{theorem}
\emph{Let $C$ be a unital $K_0$-regular algebra. Then (\ref{2012202322}) is injective and (\ref{2012212315}) is surjective.}
\end{theorem}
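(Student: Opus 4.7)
The plan is to exploit the factorization established just above the theorem: the classical comparison morphism (\ref{2012212325}) decomposes as the composite of (\ref{2012202322}) and (\ref{2012212315}). Once this factorization is in hand, both assertions reduce to a single abstract observation: if a composite $g\circ f$ of abelian group morphisms is an isomorphism, then $f$ is automatically injective and $g$ is automatically surjective.

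The substantive input I will invoke is a result about Weibel's homotopy K-theory, not about $\f{Q}$: for any $K_0$-regular unital algebra $C$, the canonical morphism $K_0(C)\to KH_0(C)$ is an isomorphism. This is precisely the motivation for the concept of $K_0$-regularity introduced in \cite{Weibel3}; it is also recorded in the survey \cite{Cortinas1} and follows by evaluating the cosimplicial algebra $C[\Delta]$ in Weibel's model of $KH_*$ and using $K_0$-regularity to collapse the relevant edge of the spectral sequence in degree zero.

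Combining these two ingredients, the composite
\[
K_0(C) \xrightarrow{(\ref{2012202322})} \f{Q}(C) \xrightarrow{(\ref{2012212315})} KH_0(C)
\]
is an isomorphism, whence (\ref{2012202322}) is injective and (\ref{2012212315}) is surjective.

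The only place where real checking could conceivably be needed is the factorization of (\ref{2012212325}) itself; but this has already been recorded in the paragraph preceding the theorem by tracing an idempotent $M\in\r{Idmp}_n(C)$ and its associated morphism $\alpha_M$ through the explicit definitions of both intermediate arrows, together with the identification $KH_0\cong kk(\bb{F},?)$ of \cite[Theorem 8.2.1]{CortinasThom1}. Thus no genuine obstacle remains; the entire argument is essentially a short bookkeeping exercise layered on top of Weibel's $K_0$-regularity theorem.
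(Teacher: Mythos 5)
Your proof is correct and is essentially identical to the paper's: both use the factorization of the canonical map $K_0(C)\to KH_0(C)$ through $\f{Q}(C)$ recorded just before the theorem, together with the fact (cited in the paper as \cite[Proposition 5.2.3]{Cortinas1}) that this map is an isomorphism for $K_0$-regular unital algebras, and then apply the elementary observation about composites of group morphisms.
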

\begin{proof}
The theorem follows from the fact that by \cite[Proposition 5.2.3]{Cortinas1}, (\ref{2012212325}) is an isomorphism.
\end{proof}
We need the following elementary lemma.
\begin{lemma}\label{2011251455}
\emph{Let $C=(C_i)_i$ be in $\r{pro}\text{-}\b{C}$. Suppose that for every $i$, $C_i$ is a cocommutative cogroup object in $\b{C}$
and all structural morphisms of $C$ preserve the cogroup structures. Then $C$ is a cocommutative cogroup in $\r{pro}\text{-}\b{C}$
with comultiplication, counit, and coinverse induced by those of $C_i$'s in the obvious way.}
\end{lemma}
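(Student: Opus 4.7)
The proof should be essentially formal: cogroup structure is expressed by commutative diagrams, so if each $C_i$ carries such a structure and the transition maps respect it, the entire diagrammatic data assemble into a cogroup structure on $C$ in $\r{pro}\text{-}\b{C}$.

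Concretely, I would begin by setting up the componentwise data. For each $i\in I$, let $\mu_i:C_i\to C_i\sqcup C_i$, $\epsilon_i:C_i\to 0$, and $\sigma_i:C_i\to C_i$ denote the comultiplication, counit, and coinverse of the cogroup $C_i$ (here $0$ denotes the initial object of $\b{C}$, which exists since it is required for the counit of any cogroup, and the coproducts $C_i\sqcup C_i$ exist in $\b{C}$ for the same reason). The hypothesis that the structural map $\alpha_{ii'}:C_{i'}\to C_i$ preserves the cogroup structure means precisely that it is compatible with these three pieces of data: $(\alpha_{ii'}\sqcup\alpha_{ii'})\mu_{i'}=\mu_i\alpha_{ii'}$, $\epsilon_i\alpha_{ii'}=\epsilon_{i'}$, and $\alpha_{ii'}\sigma_{i'}=\sigma_i\alpha_{ii'}$. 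Because the coproduct of $C$ with itself in $\r{pro}\text{-}\b{C}$ is represented (up to cofinality in the index set) by $(C_i\sqcup C_i)_{i\in I}$ with transition maps $\alpha_{ii'}\sqcup\alpha_{ii'}$, the families $\{\mu_i\}$, $\{\epsilon_i\}$, $\{\sigma_i\}$ assemble into honest pro-morphisms
$$\mu:C\to C\sqcup C,\quad \epsilon:C\to 0,\quad \sigma:C\to C.$$

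Next I would verify the cogroup axioms. Coassociativity amounts to the commutativity of a diagram involving $\mu$, $\mu\sqcup\r{id}$, and $\r{id}\sqcup\mu$; cocommutativity involves the twist on $C\sqcup C$; the counit law involves $\mu$ composed with $\epsilon\sqcup\r{id}$ and $\r{id}\sqcup\epsilon$; and the coinverse law involves $\mu$ composed with $\sigma\sqcup\r{id}$ and $\r{id}\sqcup\sigma$ followed by the codiagonal $C\sqcup C\to C$. For each of these diagrams, commutativity holds at every level $i$ by the assumption that $C_i$ is a cocommutative cogroup in $\b{C}$, so the corresponding pro-morphisms are equal, since two pro-morphisms represented by families that agree componentwise are equal in $\r{pro}\text{-}\b{C}$.

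Finally I would check that the induced abelian group structure on $\r{Hom}_{\r{pro}\text{-}\b{C}}(C,D)$ is the expected one: for $D=(D_j)$ a pro-object, the set $\r{Hom}_{\r{pro}\text{-}\b{C}}(C,D)=\lim_j\,\mathrm{colim}_i\,\r{Hom}_\b{C}(C_i,D_j)$ inherits an abelian group structure by taking colimits (over $i$, along group-homomorphisms induced by the cogroup-preserving $\alpha_{ii'}$) and limits of abelian groups, and a direct diagram chase shows this coincides with the operation coming from $\mu$, $\epsilon$, $\sigma$ via the usual Eckmann–Hilton recipe.

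There is no real obstacle; the only point requiring small care is ensuring that pro-$\b{C}$ contains the coproducts $C\sqcup C$ and initial object $0$ needed to phrase the cogroup structure in the first place, and that $(C_i\sqcup C_i)_i$ with the diagonal-transition maps genuinely represents $C\sqcup C$ in $\r{pro}\text{-}\b{C}$ — both are immediate from the componentwise existence hypothesis implicit in the statement.
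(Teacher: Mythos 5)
Your argument is correct: the paper states this lemma without proof (calling it elementary), and your levelwise assembly --- identifying $C\sqcup C$ with the diagonal system $(C_i\sqcup C_i)_i$ by cofinality of the diagonal in $I\times I$, and then verifying each cogroup axiom componentwise, using that pro-morphisms represented by componentwise-equal level families coincide --- is exactly the intended argument. No gaps.
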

\begin{theorem}\label{2011272212}
\emph{For any algebra $A$, there exists an object $\ov{kk}_A\in\r{pro}\text{-}\r{Hot}(\b{A}_\r{nc})$ with a cocommutative cogroup structure
such that the abelian groups $[\ov{kk}_A,B]$ and $kk(A,B)$ are naturally isomorphic for every algebra $B$.}
\end{theorem}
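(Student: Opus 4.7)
The plan is to adapt the argument from Theorem \ref{2011272145} to each level $n$ of the tower defining $kk(A,B)$, and then to glue the resulting pro-representations into a single pro-algebra using Lemma \ref{2011251455}. Using the canonical identifications $(\c{M}_\infty(B))^{\c{S}^n}\cong\c{M}_\infty(B^{\c{S}^n})$ and $\c{M}_\infty(B)\cong\c{M}_\infty(\bb{F})\ot B$ recorded in the excerpt, we have
\[
kk^{(n)}(A,B)=[\r{J}^n(A),\c{M}_\infty(B^{\c{S}^n})]\cong[\r{J}^n(A),\c{M}_\infty(\bb{F}^{\c{S}^n})\ot B].
\]
Since $\r{J}^n(A)\in\b{A}_\r{nc}$, $\c{M}_\infty(\bb{F}^{\c{S}^n})\in\r{ind}\text{-}\b{A}_\r{nc}$, and $B\in\b{A}_\r{nc}$, Theorem \ref{2012041948} produces a natural-in-$B$ isomorphism
\[
kk^{(n)}(A,B)\cong[D_n,B],\qquad D_n:=\f{M}^\r{nc}(\r{J}^n(A),\c{M}_\infty(\bb{F}^{\c{S}^n}))\in\r{pro}\text{-}\b{A}_\r{nc}.
\]

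Next I would use Yoneda to promote each $D_n$ to a cocommutative cogroup object. As recalled from \cite[\S6.1]{CortinasThom1} the sum in $kk^{(n)}(A,B)$ is the abelian operation coming from $\r{M}_\bullet$, and it is functorial in $B$; Lemma \ref{2012041916}, applied to $\b{C}=\r{Hot}(\b{A}_\r{nc})$ (which inherits finite coproducts from $\b{A}_\r{nc}$ and has the zero algebra as cofinal object), places a cocommutative cogroup structure on each $D_n$ in $\r{pro}\text{-}\r{Hot}(\b{A}_\r{nc})$. The natural transformations $\Phi^n:kk^{(n)}\to kk^{(n+1)}$ are additive and natural in $B$, so by the same Yoneda principle they are represented by (unique up to homotopy) pro-morphisms $\sigma_n:D_{n+1}\to D_n$ in $\r{pro}\text{-}\r{Hot}(\b{A}_\r{nc})$, and additivity of $\Phi^n$ forces the $\sigma_n$ to preserve the cogroup structures.

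I would then define $\ov{kk}_A$ to be the inverse system $\{D_n,\sigma_n\}_{n\geq1}$, collapsed from a pro-pro-algebra to a pro-algebra under the convention of \S\ref{1910101626}. Lemma \ref{2011251455} upgrades this to a cocommutative cogroup in $\r{pro}\text{-}\r{Hot}(\b{A}_\r{nc})$, and the very definition of morphism-sets in pro-categories yields
\[
[\ov{kk}_A,B]=\lim_{n\to\infty}[D_n,B]\cong\lim_{n\to\infty}kk^{(n)}(A,B)=kk(A,B),
\]
natural in $B$, with matching abelian group structures.

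The main obstacle will be the bookkeeping around Theorem \ref{2012041948}: one needs to check that its isomorphism is natural not only in the target slot (which is what Yoneda uses to produce the cogroup on $D_n$) but also simultaneously in the first two slots, so that the $\sigma_n$ assemble into a genuine inverse system of cogroups representing the direct system $\{\Phi^n\}$. Once this compatibility is pinned down, the upgrade to a global cogroup structure and the identification of $[\ov{kk}_A,B]$ with $kk(A,B)$ are formal consequences of Yoneda and Lemma \ref{2011251455}.
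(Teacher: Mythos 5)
Your proposal is correct and follows essentially the same route as the paper: the paper sets $\ov{kk}_A^{(n)}:=\f{M}^\r{nc}(\r{J}^n(A),\c{M}_\infty(\bb{F}^{\c{S}^{n}}))$ (your $D_n$), invokes Theorem \ref{2012041948} for the natural bijection $kk^{(n)}(A,B)\cong[\ov{kk}_A^{(n)},B]$, uses Lemma \ref{2012041916} to endow each level with a cocommutative cogroup structure, obtains the connecting maps $\alpha_n$ (your $\sigma_n$) from Yoneda applied to $\Phi^n_{A,?}$, and assembles the inverse system via Lemma \ref{2011251455}. Your explicit unwinding of $\c{M}_\infty(B^{\c{S}^n})\cong\c{M}_\infty(\bb{F}^{\c{S}^n})\ot B$ is a step the paper leaves implicit, but it is the same argument.
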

\begin{proof}
Let $$\ov{kk}_A^{(n)}=\f{M}^\r{nc}(\r{J}^n(A),\c{M}_\infty(\bb{F}^{\c{S}^{n}})).$$
Then by Theorem \ref{2012041948} we have the natural bijection
\begin{equation}\label{2012221318}
kk^{(n)}(A,B)\cong[\ov{kk}_A^{(n)},B].
\end{equation}
Thus by Lemma \ref{2012041916}, $\ov{kk}_A^{(n)}$ has a cocommutative cogroup structure as an object in
$\r{pro}\text{-}\r{Hot}(\b{A}_\r{nc})$ that makes (\ref{2012221318}) into a natural isomorphism of groups.
By the Yoneda Lemma there exists a morphism $$\alpha_n:\ov{kk}_A^{(n+1)}\to\ov{kk}_A^{(n)}$$
in $\r{pro}\text{-}\r{Hot}(\b{A}_\r{nc})$ that induces the natural transformation $\Phi^n_{A,?}$.
Thus, $\alpha_n$ preserves the cogroup structures. Now, it follows from \ref{2011251455} that the inverse system
$$(\ov{kk}_A^{(n)},\alpha_n)_n$$ defines the desired object $\ov{kk}_A$.
\end{proof}
Following Phillips \cite{Phillips2}, $\ov{kk}_A$ may be called classifying homotopy pro-algebra.
The following theorem is one of the main results of this note.
\begin{theorem}\label{2012221244}
\emph{There exists an object $\ov{KH}_0$ in $\r{pro}\text{-}\r{Hot}(\b{A}_\r{nc})$ with a cocommutative cogroup structure
such that the abelian groups $[\ov{KH}_0,B]$ and $KH_0(B)$ are naturally isomorphic for every algebra $B$.}
\end{theorem}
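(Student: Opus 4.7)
The plan is to derive this theorem as an immediate corollary of Theorem \ref{2011272212} combined with the Corti\~nas-Thom identification $KH_0(B)\cong kk(\bb{F},B)$ from \cite[Theorem 8.2.1]{CortinasThom1}, which was already invoked in the discussion preceding the statement. Concretely, I would simply set $\ov{KH}_0:=\ov{kk}_\bb{F}$, where $\ov{kk}_\bb{F}$ is the classifying homotopy pro-algebra for the functor $kk(\bb{F},?)$ produced by Theorem \ref{2011272212}.

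With this definition, two things come for free. First, Theorem \ref{2011272212} already puts a cocommutative cogroup structure on $\ov{kk}_\bb{F}$ as an object of $\r{pro}\text{-}\r{Hot}(\b{A}_\r{nc})$, so $\ov{KH}_0$ automatically carries such a structure. Second, Theorem \ref{2011272212} supplies a natural isomorphism of abelian groups $[\ov{KH}_0,B]\cong kk(\bb{F},B)$ for every $B\in\b{A}_\r{nc}$. Composing this with the natural isomorphism $kk(\bb{F},B)\cong KH_0(B)$ of \cite[Theorem 8.2.1]{CortinasThom1} yields the desired natural isomorphism $[\ov{KH}_0,B]\cong KH_0(B)$.

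The only point that is not a literal formal consequence of earlier results, and hence the one I would check carefully rather than treat as routine, is that the group structure on $[\ov{KH}_0,B]$ induced via Yoneda by the cogroup structure on $\ov{KH}_0$ actually agrees with the group structure on $KH_0(B)$ coming through the composition of the two natural bijections. This reduces, via Lemma \ref{2012041916}, to verifying that the cogroup structure on $\ov{kk}_\bb{F}$ constructed in the proof of Theorem \ref{2011272212} corresponds under Yoneda to the direct-sum group structure on $kk(\bb{F},?)$, and that the Corti\~nas-Thom identification $kk(\bb{F},B)\cong KH_0(B)$ is a morphism of groups. Both points are already explicit in the proof of Theorem \ref{2011272212} and in \cite[$\S$6.1, Theorem 8.2.1]{CortinasThom1}, so there is no serious obstacle; the proof amounts essentially to composing two natural group-isomorphisms and citing uniqueness of the cogroup structure representing a group-valued functor.
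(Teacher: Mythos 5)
Your proposal is correct and is exactly the paper's argument: the paper's proof of Theorem \ref{2012221244} likewise takes $\ov{KH}_0:=\ov{kk}_{\bb{F}}$ and cites Theorem \ref{2011272212} together with \cite[Theorem 8.2.1]{CortinasThom1}. You merely spell out the compatibility of group structures that the paper leaves implicit, which is a reasonable elaboration rather than a different route.
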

\begin{proof}
It follows directly from \cite[Theorem 8.2.1]{CortinasThom1} and Theorem \ref{2011272212}.
\end{proof}
%%%%%%%%%%%%%%%%%%%%%%%%%%%%%%%%%%%%%%%%%%%%%%%%%%%%%%%%%%%%%%%%%%%%%%%%%%%%%%%%%%%%%%%%%%%%%%%%%%%%%%%%%%%%%%%%%%%%%%%%%%
%%%%%%%%%%%%%%%%%%%%%%%%%%%%%%%%%%%%%%%%%%%%%%%%%%%%%%%%%%%%%%%%%%%%%%%%%%%%%%%%%%%%%%%%%%%%%%%%%%%%%%%%%%%%%%%%%%%%%%%%%%
%%%%%%%%%%%%%%%%%%%%%%%%%%%%%%%%%%%%%%%%%%%%%%%%%%%%%%%%%%%%%%%%%%%%%%%%%%%%%%%%%%%%%%%%%%%%%%%%%%%%%%%%%%%%%%%%%%%%%%%%%%
%%%%%%%%%%%%%%%%%%%%%%%%%%%%%%%%%%%%%%%%%%%%%%%%%%%%%%%%%%%%%%%%%%%%%%%%%%%%%%%%%%%%%%%%%%%%%%%%%%%%%%%%%%%%%%%%%%%%%%%%%%
%%%%%%%%%%%%%%%%%%%%%%%%%%%%%%%%%%%%%%%%%%%%%%%%%%%%%%%%%%%%%%%%%%%%%%%%%%%%%%%%%%%%%%%%%%%%%%%%%%%%%%%%%%%%%%%%%%%%%%%%%%
{\footnotesize}
%%%%%%%%%%%%%%%%%%%%%%%%%%%%%%%%%%%%%%%%%%%%%%%%%%%%%%%%%%%%%%%%%%%%%%%%%%%%%%%%%%%%%%%%%%%%%%%%%%%%%%%%%%%%%%%%%%%%%%%%%%
%%%%%%%%%%%%%%%%%%%%%%%%%%%%%%%%%%%%%%%%%%%%%%%%%%%%%%%%%%%%%%%%%%%%%%%%%%%%%%%%%%%%%%%%%%%%%%%%%%%%%%%%%%%%%%%%%%%%%%%%%%
%%%%%%%%%%%%%%%%%%%%%%%%%%%%%%%%%%%%%%%%%%%%%%%%%%%%%%%%%%%%%%%%%%%%%%%%%%%%%%%%%%%%%%%%%%%%%%%%%%%%%%%%%%%%%%%%%%%%%%%%%%
%%%%%%%%%%%%%%%%%%%%%%%%%%%%%%%%%%%%%%%%%%%%%%%%%%%%%%%%%%%%%%%%%%%%%%%%%%%%%%%%%%%%%%%%%%%%%%%%%%%%%%%%%%%%%%%%%%%%%%%%%%
%%%%%%%%%%%%%%%%%%%%%%%%%%%%%%%%%%%%%%%%%%%%%%%%%%%%%%%%%%%%%%%%%%%%%%%%%%%%%%%%%%%%%%%%%%%%%%%%%%%%%%%%%%%%%%%%%%%%%%%%%%
\end{document}